\definecolor{darkblue}{rgb}{0,0,0.8}
\definecolor{darkgreen}{rgb}{0,0.4,0}
\newtheorem{thm}{Theorem}[section]
\newtheorem{prop}[thm]{Proposition}
\newtheorem{lem}[thm]{Lemma}
\newtheorem{cor}[thm]{Corollary}
\theoremstyle{definition}
\theoremstyle{remark}
\newtheorem{rem}[thm]{Remark}
\newtheorem{rems}[thm]{Remarks}
\newtheorem{Step}{Step}
\numberwithin{equation}{section}
\newcommand{\Pic}{\mathrm{Pic}}
\newcommand{\Br}{\mathrm{Br}}
\newcommand{\oor}{\mathrm{or}}
\newcommand{\Id}{\mathrm{Id}}
\newcommand{\Fitt}{\mathrm{Fitt}}
\newcommand{\Clop}{\mathrm{Clop}}
\newcommand{\Isom}{\mathrm{Isom}}
\newcommand{\Ker}{\mathrm{Ker}}
\newcommand{\Hom}{\mathrm{Hom}}
\newcommand{\cl}{\mathrm{cl}}
\newcommand{\N}{\mathrm{N}}
\newcommand{\cd}{\mathrm{cd}}
\newcommand{\colim}{\mathrm{colim}}
\newcommand{\rr}{\mathrm{r}}
\newcommand{\et}{\mathrm{\acute{e}t}}
\newcommand{\red}{\mathrm{red}}
\newcommand{\per}{\mathrm{per}}
\newcommand{\ind}{\mathrm{ind}}
\newcommand{\pt}{\mathrm{pt}}
\newcommand{\ab}{\mathrm{ab}}
\newcommand{\an}{\mathrm{an}}
\newcommand{\Specan}{\mathrm{Specan}}
\newcommand{\Spf}{\mathrm{Spf}}
\newcommand{\Spec}{\mathrm{Spec}}
\newcommand{\Sper}{\mathrm{Sper}}
\newcommand{\Frac}{\mathrm{Frac}}
\newcommand{\Gal}{\mathrm{Gal}}
\newcommand{\RR}{\mathrm{R}}
\newcommand{\isoto}{\myxrightarrow{\,\sim\,}}
\def\myrightarrow{{\setbox\z@\hbox{$\rightarrow$}\dimen0\ht\z@\multiply\dimen0 6\divide\dimen0 10\ht\z@\dimen0\box\z@}}
\def\myrightarrowfill@{\arrowfill@\relbar\relbar\myrightarrow}
\newcommand{\myxrightarrow}[2][]{\ext@arrow 0359\myrightarrowfill@{#1}{#2}}
\def\loccit{\emph{loc}.\kern3pt \emph{cit}.{}\ }
\def\eg{e.g.\kern.3em}
\def\ie{i.e.,\ }
\def\resp {\text{resp.}\kern.3em}
\newcommand{\Homrond}{\mathscr{H}\mkern-4muom}
\def\A{\mathbb A}
\def\Z{\mathbb Z}
\def\C{\mathbb C}
\def\F{\mathbb F}
\def\G{\mathbb G}
\def\LL{\mathbb L}
\def\M{\mathbb M}
\def\P{\mathbb P}
\def\R{\mathbb R}
\def\N{\mathbb N}
\def\bS{\mathbb S}
\def\cN{\mathcal{N}}
\def\cM{\mathcal{M}}
\def\cL{\mathcal{L}}
\def\cO{\mathcal{O}}
\def\cF{\mathcal{F}}
\def\cM{\mathcal{M}}
\def\cU{\mathcal{U}}
\def\cI{\mathcal{I}}
\def\ci{\mathcal{C}^{\infty}}
\def\kp{\mathfrak{p}}
\def\km{\mathfrak{m}}
\def\kF{\mathfrak{F}}
\def\kL{\mathfrak{L}}
\def\kX{\mathfrak{X}}
\def\ta{\tilde{a}}
\def\th{\tilde{h}}
\def\tp{\tilde{p}}
\def\talpha{\tilde{\alpha}}
\def\tpsi{\tilde{\psi}}
\def\tphi{\tilde{\phi}}
\def\oT{\overline{T}}
\def\oZ{\overline{Z}}
\def\of{\overline{f}}
\def\op{\overline{p}}
\def\oX{\overline{X}}
\def\oU{\overline{U}}
\def\oK{\overline{K}}
\def\wS{\widetilde{S}}
\def\wT{\widetilde{T}}
\def\wD{\widetilde{D}}
\def\wX{\widetilde{X}}
\def\whT{\widehat{T}}
\def\whS{\widehat{S}}
\begin{document}

\title[]{On the field of meromorphic functions on a Stein surface}

\author{Olivier Benoist}
\address{D\'epartement de math\'ematiques et applications, \'Ecole normale sup\'erieure, CNRS,
45 rue d'Ulm, 75230 Paris Cedex 05, France}
\email{olivier.benoist@ens.fr}

\renewcommand{\abstractname}{Abstract}
\begin{abstract}
We prove that fields of meromorphic functions on Stein surfaces have cohomological dimension $2$, and solve the period-index problem and Serre's conjecture II for these fields. 
We obtain analogous results for fields of real meromorphic functions on Stein surfaces equipped with an antiholomorphic involution.  We deduce an optimal quantitative solution to Hilbert's $17$th problem on analytic surfaces.
\end{abstract}
\maketitle

\section*{Introduction}

  The function field $\C(X)$ of an integral algebraic variety of dimension~$n$ over $\C$ is a finitely generated field of transcendence degree $n$ over $\C$.  Its arithmetic properties impact the geometry of $X$ and are therefore important to investigate.

  The complex-analytic analogues of these fields are the fields $\cM(S)$ of meromorphic functions on a connected normal complex space $S$ of dimension $n$. They are not always interesting: they may be reduced to the field~$\C$ of constants even if~$n>0$.
To ensure that meromorphic functions on $S$ are abundant, one must restrict the class of complex spaces under consideration. Two cases of interest are projective varieties and Stein spaces.  If $S$ is projective,  then all meromorphic functions on $S$ are algebraic by GAGA,  and one is reduced to the case of algebraic function~fields.

We henceforth consider the case of \textit{Stein spaces} (analytic analogues of affine varieties, characterized by the vanishing of higher cohomology groups of coherent sheaves, see \cite{GRStein}). Their fields of meromorphic functions are much bigger than algebraic function fields, and correspondingly harder to study. A typical example is $\cM(\C^n)$: the fraction field of the ring of convergent power series on $\C^n$.  

In this article, we focus on the case where $S$ is a Stein surface, and we investigate various \textit{arithmetic} questions concerning the field of \textit{analytic} origin $\cM(S)$.

\subsection{Cohomological dimension}
\label{cohodim}

Let $F$ be a field with absolute Galois group~$\Gamma_F$. 
The \textit{cohomological dimension} of $F$ is the largest integer $n$ such that there exists a finite $\Gamma_F$-module~$M$ with $H^n(\Gamma_F,M)\neq 0$ (or $+\infty$ if there is no upper bound on these integers).
Let $X$ be an integral algebraic variety of dimension $n$ over $\C$.  It is a consequence of Tsen's theorem that the field~${F=\C(X)}$ of rational functions on~$X$ has cohomological dimension~$n$ (see \mbox{\cite[II.4.2, Proposition 11]{CG}}).

That the field of meromorphic functions on a connected normal Stein curve (that is,  of a connected noncompact Riemann surface) has cohomological dimension $1$ has been known for a long time and is attributed by Guralnick to M. Artin (see \cite[Proposition 3.7]{Guralnick}). Our first result deals with the case of Stein surfaces.

\begin{thm}[Theorem \ref{thcohodim}]
\label{thcohodimintro}
Let $S$ be a connected normal Stein surface. Then the field $\cM(S)$ of meromorphic functions on $S$ has cohomological dimension $2$.
\end{thm}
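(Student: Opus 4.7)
I would establish the two inequalities $\cd(F) \geq 2$ and $\cd(F) \leq 2$ separately, where $F = \cM(S)$.

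\emph{Lower bound.} Since $F$ has transcendence degree $2$ over $\C$, I would choose $f,g \in F$ whose differentials are $\C$-linearly independent at some smooth point $s \in S$. Then $\widehat{\cO}_{S,s} \cong \C[\![f-f(s),g-g(s)]\!]$, and the inclusion $F \hookrightarrow K := \Frac(\widehat{\cO}_{S,s})$ sends the symbol $\{f,g\} \in H^2(F,\mu_\ell^{\otimes 2})$ to a nonzero class in $H^2(K, \mu_\ell^{\otimes 2})$, as detected by the tame residue along the divisor $f - f(s) = 0$ (which sends $\{f,g\}$ to a nonzero multiple of $\{g\}$ in the $H^1$ of the residue field). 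Hence $\{f,g\} \neq 0$ in $H^2(F,\mu_\ell^{\otimes 2})$, and $\cd(F) \geq 2$.

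\emph{Upper bound.} Let $\alpha \in H^i(F, M)$ with $i \geq 3$ and $M$ a finite $\Gamma_F$-module. Pick a finite Galois extension $F'/F$ trivializing the action on $M$, and let $\pi \colon \wS \to S$ be the normalization of $S$ in $F'$. Since $\pi$ is finite and $S$ is Stein, $\wS$ is a normal Stein surface. Choose an analytic divisor $D \subset S$ containing the branch locus of $\pi$, set $U := S \setminus D$, and note that $M$ defines a locally constant finite étale sheaf $\cM$ on $U$, trivialized by the Galois étale cover $\pi^{-1}(U) \to U$. By the analytic analogue of the algebraic colimit formula $H^i(F, M) = \varinjlim_{D} H^i_\et(U, \cM)$ (the direct limit being taken as $D$ grows), the class $\alpha$ is the image of some $\alpha_D \in H^i_\et(U, \cM)$. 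The open $U$ is Stein, as the complement of an analytic divisor in a Stein surface is Stein. By the Andreotti--Frankel theorem (extended by Hamm to possibly singular Stein spaces), $U$ has the homotopy type of a CW complex of real dimension~$\leq 2$; combined with the comparison between analytic étale and singular cohomology for finite locally constant sheaves, this yields $H^i_\et(U, \cM) = H^i_\sing(U, \cM) = 0$ for $i > 2$. Therefore $\alpha = 0$, and $\cd(F) \leq 2$.

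\emph{Main obstacle.} The crux is the passage from Galois cohomology of $F$ to the étale cohomology of analytic opens of (covers of) $S$, that is, an analytic version of the colimit formula $H^i(F, M) = \varinjlim H^i_\et(U, \cM)$ interpreting $\Spec F$ as an ``étale neighborhood of the generic point'' of $S$. Once this bridge is in place, the vanishing rests on three Stein-theoretic ingredients that should be routine: finite covers of Stein spaces are Stein (by the finiteness of $\pi$ and vanishing of higher coherent cohomology on~$S$), complements of analytic divisors in Stein spaces are Stein, and Hamm's homotopy bound for possibly singular Stein spaces. A minor secondary point is that the normalization $\wS$ need not be smooth, but Hamm's theorem accommodates this without modification, since normal Stein surfaces have only isolated singularities.
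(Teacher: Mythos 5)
There is a genuine gap, and it sits exactly where you park it under ``main obstacle''. Your upper bound rests on the claimed identity $H^i(F,M)=\underset{U}{\colim}\,H^i(U,\cM)$, where $U$ runs over complements of analytic divisors (equivalently, of zero sets $\{a=0\}$ of non\-zero\-di\-vi\-sors $a\in\cO(S)$) and the right-hand side is singular cohomology of the Stein open $U$ with locally constant coefficients. What is true for formal reasons is only $H^i(F,M)=\underset{a}{\colim}\,H^i_{\et}(\Spec(\cO(S)[\tfrac{1}{a}]),\cM)$, a colimit of \'etale cohomology of \emph{schemes} over the non-noetherian ring $\cO(S)$; identifying this with the colimit of singular cohomology of the analytifications $S\setminus\{a=0\}$ is precisely the paper's main technical result (Theorem \ref{compC}), not an ``interpretation of $\Spec F$ as an \'etale neighborhood of the generic point''. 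The comparison fails term by term (Remarks \ref{remcomp}~(iv) and \ref{remH1}~(i) give counterexamples with $S$ normal and connected), and even the colimit statement requires: algebraization of closed and open subsets of \'etale $\cO(S)$-schemes (Theorem \ref{thmclopen}), generic algebraization of topological coverings via Grauert--Remmert and descent of denominators (Theorem \ref{thmconstr}), and, in degree $2$, the alteration construction killing ramification (Proposition \ref{killram}) combined with the vanishing of global degree-$2$ classes on Zariski-open sets (Proposition \ref{GLefschetz11}), which uses $H^2(T,\cO_T)=0$, the exponential sequence, and line bundles built from the negative-definite exceptional fibers of a resolution. This is where the hypothesis $\dim S\leq 2$ enters essentially; only the degree $\geq 3$ vanishing via Hamm's theorem is the routine part. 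So as written the proposal assumes the heart of the theorem; the surrounding reductions (passing to the normalization $\wS$ of $S$ in $F'$, Steinness of finite covers and of $S\setminus\{a=0\}$, Hamm) are correct but are also the easy part of the paper's own argument.

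Two smaller points. Your lower bound is a legitimate alternative to the paper's (which instead uses faithful flatness of $\cO(S)_s\to\cO_{S,s}$, $\dim=2$, and SGA 4, X, Cor.\,2.5), but the symbol should be $\{f-f(s),\,g-g(s)\}$ rather than $\{f,g\}$: if $f(s)\neq 0$ the tame residue along $\{f-f(s)=0\}$ of $\{f,g\}$ need not be what you claim, whereas for local parameters vanishing at $s$ the residue is the class of a uniformizer of $\C[[v]]$ and is visibly nonzero. Also, rather than arbitrary analytic divisors $D$ (whose complements' Steinness requires a local-Steinness argument), it is cleaner to use $D=\{a=0\}$ for global $a\in\cO(S)$, since $S\setminus\{a=0\}$ is a closed subspace of $S\times\C^*$; this is also what the colimit over non\-zero\-di\-vi\-sors naturally produces.
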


When the dimension $n$ of $S$ is arbitrary,  a positive result in this direction has been obtained in \cite{Stein}, under an additional compactness hypothesis.  More precisely,  it is shown in \cite[Theorem 0.4]{Stein} that if a connected compact set $K\subset S$ is Stein (\ie admits a basis of Stein open neighborhoods), then the field $\cM(K)$ of germs of meromorphic functions along $K$ has cohomological dimension $n$.  Theorem \ref{thcohodimintro} removes this compactness hypothesis (and hence overcomes the difficulties related to the singularities at infinity of meromorphic functions on $S$),  when $n=2$.

\subsection{The real case}
\label{cohodimR}

Let $X$ be an integral algebraic variety of dimension~$n$ over~$\R$.  If $X(\R)$ is Zariski-dense in $X$, then $-1$ is not a sum of squares in~$\R(X)$.  It follows from E. Artin and Schreier's work that the field $\R(X)$ can be ordered compatibly with the field structure (see \cite[Satz 7b]{AS}), and hence that it has infinite cohomological dimension (see \eg \cite[Remark 7.5]{Scheiderer}). 
Conversely, a result attributed to Ax by Colliot-Th\'el\`ene and Parimala states that if $X(\R)$ is not Zariski-dense in $X$ (for instance if $X(\R)=\varnothing$), then $\R(X)$ has cohomological dimension $n$ (see \mbox{\cite[Proposition~1.2.1]{CTP}}). 

With applications to real-analytic geometry in mind, it is important to investigate analogues of this result in Stein geometry.
Let $G:=\Gal(\C/\R)\simeq \Z/2$ be the group generated by the complex conjugation.
A $G$\nobreakdash\textit{-equivariant Stein space} is a Stein space equipped (as a locally ringed space) with an action of $G$ such that the complex conjugation acts $\C$-antilinearly on the structure sheaf (in other words,  it is a Stein space equipped with an antiholomorphic involution).
We prove the following real counterpart of Theorem \ref{thcohodimintro} (in dimension $1$, see \cite[Proposition~A.8]{Tight}).

\begin{thm}[Theorem \ref{thcohodimR}]
\label{thcohodimRintro}
Let $S$ be a normal $G$-equivariant Stein surface with~$S/G$ connected.  Let $\cM(S)^G$ be the field of $G$-invariant meromorphic functions on~$S$. The following assertions are equivalent:
\begin{enumerate}[(i)]
\item the field $\cM(S)^G$ has finite cohomological dimension;
\item the field $\cM(S)^G$ has cohomological dimension $2$;
\item the field $\cM(S)^G$ admits no field orderings;
\item $S^G$ is a discrete subset of $S$.
\end{enumerate}
\end{thm}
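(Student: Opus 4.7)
The strategy is the cycle (ii)$\Rightarrow$(i)$\Rightarrow$(iii)$\Rightarrow$(ii) together with (iii)$\Leftrightarrow$(iv); the cycle rests on Theorem~\ref{thcohodimintro} and classical Galois cohomology, whereas the geometric content lies in the equivalence with (iv). The step (ii)$\Rightarrow$(i) is trivial. For (i)$\Rightarrow$(iii): an orderable field is formally real, and by Artin--Schreier its absolute Galois group contains an order-two element (arising from a real closure), whence $\cd_2 = \infty$. For (iii)$\Rightarrow$(ii): since complex conjugation negates $i$, we have $i\notin\cM(S)^G$, so $\cM(S)=\cM(S)^G(\sqrt{-1})$ is a quadratic extension. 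A theorem of Serre (see \cite[Chap.~II, \S 4]{CG}) asserts that for any field $K$ with $[K(\sqrt{-1}):K]=2$ that is not formally real, $\cd(K)=\cd(K(\sqrt{-1}))$; combining with Theorem~\ref{thcohodimintro} yields $\cd(\cM(S)^G)=2$.

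For (iii)$\Rightarrow$(iv) I argue the contrapositive: assume $S^G$ is not discrete. Since $S$ is a normal surface its singular locus is zero-dimensional, so some non-isolated point of $S^G$ is smooth on $S$ (as an isolated singular fixed point would have to be surrounded by smooth fixed points). At a smooth fixed point $x$, the $G$-equivariant Cartan linearization supplies a $G$-equivariant biholomorphism of a neighborhood of $x$ onto a neighborhood of $0$ in $(\C^2, z\mapsto \bar z)$, whose fixed locus is $\R^2$; hence $S^G$ contains a real-analytic surface $M$ through $x$. Any $f\in\cM(S)^G$ is real-valued on $M$ where defined, and the pole locus of finitely many $f_1,\dots,f_n\in\cM(S)^G$ is a complex curve which cannot contain $M$ entirely, since a real $2$-submanifold on which an antiholomorphic involution acts trivially cannot carry a complex structure (that structure $J$ would have to satisfy $J=-J$). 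Evaluating at $p\in M$ off all poles gives $-1\neq \sum f_i(p)^2\geq 0$, so no sum-of-squares representation of $-1$ is possible; thus $\cM(S)^G$ is formally real, hence orderable.

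The implication (iv)$\Rightarrow$(iii) is the real heart of the theorem: assuming $S^G$ discrete, one must show that $\cM(S)^G$ is not formally real, i.e.\ that $-1$ is a sum of squares in $\cM(S)^G$. This is the Hilbert--17-type statement fueling the quantitative application in the abstract. My plan is a specialization-of-orderings argument. Any ordering on $\cM(S)^G$ specializes, along a compatible real valuation, to an ordering of the residue field. The residue fields of interest are fields of the form $\cM(C)^G$ for $G$-stable irreducible complex curves $C\subset S$, and residue fields at the (necessarily singular) isolated $G$-fixed points. Discreteness of $S^G$ forces $C^G$ discrete on every such $C$, and then by the one-dimensional case cited in the text as \cite[Prop.~A.8]{Tight}, $\cM(C)^G$ is itself nonreal, hence non-orderable. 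Combined with a local analysis at the isolated singular fixed points, perhaps via $G$-equivariant resolution of the isolated surface singularities followed by an inspection of the exceptional $G$-stable curves, this rules out any ordering on $\cM(S)^G$. The main obstacle is gaining sufficient control over the set of analytic valuations of $\cM(S)^G$, which is vastly richer than its algebraic counterpart, and making the specialization-of-orderings theory function rigorously in the Stein setting; handling the singular isolated fixed points cleanly is the most delicate local ingredient.
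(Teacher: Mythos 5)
Your steps (ii)$\Rightarrow$(i), (i)$\Rightarrow$(iii) (Artin--Schreier) and the contrapositive of (iii)$\Rightarrow$(iv) (a non-discrete $S^G$ contains, near an accumulation point and away from the discrete singular locus, a totally real $2$-dimensional piece on which $G$-invariant meromorphic functions are real-valued and which cannot lie in a pole curve, so $-1$ is not a sum of squares) are sound, and (iii)$\Rightarrow$(ii) via Serre's theorem on torsion-free profinite groups is fine in principle. Two caveats there: when $S=T\sqcup T^{\sigma}$ (allowed, since only $S/G$ is connected) one has $\sqrt{-1}\in\cM(S)^G\simeq\cM(T)$, so your ``$i\notin\cM(S)^G$'' step needs this case treated separately; and invoking Theorem \ref{thcohodimintro} is circular as the paper stands, since the complex statement is deduced there from the very theorem you are proving (it does admit an independent proof by the same machinery, but you would have to supply it).

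The genuine gap is (iv)$\Rightarrow$(iii): showing that $S^G$ discrete forces $\cM(S)^G$ to have no orderings. Your specialization-of-orderings plan is only a sketch, and its key premise --- that every ordering of $\cM(S)^G$ is compatible with a valuation whose residue field is of geometric type ($\cM(C)^G$ for a $G$-stable curve $C\subset S$, or a local field at an isolated fixed point) --- is exactly what is not available by elementary means. Orderings of this huge non-noetherian field can be centered ``at infinity'' or on ultrafilter-type maximal ideals, and the assertion that they are all controlled by $S^G$ is precisely the Artin--Lang property (Theorem \ref{thorder2}), one of the paper's main results; even its real-analytic-germ analogue (Castilla, Andradas--D\'iaz-Cano--Ruiz) is a substantial theorem. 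The paper's actual proof bypasses valuation theory entirely: it applies Theorem \ref{etcd} with $X=\Spec(\cO(S)^G)$, whose equivalences (finite cohomological dimension $\Leftrightarrow$ cd $\leq 2$ $\Leftrightarrow$ empty real spectrum $\Leftrightarrow$ existence of a nonzerodivisor $a$ with $(S\setminus\{a=0\})^G=\varnothing$) rest on the $G$-equivariant generic comparison theorem \ref{compG}, Artin vanishing for Stein spaces, and Scheiderer's real-\'etale results, plus two elementary remarks ($\cd\geq 2$ via local rings, and ``$S^G$ discrete $\Leftrightarrow$ some $G$-invariant nonzerodivisor vanishes on $S^G$'', using normality and Lemma \ref{lemrealpoints}). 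Without an argument of comparable strength, your plan for the heart of the theorem does not constitute a proof.
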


In view of Theorem \ref{thcohodimRintro}, to overcome difficulties related to the failure of~$\cM(S)^G$ having finite cohomological dimension (equal to $2$) in general, it is critical to control the field orderings of $\cM(S)^G$. This is the purpose of the next theorem.

Recall that if $A$ is a ring, the \textit{real spectrum} $\Sper(A)$ of $A$ is the set of pairs~$(\kp,\prec)$ where $\kp$ is a prime ideal of $A$ and $\prec$ is a field ordering of $\Frac(A/\kp)$. It is endowed with the \textit{spectral topology} \cite[Definition 7.1.3]{BCR}, which is generated by open sets of the form $\{(\kp,\prec)\in\Sper(A)\mid a_1,\dots,a_m\succ 0\}$ for some $a_1,\dots, a_m\in A$.
If $S$ is a Stein surface, we identify $s\in S^G$ with $(\km_s,\prec_s)\in \Sper(\cO(S)^G)$,  where $\km_s=\{a\in\cO(S)^G\mid a(s)=0\}$ and $\prec_s$ is the unique field ordering of $\cO(S)^G/\km_s=\R$.

\begin{thm}[Theorem \ref{thorder2}]
\label{thmorderings}
If $S$ is a normal $G$-equivariant Stein surface such that~$S/G$ is connected,  the closure of $S^G$ in~$\Sper(\cO(S)^G)$ contains $\Sper(\cM(S)^G)$.
\end{thm}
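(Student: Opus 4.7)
Unwinding the definition of the spectral topology on $\Sper(\cO(S)^G)$, the statement is equivalent to the following real Artin--Lang-type claim: for every ordering $\prec$ of $\cM(S)^G$ and every finite family $a_1,\dots,a_m\in\cO(S)^G$ with $a_i\succ 0$ for all $i$, there exists a point $s\in S^G$ such that $a_i(s)>0$ for all $i$. I would argue the contrapositive: assuming no such $s$ exists, I seek to produce in $\cM(S)^G$ an identity
\[
-1=\sum_{I\subset\{1,\dots,m\}}\sigma_I\prod_{i\in I}a_i,
\]
with each $\sigma_I$ a sum of squares in $\cM(S)^G$. Such an identity contradicts the existence of any ordering $\prec$ with all $a_i\succ 0$ by Artin--Schreier, proving the theorem.

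Two preparatory reductions simplify the situation. First, apply $G$-equivariant resolution of singularities to reduce to the case where $S$ is smooth: since $\cM(\tilde S)^G=\cM(S)^G$ and the induced continuous map $\Sper(\cO(\tilde S)^G)\to\Sper(\cO(S)^G)$ sends $\tilde S^G$ into $S^G$, the desired closure relation is inherited. Second, by Theorem~\ref{thcohodimRintro}, if $S^G$ is discrete then $\cM(S)^G$ admits no ordering and the statement is vacuous; we may therefore assume that the two-dimensional part $\Sigma\subset S^G$ is a non-empty totally real analytic surface in the smooth complex surface $S$.

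To construct the desired identity, I would exploit the Stein structure via a $G$-equivariant Remmert embedding $\phi=(\phi_1,\dots,\phi_N)\colon S\hookrightarrow\C^N$ (with $G$ acting by complex conjugation on the target), so that each $\phi_j$ lies in $\cO(S)^G$ and $\phi(\Sigma)\subset\R^N$. The strategy is then threefold: approximate each $a_i$ on a $G$-invariant compact exhaustion of $S$ by polynomials in $\phi_1,\dots,\phi_N$ via Runge-type approximation in the Stein setting; apply the classical Krivine--Stengle Positivstellensatz to the polynomial approximants on the Zariski closure of $\phi(S^G)$ in $\R^N$, using that the approximated system inherits emptiness of its positive locus; and assemble the resulting finite-dimensional algebraic identities into one globally valid identity in $\cM(S)^G$.

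The main obstacle is precisely this assembling step. Since $\Sigma$ need not be compact, Runge approximation only holds on compact subsets, and the local polynomial identities produced at each compact scale must be patched into a globally valid one. I expect this globalization to require the Stein hypothesis in an essential way, through the solution of an associated coherent Cousin-type problem, together with the two-dimensionality of $S$ to control the denominators arising in the sum-of-squares decompositions; this gluing-and-control step should be the technical heart of the proof.
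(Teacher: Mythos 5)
Your reduction of the statement to an Artin--Lang claim (every basic open set $\{a_1,\dots,a_m\succ 0\}$ containing a point of $\Sper(\cM(S)^G)$ must meet $S^G$) and its reformulation via the formal Positivstellensatz are correct, but the proposal stops exactly where the proof has to begin, and the route you sketch has gaps that I do not believe can be repaired as described. First, the preliminary reduction to smooth $S$ is not legitimate: a resolution $\tilde S\to S$ of a singular normal Stein surface contains compact exceptional curves, hence $\tilde S$ is never Stein, so the theorem's hypotheses fail for $\tilde S$ and you cannot simply "inherit" the statement (the paper only ever uses resolutions as auxiliary spaces over the Stein base, never as a replacement for it). Second, and more seriously, the Runge-plus-Krivine--Stengle step does not go through: the emptiness of $\{s\in S^G\mid a_i(s)>0\ \forall i\}$ is a condition on transcendental functions on a noncompact set, and it is neither preserved by polynomial approximation on a compact exhaustion nor reflected by the real Zariski closure of $\phi(S^G)$ (already for $S=\C^2$ that closure is all of $\R^2$, and positivity of polynomial approximants there has no relation to the hypothesis). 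Third, the "assembling" step, which you yourself identify as the heart of the matter, is not a Cousin-type problem: sums-of-squares certificates produced on each compact piece are not cocycle data for any coherent sheaf, and there is no cohomological mechanism in the Stein theory for gluing them; no argument is offered, so the contradiction $-1=\sum_I\sigma_I\prod_{i\in I}a_i$ in $\cM(S)^G$ is never actually produced.

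The paper's proof is of a completely different nature and does not pass through any Positivstellensatz-style certificate. It deduces Theorem \ref{thorder2} from the general statement Theorem \ref{thorder}, whose proof is short: given an ordering of $\cM(S)^G$ with $a_1,\dots,a_m\succ 0$, one forms the auxiliary affine scheme $Y=\Spec\bigl(\cO(S)^G[x_1,\dots,x_m,\tfrac{1}{x_1},\dots,\tfrac{1}{x_m}]/\langle x_i^2-a_i\rangle\bigr)$; the ordering lifts to $(Y_{\cM(S)^G})_{\rr}\neq\varnothing$, while a real analytic point of $Y$ would give exactly the sought point $s\in S^G$ with all $a_i(s)>0$. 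The dichotomy is then resolved by the equivalence (iii)$\Leftrightarrow$(iv) of Theorem \ref{etcd}, which says that emptiness of the real spectrum of $Y_{\cM(S)^G}$ forces $(Y^{\an}\setminus\{a=0\})^G=\varnothing$ for some nonzerodivisor $a$, and conversely. That equivalence is where all the real work lies: it rests on the generic comparison theorem (Theorem \ref{compG}) identifying $H^k_{\et}$ of $\cO(S)^G$-schemes generically with $G$-equivariant singular cohomology of their analytifications, together with Scheiderer's results relating real spectra to $2$-primary cohomological dimension. None of this machinery (nor any substitute for it) appears in your plan, so as it stands the proposal is a strategy with an unfilled central step rather than a proof.
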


Theorem \ref{thmorderings} is the so-called Artin--Lang property for the field $\cM(S)^G$.  For real-analytic surfaces (which amounts to only considering the germ of $S$ along~$S^G$), the Artin--Lang property was proven by Castilla \cite[Theorem~1.1]{Castilla} in the nonsingular case and by Andradas, D\'iaz-Cano and Ruiz \cite[Theorem~2]{ADR} in general.

\subsection{The period-index problem}

Let $F$ be a field with separable closure $F^s$ and absolute Galois group $\Gamma_F$. The \textit{Brauer group} $\Br(F)$ of $F$ is the Galois cohomology group $H^2(\Gamma_F,(F^s)^*)=H^2_{\et}(\Spec(F),\G_m)$. It may equivalently be defined as
the set of isomorphism classes of central division algebras $D$ over~$F$,  with law group given by
$[D_1]\cdot[D_2]=[D_3]$ if and only if $D_1\otimes_F D_2\simeq M_N(D_3)$ for some~$N\geq 0$.

Two important invariants of a class $\eta\in\Br(F)$ are its period $\per(\eta)$ (its order in the torsion group~$\Br(F)$) and its index $\ind(\eta)$ (the smallest degree of a finite field extension $F'/F$ splitting $\eta$; equivalently the gcd of these degrees; equivalently the integer $\sqrt{\dim_F(D)}$ where $D$ is a central
division algebra over $F$ representing~$\eta$).  
These two integers share the same prime factors, and $\per(\eta)\mid\ind(\eta)$.
We refer to~\cite{GS} for a textbook account of this theory.

The period-index problem aims at controlling $\ind(\eta)$ when $\per(\eta)$ is known. Its difficulty increases with the arithmetic complexity of~$F$. If $F$ is perfect of cohomological dimension~$1$,  then $\Br(F)=0$ (see \cite[II.3.1, Proposition 6]{CG}) and the problem is vacuous.  
If~$F$ has cohomological dimension~$2$, it is often reasonable to expect that $\ind(\eta)=\per(\eta)$.
This may fail (see Merkurjev's counterexamples \mbox{\cite[\S3]{Merkurjev}}), but it holds for many fields of geometric or arithmetic interest,  such as function fields of complex algebraic surfaces (de Jong's period-index theorem~\cite{deJong, Lieblich}).
The survey~\cite{BourCT} contains a detailed discussion and many more examples. 
Our next result covers the case of fields of meromorphic functions on Stein surfaces.

\begin{thm}[Theorem \ref{thpiC}]
\label{piintro}
Let $S$ be a connected normal Stein surface. For any~$\eta\in\Br(\cM(S))$, one has $\ind(\eta)=\per(\eta)$.
\end{thm}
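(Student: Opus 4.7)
The plan is to reduce Theorem~\ref{piintro} to de Jong's period-index theorem for function fields of complex algebraic surfaces \cite{deJong, Lieblich}, using Theorem~\ref{thcohodimintro} and the germ-level period-index result of \cite{Stein} to absorb the non-compactness of $S$. Set $n := \per(\eta)$ and $F := \cM(S)$. Since $n \mid \ind(\eta)$ holds automatically, it suffices to split $\eta$ by a finite extension of $F$ of degree dividing $n$, equivalently to exhibit a degree-$n$ multisection of an associated analytic Severi-Brauer scheme.

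\textbf{Step 1 (Spreading out).} Since $\cM(S) = \Frac(\cO(S))$ on a Stein space, I would first represent $\eta$ by a degree-$n$ Azumaya algebra $\cA$ over an analytic Zariski-open $U \subseteq S$ whose complement $Z$ is an analytic subset. Invoking the purity of the Brauer group at smooth codimension-one points of $S$, combined with $\cd(\cM(D)) = 1$ for every one-dimensional irreducible component $D$ of $Z$ (M.~Artin's theorem for noncompact Riemann surfaces, \cite[Proposition~3.7]{Guralnick}), one can extend $\cA$ across the divisorial part of $Z$, so one may assume that $Z \subset S$ is a discrete subset.

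\textbf{Step 2 (Severi-Brauer reformulation).} Let $\pi : P \to U$ be the Severi-Brauer scheme of $\cA$, étale-locally a $\P^{n-1}$-bundle. Splitting $\eta$ by a degree-$n$ extension of $F$ is equivalent to constructing a closed analytic subvariety $T \subset P$ generically finite of degree $n$ over $U$; equivalently, in Lieblich's formulation, to producing a rank-$n$ $\cA$-twisted torsion-free coherent analytic sheaf on $U$ whose reflexive hull extends across $Z$.

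\textbf{Step 3 (Local splittings).} Locally on $S$ the required multisection exists: near each point of $Z$ and over a suitable Stein exhaustion by relatively compact Stein opens $V_i \subset S$, the germ-level period-index result \cite[Theorem~0.4]{Stein} provides degree-$n$ splittings $T_i \subset P|_{V_i}$. One may alternatively obtain these by algebraizing $\cA|_{V_i}$ on a smooth complex algebraic surface modelling $V_i$ — using standard algebraization of coherent analytic data on Stein compacta — and then invoking de Jong's theorem on that model.

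\textbf{Step 4 (Global gluing).} Patch the local multisections $T_i$ into a global analytic multisection $T \subset P$, yielding the sought degree-$n$ splitting of $\eta$. This is where the Stein property of $S$, the cohomological dimension statement Theorem~\ref{thcohodimintro}, and Runge-type approximation combine: coherent obstructions to patching vanish by Cartan-Serre Theorem~B, while the Galois-cohomological obstruction to assembling the local splittings into a global one lies in a group of the form $H^3(\Gamma_F, \,\cdot\,)$, which vanishes by Theorem~\ref{thcohodimintro}.

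The principal difficulty is Step~4. The local splittings produced in Step~3 are not a priori compatible, and controlling them at infinity — the exact issue that Theorem~\ref{thcohodimintro} resolves for cohomological dimension — is the main obstacle. I expect the twisted-sheaves/moduli-theoretic refinement of the techniques underlying Theorem~\ref{thcohodimintro} to be what makes the global patching go through, together with an essential use of the fact that $\cM(S)$ has no $H^3$.
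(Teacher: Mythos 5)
Your proposal has two genuine gaps, one of which is an outright error. In Step~1 you claim that, because $\cd(\cM(D))=1$ for each one-dimensional component $D$ of $Z$, the Azumaya algebra extends across the divisorial part of $Z$, so that one may assume $Z$ discrete. This is false: the obstruction to extending an Azumaya algebra across a smooth divisor $D$ is the residue of $\eta$ along $D$, which lives in $H^1(\cM(D),\Z/n)$ (ramification data), not in $H^2$; cohomological dimension $1$ kills $\Br(\cM(D))$ but certainly not $H^1$, since cyclic covers of a noncompact curve abound. Already for $S=\C^2$ the quaternion class $(x,y)\in\Br(\cM(\C^2))$ is ramified along both coordinate axes and does not extend across them. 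Handling exactly this ramification is the heart of the problem: the paper does it not by extending the class but by passing to a degree-$m$ cyclic alteration $\{z^m=\sigma\}$ built from a global section of a line bundle on a resolution (Proposition~\ref{killram}/Corollary~\ref{corkillram}), after which the pulled-back class extends.

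Step~4 is the second gap, and you acknowledge it yourself: there is no obstruction formalism placing the patching of your local splittings in a group $H^3(\Gamma_F,\cdot)$. The local splittings live over the fields $\cM(V_i)$, and the maps $\cM(S)\to\cM(V_i)$ are field embeddings going the wrong way for any Galois-cohomological descent or gluing argument; what is really needed is to glue degree-$n$ analytic coverings of the exhausting opens $V_i$ into one covering of $S$, a Mittag-Leffler-type problem at infinity that Theorem~B and $\cd(\cM(S))=2$ do not address. The paper's proof avoids local-to-global patching entirely: via Corollary~\ref{BrauerC} (the generic comparison theorem) it writes $\Br(\cM(S))[m]$ as $\colim_a H^2(S\setminus\{a=0\},\Z/m)$, kills ramification globally by the degree-$m$ cyclic alteration $p:T\to S$ of Corollary~\ref{corkillram}, and then uses $H^2(T,\cO_T)=0$ to represent the extended class by a holomorphic line bundle, hence to make it vanish off a divisor (Corollary~\ref{corGLefschetz11}); thus $\eta$ dies in the degree-$m$ extension $\cM(T)$, giving $\ind(\eta)\leq\per(\eta)$. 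As written, your argument does not establish the theorem.
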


Our arguments also prove that any class $\eta\in\Br(\cM(S))$ is cyclic (see Remark~\ref{remcyclic}).

Real analogues of de Jong's period-index theorem were investigated in \cite{pi}.  
In particular, it was shown in \cite[Theorem 0.3]{pi} that if $X$ is an integral algebraic surface over $\R$ such that $X(\R)$ is not Zariski-dense in $X$, then $\ind(\eta)=\per(\eta)$ for all classes~${\eta\in\Br(\R(X))}$.
Here is the counterpart of this result in Stein geometry.

\begin{thm}[Theorem \ref{corpi}]
\label{piRintro}
Let $S$ be a normal $G$-equivariant Stein surface with $S/G$ connected and $S^G$ discrete.  For any $\eta\in\Br(\cM(S)^G)$, one has $\ind(\eta)=\per(\eta)$.
\end{thm}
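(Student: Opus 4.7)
Set $F = \cM(S)^G$ and $L = \cM(S)$. Since $G$ acts antiholomorphically and $\C\subseteq\cM(S)$, the extension $L/F$ is quadratic Galois with Galois group $G$, and $L = F(\sqrt{-1})$. The hypothesis that $S^G$ is discrete, combined with Theorem \ref{thcohodimRintro}, yields $\cd(F) = 2$ and that $F$ admits no field orderings; Theorem \ref{piintro} furnishes $\ind = \per$ on $\Br(L)$, and Theorem \ref{thmorderings} provides the Artin--Lang property.

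Decomposing into $p$-primary components reduces the statement to the case $\per(\eta) = p^k$. The corestriction-restriction identity $\cor_{L/F}\circ\res_{L/F} = 2$ makes the restriction $\Br(F)\{p\}\hookrightarrow\Br(L)\{p\}$ injective for $p$ odd. Together with the general bound $\ind_F(\eta)\mid [L:F]\cdot\ind_L(\eta|_L)$ and with $\ind_L(\eta|_L) = \per_L(\eta|_L)$ from Theorem \ref{piintro}, the odd-primary case follows at once.

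The substance is the 2-primary case. Write $\per_F(\eta) = 2^e$ and $\per_L(\eta|_L) = 2^{e'}$, so $e'\leq e$ and $\ind_F(\eta)\mid 2\cdot 2^{e'} = 2^{e'+1}$. When $e = e'+1$ the equality $\ind_F(\eta) = \per_F(\eta) = 2^e$ is automatic. The genuinely difficult subcase is $e=e'$, where one has a central division $L$-algebra $D$ of degree $2^e$ representing $\eta|_L$ and must produce a central simple $F$-algebra of degree $2^e$ representing $\eta$, ruling out the a priori possibility $\ind_F(\eta) = 2^{e+1}$.

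To carry out this descent, I would follow the strategy of the author's \cite{pi} for algebraic real surfaces, now fed with the Stein-analytic inputs. The three critical tools are: (i) no orderings on $F$, so that $-1$ is a sum of squares and the quaternion classes $(-1,f)$ that generate $\ker(\res_{L/F})$ on $\Br(F)[2]$ are controllable; (ii) $\cd(F) = 2$, which ensures that the cohomological obstructions governing the descent (typically living in $H^{\geq 2}$ of Galois modules built from units and Brauer groups) vanish; and (iii) the Artin--Lang theorem \ref{thmorderings}, which lets one check splitting statements over $F$ by checking them at the points of $\Sper(F)$, all of which lie in the closure of the discrete set $S^G$. The main obstacle is precisely the 2-primary descent step -- mirroring that in \cite{pi} -- which must be reworked with meromorphic rather than rational inputs, but is guided by exactly the same structural principles.
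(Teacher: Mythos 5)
Your $p$-primary reduction and the odd-$p$ case are fine, and you correctly isolate the crux: when $\per_L(\eta|_L)=\per_F(\eta)=2^e$, you must rule out $\ind_F(\eta)=2^{e+1}$. But here your plan has a genuine gap. The three tools you invoke do not carry out the descent. That $F$ has no orderings and $\cd(F)=2$ is not, on purely Galois-cohomological grounds, enough to transfer period$\,=\,$index from $L=F(\sqrt{-1})$ down to $F$, and your remark that $\cd(F)=2$ makes obstructions ``in $H^{\geq 2}$'' vanish is incorrect: cohomological dimension $2$ kills $H^{>2}$, not $H^2$, and the relevant obstruction lives precisely in degree $2$. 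Your item (iii) is also off the mark: when $S^G$ is discrete, Theorem~\ref{thcohodimRintro} gives $\Sper(F)=\varnothing$, so there are no orderings to check anything against; the Artin--Lang property enters elsewhere, namely in the identification of $\Br(F)_0$ in Lemma~\ref{BrauerG}.

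What is actually needed is a geometric construction that your sketch does not supply. Reducing to $\per(\eta)=2$, the paper represents $\eta$ by a class $\alpha\in H^2_G(S\setminus\{a=0\},\Z/2)_0$ via the generic comparison theorem and Lemma~\ref{BrauerG}, then explicitly builds a $G$-equivariant degree-$2$ alteration $p:T\to S$ along which $p^*\alpha$ extends to a class $\beta\in H^2_G(T,\Z/2)$ with $\partial(\beta)=0$ in $H^3_G(T,\Z(1))$ (Proposition~\ref{killram2}; the real-locus control needed to force $\partial(\beta)=0$ is the content of Lemma~\ref{reallocusram}). The vanishing of $\partial(\beta)$ lifts $\beta$ to $H^2_G(T,\Z(1))$, which is then killed on a Zariski-open subset by Proposition~\ref{GLefschetz11}. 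This yields a \emph{new} degree-$2$ \'etale extension $\cM(T)^G$ of $F$, distinct from $L$, that splits $\eta$; the general case then follows by induction on $\per(\eta)$. None of this transfers formally from the algebraic setting of \cite{pi}: it is exactly the output of the Stein-analytic machinery of Sections~\ref{secH2}--\ref{seccomp}. For the record, the paper's proof of the corollary you were asked to prove is one line: discreteness of $S^G$ and Theorem~\ref{thcohodimRintro} give that $F$ admits no orderings, hence $\Br(F)_0=\Br(F)$, and Theorem~\ref{pi3} (which contains the hard work above) applies.
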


One cannot remove the hypothesis that $S^G$ be discrete in Theorem~\ref{piRintro}.  Indeed, the sum of quaternion algebras $(-1,-1)+(x,y)\in\Br(\cM(\C^2)^G)$, where~$G$ acts naturally on $\C^2$, has period $2$ and index $4$ (it still has index $4$ in $\Br(\R((x))((y)))$ by \cite[VI, Example 1.11]{Lam}; such examples go back to Albert \cite[Theorem~2]{Albert}).

In the algebraic setting,  the hypothesis that $X(\R)$ is not Zariski-dense was weakened in \cite[Theorem 0.4]{pi} by only requiring that $\eta$ vanishes in restriction to the real points of some Zariski-dense open subset of $X$ (see also \cite[Theorem~4.1]{CTOP} in the local case).  We improve Theorem~\ref{piRintro} by weakening the hypothesis that~$S^G$ be discrete in a similar way.  

If $F$ is a field and $\xi\in\Sper(F)$ is a field ordering, we let $F_{\xi}$ be the associated \textit{real closure} of $F$ (its biggest ordered algebraic extension).  We then define 
\begin{equation}
\label{Brnul}
\Br(F)_0:=\Ker\big[\Br(F)\to \prod_{\xi\in\Sper(F)}\Br(F_{\xi})\big].
\end{equation}

\begin{thm}[Theorem \ref{thpi}]
\label{pi3}
Let $S$ be a normal $G$-equivariant Stein surface with~$S/G$ connected.  If $\eta\in\Br(\cM(S)^G)_0$, then $\ind(\eta)=\per(\eta)$.
\end{thm}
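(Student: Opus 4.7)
The plan is to descend the period-index equality from the complex case $F' := \cM(S) = F(i)$ (Theorem \ref{piintro}) down to $F := \cM(S)^G$, using the Artin-Lang property of Theorem \ref{thmorderings} to absorb the discrepancy between the two Brauer groups. Set $G := \Gal(F'/F)$ and let $\eta \in \Br(F)_0$ have period $n$. The inflation-restriction sequence yields
\[
0 \to H^2(G, F'^*) \to \Br(F) \xrightarrow{\Res} \Br(F')^G,
\]
in which $H^2(G, F'^*) = F^*/\Nm_{F'/F}(F'^*)$ is annihilated by $2$. Accordingly, I would separate $\eta$ into its odd-primary and $2$-primary parts.

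For the odd-primary component $\eta^{\mathrm{odd}}$, the restriction map $\Res$ is injective, so $\per(\eta^{\mathrm{odd}}) = \per(\eta^{\mathrm{odd}}|_{F'}) =: m$, and $\ind(\eta^{\mathrm{odd}}|_{F'}) = m$ by Theorem \ref{piintro}. Viewing a degree-$m$ étale splitting of $\eta^{\mathrm{odd}}|_{F'}$ as a degree-$2m$ étale $F$-algebra shows $\ind(\eta^{\mathrm{odd}}) \mid 2m$; but the prime factors of $\ind(\eta^{\mathrm{odd}})$ are those of $\per(\eta^{\mathrm{odd}}) = m$, all odd, hence $\ind(\eta^{\mathrm{odd}}) \mid m$, and equality follows from $m \mid \ind(\eta^{\mathrm{odd}})$.

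The $2$-primary part $\eta^{(2)}$ is the essential case, and this is where the hypothesis $\eta \in \Br(F)_0$ enters decisively. Via Theorem \ref{thmorderings}, every ordering of $F$ is a specialization of a point of $S^G$ in $\Sper(\cO(S)^G)$, so the triviality of $\eta$ at each real closure $F_\xi$ lifts --- by a compactness argument on the spectral topology of the real spectrum --- to the existence of a $G$-invariant Stein open neighborhood $U \subset S$ of $S^G$ and of a $G$-invariant analytic closed subset $Z \subset U$ with $Z \cap S^G = \varnothing$, such that $\eta^{(2)}$ lies in the image of $\Br(\cO(U \setminus Z)^G) \to \Br(F)$ and restricts to zero there. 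On the complement $V := S \setminus S^G$, a $G$-equivariant Stein surface with $V^G = \varnothing$, Theorem \ref{piRintro} supplies (after decomposing $V/G$ into its connected components and treating each separately) a degree-$\per(\eta^{(2)})$ étale splitting of $\eta^{(2)}|_{\cM(V)^G}$.

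The remaining step is to glue this splitting on $V$ with the trivial splitting on $U \setminus Z$ into a single étale $F$-algebra of degree $\per(\eta^{(2)})$ splitting $\eta^{(2)}$. I expect this to be the main technical obstacle: it amounts to a $G$-equivariant Mayer-Vietoris / descent problem for $\PGL_{\per(\eta^{(2)})}$-torsors along the cover $V \cup (U \setminus Z)$, whose obstruction should be controllable via Cartan's Theorems A and B applied to appropriate $G$-equivariant coherent sheaves on the Stein surface $S$. The detailed form of the Artin-Lang vanishing established in the previous paragraph is precisely what allows one to match the two local splittings, up to a $G$-equivariant isomorphism, on their overlap $(U \setminus Z) \cap V$.
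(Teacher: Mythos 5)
Your reduction of the odd-primary part is fine and is essentially the paper's own odd-period step (restriction--corestriction through the degree $2$ extension $\cM(S)=\cM(S)^G[\sqrt{-1}]$ plus Theorem \ref{thpiC}). The gap is in the $2$-primary case, which is the whole content of the theorem. First, a logical point: Theorem \ref{piRintro} (= Theorem \ref{corpi}) is deduced in the paper from the very statement you are proving, so invoking it is circular; moreover you would apply it to $V=S\setminus S^G$, which is not known to be a Stein surface (no argument is given, and none is obvious). Second, and more fundamentally, even granting both points the strategy cannot produce what is needed. The index of $\eta^{(2)}$ is the minimal degree of a \emph{finite} separable extension of $F=\cM(S)^G$ splitting it, whereas your two ``local'' splittings live over $\cM(V)^G$ and over functions on $U\setminus Z$, i.e. over extensions of $F$ of infinite transcendence degree (and the map $\Br(\cO(U\setminus Z)^G)\to\Br(F)$ you posit does not exist --- the natural maps go the other way, into the larger fields). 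An open analytic cover $\{V,\,U\setminus Z\}$ of $S$ is not a cover of $\Spec(F)$ in any topology, so there is no Mayer--Vietoris/descent mechanism for $\PGL_m$-torsors that glues these data into a degree-$\per(\eta^{(2)})$ \'etale $F$-algebra; and Cartan's Theorems A and B control coherent analytic sheaves, not the gluing of finite coverings or of torsors under a nonabelian group into a finite algebraic object over $\cM(S)^G$. The Artin--Lang property also does not give trivialization of $\eta^{(2)}$ on a neighborhood of $S^G$; what it yields (and what the paper extracts in Lemma \ref{BrauerG}, via Theorem \ref{thorder2}, Lemma \ref{lemgenerization} and Scheiderer's sheaf $\RR^2\rho$) is only that $\eta\in\Br(F)_0[m]$ is represented by a class $\alpha\in H^2_G(S\setminus\{a=0\},\Z/m(1))$ vanishing \emph{pointwise} at real points.

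For comparison, the paper's proof of the period-$2$ building block is global and geometric: it constructs a single degree-$2$ ramified analytic covering $T\to S$ with equation $z^2=\sigma$, where $\sigma$ is chosen (Lemma \ref{reallocusram}, Proposition \ref{killram2}) so that its zero divisor meets every component of the real locus in a controlled way; this both kills the ramification of $\alpha$ and forces the extension $\beta\in H^2_G(T,\Z/2)$ to satisfy $\partial(\beta)=0$, hence to lift to $H^2_G(T,\Z(1))$, where Proposition \ref{GLefschetz11} (Stein-ness of $T$, vanishing of $H^2_G(T,\cO_T)$) kills it on a Zariski-open subset. Then $\eta$ dies in the degree-$2$ extension $\cM(T)^G$, and the general even period is handled by induction, halving the period at each step. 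So the missing idea in your proposal is precisely the construction of a global finite covering of $S$ of the right degree adapted to the real locus; the local-to-global gluing you defer to ``the main technical obstacle'' is not a technical point but the theorem itself, and it is not attackable by coherent-sheaf methods in the way you suggest.
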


A further application of these results to the $u$-invariant is presented in \S\ref{paru}.

\subsection{Serre's conjecture II}

It is a theorem of Steinberg \cite[Theorem 1.9]{Steinberg} (previously Serre's conjecture I) that if $H$ is a connected linear algebraic group over a perfect field $F$ of cohomological dimension $1$, then all $H$-torsors over $F$ are trivial (that is, $H^1(F,H)=0$). Serre's conjecture II asserts that if $H$ is a simply connected semisimple algebraic group over a perfect field of cohomological dimension~$2$, then $H^1(F,H)=0$.  Partial results are known for particular groups (see \cite{BP, Chernousov, Gille}) or particular fields (such as function fields of complex algebraic surfaces, thanks to de Jong, He and Starr \cite[Theorem 1.5]{dJHS}). We refer to the survey~\cite{Gillesurvey} for more information.  We solve Serre's conjecture II for fields of meromorphic functions of possibly $G$-equivariant Stein surfaces.

\begin{thm}[Theorem \ref{thSerre1}]
\label{thmII1}
Let $S$ be a connected normal Stein surface.  If $H$ is a simply connected semisimple algebraic group over $\cM(S)$, then $H^1(\cM(S),H)=0$.
\end{thm}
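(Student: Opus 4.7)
The plan is to follow the Tits classification of absolutely simple simply connected groups, leveraging the two results established earlier: $\cM(S)$ has cohomological dimension $2$ (Theorem \ref{thcohodimintro}) and satisfies $\ind(\eta)=\per(\eta)$ for every Brauer class (Theorem \ref{piintro}). A crucial preliminary observation is that both properties propagate to finite extensions: any finite separable extension $L/\cM(S)$ is of the form $\cM(S')$, where $S'$ is the normalization of $S$ in $L$, and $S'$ is again a connected normal Stein surface by standard analytic finiteness theorems. In particular, Theorems \ref{thcohodimintro} and \ref{piintro} apply to $L$ as well.

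First I would reduce to absolutely simple groups. Any simply connected semisimple group $H$ over $\cM(S)$ decomposes as a product of Weil restrictions $\Res_{L/\cM(S)}(H_0)$ with $H_0$ absolutely simple simply connected over a finite separable extension $L$. Using Shapiro's lemma, $H^1(\cM(S),\Res_{L/\cM(S)}H_0)=H^1(L,H_0)$, which reduces the problem to treating absolutely simple groups over fields that satisfy the same hypotheses.

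Next, for the classical types ($^{1,2}\!A_n$, $B_n$, $C_n$, non-trialitarian $D_n$), I would invoke Bayer--Parimala's theorem \cite{BP}, whose precise hypotheses are satisfied: the base field is perfect of cohomological dimension $2$ and $\per=\ind$ holds on $\Br(L)$ and its quadratic extensions. For $G_2$ and $F_4$, the result follows from cohomological dimension $2$ via the classifications by octonion and Albert algebras and the vanishing of the relevant $H^3$-invariants (Serre). For $E_6$ and $E_7$, one applies the results of Chernousov and Gille \cite{Chernousov, Gille}, which reduce the statement to period-index-type estimates for Tits algebras that are available in our setting.

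The main obstacle is the trialitarian type $^{3,6}\!D_4$ and, above all, $E_8$, where no clean reduction to cohomological invariants is known. Here I would either cite the most recent available progress under cohomological dimension $2$, or fall back on a geometric reduction: spread $H$ out to a smooth reductive group scheme $\cH$ over some quasi-projective Stein open $U\subset S$, approximate $U$ by complex algebraic surfaces, and transfer the vanishing of $H^1$ from the algebraic side (where de Jong--He--Starr \cite{dJHS} applies) to $\cM(S)$. The delicate step in such a transfer is controlling the failure of Stein open subsets to be algebraic, which one can manage using the good cohomological properties of $\cM(S)$ together with a limit argument passing through all finite étale covers; this is where Theorems \ref{thcohodimintro} and \ref{piintro}, applied to \emph{all} finite extensions simultaneously, do their work.
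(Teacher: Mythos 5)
Your outline covers the easy part of the statement but leaves a genuine gap exactly where the real difficulty of Serre's conjecture II lies. The reduction to absolutely simple groups and the classical types are fine (Bayer--Parimala \cite{BP} indeed applies, since $\cM(S)$ is perfect of cohomological dimension $2$ by Theorem \ref{thcohodim}, and these properties do pass to finite extensions because any finite extension of $\cM(S)$ is again $\cM(T)$ for a connected normal Stein surface $T$ by the equivalence (\ref{eqext})). But for trialitarian $D_4$ and $E_8$ — and in fact already for general, non-quasi-split groups of type $E_6$ and $E_7$, where \cite{Chernousov, Gille} do not give the full statement over an arbitrary field of cohomological dimension $2$ with $\per=\ind$ — you have no argument. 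Neither fallback you offer closes the gap: ``citing the most recent available progress'' is not a proof, and the proposed geometric transfer fails at the start, because $\cM(S)$ is not a filtered colimit of function fields of complex algebraic surfaces, a Zariski-open subset of a Stein surface is not approximated by algebraic surfaces in any way compatible with meromorphic functions, and the theorem of de Jong--He--Starr \cite{dJHS} lives over function fields of algebraic surfaces, so there is no mechanism (limit over finite \'etale covers or otherwise) to transport triviality of $H$-torsors from that setting to $H^1(\cM(S),H)$.

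The missing idea is Theorem \ref{thab}: the maximal abelian extension $\cM(S)^{\ab}$ has cohomological dimension $1$. This is precisely the third hypothesis, besides $\cd(\cM(S))=2$ (Theorem \ref{thcohodim}) and $\per=\ind$ (Theorem \ref{thpiC}), that allows the paper to conclude in one line by invoking the axiomatic theorem of Colliot-Th\'el\`ene, Gille and Parimala \cite[Th\'eor\`eme 1.2 (v)]{CTGP}, which covers \emph{all} types, including $E_8$ and trialitarian $D_4$. That extra input is not a formality: it is a substantive result of the paper, proved via the ramification-killing construction of Proposition \ref{killramab} together with Corollary \ref{corGLefschetz11}. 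Without it (or some equivalent device), your case-by-case strategy cannot be completed.
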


\begin{thm}[Theorem \ref{thSerre2}]
\label{thmII2}
Let $S$ be a normal $G$-equivariant Stein surface with~$S/G$ connected and $S^G$ discrete.  If $H$ is a simply connected semisimple algebraic group over~$\cM(S)^G$, then ${H^1(\cM(S)^G,H)=0}$.
\end{thm}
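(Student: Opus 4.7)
The plan is to carry out a quadratic Galois descent from Serre's conjecture II over $\cM(S)$ (Theorem \ref{thmII1}), using crucially the hypothesis that $S^G$ is discrete. Set $F:=\cM(S)^G$ and $K:=\cM(S)$, so that $K/F$ is a quadratic Galois extension with group $G=\Gal(K/F)$. By Theorem \ref{thcohodimRintro}, the field $F$ has cohomological dimension $2$ and admits no field ordering, so $\Sper(F)=\varnothing$. For any class $\alpha\in H^1(F,H)$, Theorem \ref{thmII1} gives $\alpha|_K=0$, so $\alpha$ corresponds to an $F$-form $H_\alpha$ of $H$ which splits over $K$, and one has to show $H_\alpha\simeq H$.

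I would then proceed type by type along the Tits classification of simply connected semisimple $F$-groups. For inner type $A_n$, where $H=\SL_1(D)$, one has $H^1(F,H)=F^*/\mathrm{Nrd}(D^*)$, which vanishes by Merkurjev--Suslin using $\cd(F)=2$ and the period-index equality (Theorem \ref{piRintro}). Outer type $A_n$ is handled by restriction--corestriction along $K/F$, reducing to the inner type $A_n$ case over $K$. For the classical types $B_n,C_n,D_n,{}^2A_n$ I would invoke the Bayer-Fluckiger--Parimala classification of hermitian and quadratic forms over fields of cohomological dimension $2$: the remaining obstructions live in cohomology of the real closures $F_\xi$ for $\xi\in\Sper(F)$ and vanish for free because $\Sper(F)=\varnothing$. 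The groups $G_2$ and $F_4$ are controlled by low-degree invariants in $H^i(F,\mu_2)$, which are handled by Theorems \ref{piRintro} and \ref{thcohodimRintro}; trialitarian forms of $D_4$ require an additional cubic twist but are analogous.

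The main obstacle will be the exceptional types $E_6$, $E_7$, $E_8$, where Serre's conjecture II is unknown over general fields of cohomological dimension $2$. Here I would adapt the strategy used in the algebraic real case (Serre's conjecture II over $\R(X)$ for surfaces $X$ without Zariski-dense real points): combine the rational-curve/de Jong--He--Starr argument underlying the proof of Theorem \ref{thmII1} with the Artin--Lang property (Theorem \ref{thmorderings}) to descend a $K$-trivialization of an $H$-torsor model along the $G$-action. Concretely, spread $\alpha$ out to an $H$-torsor $T$ over a $G$-equivariant Stein model of $F$; Theorem \ref{thmII1} produces a section of $T$ after base change to $K$, and one must promote it to a $G$-invariant, hence $F$-rational, section. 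The delicate step is the $G$-equivariant construction of the rational curves carrying these sections; the discreteness of $S^G$ together with the emptiness of $\Sper(F)$ is precisely what rules out the residual real-place obstructions that would otherwise prevent the descent.
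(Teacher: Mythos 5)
Your reduction to the quadratic extension $K=\cM(S)$ is the right starting point, but the argument has a genuine gap exactly where you flag it: the exceptional types. Serre's conjecture II is open for general fields of cohomological dimension $2$ (notably for $E_8$), and your proposed remedy --- a $G$-equivariant adaptation of the de Jong--He--Starr rational-curve argument over a Stein model, combined with the Artin--Lang property --- is a research program rather than a proof; no such equivariant version exists in the paper or in the literature, and the paper deliberately avoids needing one. The actual proof is much shorter and uniform in the type of $H$: first reduce to $H$ absolutely almost simple by writing $H=\prod_{i}\RR_{F_i/\cM(S)^G}H_i$ \cite[Proposition A.5.14]{CGP} and applying Shapiro's lemma, noting via the equivalence (\ref{eqextG}) that each $F_i$ is again the field of $G$-invariant meromorphic functions of a $G$-equivariant Stein surface of the same kind; then observe, as you do, that any class in $H^1(\cM(S)^G,H)$ dies over the degree $2$ \'etale extension $\cM(S)$ by Theorem \ref{thSerre1}; and conclude with Gille's descent theorem \cite[Corollaire~5.5.3]{Gillebook}, which applies because $\cM(S)^G$ has cohomological dimension $2$ (Theorem \ref{thcohodimR}, using that $S^G$ is discrete) and shows that a class trivialized by this quadratic extension is already trivial. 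This quadratic-descent theorem, valid for all types including $E_6$, $E_7$, $E_8$ and trialitarian $D_4$, is the key ingredient your type-by-type plan is missing.

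A secondary point: your treatment of outer type $A_n$ by ``restriction--corestriction'' is not available, since there is no corestriction for nonabelian $H^1$; the classical types would instead be covered by the Bayer-Fluckiger--Parimala results you invoke. But even granting all classical and low-rank exceptional cases, the proof does not close without the descent theorem (or some substitute) for the remaining exceptional types, so as written the proposal does not establish the statement.
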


These theorems follow from known results on Serre's conjecture II, from Theorems \ref{thcohodimintro}, \ref{thcohodimRintro},~\ref{piintro} and \ref{piRintro}, and from the following theorem of independent interest (whose proof is inspired by the analogous result of Colliot-Th\'el\`ene, Ojanguren and Parimala \cite[Theorem 2.2]{CTOP} in the local case).  

\begin{thm}[Theorem \ref{thab}]
Let $S$ be a connected normal Stein surface. The maximal abelian extension $\cM(S)^{\ab}$ of $\cM(S)$ has cohomological dimension $1$. 
\end{thm}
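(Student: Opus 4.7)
The plan is to adapt the strategy of Colliot-Th\'el\`ene, Ojanguren and Parimala from \cite[Theorem~2.2]{CTOP}. By a standard criterion in Galois cohomology (see \cite[I.3]{CG}), the inequality $\cd(F^{\ab})\leq 1$, where $F=\cM(S)$, is equivalent to the vanishing of $\Br(L)$ for every finite separable extension $L/F^{\ab}$. Any such $L$ has the form $L=L_0\cdot F^{\ab}$ for a finite extension $L_0/F$; normalizing $S$ in~$L_0$ yields a normal Stein surface $\tilde{S}$ with $\cM(\tilde{S})=L_0$. Any class $\eta\in\Br(L)$ comes from $\Br(\cM(S_1))$, where $S_1$ is a Stein surface obtained by further compositing $\tilde{S}$ with a finite abelian cover of~$S$. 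It therefore suffices to show that every Brauer class on $\cM(S_1)$ is split by a finite abelian extension of~$F$ composed with~$\cM(S_1)$.

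The decisive analytic ingredient is that $\Br(S_1)=0$. Indeed, by Andreotti--Fraenkel, a Stein surface has the homotopy type of a real $2$-dimensional CW complex, so $H^k(S_1,\Z)=0$ for~$k\geq 3$. Combining this with Cartan's Theorem~B and the exponential sequence yields $H^2(S_1,\cO_{S_1}^*)=0$, whence $\Br(S_1)=0$. After resolving the (isolated) singularities of~$S_1$, purity for Brauer groups on smooth complex surfaces identifies the kernel of the ramification map $\Br(\cM(S_1))\to\bigoplus_{D} H^1(\kappa(D),\Q/\Z)$ with~$\Br(S_1)=0$. Hence, to kill~$\eta$, it is enough to annihilate its ramification along the finitely many divisors~$D_i\subset S_1$ where it ramifies.

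Since the map $S_1\to S$ is finite, each~$D_i$ projects to a divisor~$D_i'$ of~$S$. Using Cartan's Theorem~A on the Stein space~$S$, one produces functions $\alpha_i\in\cO(S)$ vanishing to prescribed order along the $D_i'$. The associated Kummer extensions $F(\alpha_i^{1/N})/F$ are abelian, hence contained in~$F^{\ab}$, and for suitable choices of the~$\alpha_i$ and a large enough~$N$, their pullbacks to~$\cM(S_1)$ cancel the ramification of~$\eta$ at each $D_i$. After this step, $\eta$ becomes unramified, and thus vanishes, so $\eta$ dies in $\cM(S_1)\cdot F^{\ab}\subset L$, as required. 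The reverse inequality $\cd(F^{\ab})\geq 1$ follows from the fact that $F$ possesses non-abelian finite algebraic extensions, so $F^{\ab}\neq\overline{F}$.

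The main obstacle I anticipate is the ramification matching: given the character $\chi_{D_i}\in H^1(\kappa(D_i),\Q/\Z)$ describing the ramification of~$\eta$ at~$D_i$, one must choose~$\alpha_i$ and~$N$ so that the ramification index of $F(\alpha_i^{1/N})$ along~$D_i$ (after pulling back from~$D_i'$) annihilates~$\chi_{D_i}$, while accounting for the possibly nontrivial ramification of $S_1\to S$ along the~$D_i$. Carrying this out relies on Cartan~A (to produce functions with any desired vanishing profile) and on the fact that each residue field $\kappa(D_i)$ is the meromorphic function field of a Stein curve, hence of cohomological dimension~$1$ (see the discussion preceding Theorem~\ref{thcohodimintro}), so that its cyclic characters admit a tractable description via valuation data lifted from~$S$.
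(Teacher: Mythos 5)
Your overall strategy — reduce to killing Brauer classes over $\cM(S_1)$ by Kummer extensions of $\cM(S)$, following Colliot-Th\'el\`ene--Ojanguren--Parimala — matches the paper's in spirit, but the central step is unjustified. You invoke ``purity for Brauer groups'' to identify the kernel of a ramification map $\Br(\cM(S_1))\to\bigoplus_D H^1(\kappa(D),\Q/\Z)$ with $\Br(S_1)$. In the algebraic setting this is Grothendieck's purity for regular schemes of dimension at most $2$, but here $\cM(S_1)$ is the (huge, non-noetherian) field of meromorphic functions on a Stein surface, and no such statement is available off the shelf. The paper's replacement — and the main technical result of the whole article — is the generic comparison theorem (Theorem~\ref{compC}), which yields $\Br(\cM(S_1))[m]=\colim_a H^2(S_1\setminus\{a=0\},\Z/m)$ (Corollary~\ref{BrauerC}). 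Without invoking it, there is no bridge from the Brauer group of the field to the topology of $S_1$. A further gap of the same nature: a class $\eta\in\Br(\cM(S_1))$ is represented only on some $S_1\setminus\{a=0\}$ and can ramify along the \emph{infinitely many} irreducible components of $\{a=0\}$; the divisor-by-divisor treatment via ``finitely many $D_i$'' cannot be made to work as stated.

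The Kummer construction, which you rightly flag as the main obstacle, also needs more than Cartan~A. Divisors on a Stein surface need not be principal (holomorphic line bundles are classified by $H^2(S,\Z)$, which may be nonzero), so one cannot in general produce $\alpha_i\in\cO(S)$ with a prescribed vanishing profile; the paper instead takes a ratio $f=\sigma_1/\tau_1^N$ of sections of a suitably globally generated line bundle. Moreover, the resulting element must lie in $\cM(S)^*$, not merely in $\cM(S_1)^*$, for the killing extension to be contained in $F^{\ab}\cdot\cM(S_1)$; the paper secures this by taking a Galois norm $\sigma_1=\prod_{\gamma\in\Gamma}\gamma^*\sigma$ and keeping the entire construction $\Gamma$-equivariant (Proposition~\ref{killramab}). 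Finally, a smaller point: once you resolve the singularities of $S_1$, the resulting surface is no longer Stein (it contains compact exceptional curves), so the vanishings $H^2(\cdot,\cO)=0$ and $H^3(\cdot,\Z)=0$ you need do not follow from Cartan~B and Andreotti--Frankel alone; the paper obtains them via Grauert's comparison theorem and Artin vanishing combined with a Leray spectral sequence (Proposition~\ref{GLefschetz11} and Corollary~\ref{corGLefschetz11}).
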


When $S^G$ is not discrete, the field $\cM(S)^G$ only has virtual cohomological dimension $2$ (i.e.\,$\cM(S)^G[\sqrt{-1}]$ has cohomological dimension $2$), and Serre's conjecture II should be replaced with a Hasse principle proposed by Colliot-Th\'el\`ene \cite{CTreal} and Scheiderer \cite{ScheidererHasse} (see \cite[p.\,652]{BP2} for a precise statement).  We do not know how to prove this Hasse principle in general for the fields $\cM(S)^G$.


\subsection{Hilbert's \texorpdfstring{$17$}{17}th problem}
\label{parsos}

E. Artin's positive answer to Hilbert's $17$th problem (\mbox{\cite[Satz 4]{Artin}}, see also \cite[Theorem 9]{LAng}) shows that if $X$ is an integral variety of dimension $n$ over~$\R$ and $h\in\cO(X)$ takes nonnegative values on~$X(\R)$,  then $h$ is a sum of squares in~$\R(X)$. 
A quantitative refinement of Artin's theorem was obtained by Pfister (\cite[Theorem 1]{Pfister}, see also \cite[Theorem 2]{PfisterICM}), who showed that $h$ is then a sum of $2^n$ squares in $\R(X)$.
We prove an analogue of the results of Artin and Pfister for $G$-equivariant holomorphic functions on $G$-equivariant Stein surfaces.

\begin{thm}[Theorem \ref{3squares}]
\label{3squaresintro}
Let $S$ be a normal $G$-equivariant Stein surface. Fix $h\in\cO(S)^G$. The following assertions are equivalent:
\begin{enumerate}[(i)]
\item there exists a closed discrete subset $\Sigma\subset S^G$ such that $h\geq 0$ on $S^G\setminus\Sigma$;
\item $h$ is a sum of squares in $\cM(S)^G$;
\item $h$ is a sum of $3$ squares in $\cM(S)^G$.
\end{enumerate}
\end{thm}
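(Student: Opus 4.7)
The implication (iii)$\Rightarrow$(ii) is immediate. For (ii)$\Rightarrow$(i), I would write $h=\sum_{i=1}^Nf_i^2$ with $f_i\in\cM(S)^G$ and let $D\subset S$ be the union of the polar loci of the $f_i$. Then $D$ is a $G$-invariant closed analytic subset of $S$ of complex dimension at most $1$. Since $G$ acts antiholomorphically, the fixed locus of any $G$-invariant complex curve in $S$ has real dimension at most $1$; hence $D\cap S^G$ is a real-analytic subset of $S^G$ of real dimension $\leq 1$. On the open dense subset $S^G\setminus D$ the identity $h=\sum_if_i^2$ gives $h\geq 0$, and by continuity this extends to the closure of that set in $S^G$, which coincides with $S^G$ minus at most its isolated points. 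These isolated points form the required discrete set~$\Sigma$.

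The main content is (i)$\Rightarrow$(iii). Setting $F:=\cM(S)^G$, the assertion is equivalent to the isotropy over $F$ of the $4$-dimensional quadratic form $q:=\langle 1,1,1,-h\rangle$. The first step is to verify $h\succeq_\xi 0$ for every $\xi\in\Sper(F)$. By the Artin--Lang theorem (Theorem~\ref{thmorderings}), $\xi$ lies in the closure of $S^G$ in $\Sper(\cO(S)^G)$. The open set $\{h\prec 0\}\subset\Sper(\cO(S)^G)$ meets $S^G$ only inside the discrete set $\Sigma$ by (i); since a generic ordering of $F$ must specialize to a positive-dimensional component of $S^G$, every basic open neighborhood of $\xi$ meets $S^G\setminus\Sigma$, forcing $\xi\notin\{h\prec 0\}$. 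Consequently $q$ has signature $+2$ at every real closure $F_\xi$, hence is isotropic over each $F_\xi$.

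The hardest step is descending this local isotropy of $q$ to isotropy over $F$ itself, by a Hasse-type principle for $4$-dimensional quadratic forms. The plan is to combine two arithmetic inputs. First, by applying Theorem~\ref{thcohodimintro} to $F[\sqrt{-1}]\subset\cM(S)$, we obtain $\cd(F[\sqrt{-1}])\leq 2$, whence $I^3F[\sqrt{-1}]=0$ by the Milnor conjecture (Voevodsky) combined with the Arason--Pfister Hauptsatz. Second, the period equals the index on $\Br(F)_0$ by Theorem~\ref{pi3}. The Clifford invariant $c(q)\in\Br(F(\sqrt{-h}))$ vanishes at every real closure of $F(\sqrt{-h})$ (because $q$ is isotropic there), so after a transfer/corestriction argument the relevant obstruction class lies in $\Br(F)_0$ and is represented by a quaternion algebra by Theorem~\ref{pi3}; the vanishing of $I^3F[\sqrt{-1}]$ then forces this quaternion obstruction to split over the relevant quadratic extension, yielding the isotropy of $q$ over~$F$. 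The principal obstacle is this final Hasse-type descent, which requires weaving together the cohomological-dimension input, the period--index control, and the classical Clifford-invariant analysis of $4$-dimensional quadratic forms; the Artin--Lang step also requires a careful analysis of the interaction between the discrete set $\Sigma$ and the spectral topology of $\Sper(\cO(S)^G)$.
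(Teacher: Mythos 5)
Your directions (iii)$\Rightarrow$(ii) and (ii)$\Rightarrow$(i) are essentially the paper's argument (the paper simply takes $\Sigma$ to be the set of singular points of $S$ lying on $S^G$, which is discrete by normality; your isolated-point bookkeeping works because off the singular locus $S^G$ is a $2$-manifold by Lemma~\ref{lemrealpoints}, so the only points missed by the closure of $S^G\setminus D$ are singular, hence isolated, real points). Your Artin--Lang step can also be repaired: since $S$ is Stein there is a nonzero $a\in\cO(S)^G$ vanishing on $\Sigma$, and adding the condition $a^2\succ 0$ to the basic open set $\{-h\succ 0\}$ forces, via Theorem~\ref{thorder2}, a point of $S^G\setminus\Sigma$ with $h<0$, a contradiction; so $h$ is indeed nonnegative at every ordering of $F=\cM(S)^G$. (The paper's proof of (i)$\Rightarrow$(iii) does not even need this step.)

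The genuine gap is the final Hasse-type descent, and it is not merely technical: it cannot be carried out from the inputs you list. The properties you invoke --- $\cd(F[\sqrt{-1}])=2$ (hence $I^3F[\sqrt{-1}]=0$), period equals index on $\Br(F)_0$, and isotropy of $\langle 1,1,1,-h\rangle$ at all real closures --- are all satisfied by the field $\Q_2$ (no orderings, cohomological dimension $2$, period equals index, $I^3\Q_2(\sqrt{-1})=0$), yet $h=-1$ is a sum of squares in $\Q_2$ and is positive at all (nonexistent) real closures while $\langle 1,1,1,1\rangle$ is anisotropic over $\Q_2$, i.e.\ $-1$ is not a sum of $3$ squares there (the level of $\Q_2$ is $4$). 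So no transfer/corestriction manipulation of Clifford invariants together with period--index and $I^3=0$ can split the quaternion algebra $(-1,-1)$ over $F(\sqrt{-h})$: a genuinely geometric input specific to meromorphic function fields is required. This is exactly what the paper supplies: it identifies $F(\sqrt{-h})$ with $\cM(T)^G$ for the $G$-equivariant normal covering $p:T\to S$ given by (\ref{eqextG}), observes that hypothesis (i) forces $T^G$ into a nowhere dense analytic set, hence $\wT^G=\varnothing$ on a $G$-resolution $\wT$, and then kills the class $\{-1\}^2$ in $\Br(\cM(T)^G)$ by combining the comparison theorem (Theorem~\ref{compG}), the vanishing $H^3_G(\wT,\Z(1))=0$ in the absence of real points, and Proposition~\ref{GLefschetz11}, which makes integral degree-$2$ classes die on Zariski-open subsets. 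Theorem~\ref{pi3} plays no role in this proof and cannot substitute for that geometric vanishing; without an ingredient of this kind (equivalently, without proving that $-1$ becomes a sum of two squares in $\cM(T)^G$ when $T^G$ is discrete), your descent step fails.
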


One deduces the following consequence in the more classical real-analytic setting.

\begin{thm}[Theorem \ref{corsquares}]
\label{3squares2}
Let $M$ be a normal real-analytic variety of pure dimension~$2$ and fix ${h\in\cO(M)}$. The following assertions are equivalent:
\begin{enumerate}[(i)]
\item $h\geq 0$ on $M$;
\item $h$ is a sum of squares in $\cM(M)$;
\item $h$ is a sum of $3$ squares in $\cM(M)$.
\end{enumerate}
\end{thm}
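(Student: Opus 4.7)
The strategy is to deduce this from Theorem \ref{3squaresintro} by complexifying $M$. I would first construct a normal $G$-equivariant Stein surface $S$ such that $S^G=M$ and such that the given $h\in\cO(M)$ extends to some $\tilde h\in\cO(S)^G$. Existence of such an $S$ (for a smooth $M$) is Grauert's theorem on Stein tubular neighborhoods of real-analytic manifolds; in the normal singular case, one takes a Stein neighborhood of $M$ in a local complexification and passes to its normalization, which does not affect the fixed locus because $M$ is already normal. Shrinking $S$ if necessary allows $h$ to extend to $\tilde h \in \cO(S)^G$ (the extension is automatically $G$-invariant because $h$ is real-valued). Moreover, since $M=S^G$ is totally real in $S$ of maximal real dimension, analytic continuation from $M$ shows that the restriction map $\cM(S)^G\hookrightarrow\cM(M)$ is injective and a ring homomorphism.

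The implications $\text{(iii)}\Rightarrow\text{(ii)}$ and $\text{(ii)}\Rightarrow\text{(i)}$ are elementary: an identity $h=\sum_i(a_i/b)^2$ with $a_i,b\in\cO(M)$, $b\not\equiv 0$, forces $h\geq 0$ wherever $b\neq 0$, and hence on all of $M$ by continuity of $h$ and density of the open set $\{b\neq 0\}\subset M$. For the substantive implication $\text{(i)}\Rightarrow\text{(iii)}$, apply Theorem \ref{3squaresintro} to $\tilde h\in\cO(S)^G$: assertion (i) of that theorem holds with $\Sigma=\varnothing$, since $\tilde h|_{S^G}=h\geq 0$ on all of $S^G=M$. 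The theorem yields an expression $\tilde h=f_1^2+f_2^2+f_3^2$ in $\cM(S)^G$, and pushing this forward along the embedding $\cM(S)^G\hookrightarrow\cM(M)$ delivers the desired expression of $h$ as a sum of three squares in $\cM(M)$.

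The main obstacle is the complexification step itself: producing a normal $G$-equivariant Stein surface $S$ with $S^G=M$ to which the prescribed $h$ lifts and whose $G$-invariant meromorphic function field embeds compatibly into $\cM(M)$. The smooth case is standard; the normal singular case should follow by a routine normalization argument applied to Stein neighborhoods in a local complexification, but one must verify that the $G$-action lifts to the normalization and that its fixed locus still agrees with $M$. Once this geometric input is in place, the remainder of the proof is purely formal.
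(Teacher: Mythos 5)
Your strategy is the same as the paper's: complexify $M$ to a normal $G$-equivariant Stein surface $S$ with $S^G\simeq M$, extend $h$ to a $G$-invariant holomorphic function after shrinking, apply Theorem \ref{3squares} (with $\Sigma=\varnothing$) and restrict the resulting three-squares identity along $\cM(S)^G\hookrightarrow\cM(M)$; the elementary implications and the well-definedness/injectivity of the restriction map are handled as you indicate.

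The step you flag as the main obstacle is precisely what the paper takes from the literature rather than rebuilding, and your sketch of it omits the one non-formal ingredient: \emph{coherence}. A singular real-analytic variety need not be coherent, and without coherence the local complexifications do not glue to a complex space having $M$ as its locus of real points (Cartan's umbrella is the standard failure), so ``take a Stein neighborhood of $M$ in a local complexification'' is not yet meaningful globally. This is where ``normal of pure dimension $2$'' enters: such an $M$ is coherent (\cite[IV, Proposition 3.8]{GMT}), and a coherent real-analytic space admits a Stein complexification with $M\isoto S^G$ and a basis of invariant Stein neighborhoods (\cite[III, Theorems 3.6 and 3.10]{GMT}); the extension of $h$ after shrinking is \cite[III, Proposition 1.8]{GMT}. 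Your normalization detour can then be avoided, and your unproved claim that it ``does not affect the fixed locus'' is justified by the same observation that makes it unnecessary: $\cO_{S,x}\simeq\cO_{M,x}\otimes_{\R}\C$ is an \'etale extension of the normal ring $\cO_{M,x}$, so the complexification is already normal at every point of $M$ and one may simply shrink to the normal locus. Finally, the extension of $h$ is not ``automatically $G$-invariant because $h$ is real-valued'': two holomorphic functions agreeing on the totally real set $S^G$ need an identity-principle argument (after arranging that every component of $S$ meets $S^G$ in a two-dimensional set); the paper sidesteps this by symmetrizing, replacing $\tilde h$ with $z\mapsto(\tilde h(z)+\overline{\tilde h(\sigma(z))})/2$, which you should do as well.
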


The qualitative statement Theorem \ref{3squares2} (i)$\Leftrightarrow$(ii) was already known \cite[Theorem 1]{ADR}. Its quantitative refinement (Theorem~\ref{3squares2}~(i)$\Leftrightarrow$(iii)) was known if $M$ is a manifold \cite[Corollary 2]{Jaworski1},  or with the weaker bound $5$ on the number of squares (see \cite[Theorem 1.2]{ABFR1} or \cite{Fernando}). Bounding the required number of squares by $3$, as we do,  is optimal in general (see \cite[Corollary 2]{Jaworski1}).  In higher dimensions,  even the qualitative statement is not known to hold (unless $M$ is compact,  see~ \cite[Theorem 1]{Ruiz}, \cite[Theorem 1]{Jaworski2} and \cite[Theorem~0.1]{Stein}).

Theorem \ref{3squaresintro} is entirely new.  Its qualitative part (Theorem \ref{3squaresintro} (i)$\Leftrightarrow$(ii)) is a direct application of Theorem \ref{thmorderings}, in view of the relation between field orderings and sums of squares discovered by Artin \cite[Satz 1]{Artin}.

\subsection{A generic comparison theorem over Stein surfaces}

Let $S$ be a connected normal Stein surface.  The main difficulty in proving the theorems stated in~\S\S\ref{cohodim}--\ref{parsos} is to find a way to compute or control the Galois cohomology of~$\cM(S)$.  Our strategy is to relate it to the singular cohomology of (Zariski-open subsets of)~$S$. 
More generally, we hope to compare the \'etale cohomology of an~$\cO(S)$\nobreakdash-scheme~$X$ of finite presentation, and the singular cohomology of its analytification~$X^{\an}$ (in the sense of Bingener~\cite{Bingener}, see~\S\ref{paranal}).
In a perfect world, the comparison morphisms
$$H^k_{\et}(X,\LL)\to H^k(X^{\an},\LL^{\an})$$
would be isomorphisms for all constructible \'etale sheaves $\LL$ on $X$ and all $k\geq 0$. Unfortunately, this is false in general (see Remark \ref{remcomp} (iv)). Our way out is to prove the following weaker \textit{generic} comparison theorem.

\begin{thm}[Theorem \ref{compC}]
\label{compintro}
Let $S$ be a reduced Stein space of dimension $\leq 2$.  
Let $\LL$ be a constructible \'etale sheaf on an $\cO(S)$-scheme of finite presentation $X$.  If one lets~$a\in\cO(S)$ run over all non\-zero\-di\-vi\-sors, the comparison morphisms
\begin{equation}
\label{geniso}
\underset{a}{\colim }\,H^k_{\et}(X_{\cO(S)[\frac{1}{a}]},\LL)\to \underset{a}{\colim}\, H^k((X_{\cO(S)[\frac{1}{a}]})^{\an},\LL^{\an})
\end{equation}
are isomorphisms for all $k\geq 0$.
\end{thm}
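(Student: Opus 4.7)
My plan is to combine a standard devissage on the constructible sheaf~$\LL$ with a spreading-out argument reducing the algebraic side of (\ref{geniso}) to the classical Artin comparison theorem for finite-type $\C$-schemes, and then to match this with the analytic side. The principal difficulty lies in the final matching step.

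Since $\LL$ is constructible, I would first stratify $X$ by finitely many locally closed subschemes of finite presentation over $\cO(S)$ on which $\LL$ is locally constant with finite stalks. The excision triangles $j_!j^*\to\Id\to i_*i^*\to$, together with the fact that the arrow in (\ref{geniso}) defines a morphism of $\delta$-functors, reduce the problem to the case where $\LL$ is locally constant on an affine integral $X$. A Leray spectral sequence along a finite étale cover trivialising $\LL$ reduces further to the case $\LL=\Z/n$.

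Finite presentation of $X$ over $\cO(S)$ next yields a finitely generated $\C$-subalgebra $A_0\subset\cO(S)$ and a scheme $X_0$ of finite presentation over $A_0$ with $X=X_0\times_{A_0}\cO(S)$; enlarging $A_0$, I may assume any given nonzerodivisor $a$ lies in $A_0$. Letting $A$ range over the filtered poset of finitely generated $A_0$-subalgebras of $\cO(S)$, one has $\cO(S)=\colim_A A$, and continuity of \'etale cohomology under filtered colimits of rings combined with Artin's classical comparison theorem applied to each finite-type $\C$-scheme $X_0\times_{A_0}A[\frac{1}{a}]$ yields
\[
H^k_{\et}\bigl(X_{\cO(S)[\frac{1}{a}]},\Z/n\bigr)=\colim_A H^k\bigl((X_0\times_{A_0}A[\tfrac{1}{a}])^{\an},\Z/n\bigr).
\]

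It remains to match this double colimit (over $a$ and $A$) with the right-hand side of (\ref{geniso}), namely $\colim_a H^k((X_{\cO(S)[\frac{1}{a}]})^{\an},\Z/n)$. Each inclusion $A\hookrightarrow\cO(S)$ induces a holomorphic map $\pi_A\colon S\to (\Spec A)^{\an}$, and the functoriality of Bingener's analytification identifies $(X_{\cO(S)[\frac{1}{a}]})^{\an}$ with the analytic pullback of $(X_0\times_{A_0}A[\frac{1}{a}])^{\an}$ along $\pi_A$; pullback supplies the needed comparison maps. The hard part is to show that the induced morphism on colimits is an isomorphism. The crucial inputs I would use are: the constraint $\dim S\leq 2$, which restricts the topology of the fibres of $\pi_A$; a cohomological descent along a Stein open cover of $S$; and a construction showing that any analytic cohomology class on $(X_{\cO(S)[\frac{1}{a}]})^{\an}$ already comes, after enlarging both $a$ and $A$, from a class on $(X_0\times_{A_0}A[\frac{1}{a}])^{\an}$. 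The colimit over $a$ is essential for absorbing the contribution of ``points at infinity'' of $\Spec A$ outside the image of $\pi_A$, which prevents any finite-level comparison from being an isomorphism (cf.\ Remark~\ref{remcomp}(iv)).
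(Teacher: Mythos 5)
Your reductions are fine as far as they go: the d\'evissage to $\LL=\Z/n$, the spreading-out $\cO(S)=\colim_A A$ over finitely generated $\C$-subalgebras, continuity of \'etale cohomology, and Artin's comparison theorem at each finite level do give the identification
$H^k_{\et}(X_{\cO(S)[\frac{1}{a}]},\Z/n)=\colim_A H^k((X_0\times_{A_0}A[\frac{1}{a}])^{\an},\Z/n)$.
But this only reformulates the statement; it does not prove it. The entire content of the theorem is the step you label ``the hard part'': showing that the map from this double colimit to $\colim_a H^k((X_{\cO(S)[\frac{1}{a}]})^{\an},\Z/n)$, induced by pullback along the maps $\pi_A:S\to(\Spec A)^{\an}$, is an isomorphism. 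The ingredients you name do not plausibly supply this. The maps $\pi_A$ are in no sense topologically close to isomorphisms, and the hypothesis $\dim S\leq 2$ does not act through ``the topology of the fibres of $\pi_A$''; nor is the problem local on $S$, so cohomological descent along a Stein cover does not engage with the colimits over global nonzerodivisors. Concretely, surjectivity in degree $1$ already requires showing that a topological covering of $X^{\an}$ is, after inverting a nonzerodivisor, the analytification of a finite \'etale cover (this is Theorem \ref{thmconstr}, which rests on the Grauert--Remmert theorem and the descent lemmas for analytic coverings), and injectivity in degree $0$ requires the algebraization of closed and open subsets (Theorem \ref{thmclopen}, Corollary \ref{compH0}); neither is addressed by your outline.

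Degree $2$ is where the dimension hypothesis really enters, and it is the part your ``construction showing that any analytic class already comes from a finite-type model'' would have to replace: in the paper this is done not by producing an algebraic model of the class directly, but by killing the class generically after a controlled degree-$m$ alteration. One first extracts an $m$-th root of a section of a suitable line bundle to remove ramification along the boundary divisor (Proposition \ref{killram}, Corollary \ref{corkillram}), and then uses that $H^2(T,\cO_T)=0$ on a resolution $T$ of a Stein surface, so via the exponential sequence the extended class is the Chern class of a line bundle and dies on a Zariski-open subset (Proposition \ref{GLefschetz11}, Corollary \ref{corGLefschetz11}); degrees $\geq 3$ are handled by Hamm's theorem that Stein spaces of dimension $\leq 2$ have the homotopy type of $2$-dimensional CW complexes. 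These mechanisms (packaged in Lemma \ref{lembilanvan2} and fed into the Leray spectral sequence of $\varepsilon:S_{\cl}\to\Spec(\cO(S))_{\et}$) are what make the colimit over $a$ an isomorphism, and Remark \ref{remcomp}~(iv) shows no finite-level statement can hold. As written, your proposal stops exactly where the proof has to begin, so it has a genuine gap at its decisive step.
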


The left-hand side of (\ref{compintro}) is equal to $H^k_{\et}(X_{\cM(S)},\LL)$ (see  \cite[Lemma~\href{https://stacks.math.columbia.edu/tag/03Q6}{03Q6}]{SP}), and hence computes the Galois cohomology of $\cM(S)$ when $X=\Spec(\cO(S))$.

Theorem \ref{compintro} and its $G$-equivariant extension Theorem \ref{compG} allow us to reduce our main results to problems pertaining to the singular cohomology of Stein surfaces. These are solved using that Stein surfaces have the homotopy type of $2$\nobreakdash-dimensional CW complexes, and that their higher coherent cohomology groups~vanish.

When $S$ is a point, Theorem \ref{compintro} amounts to M. Artin's comparison theorem between the \'etale cohomology of a complex algebraic variety and the singular cohomology of its analytification \cite[XVI, Th\'eor\`eme 4.1]{SGA43}. 

The first extension of Artin's theorem to Stein geometry appeared in \cite[Theorem 0.5]{Stein}. The comparison theorem of \loccit was restricted to algebraic varieties over Stein compacta and therefore avoided complications related to the singularities at infinity of holomorphic functions on~$S$, as well as most difficulties related to the non\-noetherian\-ness of $\cO(S)$.  It was however valid in all dimensions, and did not require to invert the non\-zero\-di\-vi\-sors of~$\cO(S)$. 

In contrast, the map~(\ref{geniso}) would fail to be an isomorphism in general if one did not take the colimit over all non\-zero\-di\-vi\-sors (see Remark \ref{remcomp} (iv)).
It also fails to be an isomorphism in general if $S$ is nonreduced, or if $X$ is only of finite type over~$\cO(S)$, or if $\LL$ is not constructible (see Remarks \ref{remcomp} (i), (ii) and (iii)). 
We do not know if Theorem~\ref{compintro} remains valid with no restriction on the dimension of $S$.

\subsection{Proof of the comparison theorem}
\label{parproof}

Using the relative comparison theorem \cite[Theorem 3.7]{Stein} and standard d\'evissage arguments, one reduces the proof of Theorem \ref{compintro} to the case where $X=\Spec(\cO(S))$ and $\LL=\Z/m$ for some $m\geq 1$.  In this crucial case, the morphism (\ref{geniso}) takes the particularly simple form
\begin{equation}
\label{genisosimple}
\underset{a}{\colim }\, H^k_{\et}(\Spec(\cO(S)[\tfrac{1}{a}]),\Z/m)\to\underset{a}{\colim }\,H^k(S\setminus\{a=0\},\Z/m).
\end{equation}
Inspired by the proof of Artin's comparison theorem in the algebraic case,  we introduce the change of topology morphism $\varepsilon:S_{\cl}\to (\Spec(\cO(S)))_{\et}$ (see~\S\ref{paranal} for more details).
One would ideally want to prove that $\Z/m\isoto\varepsilon_*\Z/m$ and that~${\RR^s\varepsilon_*\Z/m=0}$ for~$s>0$. 
If that were true, we would deduce from the Leray spectral sequence for $\varepsilon$ that (\ref{genisosimple}) is an isomorphism (even without taking the colimit on $a$). Unfortunately, this is not quite true (which explains why our comparison theorem only holds generically).  We prove weaker versions of these assertions that suffice to deduce the validity of Theorem \ref{compintro}.

The morphism of sheaves $\Z/m\to\varepsilon_*\Z/m$ and the sheaf $\RR^1\varepsilon_*\Z/m$ are investigated in Sections~\ref{secH0} and \ref{secH1} respectively.  
This amounts to studying closed and open subsets (\resp cyclic finite \'etale covers) of \'etale $\cO(S)$-schemes.
In doing so, we impose as few restrictions as possible on the base Stein space $S$ (see Theorem~\ref{thmclopen}, Proposition~\ref{morphcov} and Theorem~\ref{thmconstr}). In particular, the dimension of $S$ may be arbitrary. In addition, in Section~\ref{secH1}, we consider arbitrary finite \'etale covers that may not be cyclic. These results are complemented by examples showing their optimality (see Remarks \ref{remH0} and \ref{remH1}).
This part of our work is in the spirit of (and relies on) classical algebraic results concerning rings of meromorphic or holomorphic functions on Stein spaces, such as Iss'sa's \cite{Isssa} and Forster's \cite{Forster}.

To control the sheaf $\RR^2\varepsilon_*\Z/m$, we study degree $2$ singular cohomology classes on
Stein surfaces in Section \ref{secH2} (see notably Propositions \ref{killram} and \ref{GLefschetz11}). 
In contrast with the results of Sections \ref{secH0} and \ref{secH1},  we rely in a crucial way on~$S$ being of dimension~$\leq~2$.
In addition, for later use in the proofs of Theorems \ref{piRintro} and \ref{pi3}, it is essential 
that we work $G$-equivariantly there.

As for the sheaves $\RR^s\varepsilon_*\Z/m$ for $s\geq 3$,  
it is easy to show that they vanish on the nose when $S$ has dimension $\leq 2$, using that Stein spaces of dimension~$\leq 2$ have the homotopy type of CW complexes of dimension $\leq 2$.

\subsection{Structure of the article}

Generalities on Stein spaces and the analytification functor are gathered in Section \ref{secanal}.
As already indicated in \S\ref{parproof}, Sections~\ref{secH0},~\ref{secH1} and~\ref{secH2} contain the results that we need to control cohomology classes of respective degrees~$0$,~$1$ and~$2$ in the proof of the generic comparison theorem (Theorem~\ref{compintro}).  They are combined in Section \ref{seccomp} to prove Theorem \ref{compintro} (and its $G$-equivariant companion Theorem \ref{compG}).  

We deduce our theorems on the cohomological dimension and on the orderings of fields of (possibly $G$-invariant) meromorphic functions in Section \ref{seccohodim}.  Applications to the period-index problem, to Serre's conjecture II, and to Hilbert's $17$th problem appear in Sections \ref{secpi}, \ref{secSerre} and \ref{secH17} respectively.

\subsection{Acknowledgements}

I thank James Hotchkiss for an interesting email correspondence, and an anonymous referee for their useful comments.

\section{Stein spaces and algebraic varieties over them}
\label{secanal}

\subsection{Generalities on complex spaces}

Complex spaces (in the sense of \cite[1, \S 1.5]{GRCoherent}) are assumed to be second-countable, but may not be Hausdorff, reduced, or finite-dimensional. A complex space $S$ is said to be \textit{Stein} if~${H^k(S,\cF)=0}$ for all coherent sheaves $\cF$ on $S$ and all $k>0$ (see \cite{GRStein}). Stein spaces are Hausdorff.

We set $G:=\Gal(\C/\R)\simeq \Z/2$ and we let $\sigma\in G$ denote the complex conjugation.  As in \cite[Appendix A]{Tight}, we define a $G$\nobreakdash\textit{-equivariant complex space} to be a complex space endowed (as a $\C$-ringed space) with an action of $G$ such that the complex conjugation acts $\C$\nobreakdash-antilinearly on the structure sheaf. It is said to be \textit{Stein} (\resp reduced, normal, nonsingular...) if so is the underlying complex space.

If $S$ is a complex space, we let $S^{\sigma}$ denote the \textit{complex conjugate} of $S$. It is equal to $S$ as a ringed space, but its structural morphism $\mu:\C\to\cO_S$ is replaced with~$\mu\circ \sigma$. 
The disjoint union $S\sqcup S^{\sigma}$ then has a natural structure of $G$-equivariant complex space,  obtained by letting $\sigma$ exchange the two factors.

\begin{lem}
\label{lemrealpoints}
(i)
If $S$ is a $G$-equivariant complex manifold of dimension $n$, then~$S^G$ is a $\ci$ manifold of dimension $n$.

(ii)
Let $S$ be a $G$-equivariant complex space. If $S^G$ is included in a nowhere dense complex subspace of $S$, then $S^G$ only contains singular points of~$S$.
\end{lem}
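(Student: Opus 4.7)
The plan for (i) is to linearize the antiholomorphic involution $\sigma$ in a neighborhood of each fixed point $s\in S^G$. Starting from any holomorphic chart centered at $s$, the differential $d\sigma_s$ is a $\C$-antilinear involution on $T_sS\simeq \C^n$; by a $\C$-linear change of coordinates I can arrange $d\sigma_s$ to be the standard complex conjugation $z\mapsto \overline{z}$. Then $z\mapsto\overline{\sigma(z)}$ is holomorphic near $0$ with differential equal to the identity, so the averaged coordinate $w(z):=\tfrac{1}{2}\bigl(z+\overline{\sigma(z)}\bigr)$ is a local biholomorphism, and a direct computation shows $w(\sigma(z))=\overline{w(z)}$. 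In these coordinates $\sigma$ acts by complex conjugation on $\C^n$, so locally $S^G=\R^n$, which proves that $S^G$ is a $\ci$ manifold of real dimension $n$.

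For (ii), my approach is to argue by contradiction. If some $s\in S^G$ were a smooth point of $S$, then, setting $n=\dim_\C S$, part (i) would exhibit $S^G$ near $s$ as a totally real $\ci$ submanifold of real dimension $n$ inside a polydisk neighborhood $U$ of $s$ biholomorphic to an open subset of $\C^n$, with $S^G\cap U$ identified with $\R^n\cap U$. After shrinking $U$, the hypothesis that $S^G$ lies in a nowhere dense complex subspace $Z$ provides a nonzero holomorphic function $f\in\cO(U)$ whose vanishing locus contains $Z_{\red}\cap U$, and hence contains $S^G\cap U$. Then $f$ vanishes identically on $\R^n\cap U$; expanding $f$ as a convergent power series at $s$ and matching Taylor coefficients forces $f\equiv 0$ on $U$, a contradiction.

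The main subtlety I foresee is in the linearization step of (i), where care with conventions is required to verify that $z\mapsto\overline{\sigma(z)}$ is genuinely holomorphic (as a composition of two antiholomorphic maps in the chosen identification of $T_sS$) and that the functional equation between $w$ and $\sigma$ really holds after averaging. Once this local normal form is in place, both the dimension count in (i) and the identity-principle argument in (ii) are routine.
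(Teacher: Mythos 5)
Your proof is correct and takes essentially the same route as the paper: both reduce (i) to producing $G$-invariant local coordinates at a fixed smooth point (the paper asserts their existence; you supply the explicit averaging $w(z)=\tfrac12\bigl(z+\overline{\sigma(z)}\bigr)$, which indeed gives $w\in(\cO_{S,s})^G$ with invertible differential), and both prove (ii) by the identity principle for a holomorphic function vanishing on $\R^n$, the paper stating it directly and you phrasing it as a contradiction.
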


\begin{proof}
Let $s\in S^G$ be nonsingular.  Using a $G$\nobreakdash-in\-vari\-ant local system of coordinates ${z_1,\dots, z_n\in(\cO_{S,s})^G}$, we can $G$\nobreakdash-equivariantly identify some open neighborhood $\Omega$ of $s$ in $S$ with an open neighborhood of the origin in $\C^n$. This proves~(i).

To prove (ii),  note that if moreover $a\in\cO(\Omega)$ vanishes on $\Omega^G=\Omega\cap\R^n$, then the coefficients of the expansion of~$a$ in powers of~$z_1,\dots, z_n$ vanish, so $a=0$ in~$\cO_{S,s}$.
\end{proof}

A proper holomorphic map $p:T\to S$ between reduced complex spaces is a \textit{modification} (\resp an \textit{alteration}) if there exists a nowhere dense closed analytic subset $\Sigma\subset S$ such that $p^{-1}(\Sigma)$ is nowhere dense in $T$ and ${p|_{p^{-1}(S\setminus \Sigma)}:p^{-1}(S\setminus \Sigma)\to S\setminus \Sigma}$ is an isomorphism (\resp a local biholomorphism). A finite alteration is called an \textit{analytic covering} (unlike in \cite[7, \S 2.1]{GRCoherent},  we do not insist that~$p$ be surjective).  An alteration is said to be \textit{of degree} $d$ (\resp \textit{of bounded degree}) if the fibers of~$p|_{p^{-1}(S\setminus \Sigma)}$ have cardinality $d$ (\resp bounded cardinality).

We let $\cM(S)$ denote the ring of meromorphic functions on a complex space $S$. If $p:T\to S$ is an analytic covering of degree $1$ between reduced complex spaces (for instance the normalization map), then $\cM(S)\isoto\cM(T)$ (see \cite[8, \S 1.3]{GRCoherent}). If~$S$ is Stein and reduced, then $\cM(S)$ is the total ring of fractions of $\cO(S)$ (use the coherence of sheaves of denominators \cite[6, \S 3.2]{GRCoherent}).
If $S$ is a reduced Stein space with finitely many irreducible components, there is an equivalence of categories  
\begin{equation}
\label{eqext}
 \left\{  \begin{array}{l}
\textrm{analytic coverings }
T\to S\\\textrm{\hspace{2.2em}with $T$ normal}
  \end{array}\right\}\to
 \left\{  \begin{array}{l}
    \textrm{finite \'etale }\cM(S)\textrm{-algebras}
  \end{array}\right\}
\end{equation}
sending a degree $d$ analytic covering $T\to S$ with $T$ normal to the degree $d$ finite \'etale $\cM(S)$\nobreakdash-al\-ge\-bras $\cM(T)$ (see \cite[\S 1.5]{Stein}).
Similarly, it is shown in \cite[Proposi\-tion~5.4]{Stein} that if $S$ is a reduced $G$-equivariant Stein space with finitely many irreducible components, there is an equivalence of categories
\begin{equation}
\label{eqextG}
 \left\{  \begin{array}{l}
G\textrm {-equivariant analytic coverings}
\\ \hspace{2.2em}T\to S\textrm{ with $T$ normal}
  \end{array}\right\}\to
 \left\{  \begin{array}{l}
    \textrm{finite \'etale }\cM(S)^G\textrm{-algebras}
  \end{array}\right\}.
\end{equation}

\subsection{The analytification functor}
\label{paranal}

Let $S$ be a Stein space.  Beware that the ring~$\cO(S)$ of holomorphic functions on $S$ is not noetherian in general.
If~$X$ is an~$\cO(S)$\nobreakdash-scheme locally of finite presentation,  Bingener \cite[Satz 1.1]{Bingener} has defined the \textit{analytification} $X^{\an}$ of $X$ to be the complex space over $S$  endowed with a morphism~$i_X:X^{\an}\to X$ of locally ringed spaces such that 
\begin{equation}
\label{defan}
\Hom_S(S',X^{\an})\xrightarrow{i_X\circ\, -}\Hom_{\cO(S)-\textrm{locally ringed spaces}}(S',X)
\end{equation}
is bijective for all complex spaces $S'$ over $S$.
Then $X\mapsto X^{\an}$ is a functor \mbox{\cite[p.\,2]{Bingener}} respecting fiber products \cite[p.\,3]{Bingener} and compatible with change of the base Stein space \cite[(1.2)]{Bingener}.  It is also compatible with restriction of scalars as we now show.

\begin{lem}
\label{lemBing}
Let $S'\to S$ be a holomorphic map between Stein spaces.  Let~$X'$ be an $\cO(S')$-scheme locally of finite presentation.  Let $X$ be the scheme $X'$ viewed as an $\cO(S)$-scheme.  Assume that the $\cO(S)$-scheme $X$ is locally of finite presentation. Then the analytifications $X^{\an}$ and $(X')^{\an}$ are canonically isomorphic.
\end{lem}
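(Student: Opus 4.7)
The plan is to verify that $(X')^{\an}$, viewed as a complex space over $S$ via the composition $(X')^{\an} \to S' \to S$, satisfies the defining universal property (\ref{defan}) of the analytification $X^{\an}$. Once this is established, a canonical isomorphism $X^{\an} \simeq (X')^{\an}$ follows by Yoneda, together with the requirement that the morphism $(X')^{\an} \to X = X'$ play the role of $i_X$.

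Concretely, for every complex space $T$ over $S$, I would construct a natural bijection
\[
\Hom_S(T, (X')^{\an}) \;\longleftrightarrow\; \Hom_{\cO(S)\textup{-locally ringed spaces}}(T, X),
\]
compatible with composition by $i_{X'}: (X')^{\an} \to X' = X$. The forward direction is immediate: a morphism $T \to (X')^{\an}$ over $S$ automatically descends (by composing with the structural morphism $(X')^{\an} \to S'$) to a morphism over $S'$, and (\ref{defan}) applied to $(X')^{\an}$ over $\cO(S')$ converts it into a morphism of $\cO(S')$-locally ringed spaces $T \to X'$, which \emph{a fortiori} is a morphism of $\cO(S)$-locally ringed spaces to $X$.

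The heart of the argument is the reverse direction. Given $\phi: T \to X$ of $\cO(S)$-locally ringed spaces, taking global sections of the composite $\cO(S') \to \cO_X \xrightarrow{\phi^\sharp} \phi_*\cO_T$ yields a $\C$-algebra map $\cO(S') \to \cO_T(T)$ that refines the given $\cO(S)$-algebra structure on $\cO_T(T)$. At this point I would invoke the classical Stein universal property: for the Stein space $S'$, holomorphic maps from any complex space $T$ to $S'$ correspond bijectively to $\C$-algebra maps $\cO(S') \to \cO(T)$ (see \cite{GRStein}). This produces a unique $\psi: T \to S'$ over $S$, turning $T$ into a complex space over $S'$ and upgrading $\phi$ into a morphism of $\cO(S')$-locally ringed spaces $T \to X'$; the universal property of $(X')^{\an}$ then yields the desired morphism $T \to (X')^{\an}$ over $S'$, hence over $S$.

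That these two constructions are mutually inverse is a formal consequence of the uniqueness clauses in the two universal properties applied on each side. The main obstacle is the appeal to the Stein universal property: this is where the hypothesis that $S'$ is Stein (rather than an arbitrary complex space) is essential, and it must be applied carefully in a setting where $\cO(S)$ may fail to be noetherian and $T$ is allowed to be arbitrary (in particular possibly nonreduced or infinite-dimensional). Once this step is in hand, matching of the $\cO(S)$- and $\cO(S')$-structures at every level is straightforward bookkeeping.
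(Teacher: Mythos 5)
Your outline is essentially the paper's argument: the proof given there simply defers to Bingener's verification of the universal property \cite[(1.3)]{Bingener}, which runs exactly along the lines you describe (reduce to showing that every $\C$-algebra homomorphism $\cO(S')\to\cO(T)$ is induced by a unique holomorphic map $T\to S'$, then do the bookkeeping with the two universal properties). The one point where your write-up falls short of a complete proof is the step you yourself flag as the main obstacle: the ``classical Stein universal property'' you invoke is not available in \cite{GRStein} in the generality you need. For \emph{finite-dimensional} Stein $S'$ it is Forster's theorem \cite{Forster} (and the extension from Stein $T$ to arbitrary complex spaces $T$ is harmless, since both sides of the correspondence are sheaves in $T$ and Stein opens form a basis); but for a general, possibly infinite-dimensional Stein space $S'$ this statement is genuinely nontrivial and is precisely why the paper's proof says that Bingener's argument ``works in general, replacing the reference \cite{Forster} by \cite[Proposition 3.4]{Steinalgebra}''. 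So your proof is correct once that key input is supplied with the right reference (or restricted to finite-dimensional $S'$); as written, the crucial analytic ingredient is asserted rather than justified, and the citation given for it does not cover the nonreduced, infinite-dimensional setting the lemma allows. The remaining compatibilities you defer to ``bookkeeping'' (e.g.\ that the map $\psi:T\to S'$ produced from $\cO(S')\to\cO(T)$ lies over $S$) do follow from the uniqueness half of the same correspondence applied to the Stein space $S$, so no further ideas are needed there.
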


\begin{proof}
When $S'$ is finite-dimensional, a proof is given in \cite[(1.3)]{Bingener}. This proof works in general, replacing the reference \cite{Forster} by \cite[Proposition 3.4]{Steinalgebra}.
\end{proof}

To give just a few examples, one can compute that $\Spec(\cO(S))^{\an}=S$, that $\Spec(\cO(S)[\frac{1}{a}])^{\an}=S\setminus\{a=0\}$, that $(\A^N_{\cO(S)})^{\an}=S\times\C^N$, etc.

\begin{lem}
\label{lemCpoints}
Let $S$ be a Stein space. Let $X$ be an $\cO(S)$-scheme locally of finite presentation. The map $i_X:X^{\an}\to X$ induces a bijection $X^{\an}\isoto X(\C)$ (where~$X(\C)$ is the set of $\C$-points of $X$ viewed as a $\C$-scheme).
\end{lem}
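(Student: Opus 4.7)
The plan is to read off the desired bijection from the universal property~(\ref{defan}) of the analytification, using reduced one-point complex spaces as test objects.

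First, I would check that $i_X\colon X^{\an}\to X$ factors set-theoretically through $X(\C)\subset X$. Given $p\in X^{\an}$ with image $x:=i_X(p)$, the local ring map $\cO_{X,x}\to\cO_{X^{\an},p}$ is a local $\C$-algebra homomorphism into a local $\C$-algebra with residue field $\C$, and hence induces a $\C$-algebra embedding $\kappa(x)\hookrightarrow\C$. Since $\C$ admits no nontrivial $\C$-subalgebras that are fields, this embedding is an isomorphism, so $x\in X(\C)$.

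Next, I would apply the universal property~(\ref{defan}) fibrewise over $S$. For each $s\in S$, let $\{*\}_s$ denote a reduced one-point complex space endowed with the morphism to $S$ whose image is $s$. Then~(\ref{defan}) yields a bijection between $\Hom_S(\{*\}_s, X^{\an})$---which is precisely the fibre of the structural map $X^{\an}\to S$ above $s$---and $\Hom_{\cO(S)\text{-lrs}}(\{*\}_s, X)$, the set of pairs $(x,\phi)$ consisting of a point $x\in X$ and a local ring map $\phi\colon\cO_{X,x}\to\C$ (equivalently a field embedding $\phi\colon\kappa(x)\hookrightarrow\C$) for which the composite $\cO(S)\to\kappa(x)\xrightarrow{\phi}\C$ coincides with the evaluation at $s$. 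Such data amounts to a $\C$-point of $X$ whose induced $\C$-algebra homomorphism $\cO(S)\to\C$ is the evaluation at~$s$.

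Taking the disjoint union over $s\in S$, this produces a bijection between $X^{\an}$ and the subset of $X(\C)$ consisting of those $\C$-points whose induced homomorphism $\cO(S)\to\C$ is an evaluation at some (necessarily unique) point of $S$. The main obstacle is then to identify this subset with the whole of $X(\C)$, i.e., to show that every $\C$-algebra homomorphism $\cO(S)\to\C$ is an evaluation at a point of $S$. This is exactly the content of Forster's theorem~\cite{Forster} identifying the maximal spectrum of the Stein algebra $\cO(S)$ with the underlying set of $S$, and it closes the argument.
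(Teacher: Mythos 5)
Your argument is correct and follows essentially the same route as the paper: both reduce, via the universal property (\ref{defan}) applied to one-point complex spaces over $S$, to the statement that every $\C$-algebra homomorphism $\cO(S)\to\C$ is evaluation at a point of $S$ (i.e.\ the case $X=\Spec(\cO(S))$). The only caveat is the final citation: Forster's theorem covers this when $S$ is finite-dimensional, whereas the Stein spaces here may be infinite-dimensional, which is why the paper invokes \cite[Theorem 0.1]{Steinalgebra} for the general case.
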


\begin{proof}
When $X=\Spec(\cO(S))$, the lemma follows from \cite[\S 1]{Forster} if $S$ is finite-dimensional, and from \cite[Theorem 0.1]{Steinalgebra} in general. The general case of the lemma now follows from (\ref{defan}) applied with $S'$ equal to a point of $S$.
\end{proof}

It is shown in \cite[Satz 3.1]{Bingener} that if $f:X\to Y$ is a morphism of finite presentation between~$\cO(S)$\nobreakdash-schemes locally of finite presentation, and if~$f$ is separated (\resp proper,  finite, flat, \'etale) then ${f^{\an}:X^{\an}\to Y^{\an}}$  is separated (\resp proper, finite, flat, a local biholomorphism). 

Let $X$ be an $\cO(S)$-scheme locally of finite presentation. Let $X_{\et}$ be the small \'etale site of $X$, and let $(X^{\an})_{\cl}$ be the site of local isomorphisms of its analytification~$X^{\an}$ (see \cite[XI, \S 4.0]{SGA43}). There are natural site morphisms $\varepsilon:(X^{\an})_{\cl}\to X_{\et}$ and $\delta:(X^{\an})_{\cl}\to X^{\an}$. 
By \cite[III, Th\'eor\`eme 4.1]{SGA41}, the morphism $\delta_*$ induces an equivalence of topoi. As a consequence, for cohomological purposes, we will not distinguish between $(X^{\an})_{\cl}$ and $X^{\an}$.  If $\LL$ is an \'etale sheaf on $X$, we let~$\LL^{\an}:=\varepsilon^*\LL$ denote its \textit{analytification}. For $k\geq 0$, we consider the comparison morphisms 
\begin{equation}
\label{bcmorphisms}
H^k_{\et}(X,\LL)\to H^k(X^{\an},\LL^{\an}).
\end{equation}

As explained in \cite[\S 5.3]{Stein},  these constructions admit $G$\nobreakdash-equi\-vari\-ant variants. If $S$ is a $G$-equivariant Stein space, there is an analytification functor associating with an $\cO(S)^G$-scheme $X$ locally of finite presentation a $G$-equivariant complex space $X^{\an}$ over $S$.  

\begin{lem}
\label{lemRpoints}
Let $S$ be a $G$-equivariant Stein space. Let $X$ be an $\cO(S)^G$-scheme locally of finite presentation. There is a natural  bijection $(X^{\an})^G\isoto X(\R)$ (where $X(\R)$ is the set of $\R$-points of $X$ viewed as an $\R$-scheme).
\end{lem}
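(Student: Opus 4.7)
The plan is to reduce the statement to Lemma \ref{lemCpoints} by base-changing along the degree $2$ extension $\cO(S)^G \subset \cO(S)$.  The key algebraic input is the identification $\cO(S) = \cO(S)^G \otimes_{\R} \C$: since $\sigma$ acts $\C$-antilinearly on $\cO(S)$, multiplication by $i$ interchanges $\cO(S)^G$ and the $G$-antiinvariant subspace of~$\cO(S)$, yielding the decomposition $\cO(S) = \cO(S)^G \oplus i\,\cO(S)^G$.

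Set $X_{\cO(S)} := X \times_{\Spec(\cO(S)^G)} \Spec(\cO(S))$.  This is an $\cO(S)$-scheme locally of finite presentation, and the $G$-action on $\cO(S)$ induces a natural $G$-action on $X_{\cO(S)}$ (trivial on the first factor).  The $G$-equivariant analytification functor from \cite[\S 5.3]{Stein} is set up precisely so that $X^{\an}$ coincides, as a $G$-equivariant complex space over $S$, with the Bingener analytification $(X_{\cO(S)})^{\an}$ endowed with the $G$-action inherited from $X_{\cO(S)}$.  Applying Lemma~\ref{lemCpoints} to the $\cO(S)$-scheme $X_{\cO(S)}$ then gives a bijection $(X_{\cO(S)})^{\an} \isoto X_{\cO(S)}(\C)$ which is $G$-equivariant by naturality of $i_{X_{\cO(S)}}$; taking $G$-fixed points yields $(X^{\an})^G \isoto X_{\cO(S)}(\C)^G$.

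It remains to identify $X_{\cO(S)}(\C)^G$ with $X(\R)$.  Using $\cO(S) = \cO(S)^G \otimes_{\R} \C$, we have $X_{\cO(S)} = X \otimes_{\R} \C$, and by adjunction $X_{\cO(S)}(\C) = \Hom_{\R\text{-sch}}(\Spec(\C), X)$, with the $G$-action given by precomposition with the complex conjugation automorphism of $\Spec(\C)$.  A morphism $\phi : \Spec(\C) \to X$ of $\R$-schemes is $G$-fixed if and only if the corresponding ring map $\phi^*$ satisfies $\overline{\phi^*} = \phi^*$, if and only if $\phi^*$ factors through $\R \subset \C$, if and only if $\phi$ comes from an $\R$-point of $X$.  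Hence $X_{\cO(S)}(\C)^G = X(\R)$, which combined with the previous step yields the desired natural bijection $(X^{\an})^G \isoto X(\R)$.

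The only step that is not completely formal is the compatibility between the two possible $G$-actions on $X^{\an}$ (the one given by the equivariant analytification functor, and the one induced by the $G$-action on $X_{\cO(S)}$); this should be immediate from unwinding the construction in \cite[\S 5.3]{Stein}, which is where I expect the minor bookkeeping to lie.
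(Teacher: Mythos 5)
Your argument is correct and is essentially the paper's own proof: the paper likewise applies Lemma \ref{lemCpoints} to the base change $X_{\cO(S)}$, notes that the resulting bijection $X^{\an}\isoto X_{\cO(S)}(\C)$ is $G$-equivariant, and concludes by Galois descent. You have merely made explicit the bookkeeping (the decomposition $\cO(S)=\cO(S)^G\otimes_{\R}\C$ and the identification $X_{\cO(S)}(\C)^G=X(\R)$) that the paper compresses into the phrase ``by Galois descent.''
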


\begin{proof}
The bijection $X^{\an}\isoto X(\C)$ obtained by applying Lemma \ref{lemCpoints} to $X_{\cO(S)}$ is $G$-equivariant. The lemma follows by Galois descent.
\end{proof}

The analytification of an \'etale sheaf $\LL$ on~$X$ is a $G$\nobreakdash-equivariant sheaf $\LL^{\an}$ on $X^{\an}$ and, for $k\geq 0$,  there are comparison morphisms
\begin{equation}
\label{Gbcmorphisms}
H^k_{\et}(X,\LL)\to H^k_G(X^{\an},\LL^{\an}),
\end{equation}
where the right-hand side of (\ref{Gbcmorphisms}) is a $G$-equivariant cohomology group (see \S\ref{parGeq}).

\subsection{Finite \texorpdfstring{$\cO(S)$}{O(S)}-schemes of finite presentation}

If $(S,\cO_S)$ is any ringed space and $\pi:(S,\cO_S)\to(\pt,\cO(S))$ is the natural morphism of ringed spaces,  there is a monoidal
 adjunction
\begin{equation}
\label{monoidaladj}
 \left\{  \begin{array}{l}
    \textrm{sheaves of } \cO_S\textrm{-modules}
  \end{array}\right\}
    \stackrel[\pi^*]{\pi_*}{\rightleftarrows} 
 \left\{  \begin{array}{l}
    \cO(S)\textrm{-modules}
  \end{array}\right\}.
\end{equation}
A sheaf of  $\cO_S$-modules $\cF$ will be said to be \textit{finitely presented} if there exists a short exact sequence $\cO_S^{\oplus N'}\to\cO_S^{\oplus N}\to\cF\to 0$ for some integers $N,N'\geq 0$.

\begin{prop}
Let $S$ be a Stein space and let $\pi:(S,\cO_S)\to(\pt,\cO(S))$ be as above. The functors $\pi_*$
and $\pi^*$ 
induce a monoidal adjoint equivalence
 of categories
\begin{equation}
\label{monoidaleq}
 \left\{  \begin{array}{l}
    \textrm{\hspace{1.3em}finitely presented}\\ \textrm{sheaves of } \cO_S\textrm{-modules}
  \end{array}\right\}
    \stackrel[\pi^*]{\pi_*}{\rightleftarrows} 
 \left\{  \begin{array}{l}
    \textrm{finitely presented}\\ \hspace{.6em}\cO(S)\textrm{-modules}
  \end{array}\right\}
\end{equation}
\end{prop}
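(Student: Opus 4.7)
The plan relies on Cartan's Theorems A and B for coherent sheaves on Stein spaces. The functors $\pi_*$ and $\pi^*$ form an adjoint pair by the general formalism of morphisms of ringed spaces, and the left adjoint $\pi^*$ is automatically strong monoidal. Consequently, once I have shown that (\ref{monoidaleq}) is an adjoint equivalence, the monoidality of the right adjoint follows formally: writing $\cF\simeq\pi^*\pi_*\cF$ and $\cG\simeq\pi^*\pi_*\cG$ and using strong monoidality of $\pi^*$ yields $\pi_*(\cF\otimes_{\cO_S}\cG)\simeq\pi_*\cF\otimes_{\cO(S)}\pi_*\cG$. So the real content is the adjoint equivalence.

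Next I check that the two functors land in the indicated subcategories. The functor $\pi^*$ is right exact and satisfies $\pi^*\cO(S)^{\oplus n}\simeq\cO_S^{\oplus n}$, so it preserves finite presentation. For $\pi_*$, let $\cF$ be finitely presented. By Oka's coherence theorem, $\cO_S$ is a coherent sheaf of rings, hence $\cF$ is coherent, and so is the image $K$ of the defining map $\cO_S^{\oplus N'}\to\cO_S^{\oplus N}$. Cartan's Theorem B yields $H^1(S,K)=0$, so applying $\pi_*$ to the short exact sequence $0\to K\to\cO_S^{\oplus N}\to\cF\to 0$ gives the exact sequence $\cO(S)^{\oplus N'}\to\cO(S)^{\oplus N}\to\pi_*\cF\to 0$, proving $\pi_*\cF$ is finitely presented.

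For the unit $\eta_M\colon M\to\pi_*\pi^*M$ with $M$ finitely presented, I choose a presentation $\cO(S)^{\oplus N'}\xrightarrow{A}\cO(S)^{\oplus N}\to M\to 0$, apply $\pi^*$ to obtain a presentation of $\pi^*M$ with the same matrix $A$ (viewed as global sections), and then apply $\pi_*$; the exactness argument from the previous paragraph, applied to $\pi^*M$ in place of $\cF$, yields $\cO(S)^{\oplus N'}\xrightarrow{A}\cO(S)^{\oplus N}\to\pi_*\pi^*M\to 0$. Naturality of $\eta$ and the triangle identity (which forces $\eta_{\cO(S)^{\oplus n}}$ to be the canonical isomorphism) produce a commutative ladder between the two presentations with identity maps on the free terms; the five lemma gives $\eta_M$ is an isomorphism. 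The argument for the counit $\varepsilon_\cF\colon \pi^*\pi_*\cF\to\cF$ is symmetric, starting from a presentation of $\cF$ by free $\cO_S$-modules (which exists by hypothesis).

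The principal technical input is the exactness of $\pi_*$ on the coherent subcategory, which is precisely Cartan's Theorem B. This is the conceptual obstacle: finite presentation interacts well with $\pi_*$ only because cohomological vanishing forces short exact sequences of coherent sheaves to remain exact after taking global sections. No hypothesis of reducedness, finite dimensionality, or Hausdorffness beyond what is built into Stein spaces is required, since Cartan's theorems are available in this generality \cite{GRStein}.
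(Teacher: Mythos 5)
Your proof is correct and follows essentially the same route as the paper: exactness of $\pi_*$ on coherent (finitely presented) sheaves via Cartan's Theorem B, right exactness of $\pi^*$ together with $\pi^*\cO(S)\simeq\cO_S$, and reduction of the unit and counit to the free case by a presentation-plus-five-lemma argument. The only cosmetic differences are that the paper additionally records exactness of $\pi^*$ (flatness of $\cO(S)\to\cO_{S,s}$) and checks monoidality by noting that tensor products preserve finite presentation, whereas you deduce monoidality of $\pi_*$ formally from the equivalence; neither changes the substance.
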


\begin{proof}
On the one hand, the functor $\cF\mapsto\pi_*(\cF)=H^0(S,\cF)$  is exact on the category of sheaves of finite presentation on $S$, because $S$ is Stein and these sheaves are coherent.  On the other hand, the functor $M\mapsto \pi^*(M)$ is exact on the category of $\cO(S)$\nobreakdash-modules, because the natural ring morphisms $\cO(S)\to\cO_{S,s}$ are flat  for all~$s\in S$ (\eg apply \cite[Lemma 1.8]{Stein} with $K=\{s\}$). It follows that $\pi_*$ and $\pi^*$ preserve the condition of being finitely presented, whence the diagram (\ref{monoidaleq}).

The tensor product of $\cO(S)$-modules, or of sheaves of $\cO_S$-modules,  preserves the condition of being finitely presented \cite[II, \S3.6, Proposition 6]{BourbAlgebre}.  Since (\ref{monoidaladj}) is a monoidal adjunction, we deduce that so is (\ref{monoidaleq}).

Let $\cF$ be a finitely presented sheaf of $\cO_S$-modules. We claim that the counit 
$\pi^*(\pi_*(\cF))\to\cF$ is an isomorphism.  Making use of a presentation of~$\cF$,  it suffices to prove the claim for $\cF=\cO_S$, which reduces us to the obvious fact that ${\pi^*(\cO(S))\isoto\cO_S}$.
Let $M$ be a finitely presented $\cO(S)$-module.  We also claim that the unit $M\to \pi_*(\pi^*(M))$ is an isomorphism.  Using a presentation of $M$, we reduce to the case $M=\cO(S)$, which boils down to the tautology
${\cO(S)\isoto H^0(S,\cO_S)}$.
It follows from these claims that (\ref{monoidaleq}) is an adjoint equivalence of categories. 
\end{proof}

\begin{rem}
\label{remseqcat}
(i) The equivalence (\ref{monoidaleq}) restricts to an equivalence between the categories of finitely presented sheaves of $\cO_S$-modules $\cF$ that are locally free of rank~$r$ and of $\cO(S)$-modules $M$ that are locally free of rank~$r$ (hence of finite presentation by \cite[Lemma 00NX]{SP}).
Clearly,  if $M$ is locally free of rank~$r$, then so is~$\cF$.  
Conversely, if $\cF$ is locally free of rank $r$, then so is $M$ by \cite[S\" atze~6.2 und~6.3]{Forster} (the irreducibility and finite-dimensionality hypotheses in \cite[Satz~6.3]{Forster} are only used to ensure that $M$ is finitely generated; here, it is even finitely presented).  


(ii) If $S$ is finite-dimensional, then any coherent sheaf of $\cO_S$-modules~$\cF$ that is locally free of rank $r$ is finitely presented. Indeed, by~\cite[Theorem~1]{Kripke}, one can find a surjection $\cO_S^{\oplus N}\to\cF$ for some $N\geq 0$. Its kernel is still locally free, of rank~$N-r$, so applying \cite[Theorem~1]{Kripke} again concludes.
\end{rem}

\begin{prop}
\label{eqfinitepres}
Let $S$ be a Stein space.  Then (\ref{monoidaleq}) induces an adjoint equivalence
\begin{equation}
\label{adjeq}
 \left\{  \begin{array}{l}
    \textrm{finite }\cO(S)\textrm{-schemes }X\\ \hspace{.3em}\textrm{of finite presentation}
  \end{array}\right\}
    \rightleftarrows
 \left\{  \begin{array}{l}
    \textrm{\hspace{.2em}finite holomorphic maps $p:T\to S$}\\ \textrm{such that }p_*\cO_T \textrm{ is finitely presented}
  \end{array}\right\}
\end{equation}
whose involved functors are $X\mapsto X^{\an}$ and $T\mapsto\Spec(\cO(T))$.
The $\cO(S)$\nobreakdash-scheme~$X$ is flat (\resp \'etale) if and only if $p$ is flat (\resp a local biholomorphism).
\end{prop}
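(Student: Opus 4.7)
The plan is to proceed in four steps. First, I would extend the monoidal adjoint equivalence (\ref{monoidaleq}) to an equivalence between finitely presented sheaves of $\cO_S$-algebras and finitely presented $\cO(S)$-algebras: commutative algebra structures are commutative monoids in the ambient symmetric monoidal categories, and monoidal equivalences automatically induce equivalences on such monoid objects.

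Next, I would identify both sides of (\ref{adjeq}) with algebras. A finite $\cO(S)$-scheme of finite presentation has the form $X=\Spec(A)$ for a module-finite $\cO(S)$-algebra $A$; under module-finiteness, being of finite presentation as an algebra is equivalent to being of finite presentation as a module (\cite[Lemma~\href{https://stacks.math.columbia.edu/tag/0GVM}{0GVM}]{SP}). On the analytic side, given a finitely presented sheaf of $\cO_S$-algebras $\cA$, one forms the analytic spectrum $T:=\Specan(\cA)$ (\cite[5, \S 1.2]{GRCoherent}), yielding a finite holomorphic map $p\colon T\to S$ with $p_*\cO_T=\cA$; conversely, every $T$ in the right-hand side of (\ref{adjeq}) arises this way from $\cA:=p_*\cO_T$.

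Third, I would identify the functors in (\ref{adjeq}). The right-to-left functor $T\mapsto\Spec(\Gamma(S,p_*\cO_T))=\Spec(\cO(T))$ directly matches the algebra equivalence. For the left-to-right functor, the task is to show that $X^{\an}=\Specan(\cA)$ when $\cA$ is attached to $A$ by (\ref{monoidaleq}). I would do this by checking that both represent the same functor on complex spaces $f\colon S'\to S$:
\[
\Hom_S(S',\Specan(\cA))=\Hom_{\cO_S\text{-alg}}(\cA,f_*\cO_{S'})=\Hom_{\cO(S)\text{-alg}}(A,\Gamma(S',\cO_{S'}))=\Hom_S(S',X^{\an}),
\]
where the first equality is the universal property of the analytic spectrum, the second is the adjunction $\pi^*\dashv\pi_*$ of (\ref{monoidaladj}) applied to the sheaf of algebras $\cA$ (using that $\pi^*A\cong\cA$ by (\ref{monoidaleq}), since $A$ is finitely presented), and the third combines (\ref{defan}) with the universal property of the affine scheme $X$.

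Finally, I would handle the flatness and étaleness assertions. By \cite[Satz 3.1]{Bingener}, analytification preserves both properties, giving one implication in each case. The converse for flatness follows from Remark \ref{remseqcat}(i): $p$ is flat iff the coherent sheaf $p_*\cO_T=\cA$ is locally free iff $A$ is locally free as an $\cO(S)$-module iff $X\to\Spec(\cO(S))$ is flat. For étaleness, one combines flatness with the unramifiedness condition, which can be read off on either side from the vanishing of the relative module of K\"ahler differentials (equivalently, from the nondegeneracy of the trace pairing on the locally free algebra). The main obstacle is the universal-property matching in the third step: since $f_*\cO_{S'}$ is in general not a finitely presented sheaf of $\cO_S$-algebras, one cannot apply (\ref{monoidaleq}) directly to it, and one must instead rely on the raw adjunction $\pi^*\dashv\pi_*$ of (\ref{monoidaladj}) together with the isomorphism $\pi^*A\cong\cA$, which only uses finite presentation of $A$.
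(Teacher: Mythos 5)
Your proposal follows the paper's proof almost step for step: the passage from the monoidal equivalence (\ref{monoidaleq}) to finitely presented sheaves of algebras, the identification of both sides of (\ref{adjeq}) via $\Spec$ and $\Specan$ (with finite fp algebras $=$ fp modules), the representability computation $\Hom_S(S',\Specan(\cA))=\Hom_{\cO(S)}(A,\cO(S'))=\Hom_{\Spec(\cO(S))}(S',X)$ compared with (\ref{defan}), the direct implications from \cite[Satz 3.1]{Bingener}, and the flat converse via Remark \ref{remseqcat}(i) are all exactly the paper's argument; your closing remark about why one must use the raw adjunction rather than (\ref{monoidaleq}) on $f_*\cO_{S'}$ is precisely the point the paper navigates. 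The one place you genuinely diverge is the converse for \'etaleness. The paper avoids differentials altogether: it observes that the non-\'etale locus $Z\subset\Spec(\cO(S))$ of the finite locally free morphism $X\to\Spec(\cO(S))$ is a closed subscheme of finite presentation (locally cut out by one equation, \cite[Lemma 0BJF]{SP}), so that $Z^{\an}=\varnothing$ forces $Z=\varnothing$ by the equivalence (\ref{adjeq}) itself. Your route via $\Omega_{A/\cO(S)}$ (or the trace pairing) is workable, but "can be read off on either side" elides the only nontrivial point: $\Spec(\cO(S))$ has many points invisible to $S$, so vanishing of $\Omega_{T/S}$ does not formally give vanishing of $\Omega_{A/\cO(S)}$; you must check that the analytification of $\Omega_{A/\cO(S)}$ is $\Omega_{T/S}$ (e.g.\ by a Jacobian presentation, since $A$ is of finite presentation) and then invoke conservativity of (\ref{monoidaleq}) on the finitely presented $\cO(S)$-module $\Omega_{A/\cO(S)}$. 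With that step spelled out your argument is complete; the paper's device with the finitely presented non-\'etale locus buys exactly the avoidance of this compatibility check.
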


\begin{proof}
The monoidal adjoint equivalence (\ref{monoidaleq}) induces an adjoint equivalence
\begin{equation}
\label{adjeqbis}
 \left\{  \begin{array}{l}
     \hspace{1.2em}\textrm{finitely presented}\\\textrm{sheaves of $\cO_S$-algebras}
  \end{array}\right\}
    \stackrel[\pi^*]{\pi_*}{\rightleftarrows} 
 \left\{  \begin{array}{l}
    \textrm{finitely presented}\\ \hspace{.7em}\cO(S)\textrm{-algebras}
  \end{array}\right\},
\end{equation}
where finitely presented is meant as sheaves of $\cO_S$-modules and as $\cO(S)$-modules.  The adjoint equivalence (\ref{adjeq}) is obtained by passing to opposite categories, using the spectrum construction $\Spec$ (and that an $\cO(S)$-algebra is finitely presented as an~$\cO(S)$\nobreakdash-module if and only if it is finite of finite presentation; see \cite[Lemmas~\href{https://stacks.math.columbia.edu/tag/0D46}{0D46} and \href{https://stacks.math.columbia.edu/tag/0564}{0564}]{SP}) on one side, and the analytic spectrum construction $\Specan$ (see \cite[Theorem 1.15 b)]{Fischer}) on the other side.

We now identify the right arrow $X\mapsto \Specan(\pi^*\cO(X))$ of (\ref{adjeq}) with ${X\mapsto X^{\an}}$. 
For any holomorphic map $q:S'\to S$ of complex spaces, 
\begin{alignat*}{4}
\label{eqcd}
\Hom_S(S',\Specan(\pi^*\cO(X)))&=\Hom_{\cO_S}(\pi^*\cO(X),q_*\cO_{S'})\\
&=\Hom_{\cO(S)}(\cO(X),\cO(S'))\\
&=\Hom_{\Spec(\cO(S))}(S',X),
\end{alignat*}
where the first equality is the definition of the analytic spectrum given in \cite[\S 1.14]{Fischer},  the second equality is the adjunction between $\pi^*$ and $\pi_*$, and the third one is \cite[Lemma \href{https://stacks.math.columbia.edu/tag/01I1}{01I1}]{SP}. Comparing with (\ref{defan}) concludes.

The direct implications of the last assertion follow from the general properties of the analytification functor.  
If $p$ is flat, then~$p_*\cO_T$ is locally free (of bounded rank as it is finitely presented), and hence the
$\cO(S)$-module~$\cO(T)$ is flat by Remark \ref{remseqcat}~(i).

Assume that~$p$ is a local biholomorphism.  The locus $Z\subset \Spec(\cO(S))$ over which the finite locally free morphism $X\to\Spec(\cO(S))$ (see~\cite[Lemma~\href{https://stacks.math.columbia.edu/tag/02KB}{02KB}]{SP}) is not \'etale has a natural schematic structure locally defined by the vanishing of a single equation (see~\cite[Lemma~\href{https://stacks.math.columbia.edu/tag/0BJF}{0BJF}]{SP}). It is therefore a closed subscheme of finite presentation of~$\Spec(\cO(S))$. Since $Z^{\an}=\varnothing$ (as $p$ is a local biholomorphism), one has~$Z=\varnothing$ by the equivalence~(\ref{adjeq}). It follows that $X$ is an \'etale $\cO(S)$-scheme.
\end{proof}

\begin{cor}
\label{surjO}
Let $S$ be a Stein space. Let $X$ be a finite $\cO(S)$-scheme of finite presentation.  The natural morphism $\cO(X)\to\cO(X^{\an})$ is an isomorphism.
\end{cor}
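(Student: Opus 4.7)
The plan is to read the statement off directly from the equivalence (\ref{monoidaleq}), or rather its algebra-level incarnation (\ref{adjeqbis}), that was set up in the proof of Proposition \ref{eqfinitepres}. Since $X$ is a finite $\cO(S)$-scheme of finite presentation, it is affine and we may write $X=\Spec(A)$ for some $\cO(S)$-algebra $A$ that is finite and of finite presentation (equivalently, finitely presented as an $\cO(S)$-module, by the Stacks references already cited in the proof of Proposition~\ref{eqfinitepres}). Thus $\cO(X)=A$ tautologically.

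Next I would identify $\cO(X^{\an})$ with $\pi_*\pi^*A$. Tracing through the construction of the analytification functor $X\mapsto X^{\an}$ as it appears in the proof of Proposition~\ref{eqfinitepres}, the complex space $X^{\an}$ is $\Specan(\pi^*A)$, whose structure sheaf, viewed as a sheaf of $\cO_S$-algebras on $S$ via the finite map $X^{\an}\to S$, is precisely $\pi^*A$. Taking global sections, $\cO(X^{\an})=H^0(S,\pi^*A)=\pi_*\pi^*A$, and under these identifications the natural morphism $\cO(X)\to\cO(X^{\an})$ coincides with the unit $A\to\pi_*\pi^*A$ of the adjunction (\ref{monoidaladj}).

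It then remains to invoke the fact, established in the proof of the proposition preceding (\ref{monoidaleq}), that the unit $M\to\pi_*\pi^*M$ is an isomorphism for every finitely presented $\cO(S)$-module $M$: one chooses a presentation $\cO(S)^{\oplus N'}\to\cO(S)^{\oplus N}\to M\to 0$ and reduces to the case $M=\cO(S)$, which is the tautology $\cO(S)\isoto H^0(S,\cO_S)$, using exactness of $\pi^*$ and of $\pi_*$ on finitely presented sheaves (this latter exactness being where the Steinness of $S$ enters). Applied to $M=A$, this yields the corollary. I do not anticipate any real obstacle: the corollary is essentially a repackaging of (\ref{adjeqbis}) in terms of global sections of structure sheaves, and all the substantive input has already been provided in Proposition \ref{eqfinitepres} and the results used in its proof.
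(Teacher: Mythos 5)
Your proof is correct and is essentially the paper's argument, merely unfolded one level: the paper invokes the counit $\Spec(\cO(X^{\an}))\to X$ of the adjoint equivalence~(\ref{adjeq}) and takes global sections, whereas you pass through $X^{\an}=\Specan(\pi^*\cO(X))$ to identify the natural map with the unit $A\to\pi_*\pi^*A$ of~(\ref{monoidaladj}), which is the same arrow under the construction of~(\ref{adjeq}) from~(\ref{adjeqbis}). Both ultimately rest on the fact, established in the proof of the proposition giving~(\ref{monoidaleq}), that the unit is an isomorphism on finitely presented $\cO(S)$-modules.
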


\begin{proof}
By Proposition \ref{eqfinitepres}, the counit map $\Spec(\cO(X^{\an}))\to X$ of~(\ref{adjeq}) is an isomorphism. Applying the global sections functor shows that $\cO(X)\isoto\cO(X^{\an})$.
\end{proof}

\begin{rem}
\label{remnotfp}
Let $S$ be a reduced countable Stein space.  Let $\km\subset\cO(S)$ be a maximal ideal associated with a nonprincipal ultrafilter on $S$. Define $X:=\Spec(\cO(S)/\km)$. Then $X^{\an}=\varnothing$, so $\cO(S)/\km=\cO(X)\to\cO(X^{\an})$ is not an isomorphism. This shows that the finite presentation hypothesis in Corollary \ref{surjO} cannot be dispensed with.
\end{rem}

\section{Algebraization of closed and open subsets}
\label{secH0}

The goals of this section are Theorem \ref{thmclopen} and its consequence Corollary \ref{compH0}.

\subsection{Rings of holomorphic functions of normal Stein spaces}

\begin{lem}
\label{Steinnormal}
If $S$ is a connected normal Stein space,  then the ring $\cO(S)$ is a normal domain.
\end{lem}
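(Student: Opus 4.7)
The plan is to check separately that $\cO(S)$ is a domain and that it is integrally closed in its fraction field. Recall from \S\ref{secanal} that for $S$ Stein and reduced, $\cM(S)$ is the total ring of fractions of $\cO(S)$; once we know $\cO(S)$ is a domain, $\cM(S)$ will simply be its fraction field, and this is the natural overring in which to test integral closure.

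For the domain property, I would first show that $S$ is irreducible. Normality of $S$ means that each stalk $\cO_{S,s}$ is a normal local ring, hence a domain, so the germ of $S$ at every point is analytically irreducible. The standard consequence is that a connected normal complex space is globally irreducible: each $s\in S$ lies in a unique irreducible component of $S$, so the components partition $S$ into pairwise disjoint closed analytic subsets, and connectedness leaves only one. Then for any nonzero $f\in\cO(S)$, the zero locus $\{f=0\}$ is a proper closed analytic subset of the irreducible space $S$, hence nowhere dense; so $fg=0$ forces $g$ to vanish on the dense open set $S\setminus\{f=0\}$, and $g=0$ by continuity.

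For integral closure, I would take $f\in\cM(S)=\Frac(\cO(S))$ satisfying a monic relation $f^n+a_{n-1}f^{n-1}+\cdots+a_0=0$ with $a_i\in\cO(S)$ and verify that $f$ is holomorphic. At each $s\in S$, the image of $f$ in $\cM_{S,s}=\Frac(\cO_{S,s})$ remains integral over the normal local ring $\cO_{S,s}$, hence lies in $\cO_{S,s}$. Thus $f$ is holomorphic in a neighborhood of every point of $S$, which yields $f\in H^0(S,\cO_S)=\cO(S)$.

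The only step carrying any real content is the passage from normality at every stalk to global irreducibility of the connected space $S$, which is a classical fact about reduced complex spaces; everything else is a direct transfer of the local-to-global principle for normality familiar from scheme theory, combined with the identification of $\cM(S)$ with the total ring of fractions of $\cO(S)$ on a reduced Stein space.
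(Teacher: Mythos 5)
Your proof is correct and follows essentially the same route as the paper: local irreducibility from normality plus connectedness gives irreducibility and hence the domain property, and integral closure is checked locally. The only cosmetic difference is that the paper phrases the local step via the normalization sheaf $\widehat{\cO}_S$ (whose sections coincide with those of $\cO_S$ since $S$ is normal), whereas you argue stalk by stalk with the normal local rings $\cO_{S,s}$ — the same idea in different packaging.
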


\begin{proof}
Since $S$ is normal, it is locally irreducible, and hence irreducible by connectedness. As $S$ is moreover reduced, it follows that $\cO(S)$ is a domain.

Suppose that $a\in\cO(S)$ and $b\in\cO(S)^*$ are such that $h=\frac{a}{b}\in\Frac(\cO(S))$ satisfies an equation of the form $h^d+\sum_{i=0}^{d-1}c_ih^i=0$ with $c_i\in\cO(S)$.  Then $h\in\cM(S)$ is a~section of the normalization sheaf $\widehat{\cO}_S$ of $S$ (see \cite[6, \S 4.1]{GRCoherent}), hence a section of~$\cO_S$ as $S$ is normal.  The ring $\cO(S)$ is therefore integrally closed in $\Frac(\cO(S))$.
\end{proof}

\begin{lem}
\label{lemprodnormal}
Let $B$ be a finitely generated $\Z$-algebra. There exist a normal finitely generated $\Z$-algebra $B'$ and a ring morphism $v:B\to B'$ such that for any normal domain~$A$, any ring morphism~$u:B\to A$ factors through $v$.
\end{lem}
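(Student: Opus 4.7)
The plan is to produce $B'$ as a finite product of normal domains, one factor per minimal prime of $B$, and to use that any map from $B$ to a normal domain kills one of these minimal primes.

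Since $B$ is a Noetherian ring, it has only finitely many minimal primes $\kp_1,\dots,\kp_n$. For each $i$, the quotient $B/\kp_i$ is a domain which is finitely generated as a $\Z$-algebra; let $K_i=\Frac(B/\kp_i)$ and let $B_i$ be the integral closure of $B/\kp_i$ in $K_i$. The classical finiteness theorem for integral closures of finitely generated $\Z$-algebras (a special case of the Noether--Nagata theorem) shows that $B_i$ is a finite $B/\kp_i$-module, and in particular a normal finitely generated $\Z$-algebra. I would then set
\[
B':=\prod_{i=1}^n B_i,
\]
which is still a finitely generated $\Z$-algebra; since normality is a local property and the localizations of $B'$ at prime ideals coincide with those of the individual $B_i$, the ring $B'$ is normal (even though it is not a domain for $n\geq 2$). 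The morphism $v:B\to B'$ is defined as the product of the natural maps $B\twoheadrightarrow B/\kp_i\hookrightarrow B_i$.

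To check universality, let $u:B\to A$ be any ring morphism with $A$ a normal domain. Then $\ker(u)$ is a prime ideal of $B$, so it contains some minimal prime $\kp_i$, and $u$ factors as $B\twoheadrightarrow B/\kp_i\xrightarrow{\overline u}A$. The injection $\overline u$ extends uniquely to an inclusion of fraction fields $K_i\hookrightarrow\Frac(A)$. Because $A$ is integrally closed in $\Frac(A)$, every element of $B_i\subset K_i\subset\Frac(A)$ that is integral over $B/\kp_i$ lies in $A$, giving a ring morphism $\widetilde u:B_i\to A$ extending $\overline u$. Composing $\widetilde u$ with the projection $B'\twoheadrightarrow B_i$ yields the desired morphism $w:B'\to A$ with $w\circ v=u$.

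The only nontrivial ingredient is the finite generation of the integral closures $B_i$ over $\Z$; this is the theorem that the integral closure of a finitely generated $\Z$-algebra domain in a finite extension of its fraction field is a finite module over it, applied with the trivial extension. Everything else is a direct manipulation of minimal primes and the universal property of integral closure.
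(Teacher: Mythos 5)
Your proof has a genuine gap in the universality step. You factor an arbitrary $u:B\to A$ as $B\twoheadrightarrow B/\kp_i\xrightarrow{\overline u}A$ for a minimal prime $\kp_i\subset\ker(u)$ and then assert that the \emph{injection} $\overline u$ extends to an inclusion of fraction fields $K_i\hookrightarrow\Frac(A)$. But $\overline u$ is injective only when $\ker(u)=\kp_i$; in general $\ker(u)$ is a prime that may strictly contain $\kp_i$, so $\overline u$ has nontrivial kernel and there is no induced map on fraction fields. Without that, the integral-closure argument does not apply, and in fact a ring map from a domain to a normal domain need not extend to the normalization of the source. Concretely, take $B=k[x,y,z]/(z^2-x^2y)$ (Whitney's umbrella, a domain). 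Its only minimal prime is $\kp_1=0$ and its normalization is $B_1=k[x,w]$ with $y=w^2$, $z=xw$. Let $A=k[y]$ and let $u:B\to A$ send $x,z\mapsto 0$ and $y\mapsto y$; this is a well-defined map to a normal domain. Any factorization $B_1\to A$ extending $u$ would have to send $w$ to some $c\in k[y]$ with $c^2=y$, which is impossible. So $u$ does not factor through your $B'=B_1$.

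The missing idea is that one must normalize not just the irreducible components of $\Spec(B)$ but also deeper strata, which is exactly what the paper does: set $X_1=\Spec(B)$, let $X_{j+1}$ be the singular locus of $X_j$ (with reduced structure), iterate until the result is regular, and let $B'$ be the coordinate ring of the disjoint union of the normalizations $\wX_j$. Given $u:B\to A$, the closure $Z=V(\ker u)$ of the image of $\Spec(A)\to\Spec(B)$ is irreducible; choosing $j$ maximal with $Z\subset X_j$ puts the generic point of $Z$ in the regular locus of $X_j$. Since $\Spec(A)\to Z$ is dominant and $A$ is normal, it lifts to the normalization of $Z$, hence to the strict transform $\oZ\subset\wX_j$, hence to $\Spec(B')$. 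In the Whitney example the relevant stratum is $X_2=\Spec(k[y])$, which is already regular, and $u$ factors through it directly. Your construction corresponds to the case $j=1$ only, i.e.\ handles only those $u$ whose kernel is a minimal prime.
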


\begin{proof}
Let $X_1,\dots, X_k$ be reduced schemes such that $X_1=\Spec(B)$, such that~$X_{j+1}$ is equal to the singular locus of $X_j$, and with $X_k$ regular. 
Let~$\wX_j$ be the normalization of $X_j$ and let $B'$ be the coordinate ring of the disjoint union of the~$\wX_j$. Define~$v:B\to B'$ to be the natural morphism.

Let $Z\subset X_1$ be the closure of the image of the morphism ${f:\Spec(A)\to\Spec(B)}$ induced by~$u$.  Choose $j$ maximal with $Z\subset X_{j}$.  Let $\oZ\subset\wX_{j}$ be the strict transform of~$Z$ in $\wX_{j}$.  The morphism $f$ lifts to the normalization of $Z$, hence a fortiori to a morphism ~$\of:\Spec(A)\to\oZ$. Consider the composition~${\Spec(A)\xrightarrow{\of}\oZ\subset \wX_{j}\subset\Spec(B')}$. The ring map $w:B'\to A$ it induces satisfies~$u=w\circ v$.
\end{proof}

\begin{lem}
\label{prodnormal}
 Let $(A_i)_{i\in I}$ be normal domains. Then $\prod_{i\in I} A_i$ is a directed colimit of normal $\Z$-algebras of finite type.
\end{lem}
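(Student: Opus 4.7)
The plan is to realize $\prod_{i\in I}A_i$ as a filtered colimit of normal finitely generated $\Z$-algebras; since any such filtered colimit of rings may be rewritten as a colimit over a directed poset (by extracting a cofinal directed subsystem), this will yield the statement. The key ingredient, which I would establish first, is a $\prod_{i\in I}A_i$-valued version of Lemma~\ref{lemprodnormal}: for any finitely generated $\Z$-subalgebra $D\subset\prod_{i\in I}A_i$, apply Lemma~\ref{lemprodnormal} to produce $v\colon D\to D'$ with $D'$ normal of finite type; for each $i\in I$, the composition $D\hookrightarrow\prod_{i\in I}A_i\twoheadrightarrow A_i$ targets a normal domain and hence factors as $w_i\circ v$ for some $w_i\colon D'\to A_i$; reassembling the $w_i$ yields a ring morphism $\phi_{D'}\colon D'\to\prod_{i\in I}A_i$ with $\phi_{D'}\circ v$ equal to the inclusion $D\hookrightarrow\prod_{i\in I}A_i$.

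I would then introduce the category $J$ whose objects are pairs $(C,\phi)$ with $C$ a normal $\Z$-algebra of finite type and $\phi\colon C\to\prod_{i\in I}A_i$ a ring morphism, and whose morphisms are ring maps over $\prod_{i\in I}A_i$. Using the construction above, $J$ is filtered: given $(C_1,\phi_1)$ and $(C_2,\phi_2)$, apply the construction to the image $D$ of $(\phi_1,\phi_2)\colon C_1\otimes_\Z C_2\to\prod_{i\in I}A_i$ to obtain an object $(D',\phi_{D'})$ dominating both via $C_j\to D\xrightarrow{v}D'$; given parallel morphisms $f,g\colon(C_1,\phi_1)\to(C_2,\phi_2)$, apply it to $D:=\phi_2(C_2)$ to produce $h:=v\circ\phi_2\colon C_2\to D'$, which defines a morphism in $J$ and satisfies $h\circ f=h\circ g$ because $\phi_2\circ f=\phi_1=\phi_2\circ g$.

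Finally, I would check that the canonical map $\colim_{(C,\phi)\in J}C\to\prod_{i\in I}A_i$ is bijective. For surjectivity, any $a=(a_i)\in\prod_{i\in I}A_i$ equals $\phi_{D'}(v(a))$ for $D:=\Z[a]$. For injectivity, if $c\in C$ satisfies $\phi(c)=0$, applying the construction to $D:=\phi(C)$ produces a morphism $(C,\phi)\to(D',\phi_{D'})$ in $J$ under which $c\mapsto v(\phi(c))=0$, so $c$ vanishes in the colimit.

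The one subtle point -- rather than a genuine obstacle -- is that Lemma~\ref{lemprodnormal} furnishes factorizations only for morphisms targeting normal \emph{domains}, which is why one must process each factor $A_i$ separately and reassemble. This also shows that one cannot hope to realize $\prod_{i\in I}A_i$ as a directed union of normal finite-type $\Z$-subalgebras: the subalgebras $\phi_{D'}(D')\subset\prod_{i\in I}A_i$, being quotients of the normal ring $D'$, are generally not themselves normal.
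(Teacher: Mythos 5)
Your proposal is correct and uses the same key idea as the paper: apply Lemma~\ref{lemprodnormal} to factor each projection $D\to A_i$ through a single normal finite-type $\Z$-algebra $D'$, using that the $A_i$ are normal domains. The only real difference is expository---you verify filteredness of the index category and bijectivity of the colimit map by hand, whereas the paper cites general Stacks Project lemmas (tags 0BUF, 0BUC, 0032) to the same effect.
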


\begin{proof}
Write $\prod_{i\in I} A_i$ as the filtered colimit of all finitely generated $\Z$-algebras $B$ equipped with a ring morphism to $\prod_{i\in I} A_i$ (see \cite[Lemma \href{https://stacks.math.columbia.edu/tag/0BUF}{0BUF}]{SP}).  We only need to prove that the category of those $B\to\prod_{i\in I} A_i$ with $B$ is normal is cofinal (as one can then replace the filtered colimit by a directed colimit, by \cite[Lemma~\href{https://stacks.math.columbia.edu/tag/0032}{0032}]{SP}).

In view of \cite[Lemma \href{https://stacks.math.columbia.edu/tag/0BUC}{0BUC}]{SP},  it suffices to verify that any $B\xrightarrow{u} \prod_{i\in I} A_i$ with~$B$ normal and finitely generated over $\Z$ can be factorized as ${B\xrightarrow{v} B'\xrightarrow{w} \prod_{i\in I} A_i}$ with~$B'$ normal and finitely generated over $\Z$.  
To do so, let $v:B\to B'$ be as in Lemma~\ref{lemprodnormal} and factor through $v$ each of the maps $B\to A_i$ induced by~$u$.
\end{proof}

\begin{rem}
Not all normal rings are directed colimits of normal noetherian rings (one can check that the ring appearing in \cite[\S \href{https://stacks.math.columbia.edu/tag/0568}{0568}]{SP} is a counterexample).
\end{rem}

\begin{cor}
\label{cornormal}
Let $S$ be a normal Stein space. Then $\cO(S)$ is a directed colimit of normal $\Z$-algebras of finite type.
\end{cor}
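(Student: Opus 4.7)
The plan is to reduce Corollary~\ref{cornormal} to the combination of Lemma~\ref{Steinnormal} and Lemma~\ref{prodnormal} via the connected-components decomposition of $S$.

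First, I would decompose $S$ into its connected components. Since complex spaces are assumed second-countable and are locally connected (analytic sets in open subsets of $\C^n$ are locally path-connected), $S$ has at most countably many connected components $(S_i)_{i \in I}$, each of which is open and closed in $S$. Each $S_i$ is then itself a Stein space (being an open and closed subspace of a Stein space) and is normal since so is $S$. Because the $S_i$ are open and disjoint, restriction induces a ring isomorphism
\[
\cO(S) \isoto \prod_{i \in I} \cO(S_i).
\]

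Next, Lemma~\ref{Steinnormal} applies to each connected component $S_i$, so each ring $\cO(S_i)$ is a normal domain. Hence $\cO(S)$ is, via the above isomorphism, a product of normal domains indexed by $I$. Lemma~\ref{prodnormal} then asserts that any such product is a directed colimit of normal finitely generated $\Z$-algebras, which is exactly the desired conclusion for $\cO(S)$.

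The argument is essentially formal once the two ingredients are in place; the only point deserving attention is the decomposition $\cO(S) = \prod_i \cO(S_i)$, which relies on the connected components of $S$ being open. This is where second-countability and the local topology of complex spaces enter. Apart from this mild verification, there is no substantive obstacle: the work has been done in Lemma~\ref{Steinnormal} (normality of $\cO(S)$ on each component) and in Lemma~\ref{prodnormal} (the colimit presentation of products of normal domains).
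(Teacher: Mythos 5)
Your proof is correct and follows essentially the same route as the paper: decompose $S$ into connected components, apply Lemma~\ref{Steinnormal} on each component to get normal domains, identify $\cO(S)$ with the product $\prod_i\cO(S_i)$, and conclude with Lemma~\ref{prodnormal}. The extra verification that the components are open (hence that the product decomposition holds) is a fine point the paper leaves implicit.
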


\begin{proof}
Let $(S_i)_{i\in I}$ be the connected components of $S$. Then $\cO(S)=\prod_{i\in I}\cO(S_i)$, so the corollary follows from Lemmas \ref{Steinnormal} and \ref{prodnormal}.
\end{proof}

\subsection{Zerodivisors on \'etale \texorpdfstring{$\cO(S)$}{O(S)}-schemes}

\begin{lem}
\label{rednoeth}
Let $f:X\to Y$ be a morphism of schemes that is \'etale of finite presentation.  
Assume that $Y$ is a directed limit with affine transition maps of normal noetherian schemes.  If $a_1,a_2\in\cO(X)$ are such that $a_1a_2=0$,  one can write $X=V_1\cup V_2$ as the union of two disjoint open subsets with $a_1|_{V_1}=0$ and~$a_2|_{V_2}=0$.
\end{lem}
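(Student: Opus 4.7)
The strategy is to reduce to the Noetherian case by a standard limit argument and then exploit the fact that \'etale morphisms preserve normality.

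Write $Y = \lim_{i \in I} Y_i$ with $Y_i$ normal Noetherian and affine transition maps. Since the transitions are affine, the projection $Y \to Y_i$ is affine, so $Y$ is qcqs, and $X$ is qcqs as well because $f$ is of finite presentation. By the limit formalism for morphisms of finite presentation over a cofiltered limit of qcqs schemes with affine transition maps, there exist $i_0 \in I$ and an \'etale morphism of finite presentation $f_{i_0} \colon X_{i_0} \to Y_{i_0}$ whose base change to $Y$ recovers $f$. Setting $X_i := X_{i_0} \times_{Y_{i_0}} Y_i$ for $i \geq i_0$ gives $X = \lim_i X_i$ with affine transitions, and hence $\cO(X) = \colim_i \cO(X_i)$. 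Thus there is $i \geq i_0$ with lifts $a_{1,i}, a_{2,i} \in \cO(X_i)$ of $a_1, a_2$; enlarging $i$ once more, we may assume that the relation $a_{1,i} a_{2,i} = 0$ already holds in $\cO(X_i)$.

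Now $X_i$ is Noetherian (quasi-compact and locally of finite type over the Noetherian scheme $Y_i$) and, being \'etale over the normal Noetherian scheme $Y_i$, is itself normal. A normal Noetherian scheme is locally irreducible, so its finitely many connected components coincide with its irreducible components and are each open and integral. On each connected component $C$ of $X_i$, the ring $\cO(C)$ is a domain, so $a_{1,i}|_C \cdot a_{2,i}|_C = 0$ forces $a_{1,i}|_C = 0$ or $a_{2,i}|_C = 0$. Let $V_{1,i}$ collect the components on which $a_{1,i}$ vanishes, and let $V_{2,i}$ collect the remaining ones (on which $a_{2,i}$ must vanish). Base-changing the decomposition $X_i = V_{1,i} \sqcup V_{2,i}$ back to $X$ yields the required open partition $X = V_1 \sqcup V_2$ with $a_j|_{V_j} = 0$. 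The main technical point is the colimit identification $\cO(X) = \colim_i \cO(X_i)$, which allows the single relation $a_1 a_2 = 0$ to be realized at a finite stage; this depends crucially on the $X_i$ being qcqs, guaranteed by finite presentation of $f$ together with affineness of the transition maps.
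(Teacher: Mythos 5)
Your proof is correct and follows essentially the same route as the paper's: both descend the data (via the standard limit formalism for finitely presented morphisms and for sections) to a normal noetherian stage, use that such a scheme is a finite disjoint union of integral open components on each of which one of the two functions must vanish, and pull the resulting open partition back to $X$. The only cosmetic difference is that you descend $f$ itself and form the relative limit $X=\lim_i X_i$, while the paper directly presents $X$ as a directed limit of normal noetherian schemes and descends $a_1,a_2$ with their relation to one of them.
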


\begin{proof}
It follows from \cite[Lemmas \href{https://stacks.math.columbia.edu/tag/01ZM}{01ZM} (1), \href{https://stacks.math.columbia.edu/tag/07RP}{07RP} and \href{https://stacks.math.columbia.edu/tag/01YZ}{01YZ}]{SP} that $X$ is itself a directed limit with affine transition maps of schemes which admit an \'etale morphism of finite presentation to a normal noetherian scheme, and hence which are themselves normal noetherian (by \cite[Lemma \href{https://stacks.math.columbia.edu/tag/034F}{034F}]{SP}).

Consequently, in view of \cite[Lemma \href{https://stacks.math.columbia.edu/tag/01YX}{01YX}]{SP}, there exist a normal noetherian scheme $Z$, a morphism $g:X\to Z$, and $b_1,b_2\in\cO(Z)$ with $a_1=g^*b_1$ and $a_2=g^*b_2$ such that~$b_1b_2=0$. Since $Z$ is normal noetherian, it has finitely many connected components, which are integral.  On each of these, at least one of $b_1$ and $b_2$ vanishes.  It follows that one can write $Z=U_1\cup U_2$ as the union of two disjoint open subsets with $b_1|_{U_1}=0$ and $b_2|_{U_2}=0$. We finally set $V_1:=g^{-1}(U_1)$ and $V_2:=g^{-1}(U_2)$.
\end{proof}

\begin{rem}
Lemma \ref{rednoeth} fails if $Y$ is only assumed to be normal (the spectrum of the ring constructed in \cite[\S \href{https://stacks.math.columbia.edu/tag/0568}{0568}]{SP} can be checked to be a counterexample).
\end{rem}

The next corollary is an immediate consequence of Corollary \ref{cornormal} and Lemma~\ref{rednoeth}.

\begin{cor}
\label{cordisj}
Let $S$ be a normal Stein space. Let $X$ be an \'etale $\cO(S)$-scheme of finite presentation. If $a_1,a_2\in\cO(X)$ are such that $a_1a_2=0$,  one can write $X=V_1\cup V_2$ as the union of two disjoint open subsets with $a_1|_{V_1}=0$ and~$a_2|_{V_2}=0$.
\end{cor}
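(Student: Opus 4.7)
The plan is to derive the corollary by directly combining the two cited results. The key observation is that Corollary~\ref{cornormal} provides exactly the structural description of $\cO(S)$ needed to feed $\Spec(\cO(S))$ into Lemma~\ref{rednoeth} as the base scheme $Y$.

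More precisely, I would first invoke Corollary~\ref{cornormal} to write $\cO(S) = \colim_{i\in I} A_i$ as a directed colimit of normal finitely generated $\Z$-algebras. Taking $\Spec$ (which turns directed colimits of rings into directed limits of schemes with affine transition maps) yields $Y := \Spec(\cO(S)) = \lim_i \Spec(A_i)$, where each $\Spec(A_i)$ is a normal noetherian scheme (noetherian since $A_i$ is finitely generated over $\Z$). This realizes $Y$ in exactly the form required by Lemma~\ref{rednoeth}.

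Since $X$ is an étale $\cO(S)$-scheme of finite presentation, the structure morphism $f : X \to Y$ is étale of finite presentation. Applying Lemma~\ref{rednoeth} to $f$ and to the pair $a_1, a_2 \in \cO(X)$ satisfying $a_1 a_2 = 0$ immediately produces the disjoint open decomposition $X = V_1 \cup V_2$ with $a_1|_{V_1} = 0$ and $a_2|_{V_2} = 0$. There is no real obstacle here: the entire content of the corollary is that Corollary~\ref{cornormal} matches the hypothesis on $Y$ in Lemma~\ref{rednoeth}, and the bulk of the work has been carried out in those two earlier statements.
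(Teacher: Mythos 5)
Your proposal is correct and matches the paper's argument exactly: the paper also deduces the corollary immediately from Corollary~\ref{cornormal} (giving $\cO(S)$ as a directed colimit of normal finite type $\Z$-algebras, hence $\Spec(\cO(S))$ as a directed limit with affine transition maps of normal noetherian schemes) combined with Lemma~\ref{rednoeth} applied to the étale finite presentation morphism $X\to\Spec(\cO(S))$. No gaps.
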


\subsection{Closed and open subsets of \'etale \texorpdfstring{$\cO(S)$}{O(S)}-schemes}
\label{clopen}

If $E$ is a topological space, we let $\Clop(E)$ denote the set of all closed and open subsets of~$E$.

\begin{thm}
\label{thmclopen}
Let $S$ be a Stein space.  Let $f:X\to\Spec(\cO(S))$ be an \'etale morphism.
The map $\Clop(X)\to \Clop(X^{\an})$ given by $U\mapsto U^{\an}$ is
\begin{enumerate}[(i)]
\item injective if $S$ is reduced;
\item bijective if $S$ is normal.
\end{enumerate}
\end{thm}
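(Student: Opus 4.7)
My plan is to reduce both parts to studying clopens affine-locally on $X$, and for surjectivity to exploit the structure of standard \'etale morphisms together with Corollaries \ref{cornormal} and \ref{cordisj}.

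For (i), it suffices (by a Boolean-algebra argument on symmetric differences) to show that a nonempty clopen $U \subset X$ has nonempty analytification. Choose an affine open $\Spec(A) \subset U$ with $A \neq 0$ a finitely presented \'etale $\cO(S)$-algebra. The image of $\Spec(A) \to \Spec(\cO(S))$ is a nonempty open, say the complement of some $V(I)$. I claim it contains a maximal ideal $\km_s$ for some $s \in S$: otherwise every element of $I$ vanishes identically on $S$, hence is zero in $\cO(S)$ by reducedness, forcing $I = 0$ and a contradiction. The fiber of $\Spec(A)$ over $\km_s$ is then a nonzero \'etale $\C$-algebra, providing a $\C$-point of $\Spec(A)$; Lemma \ref{lemCpoints} then gives a point of $U^{\an}$.

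For (ii), injectivity from (i) reduces us, by gluing over an affine cover, to the case $X = \Spec(A)$ with $A = (R[t]/f)[1/g]$ standard \'etale over $R := \cO(S)$. We may assume $S$ is connected, so that $R$ is a normal domain (Lemma \ref{Steinnormal}). Factor $A \otimes_R \cM(S) = \prod_i L_i$ into finite separable extensions; equivalence~(\ref{eqext}) identifies each $L_i$ with $\cM(T_i)$ for a connected normal analytic covering $T_i \to S$. The aim is to prove that both $\Spec(A)$ and $\Spec(A)^{\an}$ have connected components in bijection with $\cI := \{i : g \textrm{ has nonzero image in } L_i\}$, with the bijection induced by the analytification functor.

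On the analytic side, the inclusion $\Spec(A)^{\an} \subset Z := \Spec(R[t]/f)^{\an}$ realizes $\Spec(A)^{\an}$ as an open subset of a finite analytic cover of $S$ (via Proposition \ref{eqfinitepres}); its normalization is $\sqcup_i T_i$, and each $T_i \setminus \{g = 0\}$ is connected because the complement of a proper analytic subset in a connected normal complex space is connected. On the algebraic side, Corollary \ref{cordisj} serves to separate the minimal primes of the reduced ring $A$: for $i \neq j$, clearing denominators in the orthogonal idempotents of $A \otimes_R \cM(S)$ yields $a_i, a_j \in A$ with $a_i a_j = 0$, $a_i \notin \kp_i$, and $a_j \notin \kp_j$, to which Corollary \ref{cordisj} applies to produce a clopen decomposition of $\Spec(A)$ separating $\kp_i$ from $\kp_j$; iterating, $A$ decomposes as a product $\prod_{i \in \cI} A_i$ of integral domains whose spectra are the connected components of $\Spec(A)$.

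The main obstacle is to match the two decompositions under analytification, i.e.\ to identify $\Spec(A_i)^{\an}$ with $T_i \setminus \{g = 0\}$. This reduces to showing $A_i = \cO(T_i)[1/g]$, where $\cO(T_i)$ is the integral closure of $R$ in $L_i$. The inclusion $A_i \subset \cO(T_i)[1/g]$ is clear since $t$ is integral over $R$, and the reverse inclusion follows once one knows $A_i$ to be normal. Normality of $A_i$ is standard in the noetherian setting; in the non-noetherian Stein context, one reduces to noetherian bases via Corollary \ref{cornormal} together with a limit argument on the \'etale $R$-algebra $A$.
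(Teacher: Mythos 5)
Your part (i) is fine and is essentially the paper's own argument (Lemma \ref{lemred} plus Lemma \ref{lemCpoints}). In part (ii), however, there are two genuine gaps. First, the step ``we may assume $S$ is connected'' is not a formal reduction. A normal Stein space may have infinitely many connected components $(S_i)$, and $\Spec(\cO(S))$ is \emph{not} the disjoint union of the $\Spec(\cO(S_i))$: that union is an open, non-closed subscheme, and the missing points (primes over nonprincipal ultrafilters) are exactly where clopens constructed component by component may fail to assemble into a clopen of $X$. Your affine strategy -- finitely many field factors $L_i$ of $A\otimes_R\cM(S)$, finitely many connected components, every clopen a union of them -- only makes sense over connected $S$, so this is not a side issue; compare Remark \ref{remH0} (iv), which warns that the statement must be phrased in terms of clopens rather than connected components precisely because of these phantom points.

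Second, and more centrally, the ``main obstacle'' is not actually overcome. The algebraic decomposition via Corollary \ref{cordisj} and the ring identity $A_i=\cO(T_i)[\frac1g]$ (granting the non-noetherian normality transfer via Corollary \ref{cornormal}) are correct, but the claim that identifying $\Spec(A_i)^{\an}$ with $T_i\setminus\{g=0\}$ ``reduces to'' this ring identity is a non sequitur: Bingener's analytification cannot be read off from an abstract isomorphism of $\cO(S)$-algebras, and the dictionary between the analytic covering $T_i$ and the algebra $\cO(T_i)$ is exactly the delicate point of the whole paper -- $p_*\cO_{T_i}$ need not be finitely presented (Remark \ref{remH1} (iii)), $\cO(T_i)$ need not even be a finite $\cO(S)$-module when $S$ is infinite-dimensional, and Proposition \ref{eqfinitepres} and Corollary \ref{surjO} only apply under finite presentation hypotheses (cf.\ Remark \ref{remnotfp} and Remark \ref{remH1} (i)). Note also that the connectedness of $\Spec(A_i)^{\an}$ is essentially \emph{equivalent} to the surjectivity you are proving in this case, so some genuine analytic input is unavoidable; the paper supplies it by compactifying $X\subset\oX$ via Zariski's Main Theorem, constructing on the Stein space $\oX^{\an}$ two functions with $a_1a_2=0$ detecting the given analytic clopen, transporting them to $\cO(\oX)$ by Corollary \ref{surjO}, and only then invoking Corollary \ref{cordisj}. (A smaller inaccuracy: the normalization of $\Spec(R[t]/f)^{\an}$ need not be $\bigsqcup_i T_i$, since factors of $K[t]/f$ killed by inverting $g$ also contribute components.)
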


We start with a lemma.


\begin{lem}
\label{lemred}
Let $S$ be a reduced Stein space and let $f:X\to\Spec(\cO(S))$ be an \'etale morphism.
If $X\neq\varnothing$, then $X^{\an}\neq\varnothing$.
\end{lem}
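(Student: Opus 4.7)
The plan is to produce a $\C$-point of $X$ lying over the maximal ideal $\km_{s_0} := \{a \in \cO(S) : a(s_0) = 0\}$ associated with some $s_0 \in S$; Lemma~\ref{lemCpoints} will then deliver a corresponding point of $X^{\an}$.

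Since $f$ is étale, it is flat and locally of finite presentation, hence open. As $X$ is nonempty, $f(X) \subset \Spec(\cO(S))$ is a nonempty open subset, so it contains a basic open $D(g)$ for some $g \in \cO(S)$ with $g \neq 0$. This is where reducedness enters: on a reduced complex space, a nonzero holomorphic function does not vanish identically, so there exists $s_0 \in S$ with $g(s_0) \neq 0$. Consequently $g \notin \km_{s_0}$, i.e.\ $\km_{s_0} \in D(g) \subset f(X)$, and one can choose some $x \in X$ with $f(x) = \km_{s_0}$.

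Since $\cO(S)/\km_{s_0} = \C$, the scheme-theoretic fiber $X \times_{\Spec(\cO(S))} \Spec(\C)$ is étale over $\Spec(\C)$, hence a disjoint union of copies of $\Spec(\C)$. Thus $x$ yields a $\C$-point of $X$ viewed as a $\C$-scheme via $\C \subset \cO(S) \to \cO(X)$, and Lemma~\ref{lemCpoints} converts it into the desired point of $X^{\an}$. No step should be delicate: the two key ingredients are the openness of flat morphisms locally of finite presentation, and the fact that for reduced $S$, the subset $S \subset \Spec(\cO(S))$ meets every nonempty open.
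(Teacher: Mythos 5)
Your proof is correct and takes essentially the same route as the paper's: étale implies open, so $f(X)$ contains a nonempty basic open $D(g)$; reducedness of $S$ gives a point $s_0$ with $g(s_0)\neq 0$, hence $\km_{s_0}\in f(X)$; and Lemma~\ref{lemCpoints} turns a point of $X$ over $\km_{s_0}$ into a point of $X^{\an}$. The only (harmless) extra detail you add is spelling out that the fiber over $\Spec(\cO(S)/\km_{s_0})=\Spec(\C)$ is a disjoint union of copies of $\Spec(\C)$.
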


\begin{proof}
As \'etale morphisms are open (see \cite[Lemma~\href{https://stacks.math.columbia.edu/tag/03WT}{03WT}]{SP}), the image of~$f$ is a nonempty open subset of $\Spec(\cO(S))$. This image therefore contains a nonempty affine open subset $U\subset\Spec(\cO(S))$ of the form $\Spec(\cO(S)[\frac{1}{a}])$ for some~$a\in\cO(S)$. As~$U$ is nonempty,  one has~$a\neq 0$.
Since $S$ 
is reduced,  there exists~$s\in S$ with~${a(s)\neq 0}$, so the ideal $\km_s\in \Spec(\cO(S))$ of functions vanishing on $s$ is a point of~$
U$. Any point of $X$ above $\km_s$ (there exists one as $U$ is included in the image of~$f$) corresponds to a point of $X^{\an}$ (see Lemma \ref{lemCpoints}), which is therefore nonempty.
\end{proof}

\begin{proof}[Proof of Theorem \ref{thmclopen}]
Assume that $S$ is reduced and let ${U,V\subset X}$ be closed open subsets such that~$U^{\an}=V^{\an}$. Then $(U\setminus(U\cap V))^{\an}=\varnothing$ and it follows from Lemma~\ref{lemred} that $U\setminus(U\cap V)=\varnothing$. Similarly, one proves that~$V\setminus(U\cap V)=\varnothing$. We deduce that~$U=V$. This proves (i).

From now on, we suppose that $S$ is normal and we prove (ii).  Assume first that~$X$ is affine (hence that  $f$ is of finite presentation). By Zariski's Main Theorem \cite[Lemma \href{https://stacks.math.columbia.edu/tag/0F2N}{0F2N}]{SP}, one can write $f=\of\circ j$, where $j:X\hookrightarrow\oX$ is an open immersion and $\of:\oX\to\Spec(\cO(S))$ is finite of finite presentation.  Set $Z:=\oX\setminus X$. The complex space $\oX^{\an}$ is finite over $S$ and hence Stein.  Suppose that $X^{\an}=U_1\cup U_2$ is the union of two disjoint open subsets. Since $Z^{\an}=\oX^{\an}\setminus X^{\an}$ is a closed analytic subset of $\oX^{\an}$,  the closure $\oU_1$ of $U_1$ in $\oX^{\an}$ is an analytic subset of $\oX^{\an}$ (a union of irreducible components of $\oX^{\an}$). 
 Let $\Sigma_1\subset \oX^{\an}\setminus\oU_1$ be a discrete subset containing exactly one point in each irreducible component of $\oX^{\an}\setminus\oU_1$. Let $\Sigma_2\subset U_1$ be a discrete subset containing exactly one point in each irreducible component of $U_1$.

Since $\oX^{\an}$ is Stein, one can find $a_1\in\cO(\oX^{\an})$ with $a_1=0$ on 
$\oU_1$ and $a_1(x)=1$ for all $x\in\Sigma_1$. Let $\cI\subset\cO_X$ be the annihilator of the image of $\cO_{\oX^{\an}}\xrightarrow{a_1}\cO_{\oX^{\an}}$; it is a coherent sheaf by \cite[Annex, \S 4.5]{GRCoherent}.
Since $\oX^{\an}$ is Stein, one can find~$a_2\in\cI(\oX^{\an})$ such that $a_2(x)=1$ for all $x\in\Sigma_2$. Then $a_1a_2=0$ in $\cO(\oX^{\an})$.

Using Corollary \ref{surjO}, we view $a_1$ and $a_2$ as elements of $\cO(\oX)$ such that ${a_1a_2=0}$.
By Corollary \ref{cordisj}, one can write $X=V_1\cup V_2$ as a union of disjoint open subsets such that $a_1|_{V_1}=0$ and $a_2|_{V_2}=0$.  Then~$X^{\an}$ is the union of its two disjoint open subsets $V_1^{\an}$ and $V_2^{\an}$.  Since $a_1|_{V_1^{\an}}=0$ and $a_2|_{V_2^{\an}}=0$, and since $a_1$ (\resp $a_2$) vanishes identically only on those irreducible component of $X^{\an}$ that are that are included (\resp not included) in $U_1$, one must have $V_1^{\an}=U_1$ and $V_2^{\an}=U_2$. This completes the proof of (ii) when $X$ is affine.

In general,  choose a cover $(X_i)_{i\in I}$ of $X$ by affine open subsets.  If $U\subset X^{\an}$ is closed and open, there exists $V_i\subset X_i$ closed and open such that $V_i^{\an}=X_i^{\an}\cap U$ (by~(ii) in the affine case), and the $V_i$ glue by (i) applied on the $X_i\cap X_j$.
\end{proof}

\begin{rems}
\label{remH0}
(i)
Theorem \ref{thmclopen} (i) fails in general if $S$ is not assumed to be reduced.  To see it, take $S:=\bigsqcup_{n\in\N}\Spec(\C[x]/(x^{n+1}))^{\an}$ (so $\cO(S)=\prod_{n\in\N}\C[x]/(x^{n+1})$) and define $X:=\Spec(\cO(S)[\frac{1}{x}])$. Then $X^{\an}=S\setminus\{x=0\}=\varnothing$. But the scheme $X$ is nonempty since $x\in\cO(S)$ is not nilpotent (as $x^n$ does not vanish in the $n$th factor).

(ii)
Theorem \ref{thmclopen} (ii) fails in general if $S$ is reduced but not normal. To see it, set $S_1=S_2=\C$. Let $S$ be the complex space obtained from $S_1\sqcup S_2$ by gluing, for all $i\geq 1$, the points $i\in S_1$ and $i\in S_2$ with a tangency of order $i-1$, so
$$\cO(S)=\{(f_1,f_2)\in\cO(S_1)\times\cO(S_2)\mid f_1^{(j)}(i)=f_2^{(j)}(i)\textrm{ for } i\geq 1\textrm{ and }0\leq j\leq i-1\}.$$
The complex space $S$ is Stein because so is its normalization $S_1\sqcup S_2$ (see \cite[Theorem 1]{Narasimhannormalization}).  Let $f_1\in\cO(S_1)$ be a holomorphic function vanishing only on the integers $i\geq 1$, at order exactly $1$. Let $f_2\in\cO(S_2)$ be the same function viewed on~$S_2$. Let $f\in\cO(S)$ be the function induced by $(f_1,f_2)$. Set $X:=\Spec(\cO(S)[\frac{1}{f}])$. 

Then $X^{\an}=S\setminus\{f=0\}$ has two connected components. 
  To conclude, we show that~$X$ is connected. Take $g\in\cO(X)$ such that $g^2=g$.  Write $g=\frac{h}{f^N}$ for some~$h\in\cO(S)$ and some $N\geq 1$.  Then $h(h-f^N)=0$ in $\cO(S)$.  
It follows that~$h|_{S_i}$ is either equal to $0$ or to $f_i^N$ (for $i\in\{1,2\}$). If $h|_{S_1}=h|_{S_2}=0$, then $h=0$ and~$g=0$. If $h|_{S_1}=f_1^N$ and $h|_{S_2}=f_2^N$, then $h=f^N$ and $g=1$.  Otherwise, 
one has $h|_{S_1}=0$ and $h|_{S_2}=f_2^N$ (or $h|_{S_1}=f_1^N$ and $h|_{S_2}=0$).  Both cases are absurd since $h|_{S_1}$ and $h|_{S_2}$ then do not coincide at order $i-1$ at the integer $i$ if $i\geq N+1$.

(iii)
The map $\Clop(\Spec(\cO(S)))\to\Clop(S)$ is bijective for any Stein space~$S$, as follows from Proposition \ref{eqfinitepres}.  Alternatively, one can argue that $\Clop(\Spec(\cO(S)))$ and $\Clop(S)$ are both in bijection with the set of~${a\in\cO(S)}$ such that $a^2=a$.
This contrasts with the examples of (i) and (ii).

(iv)
It is important to formulate Theorem \ref{thmclopen} (ii) in terms of closed and open subsets, and not in terms of connected components. Indeed, if~$S$ is a reduced countable Stein space and $X=\Spec(\cO(S))$, the map $S=X^{\an}\to X$ does not induce a surjection on sets of connected components (maximal ideals of~$\cO(S)$ associated with nonprincipal ultrafilters on $S$ are not in the image).
\end{rems}

Recall that there is a morphism of sites $\varepsilon :S_{\cl}\to \Spec(\cO(S))_{\et}$ (see \S\ref{paranal}).

\begin{cor}
\label{compH0}
Let $S$ be a Stein space.  Fix $m\geq 1$. The natural morphism $\Z/m\to\varepsilon_*\Z/m$ of \'etale sheaves on $\Spec(\cO(S))$ is
\begin{enumerate}[(i)]
\item injective if $S$ is reduced;
\item an isomorphism if $S$ is normal.
\end{enumerate}
\end{cor}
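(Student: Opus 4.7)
The plan is to verify both assertions sectionwise on the \'etale site of $\Spec(\cO(S))$, translating everything into statements about clopen subsets, at which point Theorem \ref{thmclopen} will apply directly. Given an \'etale morphism $f:X\to\Spec(\cO(S))$ (automatically locally of finite presentation, so $X^{\an}$ exists), the sections of the constant sheaf $\Z/m$ over $X$ are the locally constant $\Z/m$-valued functions on $X$, equivalently the decompositions $X=\bigsqcup_{i\in\Z/m}X_i$ into clopen subsets labelled by $\Z/m$; likewise, sections of $\varepsilon_*\Z/m$ over $X$ are sections of $\Z/m$ over $X^{\an}$, which identify with decompositions $X^{\an}=\bigsqcup_{i\in\Z/m}U_i$. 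Under this dictionary, the morphism in question sends $(X_i)_i$ to $(X_i^{\an})_i$.

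For (i), suppose $(X_i)_i$ is a decomposition of $X$ whose image vanishes, i.e.\ $X_i^{\an}=\varnothing$ for every $i\neq 0$. Theorem \ref{thmclopen} (i), applied to the clopens $X_i$ and $\varnothing$ (both with empty analytification), forces $X_i=\varnothing$ for $i\neq 0$, so the section is zero. Sectionwise injectivity at every \'etale $X$ yields injectivity of the sheaf map.

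For (ii), $S$ normal implies $S$ reduced, so (i) remains available. I will establish sectionwise bijectivity. Starting from a decomposition $X^{\an}=\bigsqcup_i U_i$, Theorem \ref{thmclopen} (ii) produces, for each $i$, a unique clopen $X_i\subset X$ with $X_i^{\an}=U_i$. To see that $(X_i)_i$ is itself a decomposition of $X$, note that $X_i\cap X_j$ and $\bigsqcup_i X_i$ are clopen in $X$, and that analytification commutes with fiber products and finite disjoint unions (this is immediate from the defining adjunction of $(-)^{\an}$); hence $(X_i\cap X_j)^{\an}=U_i\cap U_j=\varnothing$ and $(\bigsqcup_i X_i)^{\an}=X^{\an}$, and (i) upgrades these equalities to $X_i\cap X_j=\varnothing$ and $\bigsqcup_i X_i=X$. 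This gives a two-sided inverse to the map on sections.

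I do not expect a genuine obstacle: the content of the corollary is essentially a bookkeeping reformulation of Theorem \ref{thmclopen} in the language of the constant sheaf $\Z/m$. The only place where some care is needed is the second half of (ii), namely checking that the lifts $X_i$ of a partition $(U_i)_i$ assemble into a partition of $X$; this is precisely where (i) is used a second time, and it is the main conceptual point of the argument.
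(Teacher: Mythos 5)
Your proof is correct and follows essentially the same route as the paper's: identify sections of $\Z/m$ on an \'etale $X$ (resp.\ on $X^{\an}$) with ordered partitions into $m$ clopen subsets, then invoke Theorem \ref{thmclopen}. The paper's proof is terser -- it simply asserts that the two sets of ordered partitions coincide by Theorem \ref{thmclopen} -- whereas you carefully spell out the step that the bijection on clopens actually sends partitions to partitions and lifts them, which is where part (i) of Theorem \ref{thmclopen} is reused inside the proof of (ii). This extra care is warranted, and the mechanism you use (analytification respects fiber products and preserves unions of clopens, identifiable via $X^{\an}\cong X(\C)$) is sound.
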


\begin{proof}
Evaluating the morphism $\Z/m\to\varepsilon_*\Z/m$ on an \'etale $\cO(S)$-scheme $X$ gives rise to the pull-back morphism
\begin{equation}
\label{evH0}
H^0_{\et}(X,\Z/m)\xrightarrow{\varepsilon^*} H^0(X^{\an},\Z/m).
\end{equation}
 The right-hand side of (\ref{evH0}) coincides with the set of ordered partitions of $X^{\an}$ into~$m$ closed and open subsets.  The left-hand side of (\ref{evH0}) coincides with the set of ordered partitions of~$X$ into $m$ closed and open subsets (as the Zariski sheaf~$\Z/m$ is already an \'etale sheaf, by \'etale descent).  Both sides coincide by Theorem \ref{thmclopen}.
\end{proof}

\section{Generic algebraization of topological coverings}
\label{secH1}

Let $S$ be a normal Stein space. Let $X$ an \'etale $\cO(S)$-scheme of finite presentation.
The analytification functor does not induce an equivalence of categories between degree $d$ finite \'etale covers of $X$ and degree $d$ topological coverings of~$X^{\an}$ (see Remark~\ref{remH1} (i)). Together,  Corollary \ref{cormorphcov} and Theorem \ref{thmconstr} show that this however holds in the limit where one inverts more and more non\-zero\-di\-vi\-sors in~$\cO(S)$.

\subsection{Descending holomorphic functions along analytic coverings}

\begin{lem}
\label{lemconductor}
Let $p:T\to S$ be an analytic covering of degree $1$ between reduced Stein spaces. There exists a non\-zero\-di\-vi\-sor $b\in\cO(S)$ such that 
$b\cdot\cO(T)\subset\cO(S)$.
\end{lem}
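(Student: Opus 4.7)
The plan is to identify the set of $b\in\cO(S)$ satisfying $b\cdot\cO(T)\subset\cO(S)$ with the global sections of a suitable coherent ideal sheaf on $S$ whose zero locus is nowhere dense, and then to extract a non-zero-divisor from it.

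First, I would introduce the coherent $\cO_S$-module $\cC:=p_*\cO_T/\cO_S$, which makes sense because $p$ is finite (so $p_*\cO_T$ is coherent) and a modification of degree $1$ (so the natural map $\cO_S\to p_*\cO_T$ is injective). The support of $\cC$ is contained in the nowhere dense closed analytic subset $\Sigma\subset S$ over which $p$ fails to be an isomorphism, as $\cC_s=0$ whenever $p$ is a local biholomorphism at $s$. I would then consider the coherent ideal sheaf $\cI:=\mathrm{Ann}_{\cO_S}(\cC)$; its zero locus equals $\mathrm{supp}(\cC)$ and is therefore also nowhere dense. By construction, any $b\in\cI(S)$ satisfies $b\cdot p_*\cO_T\subset\cO_S$, which upon taking global sections yields $b\cdot\cO(T)\subset\cO(S)$.

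The problem thus reduces to exhibiting a non-zero-divisor in $\cI(S)$. Let $(S_j)_{j\in J}$ be the irreducible components of $S$, a countable family since $S$ is second-countable. Each $\cO(S_j)$ is an integral domain: indeed $S_j$ is irreducible and reduced, and so embeds into its normalization $\wS_j$, a connected normal Stein space whose ring of holomorphic functions is a domain by Lemma~\ref{Steinnormal}. It follows that $b\in\cO(S)$ is a non-zero-divisor if and only if $b|_{S_j}\neq 0$ in $\cO(S_j)$ for every $j\in J$. Since $V(\cI)$ is nowhere dense in $S$ and each $S_j$ has nonempty interior in $S$ (namely $S_j\setminus\bigcup_{k\neq j}S_k$, using the local finiteness of the family $(S_j)_{j\in J}$), we have $V(\cI)\cap S_j\subsetneq S_j$ for each $j$. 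Picking $s_j\in S_j\setminus V(\cI)$, Cartan's theorem~A applied to the coherent sheaf $\cI$ on the Stein space $S$ produces a global section of $\cI$ not vanishing at $s_j$, and hence not vanishing identically on $S_j$.

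The main obstacle is to exhibit a single $b\in\cI(S)$ with $b|_{S_j}\neq 0$ for all $j$ simultaneously. Since $J$ may well be infinite and $\cO(S)$ is non-noetherian, no finite linear combination of the sections constructed above will do. I would resolve this by a Baire category argument. Endowing $\cO(S)$ with its natural Fr\'echet topology of uniform convergence on compact sets, $\cI(S)$ is closed in $\cO(S)$ as the kernel of the continuous map $\cO(S)\to(\cO_S/\cI)(S)$ between Fr\'echet spaces (using Cartan's theorem~B to endow $(\cO_S/\cI)(S)$ with its natural Fr\'echet structure), hence itself Fr\'echet. Each restriction map $\cI(S)\to\cO(S_j)$ is continuous, so the subspace $V_j:=\{b\in\cI(S):b|_{S_j}=0\}$ is closed and, by the previous paragraph, proper. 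The Baire category theorem then guarantees that the countable union $\bigcup_{j\in J}V_j$ has empty interior in the Fr\'echet space $\cI(S)$, so any element in its complement provides the desired non-zero-divisor.
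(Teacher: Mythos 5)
Your proof is correct and follows essentially the same route as the paper: the annihilator ideal sheaf $\cI$ of the cokernel of $\cO_S\to p_*\cO_T$, its nowhere dense zero locus, and a global section of $\cI$ not vanishing identically on any irreducible component. The only difference is that you spell out, via Cartan's Theorem~A and a Baire category argument in the Fr\'echet space $\cI(S)$, the existence of such a section, which the paper simply asserts with ``such elements exist because $S$ is Stein'' (and itself proves by the same Baire argument elsewhere, e.g.\ in Lemma~\ref{lemsnc}).
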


\begin{proof}
Let $\cI\subset\cO_S$ be the annihilator of the cokernel of the morphism $\cO_S\to p_*\cO_T$ induced by pull-back. It is a coherent ideal sheaf (see \cite[Annex, \S 4.5]{GRCoherent}).  Since~$p$ has degree $1$, the complex subspace of $S$ defined by $\cI$ is nowhere dense in $S$.  It suffices to choose $b$ to be an element of $\cI(S)$ that is not identically zero on any irreducible component of $S$ (such elements exist because $S$ is Stein).
\end{proof}

\begin{lem}
\label{multdescend}
Let $p:T\to S$ be a surjective
analytic covering of reduced Stein spaces. For all non\-zero\-di\-vi\-sor
 $b\in\cO(T)$, there exists a non\-zero\-di\-vi\-sor $a\in\cO(T)$ with $ab\in\cO(S)$.
\end{lem}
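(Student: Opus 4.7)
The plan is to produce $a$ sheaf-theoretically via a conductor-type ideal on $S$. Since $p$ is finite, $\cF:=p_*\cO_T$ is a coherent sheaf of $\cO_S$-algebras, containing $\cO_S$ as the image of its unit map. I would begin by showing that multiplication by $b$ defines an injective endomorphism $b:\cF\to\cF$ of sheaves. This amounts to checking at each stalk $\cF_s=\prod_{t\in p^{-1}(s)}\cO_{T,t}$ that $b$ is a non\-zero\-di\-vi\-sor, which one verifies by passing to the normalization $T^\nu\to T$: its connected components are normal Stein spaces, so have domain rings of holomorphic functions by Lemma~\ref{Steinnormal}, and the assumption that $b\in\cO(T)$ does not vanish identically on any global irreducible component of $T$ then translates into $b$ being nonzero on every analytic branch at every point, hence a non\-zero\-di\-vi\-sor in each local analytic ring $\cO_{T,t}$.

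Granting this injectivity, set $Q:=\cF/b\cF$, a coherent sheaf on $S$, and let $\cI\subset\cO_S$ be its annihilator, a coherent ideal sheaf. The inclusion $\cI\cdot\cF\subset b\cF$, evaluated on the unit section $1\in\cF$, gives $\cI\subset b\cF$ inside $\cF$, and thus $\cI(S)\subset b\cdot\cO(T)\cap\cO(S)$: concretely, every $c\in\cI(S)$ is of the form $c=ab$ for a unique $a\in\cO(T)$, and if $c$ is a non\-zero\-di\-vi\-sor in $\cO(T)$, then so is $a$ (any annihilator $a'$ of $a$ satisfies $a'c=0$). The problem thereby reduces to producing some $c\in\cI(S)$ that is a non\-zero\-di\-vi\-sor in $\cO(T)$.

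The support of $Q$ equals $p(\{b=0\})\subset S$, a closed analytic subset by the properness of $p$ (Remmert's theorem). It is nowhere dense in $S$: the locus $\{b=0\}$ is nowhere dense in $T$ since $b$ is a non\-zero\-di\-vi\-sor, and each irreducible component of $T$ is sent dominantly onto an irreducible component of $S$ by the generically \'etale finite surjection $p$, so the image of a nowhere dense analytic subset stays nowhere dense (being of strictly smaller dimension). Consequently $\cI$ agrees with $\cO_S$ generically on every irreducible component of $S$. Since $S$ is second-countable, it has at most countably many irreducible components $(S_l)_l$, and a Baire category argument in the Fr\'echet space $\cI(S)$---each subspace $\{c\in\cI(S)\mid c|_{S_l}=0\}$ being proper closed by Cartan's Theorem~A applied at a generic point of $S_l$ lying outside $\mathrm{supp}(Q)$---produces some $c\in\cI(S)$ not vanishing identically on any $S_l$. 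Such $c$ is a non\-zero\-di\-vi\-sor in $\cO(S)$, and remains one in $\cO(T)$ since irreducible components of $T$ map dominantly to irreducible components of $S$.

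I expect the main obstacle to lie in the stalk-level injectivity of multiplication by $b$: the analytic local rings $\cO_{T,t}$ fail to be domains precisely when $T$ has several branches at $t$, so ``$b(t)\neq 0$'' does not by itself give that $b$ is a non\-zero\-di\-vi\-sor in $\cO_{T,t}$. The normalization argument above circumvents this by embedding each $\cO_{T,t}$ into a finite product of analytic local domains, where the property of being a non\-zero\-di\-vi\-sor becomes immediate.
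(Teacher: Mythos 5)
Your proof is correct, and it takes a genuinely different route from the paper's. The paper first reduces to the case where $S$ and $T$ are normal and connected via Lemma~\ref{lemconductor} applied to the normalization maps, then passes to the Galois closure $\whT\to T$ of $\cM(T)/\cM(S)$ and takes the norm $c=\prod_{\gamma\in\Gamma}\gamma^*q^*b$, which lies in $\cO(\whT)\cap\cM(S)=\cO(S)$ by normality; the cofactor $a=c/q^*b$ is then the required nonzerodivisor in $\cO(T)$. You instead work directly on $S$ with the conductor-type coherent ideal $\cI=\mathrm{Ann}_{\cO_S}(p_*\cO_T/b\,p_*\cO_T)$ and select a nonzerodivisor $c\in\cI(S)\subset b\,\cO(T)$ by a Baire category argument in the Fr\'echet space of global sections, using Cartan's Theorem~A to see each vanishing locus condition cuts out a proper closed subspace. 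In effect you generalize the mechanism of Lemma~\ref{lemconductor} (whose proof in the paper is exactly an annihilator-ideal argument of this kind) to handle $b\in\cO(T)$ rather than a degree-$1$ covering, avoiding both the preliminary normalization reduction and the Galois-theoretic construction; the paper's norm trick, once the normality reduction is in hand, gives the witness $a$ more explicitly. One point you gloss slightly is the nowhere-density of $p(\{b=0\})$: it is cleanest to argue that over each irreducible component $D$ of $S$ only finitely many components of $T$ dominate $D$, and the images under the finite map $p$ of the (nowhere dense) sets $\{b=0\}$ restricted to these components are proper analytic subsets of $D$ of strictly smaller dimension, so their finite union cannot exhaust $D$. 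Likewise the stalkwise injectivity of $b$ should be stated as: $b$ is a nonzerodivisor in each $\cO_{T,t}$ because a local branch on which $b$ vanished would force $b$ to vanish on the whole irreducible component of $T$ carrying that branch, as you see after lifting to the (normal, hence locally irreducible) normalization.
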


\begin{proof}
Making use of Lemma \ref{lemconductor} applied to the normalization morphisms of $S$ and~$T$,  one can reduce to the case where both $S$ and $T$ are normal. One can then further assume that $S$ and $T$ are connected.
 Let $F$ be the Galois closure of~$\cM(T)$ over $\cM(S)$.  Let ${q:\whT\to T}$ be the surjective analytic covering of normal Stein spaces associated with the field extension~$\cM(T)\subset F$ (see~(\ref{eqext})).  By functoriality of~(\ref{eqext}), the Galois group~$\Gamma:=\Gal(F/\cM(S))$ acts on~$\whT$. Consider the ele\-ments~${c:=\prod_{\gamma\in\Gamma}\gamma^*q^*b}$ and~$a:=\frac{c}{q^*b}$ of $\cO(\whT)$. One has $c\in \cO(\whT)\cap\cM(S)=\cO(S)$ by normality of $S$ and similarly $a\in\cO(\whT)\cap\cM(T)=\cO(T)$ by normality of $T$.
\end{proof}

\subsection{Quasi-finite \texorpdfstring{$\cO(S)$}{O(S)}-schemes}
\label{parqf}

We recall (for use here and later in \S\ref{parbasechange}) that a ring~$F$ is said to be \textit{absolutely flat} if all $F$-modules are flat. 
We refer to \cite[Proposition~4.41]{Chromatic} for a list of equivalent properties and references.
Any product of fields is absolutely flat, as one easily checks using the characterization \cite[Proposition~4.41~(4)]{Chromatic}. This applies to the ring $F=\cM(S)=\prod_{i\in I} \cM(S_i)$ of meromorphic functions on a reduced Stein space $S$ with irreducible components $(S_i)_{i\in I}$.

\begin{lem}
\label{finiteF}
Any quasi-separated and quasi-finite scheme $X$ over an absolutely flat ring $F$ is finite.
\end{lem}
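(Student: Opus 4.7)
Since every $F$-module is flat, $F$ is reduced of Krull dimension zero: every prime ideal is maximal, every stalk $\cO_{\Spec F, s}$ coincides with the residue field $\kappa(s)$, and every principal ideal $(a) \subset F$ is generated by an idempotent, so $D(a) \subset \Spec F$ is clopen. Consequently, $\Spec F$ is a Stone space (compact, Hausdorff, totally disconnected), and every open subset is a union of clopens. My plan is to spread the fiberwise finiteness of $f : X \to \Spec F$ to a clopen neighborhood of each point of $\Spec F$, and then patch using quasi-compactness of $\Spec F$.

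Fix $s \in \Spec F$. Since the fiber $X_s$ is quasi-separated and quasi-finite over the field $\kappa(s)$, it is a finite disjoint union of spectra of finite local $\kappa(s)$-algebras, hence a finite $\kappa(s)$-scheme with a finite set of points $x_1, \dots, x_n$. By the local form of Zariski's Main Theorem applied at each $x_i$ (see \cite[Lemma \href{https://stacks.math.columbia.edu/tag/03I1}{03I1}]{SP}), one obtains open affine neighborhoods $W_i \subset X$ of $x_i$ and $V_i \subset \Spec F$ of $s$ such that $W_i \to V_i$ is finite and $x_i$ is the unique preimage of $s$ in $W_i$. Setting $V := \bigcap_i V_i$ and assembling the $W_i|_V$'s, the complement $f^{-1}(V) \setminus \bigcup_i W_i|_V$ is closed in $f^{-1}(V)$ with empty fiber over $s$; using that each $W_i|_V \to V$ is finite (hence closed) and invoking the quasi-separated hypothesis to ensure the intersections $W_i \cap W_j$ are quasi-compact, one shows that the image of this complement in $V$ is a closed subset not containing $s$. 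Removing it yields an open $U \ni s$ in $\Spec F$ over which $f^{-1}(U) \to U$ is finite.

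Because $\Spec F$ is a Stone space, $U$ contains a clopen neighborhood $V_s = D(e_s)$ of $s$ for some idempotent $e_s \in F$, and the restriction $f^{-1}(V_s) \to V_s$ remains finite. The clopens $\{V_s\}_{s \in \Spec F}$ form an open cover of the quasi-compact space $\Spec F$, from which we extract a finite subcover $V_{s_1}, \dots, V_{s_N}$. Since being a finite morphism is local on the target for the Zariski topology, the morphism $f : X \to \Spec F$ is finite.

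\textbf{Main obstacle.} The delicate step is the Zariski's Main Theorem application: the standard statement assumes separatedness, while here only quasi-separatedness is available. The quasi-separated hypothesis is precisely what is needed to patch the local affine finite pieces $W_i$ around the finitely many points of $X_s$ into a single finite morphism over a neighborhood of $s$. An alternative route, which avoids this delicate patching, would be to first prove that $f$ is affine by using the Stone-space structure of $\Spec F$ (e.g.\ via Pierce sheaves), reducing to $X = \Spec A$ with $A$ a finitely generated $F$-algebra, and then bounding the fiber dimensions $\dim_{\kappa(s)}(A \otimes_F \kappa(s))$ globally via an ultrafilter/compactness argument on $\Spec F$ to conclude that $A$ is a finite $F$-module.
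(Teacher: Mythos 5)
The step that does the real work---``by the local form of Zariski's Main Theorem applied at each $x_i$ (\cite[Lemma \href{https://stacks.math.columbia.edu/tag/03I1}{03I1}]{SP}), one obtains open affine neighborhoods $W_i \subset X$ of $x_i$ and $V_i \subset \Spec(F)$ of $s$ such that $W_i \to V_i$ is finite''---is not what the cited lemma says, and as stated it is not a true statement about general bases. That lemma produces an \emph{elementary \'etale neighbourhood} of $(\Spec(F),s)$, not a Zariski open $V_i \ni s$; quasi-finite morphisms only decompose into ``finite $\sqcup$ rest'' \'etale-locally (or after base change to a henselian local ring), and Zariski-locally on the base the claim fails in general. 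To make it true here you must genuinely exploit absolute flatness: $\cO_{\Spec(F),s}=\kappa(s)=\colim_e F_e$ over the idempotents $e$ with $s\in D(e)$, and the finiteness of $X_s$ has to be spread out along this filtered colimit to some clopen $D(e)$. But the standard spreading-out lemmas (\eg \cite[Lemmas \href{https://stacks.math.columbia.edu/tag/01ZO}{01ZO} and \href{https://stacks.math.columbia.edu/tag/081E}{081E}]{SP}) require finite \emph{presentation}, which is not available ($X$ is only of finite type); one can circumvent this, \eg by spreading out monic equations for finitely many algebra generators (a relation holding in a filtered colimit holds at a finite stage), but no such argument appears in your proposal, and your ``main obstacle'' paragraph misdiagnoses the difficulty as separatedness. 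The alternative route you sketch (affineness via Pierce sheaves, then an ultrafilter bound on fiber dimensions) has the same missing ingredient: bounding $\dim_{\kappa(s)}(A\otimes_F\kappa(s))$ does not by itself make $A$ a finite $F$-module without a finite-presentation or integrality argument.

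Two further remarks. In the patching step, the correct reason the image of the complement (and of the overlaps $W_i\cap W_j$) can be excised is that these sets are quasi-compact and $\Spec(F)$ is Hausdorff, so their images are closed and miss $s$; moreover you must shrink until the $W_i$ become pairwise disjoint over the base, since a union of opens each finite over the base need not be finite (think of a doubled section)---this is exactly where quasi-separatedness enters, and your text only gestures at it. By contrast, the paper's proof is two lines and bypasses all of this: it checks the valuative criterion of properness \cite[Lemma \href{https://stacks.math.columbia.edu/tag/0BX5}{0BX5}]{SP}; since every local ring of $F$ is a field, any test diagram over a valuation ring factors through a field, where the statement is classical, and then properness together with quasi-finiteness gives finiteness. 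If you want to keep your local-to-global strategy, it can be repaired along the lines above, but you should expect it to be substantially longer than the valuative-criterion argument.
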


\begin{proof}
We prove that $X$ is proper over $F$ using the valuative criterion of properness \cite[Lemma \href{https://stacks.math.columbia.edu/tag/0BX5}{0BX5}]{SP}. As all the local rings of $F$ are fields (see \cite[Lemma~\href{https://stacks.math.columbia.edu/tag/092F}{092F}]{SP}), this reduces us to the case where $F$ is a field, which is well-known.
\end{proof}

\begin{lem}
\label{finite}
Let $S$ be a reduced Stein space. Let $X$ be a quasi-finite $\cO(S)$\nobreakdash-scheme of finite presentation. Then there exists a non\-zero\-di\-vi\-sor~${a\in\cO(S)}$ such that the $\cO(S)[\frac{1}{a}]$-scheme $X_{\cO(S)[\frac{1}{a}]}$ is finite.
\end{lem}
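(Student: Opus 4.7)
The plan is to first establish that $X$ becomes finite after base change to the total ring of fractions $\cM(S)$, and then to spread this finiteness out to an open subscheme of $\Spec(\cO(S))$ cut out by inverting a single non-zero-divisor.

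For the first step, I would observe that $X_{\cM(S)}\to\Spec(\cM(S))$ is still quasi-finite (base changes of quasi-finite morphisms are quasi-finite) and of finite presentation, hence in particular quasi-separated. Since $S$ is reduced, writing $S=\bigsqcup_{i\in I} S_i$ for its irreducible components (or rather, passing to its normalization and using that this does not change $\cM(S)$), one has $\cM(S)=\prod_{i\in I}\cM(S_i)$, a product of fields, so $\cM(S)$ is absolutely flat as noted in \S\ref{parqf}. Lemma~\ref{finiteF} then forces $X_{\cM(S)}$ to be finite over $\cM(S)$.

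The second step is a spreading-out argument. The total ring of fractions is the filtered colimit $\cM(S)=\colim_a\cO(S)[\frac{1}{a}]$, where $a$ runs over the non-zero-divisors of $\cO(S)$ (a directed set, since the product of two non-zero-divisors is a non-zero-divisor). Correspondingly, $\Spec(\cM(S))=\lim_a\Spec(\cO(S)[\frac{1}{a}])$ is a cofiltered limit of affine schemes with affine transition maps. Since $X\to\Spec(\cO(S))$ is of finite presentation and finiteness of morphisms descends to some finite stage in such limits (see \cite[Lemma~\href{https://stacks.math.columbia.edu/tag/01ZM}{01ZM}]{SP}), the finiteness of $X_{\cM(S)}$ implies that there is a non-zero-divisor $a\in\cO(S)$ with $X_{\cO(S)[\frac{1}{a}]}$ already finite over $\cO(S)[\frac{1}{a}]$. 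This $a$ is the desired one.

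I do not anticipate any serious obstacle: the non-noetherianness of $\cO(S)$ is sidestepped by the limit description of $\cM(S)$, Lemma~\ref{finiteF} handles the passage to the generic ring (this is the conceptual heart of the argument and the reason absolute flatness of $\cM(S)$ was recorded in \S\ref{parqf}), and the finite-presentation hypothesis on $X$ guarantees that the standard spreading-out results apply. The only mild care needed is to check that $\cM(S)$ is indeed absolutely flat, which is immediate from its description as a product of fields.
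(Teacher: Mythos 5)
Your proposal is correct and matches the paper's argument: the paper likewise applies Lemma~\ref{finiteF} to $X_{\cM(S)}$ (using that $\cM(S)$ is absolutely flat and that finite presentation gives quasi-separatedness) and then concludes by the standard spreading-out/limit argument over $\cM(S)=\colim_a\cO(S)[\frac{1}{a}]$, citing the corresponding Stacks project lemma. The only difference is the exact Stacks tag invoked for the limit step, which is immaterial.
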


\begin{proof}
As $\cM(S)$ is absolutely flat, the $\cM(S)$-scheme $X_{\cM(S)}$ is finite by Lemma~\ref{finiteF} (note that $X_{\cM(S)}$ is quasi-separated because it is of finite presentation). 
The lemma therefore follows from a limit argument (see \cite[Lemma \href{https://stacks.math.columbia.edu/tag/01ZO}{01ZO}]{SP}).
\end{proof}

\subsection{From analytic coverings to finite \'etale covers}

\begin{lem}
\label{lemfiniteflat}
Let $p:T\to S$ be an analytic covering of degree $d$ between reduced Stein spaces.
There exists a morphism $f:X\to\Spec(\cO(S))$ such that:
\begin{enumerate}[(i)]
\item $f$ is of finite presentation and finite flat of degree $d$;
\item $f^{\an}:X^{\an}\to S$ is an analytic covering of reduced Stein spaces;
\item the $\cM(S)$-algebras $\cM(X^{\an})$ and $\cM(T)$ are isomorphic;
\item there is a non\-zero\-di\-vi\-sor $c\in\cO(S)$ such that~$f$ is \'etale over $\Spec(\cO(S)[\frac{1}{c}])$.
\end{enumerate}
\end{lem}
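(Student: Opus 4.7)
The plan is to realize the analytic covering as the analytification of an explicit finite free $\cO(S)$-subalgebra $A\subset\cO(T)$. Since $\cM(S)$ is a product of fields (as $S$ is reduced) and $\cM(T)$ is a finite \'etale $\cM(S)$-algebra of degree $d$ (by (\ref{eqext}) applied componentwise on the irreducible components of $S$, using normalization where needed), one can choose an $\cM(S)$-basis $u_1=1,u_2,\dots,u_d$ of $\cM(T)$ and record its multiplication via $u_iu_j=\sum_k c_{ij}^k u_k$ with $c_{ij}^k\in\cM(S)$. To clear denominators, pick a non\-zero\-di\-vi\-sor $b_0\in\cO(T)$ with $b_0u_i\in\cO(T)$ for all $i$; Lemma \ref{multdescend} then furnishes a non\-zero\-di\-vi\-sor $b_1\in\cO(S)$ with $b_1u_i\in\cO(T)$, while writing the $c_{ij}^k$ as fractions in $\cO(S)$ provides a non\-zero\-di\-vi\-sor $b_2\in\cO(S)$ with $b_2c_{ij}^k\in\cO(S)$. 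Setting $b:=b_1b_2$, $v_1:=1$, and $v_i:=bu_i$ for $i\geq 2$, the identity
\[
v_iv_j \;=\; b^2c_{ij}^1\cdot v_1\;+\;\sum_{k\geq 2} bc_{ij}^k\cdot v_k
\]
(with all coefficients in $\cO(S)$) shows that $A:=\cO(S)v_1\oplus\cdots\oplus\cO(S)v_d\subset\cO(T)$ is an $\cO(S)$-subalgebra; the $v_i$ are $\cO(S)$-linearly independent because $b$ is a non\-zero\-di\-vi\-sor, so $A$ is $\cO(S)$-free of rank $d$.

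Let $f:X:=\Spec A\to\Spec\cO(S)$. Then $f$ is finite flat of rank $d$ and of finite presentation by \cite[Lemma~\href{https://stacks.math.columbia.edu/tag/0D46}{0D46}]{SP}, proving (i). The identification $A\otimes_{\cO(S)}\cM(S)=\bigoplus_i\cM(S)u_i=\cM(T)$ shows that $A_{\cM(S)}$ is finite \'etale over $\cM(S)$; a standard spread-out argument then produces a non\-zero\-di\-vi\-sor $c\in\cO(S)$ such that $A[\frac{1}{c}]$ is finite \'etale of degree $d$ over $\cO(S)[\frac{1}{c}]$, giving (iv). By Proposition \ref{eqfinitepres}, $f^{\an}:X^{\an}\to S$ is a finite flat holomorphic map of degree $d$ that restricts to a local biholomorphism above the dense open $S\setminus\{c=0\}$; the only remaining piece of (ii) is the reducedness of $X^{\an}$.

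The main obstacle is thus this reducedness, which I establish by a dominance argument. The inclusion $A\hookrightarrow\cO(T)$ induces, via the universal property (\ref{defan}), a morphism $g:T\to X^{\an}$ of $S$-complex spaces, whose induced map on global sections $g^*:\cO(X^{\an})=A\hookrightarrow\cO(T)$ is (using Corollary \ref{surjO}) precisely this inclusion. The nilradical $\cN\subset\cO_{X^{\an}}$ is a coherent ideal, and $X^{\an}$ is Stein (being finite over $S$), so by Cartan's Theorem A it suffices to prove $H^0(X^{\an},\cN)=0$. Any $\varphi\in H^0(X^{\an},\cN)\subset\cO(X^{\an})=A$ is pointwise nilpotent on $X^{\an}$; for each $t\in T$, the local ring map $\cO_{X^{\an},g(t)}\to\cO_{T,t}$ preserves nilpotence, so $g^*\varphi=\varphi\in\cO(T)$ is pointwise nilpotent on $T$, hence zero as $T$ is reduced, forcing $\varphi=0$ in $A$. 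This proves (ii); and (iii) then follows since $\cM(X^{\an})=\Frac(\cO(X^{\an}))=\Frac(A)=A\otimes_{\cO(S)}\cM(S)=\cM(T)$, where the third equality uses that $A\otimes_{\cO(S)}\cM(S)$, being finite \'etale over the product of fields $\cM(S)$, is itself a product of fields equal to its own total fraction ring.
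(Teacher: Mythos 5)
Your proof is correct, but it takes a genuinely different route from the paper's. The paper works with a primitive element: it writes $\cM(T_i)=\cM(S_i)[t]/(P_i)$ over each irreducible component, assembles a monic $P\in\cM(S)[t]$, rescales ($t\mapsto a^dP(t/a)$) so that $P\in\cO(S)[t]$ stays monic, and sets $X:=\Spec(\cO(S)[t]/(P))$; reducedness is then read off algebraically from the injection $\cO(X)\hookrightarrow\cM(S)[t]/(P)=\cM(T)$, property (iii) from Corollary \ref{surjO} plus total rings of fractions, and (iv) is immediate with $c=\mathrm{disc}(P)$. You instead build an explicit $\cO(S)$-order $A\subset\cO(T)$: a finite free module on a cleared-denominator basis, with closure under multiplication checked via structure constants, and you verify reducedness of $X^{\an}$ analytically, using the canonical map $g:T\to X^{\an}$ furnished by (\ref{defan}), coherence of the nilradical, and Theorem A on the Stein space $X^{\an}$. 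Your (iv) comes from a limit/spreading-out argument rather than the discriminant. What each buys: the paper's quotient presentation is shorter and makes (iv) explicit, but it quietly asserts that $X$ reduced implies $X^{\an}$ reduced (over the non-noetherian ring $\cO(S)$ this is not completely formal); your subalgebra $A\subset\cO(T)$ gives the comparison map $T\to X^{\an}$ for free, which lets you prove reducedness of the analytification directly — arguably the cleanest point of your write-up. A few small things to tighten: the existence of an $\cM(S)$-basis of $\cM(T)$ containing $1$ should be justified componentwise over the (possibly infinitely many) irreducible components of $S$, exactly as the freeness of $\cM(T)$ itself; in (ii) you should also note that $\{c=0\}$ and its preimage are nowhere dense (this uses that $c$ is a nonzerodivisor in $\cO(S)$ and in $A=\cO(X^{\an})$, together with reducedness); and in the chain $\Frac(A)=A\otimes_{\cO(S)}\cM(S)$ you implicitly use that nonzerodivisors of $A$ remain nonzerodivisors after localizing at the nonzerodivisors of $\cO(S)$ (exactness of localization), not only that $\cM(T)$ is its own total fraction ring — this is routine and at the same level of brevity as the paper's corresponding step, but worth a clause.
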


\begin{proof}
Let $(S_i)_{i\in I}$ be the irreducible components of $S$. Let $T_i$ be the disjoint union of the irreducible components of $T$ with image $S_i$. The $\cM(S_i)$-algebra $\cM(T_i)$ is finite \'etale of degree $d$  (see (\ref{eqext})).  One can therefore write $\cM(T_i)=\cM(S_i)[t]/(P_i)$ for some monic degree~$d$ polynomial~$P_i\in\cM(S_i)[t]$.  Set $P:=(P_i)_{i\in I}\in\cM(S)[t]$. Then $\cM(T)=\prod_{i\in I}\cM(T_i)=\prod_{i\in I}\cM(S_i)[t]/(P_i)=\cM(S)[t]/(P)$.
Let $a\in\cO(S)$ be a non\-zero\-di\-vi\-sor with $aP\in\cO(S)[t]$. Replacing $P$ with $t\mapsto a^dP(t/a)$, one can ensure that $P\in \cO(S)[t]$ is monic of degree $d$ such that $\cM(T)=\cM(S)[t]/(P)$.

Set $X:=\Spec(\cO(S)[t]/(P))$. It is an $\cO(S)$-scheme of finite presentation that is finite flat of degree $d$.
As $\cO(X)=\cO(S)[t]/(P)\to\cM(S)[t]/(P)=\cM(T)$ is injective, the scheme $X$ is reduced (and hence so is the complex space $X^{\an}$). Since~$X^{\an}$ is moreover flat over~$S$, the holomorphic map $f^{\an}:X^{\an}\to S$ is an analytic covering. 

As non\-zero\-di\-vi\-sors in~$\cO(S)$ remain non\-zero\-di\-vi\-sors in $\cM(T)$, they are also non\-zero\-di\-vi\-sors in $\cO(X)$. It follows that the total ring of fractions of $\cO(X)$ is $\cO(X_{\cM(S)})=\cM(T)$.
By Corollary \ref{surjO}, one has $\cO(X^{\an})=\cO(X)$. Passing to total rings of fractions, one gets
$\cM(X^{\an})=\cM(T)$ (as $\cM(S)$-algebras).  

Finally, to prove (iv), one can choose $c\in\cO(S)$ to be the discriminant of~$P$.
\end{proof}

\begin{lem}
\label{lemgenfinietaled}
Let $p:T\to S$ be an analytic covering of degree $d$ between reduced Stein spaces. There exists a non\-zero\-di\-vi\-sor $a\in\cO(S)$ such that $\cO(T)[\frac{1}{a}]$ is a finite \'etale $\cO(S)[\frac{1}{a}]$\nobreakdash-algebra of degree $d$.
\end{lem}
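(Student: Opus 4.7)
The plan is to algebraize the covering via Lemma~\ref{lemfiniteflat}, which produces a finite flat $\cO(S)$-scheme $X=\Spec(\cO(S)[t]/(P))$ of degree~$d$ whose analytification $X^{\an}\to S$ is an analytic covering satisfying $\cM(X^{\an})\simeq\cM(T)$ as $\cM(S)$-algebras, and that is \'etale after inverting some non\-zero\-di\-vi\-sor $c\in\cO(S)$. Using Corollary~\ref{surjO} to identify $\cO(X)=\cO(X^{\an})$, both $\cO(T)$ and $\cO(X)$ sit canonically as subrings of $\cM(T)$. It will suffice to find a non\-zero\-di\-vi\-sor $a\in\cO(S)$ divisible by $c$ such that $\cO(T)[\frac{1}{a}]=\cO(X)[\frac{1}{a}]$ inside $\cM(T)$, for then this common ring is finite \'etale of degree~$d$ over $\cO(S)[\frac{1}{a}]$ by localizing the \'etale property of $\cO(X)[\frac{1}{c}]$.

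To compare $\cO(T)$ and $\cO(X)$, I would introduce the common overring $R:=\cO(\wT)$, where $\wT$ is the normalization of $T$. Component-wise, $R$ is integrally closed in $\cM(\wT)=\cM(T)$ by Lemma~\ref{Steinnormal}. The inclusion $\cO(T)\subseteq R$ is given by the normalization map; the inclusion $\cO(X)\subseteq R$ holds because the generator $t\in\cM(T)$ of $\cO(X)$ satisfies the monic polynomial $P\in\cO(S)[X]\subseteq R[X]$, hence is integral over the integrally closed ring $R$. The latter inclusion defines, via the universal property~(\ref{defan}) applied to the affine scheme $X$, a holomorphic morphism $\wT\to X^{\an}$ over $S$. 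Since $\wT\to S$ is finite (normalization of an analytic covering) and $X^{\an}\to S$ is separated, this morphism is finite; since $\cM(\wT)=\cM(X^{\an})$, it is generically an isomorphism, hence an analytic covering of degree~$1$.

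It then remains to apply Lemma~\ref{lemconductor} to the two degree-one analytic coverings $\wT\to T$ and $\wT\to X^{\an}$, which yields non\-zero\-di\-vi\-sors $b_1\in\cO(T)$ and $b_2\in\cO(X)$ with $b_1R\subseteq\cO(T)$ and $b_2R\subseteq\cO(X)$, and then Lemma~\ref{multdescend} to the surjective analytic coverings $T\to S$ and $X^{\an}\to S$ to multiply each $b_i$ by a suitable non\-zero\-di\-vi\-sor of $\cO(T)$ (\resp $\cO(X)$) so that the product $e_i$ lands in $\cO(S)$. Each $e_i$ remains a non\-zero\-di\-vi\-sor in $\cO(S)$ because $\cO(S)$ embeds injectively into $\cO(T)$ and $\cO(X)$. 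Setting $a:=c\cdot e_1\cdot e_2$, the relations $e_iR\subseteq\cO(T),\cO(X)$ combined with the reverse inclusions $\cO(T),\cO(X)\subseteq R$ force $\cO(T)[\frac{1}{a}]=R[\frac{1}{a}]=\cO(X)[\frac{1}{a}]$, which is the desired equality. The main obstacle I anticipate is the construction of the holomorphic map $\wT\to X^{\an}$ and the verification that it is an analytic covering of degree~$1$, since it is this second application of Lemma~\ref{lemconductor} that bridges the conductor information on the two sides.
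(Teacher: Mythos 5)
Your proposal is correct and follows essentially the same route as the paper's proof: algebraize via Lemma~\ref{lemfiniteflat}, compare $\cO(T)$ and $\cO(X)=\cO(X^{\an})$ through the common normalization $\wT$, apply Lemma~\ref{lemconductor} to the two degree-$1$ coverings $\wT\to T$ and $\wT\to X^{\an}$, push the conductors into $\cO(S)$ with Lemma~\ref{multdescend}, and set $a$ to be the product with $c$. The only difference is at your anticipated obstacle: instead of constructing the map $\wT\to X^{\an}$ by hand and checking it is a degree-$1$ analytic covering, the paper identifies $\wT$ with the normalization of $X^{\an}$ immediately by applying the equivalence (\ref{eqext}) over each irreducible component of $S$, using the isomorphism $\cM(X^{\an})\simeq\cM(T)$ of $\cM(S)$-algebras.
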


\begin{proof}
Let $f:X\to\Spec(\cO(S))$ and $c\in\cO(S)$ be as in Lemma \ref{lemfiniteflat}. Since the $\cM(S)$\nobreakdash-algebras $\cM(X^{\an})$ and $\cM(T)$ are isomorphic, the normalization~$\wT$ of $T$ is also the normalization of $X^{\an}$ (apply (\ref{eqext}) over each irreducible component of $S$).
By Lemma \ref{lemconductor} applied to the normalization morphisms~$\wT\to T$ and~${\wT\to X^{\an}}$, there exist non\-zero\-di\-vi\-sors ${b\in\cO(X^{\an})}$ and $b'\in\cO(T)$ such that ${\cO(X^{\an})[\frac{1}{b}]=\cO(\wT)[\frac{1}{b}]}$ and ${\cO(T)[\frac{1}{b'}]=\cO(\wT)[\frac{1}{b'}]}$. By Lemma \ref{multdescend}, we may assume, after multiplying them by appropriate non\-zero\-di\-vi\-sors, that $b$ and $b'$ belong to~$\cO(S)$.
Set $a:=b\, b'c\in\cO(S)$. One has $\cO(T)[\frac{1}{a}]=\cO(\wT)[\frac{1}{a}]=\cO(X^{\an})[\frac{1}{a}]=\cO(X)[\frac{1}{a}]$, where the last equality is Corollary \ref{surjO}. This $\cO(S)[\frac{1}{a}]$-algebra is finite \'etale of degree~$d$ by our choice of $c$.
\end{proof}

\begin{lem}
\label{lemgenfinietale}
Let $p:T\to S$ be an analytic covering of bounded degree between reduced Stein spaces. There exists a non\-zero\-di\-vi\-sor $a\in\cO(S)$ such that $\cO(T)[\frac{1}{a}]$ is a finite \'etale $\cO(S)[\frac{1}{a}]$\nobreakdash-algebra.
\end{lem}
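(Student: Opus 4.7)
The plan is to reduce to Lemma~\ref{lemgenfinietaled} by algebraically padding $T$ to an auxiliary analytic covering of constant degree $d$ (the given bound on the degree of $p$), from which $\cO(T)$ will be extracted as a direct factor after inverting a suitable non\-zero\-di\-vi\-sor. We may assume $d\geq 1$, the case $T=\varnothing$ being trivial.

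For each irreducible component $S_i$ of $S$, let $d_i\leq d$ denote the degree of $p^{-1}(S_i)\to S_i$, and use the primitive element theorem for finite \'etale algebras over an infinite field to write $\cM(p^{-1}(S_i))=\cM(S_i)[t]/(P_i)$ with $P_i\in\cM(S_i)[t]$ monic separable of degree $d_i$. Let $R_i\in\Z[t]$ be monic of degree $d-d_i$ with $d-d_i$ distinct roots chosen in $\{1,\dots,2d\}$ among the (at least $d$) integers there that are not roots of $P_i$; the Chinese remainder theorem then yields $\cM(S_i)[t]/(Q_i)\simeq\cM(p^{-1}(S_i))\times\cM(S_i)[t]/(R_i)$, where $Q_i:=P_iR_i$ is monic separable of degree~$d$. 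Assembling $Q:=(Q_i)_i\in\cM(S)[t]$ and clearing denominators via the substitution $t\mapsto bt$ followed by multiplication by $b^d$, for a non\-zero\-di\-vi\-sor $b\in\cO(S)$ that clears the denominators of the coefficients of $Q$, produces $\tilde Q\in\cO(S)[t]$ monic of degree $d$. The $\cO(S)$-scheme $X:=\Spec(\cO(S)[t]/(\tilde Q))$ is finite flat of degree $d$ of finite presentation with reduced coordinate ring (embedded in the reduced ring $\cM(S)[t]/(\tilde Q)\simeq\cM(T)\times M$). By Proposition~\ref{eqfinitepres}, $X^{\an}\to S$ is a finite flat holomorphic map of degree $d$; it is an analytic covering of reduced Stein spaces since it is \'etale over the complement of the nowhere dense analytic locus where the discriminant of $\tilde Q$ vanishes.

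Applying Lemma~\ref{lemgenfinietaled} to $X^{\an}\to S$ furnishes a non\-zero\-di\-vi\-sor $a_0\in\cO(S)$ such that $\cO(X)[\tfrac{1}{a_0}]=\cO(X^{\an})[\tfrac{1}{a_0}]$ (by Corollary~\ref{surjO}) is finite \'etale of degree $d$ over $\cO(S)[\tfrac{1}{a_0}]$. The idempotent $e\in\cM(S)[t]/(\tilde Q)\simeq\cM(T)\times M$ projecting onto the $\cM(T)$-factor has coefficients in $\cM(S)$; after clearing these by a non\-zero\-di\-vi\-sor $a_1\in\cO(S)$, it becomes an idempotent of $\cO(S)[\tfrac{1}{a_0a_1}][t]/(\tilde Q)$, yielding a decomposition $B\times B'$ with both factors finite \'etale over $\cO(S)[\tfrac{1}{a_0a_1}]$ (direct factors of a finite \'etale algebra are finite \'etale, since quotients by idempotent ideals have vanishing K\"ahler differentials). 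It remains to identify $B$ with $\cO(T)[\tfrac{1}{a}]$ after a further localization: both sit inside $\cM(T)$ and agree after tensoring with $\cM(S)$, and by the equivalence~(\ref{eqext}) applied on irreducible components of $S$, the normal analytic covering corresponding to $B$ coincides with the restriction of the normalization of $T$ to $S\setminus\{a_0a_1=0\}$. Lemmas~\ref{lemconductor} and~\ref{multdescend}, the latter applied to the surjective covering $X^{\an}\to S$ to descend conductor denominators from $\cO(X^{\an})$ to $\cO(S)$, then yield a non\-zero\-di\-vi\-sor $a_2\in\cO(S)$ with $B[\tfrac{1}{a_2}]=\cO(T)[\tfrac{1}{a_0a_1a_2}]$; the choice $a:=a_0a_1a_2$ then satisfies the conclusion. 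The main obstacle is this final identification step, in which one must carefully match the algebraically constructed factor~$B$ to the analytically defined algebra~$\cO(T)$ through normalization comparisons, exploiting the surjectivity of the auxiliary covering $X^{\an}\to S$ to transport non\-zero\-di\-vi\-sors between $\cO(X^{\an})$ and~$\cO(S)$.
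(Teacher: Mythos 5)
Your padding strategy (adding dummy sheets via an auxiliary polynomial so as to reach constant degree $d$, then splitting off the $\cM(T)$-factor by an idempotent) is a genuinely different reduction from the paper's, which instead normalizes $T$ and $S$ and applies Lemma \ref{lemgenfinietaled} over the locus of each degree; up to and including the decomposition $\cO(S)[\frac{1}{a_0a_1}][t]/(\tilde Q)=B\times B'$ your argument is sound. The gap is in the final identification. Lemma \ref{lemconductor} together with Lemma \ref{multdescend} applied to the surjective covering $X^{\an}\to S$ only controls the conductor of the normalization of $X^{\an}$: it produces a nonzerodivisor $\beta\in\cO(S)$ with $\cO(X^{\an})[\frac{1}{\beta}]=\cO(\widetilde{X^{\an}})[\frac{1}{\beta}]$, and hence (comparing idempotents inside the common total ring of fractions $\cM(T)\times M$) an identification of $B$, after inverting $\beta$, with $\cO(\wT)[\frac{1}{a_0a_1\beta}]$, where $\wT$ is the normalization of $T$. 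It says nothing about the difference between $\cO(T)$ and $\cO(\wT)$: the conductor of $\wT\to T$ lives in $\cO(T)$, and there is no ring map between $\cO(X^{\an})$ and $\cO(T)$ through which your use of $X^{\an}\to S$ could reach it ($X^{\an}$ is a different, generally non-normal, model glued with the padded sheets). As it stands you prove only that $\cO(\wT)[\frac{1}{a}]$ is finite \'etale over $\cO(S)[\frac{1}{a}]$, which is weaker than the statement, since $T$ is merely reduced and $\cO(T)$ and $\cO(\wT)$ can differ along an unbounded non-normal locus that no single inverted function of $\cO(T)$ chosen so far is known to dominate.

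The missing ingredient is exactly the step the paper performs: apply Lemma \ref{lemconductor} to the degree-$1$ covering $\wT\to T$ to get a nonzerodivisor $b'\in\cO(T)$ with $\cO(T)[\frac{1}{b'}]=\cO(\wT)[\frac{1}{b'}]$, and then descend $b'$ to an element of $\cO(S)$ using Lemma \ref{multdescend}. If you are worried that $T\to S$ itself need not be surjective, your auxiliary covering can serve here too: $T\sqcup X^{\an}\to S$ is a surjective analytic covering of bounded degree, and $(b',1)$ is a nonzerodivisor in $\cO(T\sqcup X^{\an})=\cO(T)\times\cO(X^{\an})$, so Lemma \ref{multdescend} yields a nonzerodivisor of $\cO(S)$ whose pullback to $T$ is a multiple of $b'$. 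Inverting it, together with $a_0a_1\beta$, gives $\cO(T)[\frac{1}{a}]=\cO(\wT)[\frac{1}{a}]=B[\frac{1}{\,\cdot\,}]$ and closes your argument; without it, the identification $B[\frac{1}{a_2}]=\cO(T)[\frac{1}{a_0a_1a_2}]$ is unjustified.
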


\begin{proof}
Let $\tp:\wT\to\wS$ be the induced analytic covering between the normalizations~$\wT$ and $\wS$ of $T$ and $S$.  Let $\tp_d:\wS_d\to\wT_d$ be the restriction of $\tp$ over the locus where it has degree $d$. Applying Lemma \ref{lemgenfinietaled} to the finitely many ${\tp_d:\wS_d\to\wT_d}$ with~$\wT_d\neq\varnothing$ shows the existence of a non\-zero\-di\-vi\-sor $\ta\in\cO(\wS)$ such that~
$\cO(\wT)[\frac{1}{\ta}]$ is a finite \'etale $\cO(\wS)[\frac{1}{\ta}]$\nobreakdash-algebra.
Apply Lemma~\ref{lemconductor} to get non\-zero\-di\-vi\-sors $b\in\cO(S)$ and~$b'\in\cO(T)$ with 
${\cO(S)[\frac{1}{b}]=\cO(\wS)[\frac{1}{b}]}$ and ${\cO(T)[\frac{1}{b'}]=\cO(\wT)[\frac{1}{b'}]}$.
By Lemma~\ref{multdescend}, we may assume, after multiplying them by non\-zero\-di\-vi\-sors, that $\ta$ and $b'$ belong to $\cO(S)$.
Set $a:=\ta\, b\, b'\in\cO(S)$.  Then $\cO(T)[\frac{1}{a}]=\cO(\wT)[\frac{1}{a}]$ is a finite \'etale 
$\cO(S)[\frac{1}{a}]=\cO(\wS)[\frac{1}{a}]$-algebra by our choice of~$\ta$.
\end{proof}

\subsection{Morphisms of coverings}

\begin{prop}
\label{morphcov}
Let $S$ be a normal Stein space. Let $X$ be an \'etale $\cO(S)$\nobreakdash-scheme.  Let $Y\to X$ and $Z\to X$ be two finite \'etale morphisms. Then the analytifi\-ca\-tion functor induces a bijection
$$\Hom_X(Y,Z)\to \Hom_{X^{\an}}(Y^{\an},Z^{\an}).$$
\end{prop}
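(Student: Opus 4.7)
The plan is to reduce the statement to Theorem~\ref{thmclopen}~(ii) via the graph construction.

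First I would interpret morphisms as graphs. Setting $W:=Y\times_X Z$, the projection $\pi\colon W\to Y$ is finite étale (as a base change of $Z\to X$), and sections of a finite étale morphism correspond bijectively to closed-open subsets $\Gamma$ of its source mapping isomorphically to its target (a section is both a closed immersion and étale, hence identifies its source with a closed-open subscheme). Since a morphism $f\colon Y\to Z$ over $X$ is the same datum as its graph $(\mathrm{id}_Y,f)\colon Y\to W$, this identifies $\Hom_X(Y,Z)$ with the set of closed-open subsets $\Gamma\subset W$ for which $\pi|_\Gamma\colon\Gamma\isoto Y$. The analogous description holds on the analytic side, using that analytification commutes with fiber products to give $W^{\an}=Y^{\an}\times_{X^{\an}}Z^{\an}$ and that $\pi^{\an}$ is a finite local biholomorphism, and the analytification functor visibly sends one datum to the other.

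Next, to apply Theorem~\ref{thmclopen}~(ii), I would verify that $Y$ and $W$ are étale over $\Spec(\cO(S))$. This follows by composition from the hypothesis that $X\to\Spec(\cO(S))$ is étale, together with the étaleness of $Y\to X$, $Z\to X$, and $W\to Y$. The theorem then yields bijections $\Clop(Y)\isoto\Clop(Y^{\an})$ and $\Clop(W)\isoto\Clop(W^{\an})$, and in particular $\Gamma\mapsto\Gamma^{\an}$ is a bijection between closed-open subsets of $W$ and of $W^{\an}$.

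It remains to show that this bijection preserves the isomorphism condition, namely that $\pi|_\Gamma\colon\Gamma\isoto Y$ if and only if $\pi^{\an}|_{\Gamma^{\an}}\colon\Gamma^{\an}\isoto Y^{\an}$. I would argue via degree functions. The finite étale morphism $\pi|_\Gamma\colon\Gamma\to Y$ has a locally constant degree function $d\colon Y\to\N$, with closed-open fibers $d^{-1}(n)\subset Y$, and $\pi|_\Gamma$ is an isomorphism precisely when $d^{-1}(n)=\varnothing$ for every $n\neq 1$. The analogous statement holds on the analytic side with the locally constant degree function $d^{\an}\colon Y^{\an}\to\N$. Since analytification preserves the cardinality of fibers of finite étale morphisms, one has $(d^{-1}(n))^{\an}=(d^{\an})^{-1}(n)$ for every $n$. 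The bijection $\Clop(Y)\isoto\Clop(Y^{\an})$ from Theorem~\ref{thmclopen}~(ii) then gives $d^{-1}(n)=\varnothing$ if and only if $(d^{\an})^{-1}(n)=\varnothing$, completing the argument.

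The main obstacle I anticipate is this last step: one must translate the ``isomorphism'' condition into the vanishing of certain closed-open subsets of $Y$, so as to apply Theorem~\ref{thmclopen}~(ii) there rather than merely on $W$. The stratification by the locally constant degree function accomplishes this, and it handles each value $n\neq 1$ independently, so no uniform bound on the degrees is required even though $X$ may well have infinitely many connected components on which $Z\to X$ has unbounded degree.
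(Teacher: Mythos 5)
Your proposal is correct and follows essentially the same route as the paper: identify $\Hom_X(Y,Z)$ with closed-open subsets of $Y\times_X Z$ projecting isomorphically to $Y$, do the same analytically, and invoke Theorem~\ref{thmclopen}. The paper's proof stops at ``the result therefore follows from Theorem~\ref{thmclopen},'' leaving implicit the verification that the $\Clop$ bijection respects the ``projects isomorphically'' condition; your locally-constant-degree argument (applying Theorem~\ref{thmclopen}~(ii), or really just Lemma~\ref{lemred}, to each $d^{-1}(n)$ with $n\neq 1$ separately) is a clean way to fill that in, and correctly avoids any spurious boundedness assumption on the degrees.
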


\begin{proof}
The set $\Hom_X(Y,Z)$ identifies with the set of sections of the finite \'etale morphism $Y\times _X Z\to Y$ and hence with the set of closed and open subsets of~$Y\times _X Z$ that project isomorphically to $Y$.  Similarly,  the set $\Hom_{X^{\an}}(Y^{\an},Z^{\an})$ identifies with the set of sections of the topological covering $Y^{\an}\times_{X^{\an}}Z^{\an}\to Y^{\an}$ and hence with the set of closed and open subsets of $Y^{\an}\times_{X^{\an}}Z^{\an}$ that project isomorphically to $Y^{\an}$.  The result therefore follows from Theorem \ref{thmclopen}.
\end{proof}

\begin{cor}
\label{cormorphcov}
Let $S$ be a reduced Stein space. There is a non\-zero\-di\-vi\-sor~${a\in\cO(S)}$ such that for any \'etale $\cO(S)$\nobreakdash-scheme $X$ and any finite \'etale morphisms~$Y\to X$ and~$Z\to X$, the analytification functor induces a bijection
$$\Hom_{X_{\cO(S)[\frac{1}{a}]}}(Y_{\cO(S)[\frac{1}{a}]},Z_{\cO(S)[\frac{1}{a}]})\to \Hom_{X^{\an}\setminus\{a=0\}}(Y^{\an}\setminus\{a=0\},Z^{\an}\setminus\{a=0\}).$$
\end{cor}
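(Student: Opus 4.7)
The plan is to reduce Corollary \ref{cormorphcov} to Proposition \ref{morphcov} by passing to the normalization. Let $n:\tilde{S}\to S$ be the normalization: it is an analytic covering of degree $1$ between reduced Stein spaces, with $\tilde{S}$ normal. Applying Lemma \ref{lemconductor} to $n$ produces a non\-zero\-di\-vi\-sor $a\in\cO(S)$ such that $a\cdot\cO(\tilde{S})\subset\cO(S)$. Since $a$ lies in the conductor ideal sheaf, the inclusion $a\cdot n_*\cO_{\tilde{S}}\subset\cO_S$ holds on stalks as well, so $n$ restricts to a biholomorphism $\tilde{S}\setminus\{a=0\}\isoto S\setminus\{a=0\}$. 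Globally, this translates into the equality $\cO(S)[\tfrac{1}{a}]=\cO(\tilde{S})[\tfrac{1}{a}]$. We claim this $a$ does the job.

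Given an \'etale $\cO(S)$-scheme $X$ and finite \'etale morphisms $Y\to X$, $Z\to X$, the localization $X_{\cO(S)[\frac{1}{a}]}$ acquires a natural structure of \'etale $\cO(\tilde{S})$-scheme: it is \'etale over $\cO(S)[\tfrac{1}{a}]=\cO(\tilde{S})[\tfrac{1}{a}]$ by base change, and the open immersion $\Spec\cO(\tilde{S})[\tfrac{1}{a}]\hookrightarrow\Spec\cO(\tilde{S})$ is itself \'etale. Likewise, the base changes $Y_{\cO(S)[\frac{1}{a}]}\to X_{\cO(S)[\frac{1}{a}]}$ and $Z_{\cO(S)[\frac{1}{a}]}\to X_{\cO(S)[\frac{1}{a}]}$ remain finite \'etale. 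Applying Proposition \ref{morphcov} to the normal Stein space $\tilde{S}$ and to these \'etale data produces a bijection between $\Hom_{X_{\cO(S)[\frac{1}{a}]}}(Y_{\cO(S)[\frac{1}{a}]},Z_{\cO(S)[\frac{1}{a}]})$ and the Hom set of the corresponding analytifications computed over $\tilde{S}$.

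The remaining (and main) technical step is to identify those analytifications over $\tilde{S}$ with the ones appearing in Corollary \ref{cormorphcov}. The two structures of $\cO(S)$-scheme and of $\cO(\tilde{S})$-scheme on $X_{\cO(S)[\frac{1}{a}]}$ are compatible via the factorization $\cO(S)\hookrightarrow\cO(\tilde{S})\hookrightarrow\cO(\tilde{S})[\tfrac{1}{a}]=\cO(S)[\tfrac{1}{a}]$, and both make $X_{\cO(S)[\frac{1}{a}]}$ locally of finite presentation. Lemma \ref{lemBing} therefore canonically identifies the analytifications over $S$ and over $\tilde{S}$. The $S$-analytification is $X^{\an}\setminus\{a=0\}$ since $\Spec(\cO(S)[\tfrac{1}{a}])^{\an}=S\setminus\{a=0\}$, while the $\tilde{S}$-analytification lives over $\tilde{S}\setminus\{a=0\}$, which is biholomorphic to $S\setminus\{a=0\}$ by the first paragraph. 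Applying the same identifications to $Y_{\cO(S)[\frac{1}{a}]}$ and $Z_{\cO(S)[\frac{1}{a}]}$, the bijection coming from Proposition \ref{morphcov} rewrites as the one in the statement. All other ingredients (base change of \'etale and finite \'etale morphisms, and the description of the analytifications of open subschemes) are routine.
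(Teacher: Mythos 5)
Your proof is correct and follows essentially the same route as the paper: pass to the normalization $\wS$, invert a non\-zero\-di\-vi\-sor $a$ so that $\cO(S)[\frac{1}{a}]=\cO(\wS)[\frac{1}{a}]$, and then apply Proposition \ref{morphcov} over $\wS$ together with Lemma \ref{lemBing} to identify the analytifications with $X^{\an}\setminus\{a=0\}$, etc. The only immaterial difference is that you extract the equality $\cO(S)[\frac{1}{a}]=\cO(\wS)[\frac{1}{a}]$ (and the biholomorphism over $\{a\neq 0\}$) directly from Lemma \ref{lemconductor}, whereas the paper invokes the degree-$1$ case of Lemma \ref{lemgenfinietaled}, which itself rests on that same lemma.
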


\begin{proof}
Let $p:\wS\to S$ be the normalization morphism. By Lemma \ref{lemgenfinietaled}, there exists a non\-zero\-di\-vi\-sor $a\in\cO(S)$ such that $\cO(S)[\frac{1}{a}]=\cO(\wS)[\frac{1}{a}]$.  The corollary follows from Proposition \ref{morphcov} after replacing $S$ with $\wS$ and $X$ with $X_{\cO(S)[\frac{1}{a}]}$.
\end{proof}

\subsection{Construction of coverings}

\begin{lem}
\label{lemproduct}
Let $(A_i)_{i\in I}$ be rings. Set $A:=\prod_{i\in I}A_i$.  Let $M$ be a finitely presented $A$-module. The natural map $M\to\prod_{i\in I}M\otimes_A A_i$ is an isomorphism of $A$\nobreakdash-modules.
\end{lem}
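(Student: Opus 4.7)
The plan is to argue by presentation, reducing the claim to the case $M=A$, where it is tautological. Since the functor on the right is built as a composition of tensor products with direct products of abelian groups, it is right exact (direct products of abelian groups are exact), and the natural map from $M$ to this construction is a morphism of right exact functors.

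First I would check the case $M=A$: the natural map becomes the identification $A = \prod_{i\in I}A_i \xrightarrow{\sim} \prod_{i\in I}(A\otimes_A A_i) = \prod_{i\in I}A_i$. Next, by compatibility with finite direct sums, the case $M=A^n$ is immediate. Then I would pick a presentation $A^{n'}\to A^n\to M\to 0$ afforded by finite presentation, and apply both functors to obtain a commutative diagram
\begin{equation*}
\begin{CD}
A^{n'} @>>> A^n @>>> M @>>> 0\\
@VVV @VVV @VVV \\
\prod_{i\in I}(A^{n'}\otimes_A A_i) @>>> \prod_{i\in I}(A^n\otimes_A A_i) @>>> \prod_{i\in I}(M\otimes_A A_i) @>>> 0
\end{CD}
\end{equation*}
whose top row is exact by construction and whose bottom row is exact because $-\otimes_A A_i$ is right exact for each $i$ and the product functor on abelian groups is exact. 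The two leftmost vertical arrows are isomorphisms by the previous step, so the five lemma (or a direct diagram chase) gives that the rightmost vertical arrow is an isomorphism as well.

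There is no real obstacle here beyond observing that arbitrary products of abelian groups preserve surjections, which is what allows the comparison of right exact functors to proceed unchanged from the noetherian case. The finite presentation hypothesis on $M$ is used exactly once, to produce the presentation $A^{n'}\to A^n\to M\to 0$ with both $n$ and $n'$ finite.
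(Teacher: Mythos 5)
Your proposal is correct and is in substance the same argument as the paper's, which simply cites Stacks Project Tag 059K (finitely presented modules tensor-commute with arbitrary products), applied with $R=A$ and $Q_i=A_i$ so that $M\otimes_A\prod_iQ_i=M$. The proof of that cited lemma is exactly the right-exactness-plus-five-lemma reduction to the free case that you spell out, so you have effectively inlined the reference.
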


\begin{proof}
This follows from \cite[Proposition \href{https://stacks.math.columbia.edu/tag/059K}{059K} (1)$\Rightarrow$(2)]{SP} applied with $R:=A$.
\end{proof}

\begin{lem}
\label{etalegeneric}
Let $S$ be a reduced Stein space and let $X$ be an
\'etale $\cO(S)$\nobreakdash-scheme of finite presentation.
 Then there exist a non\-zero\-di\-vi\-sor $c\in\cO(S)$ and an analytic covering 
$q:\whS\to S$ of reduced Stein spaces such that
$X_{\cO(S)[\frac{1}{c}]}$ and $\Spec(\cO(\whS)[\frac{1}{c}])$
are isomorphic finite \'etale $\cO(S)[\frac{1}{c}]$-schemes.
\end{lem}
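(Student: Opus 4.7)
The plan is to build $\whS$ by applying the equivalence (\ref{eqext}) to a suitable finite \'etale $\cM(S)$-algebra extracted from $X$, and then to match the resulting analytic covering with $X$ on a large open of $\Spec(\cO(S))$ by a spreading-out argument. Throughout, we may reduce to the case where $S$ has finitely many irreducible components, by decomposing $S$ into its (countably many) connected components, carrying out the construction on each piece, and reassembling by a disjoint union (a disjoint union of analytic coverings is an analytic covering).

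Since $X$ is \'etale and of finite presentation over $\cO(S)$, it is quasi-finite, so Lemma~\ref{finite} yields a non\-zero\-di\-vi\-sor $a_0\in\cO(S)$ such that $B:=\cO(X_{\cO(S)[\frac{1}{a_0}]})$ is a finite \'etale $\cO(S)[\frac{1}{a_0}]$-algebra. Base-changing to $\cM(S)$ produces a finite \'etale $\cM(S)$-algebra $A:=B\otimes_{\cO(S)[\frac{1}{a_0}]}\cM(S)$, and (\ref{eqext}) applied to $A$ yields an analytic covering $q:\whS\to S$ of reduced Stein spaces with $\whS$ normal and $\cM(\whS)=A$ as $\cM(S)$\nobreakdash-algebras. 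By Lemma~\ref{lemgenfinietale} applied to $q$, there is a non\-zero\-di\-vi\-sor $c'\in\cO(S)$, which we may take divisible by $a_0$, such that $\cO(\whS)[\frac{1}{c'}]$ is a finite \'etale $\cO(S)[\frac{1}{c'}]$-algebra.

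The two finite \'etale $\cO(S)[\frac{1}{c'}]$-algebras $B[\frac{1}{c'}]$ and $\cO(\whS)[\frac{1}{c'}]$ have isomorphic base change to $\cM(S)=\colim_a\cO(S)[\frac{1}{a}]$: both equal $A$. For $B[\frac{1}{c'}]$ this is by construction; for $\cO(\whS)[\frac{1}{c'}]$ it follows from the integrality of $\cO(\whS)[\frac{1}{c'}]$ over $\cO(S)[\frac{1}{c'}]$ together with $\cM(\whS)=A$ (every non\-zero\-di\-vi\-sor in $\cO(\whS)$ divides a non\-zero\-di\-vi\-sor pulled back from $\cO(S)[\frac{1}{c'}]$ by taking the constant term of an integral equation). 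Since finite \'etale algebras are finitely presented, the chosen generic-fibre isomorphism descends: there exist a non\-zero\-di\-vi\-sor $c\in\cO(S)$ divisible by $c'$ and an isomorphism of $\cO(S)[\frac{1}{c}]$-algebras $B[\frac{1}{c}]\isoto\cO(\whS)[\frac{1}{c}]$, which yields the required isomorphism $X_{\cO(S)[\frac{1}{c}]}\cong\Spec(\cO(\whS)[\frac{1}{c}])$ of finite \'etale $\cO(S)[\frac{1}{c}]$-schemes.

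The main obstacle is the spreading-out step in the last paragraph: although routine for finitely presented algebras, it requires some care here because $\cO(S)$ is not noetherian. It reduces to the standard fact that morphisms (and isomorphisms) between finitely presented modules descend through filtered colimits of the base ring.
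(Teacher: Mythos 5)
There is a genuine gap, and it sits exactly at the step you dismiss as routine: the opening reduction. You reduce to the case where $S$ has finitely many irreducible components by splitting $S$ into its connected components, but a connected reduced Stein space can perfectly well have infinitely many irreducible components (e.g.\ the connected closed analytic subset $\{w=0\}\cup\bigcup_{n\in\N}\{z=n\}$ of $\C^2$). So after passing to connected components you are in general no better off, and the application of the equivalence (\ref{eqext}) to the finite \'etale $\cM(S)$-algebra $A$ is not justified, since (\ref{eqext}) is only available for reduced Stein spaces with finitely many irreducible components. Moreover, even granting a decomposition into pieces on which the lemma holds, the ``reassembling by a disjoint union'' is not routine: localization does not commute with infinite products, so $\cO(S)[\frac{1}{c}]$ with $c=(c_j)_j$ is strictly smaller than $\prod_j\cO(S^{(j)})[\frac{1}{c_j}]$, the clopen subschemes $\Spec(\cO(S^{(j)})[\frac{1}{c_j}])$ do not cover $\Spec(\cO(S)[\frac{1}{c}])$, and the piecewise isomorphisms therefore do not glue to an isomorphism of finite \'etale $\cO(S)[\frac{1}{c}]$-schemes; finiteness and finite presentation of the assembled algebra over $\cO(S)[\frac{1}{c}]$ would also need an argument.

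The way around this (and the route the paper takes, whose remaining steps essentially coincide with yours) is to decompose along \emph{irreducible} components only at the level of the total fraction ring: write $\cM(S)=\prod_{i\in I}\cM(S_i)$, apply (\ref{eqext}) over each irreducible $S_i$ to the finite \'etale $\cM(S_i)$-algebra $\cO(X_{\cM(S_i)})$, and observe that the degrees $d_i$ are \emph{bounded} because $X$ is of finite presentation (this is where the finite-presentation hypothesis is used). The disjoint union $\whS=\bigsqcup_i\whS_i$ is then an analytic covering of $S$ of bounded degree, so Lemma \ref{lemgenfinietale} applies once, globally, to produce a single nonzerodivisor $a$ with $\cO(\whS)[\frac{1}{a}]$ finite \'etale over $\cO(S)[\frac{1}{a}]$; Lemma \ref{lemproduct}, applied to the finite (hence finitely presented) $\cM(S)$-module $\cO(X_{\cM(S)})$, identifies $\cO(X_{\cM(S)})$ with $\prod_i\cM(\whS_i)=\cO(\whS)\otimes_{\cO(S)}\cM(S)$, and a single spreading-out argument over the filtered colimit $\cM(S)=\colim_a\cO(S)[\frac{1}{a}]$ then yields the isomorphism after inverting one further nonzerodivisor. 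Your last two paragraphs (Lemma \ref{finite}, Lemma \ref{lemgenfinietale}, descent of the generic isomorphism) are fine as far as they go; what is missing is this global handling of infinitely many irreducible components in place of the connected-component reduction.
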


\begin{proof}
By Lemma \ref{finite}, we may assume, after possibly replacing $X$ with $X_{\cO(S)[\frac{1}{a}]}$ for some non\-zero\-di\-vi\-sor $a\in\cO(S)$, that $X$ is affine.

Let $(S_i)_{i\in I}$ be the irreducible components of $S$. Then $\cM(S)=\prod_{i\in I}\cM(S_i)$. 
Let~$d_i$ be the degree of the \'etale $\cM(S_i)$-algebra $\cO(X_{\cM(S_i)})$. 
Let $q_i:\whS_i\to S_i$ be the analytic covering of degree $d_i$ with $\whS_i$ normal associated with it (see~(\ref{eqext})).  The~$d_i$ are bounded by \cite[Lemma \href{https://stacks.math.columbia.edu/tag/03JA}{03JA}]{SP}. Let $\whS$ be the disjoint union of the~$\whS_i$. Let ${q:\whS\to S}$ be the induced analytic covering of bounded degree. By Lemma~\ref{lemgenfinietale}, there is a non\-zero\-di\-vi\-sor $a\in\cO(S)$ such that $\cO(\whS)[\frac{1}{a}]$ is a finite \'etale~$\cO(S)[\frac{1}{a}]$\nobreakdash-al\-ge\-bra.
There are isomorphisms of $\cM(S)$-algebras
$$\cO(X_{\cM(S)})=\prod_{i\in I} \cO(X_{\cM(S_i)})=\prod_{i\in I}\cM(\whS_i)=\cM(\whS)=\cO(\whS)\otimes_{\cO(S)}\cM(S),$$
where the first equality results from Lemma \ref{lemproduct}
$$\cO(X)[\frac{1}{a}]\otimes_{\cO(S)[\frac{1}{a}]}\cM(S)=\cO(\whS)[\frac{1}{a}]\otimes_{\cO(S)[\frac{1}{a}]}\cM(S).$$
A limit argument (see \cite[Lemmas \href{https://stacks.math.columbia.edu/tag/01ZM}{01ZM} (2) and \href{https://stacks.math.columbia.edu/tag/081E}{081E}]{SP}) shows the existence of a non\-zero\-di\-vi\-sor $b\in\cO(S)$ such that $\cO(X)[\frac{1}{ab}]=\cO(\whS)[\frac{1}{ab}]$. Take~$c:=ab$.
\end{proof}

\begin{thm}
\label{thmconstr}
Let $S$ be a reduced Stein space.  Let $X$ be an \'etale $\cO(S)$-scheme of finite presentation.
 Let $p:T\to X^{\an}$ be a topological covering of degree $d$. There is a non\-zero\-di\-vi\-sor $a\in\cO(S)$ and a finite \'etale covering $f:Y\to X_{\cO(S)[\frac{1}{a}]}$ of degree $d$
such that $f^{\an}:Y^{\an}\to X^{\an}\setminus\{a=0\}$ and ${p|_{T\setminus\{a=0\}}:T\setminus\{a=0\}\to X^{\an}\setminus\{a=0\}}$ are isomorphic topological coverings.
\end{thm}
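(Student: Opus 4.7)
My plan is to produce $Y$ in three stages: first, to represent the base $X$ generically as the spectrum of holomorphic functions on a normal analytic covering of $S$; second, to extend the topological covering $T$ across the relevant analytic hypersurface to an analytic covering of the full extended base, via a Grauert--Remmert extension theorem; and third, to algebraize this analytic covering using the results already established in this section.

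The proof of Lemma~\ref{etalegeneric} in fact produces a non\-zero\-di\-vi\-sor $c_1\in\cO(S)$ and a \emph{normal} analytic covering $q:\whS\to S$ of reduced Stein spaces (the disjoint union of the normal analytic coverings of the irreducible components of $S$ attached to the finite \'etale algebras $\cO(X_{\cM(S_i)})$ via~(\ref{eqext})) such that $X_{\cO(S)[\frac{1}{c_1}]}\simeq\Spec(\cO(\whS)[\frac{1}{c_1}])$ as finite \'etale $\cO(S)[\frac{1}{c_1}]$-schemes. Analytifying by means of Lemma~\ref{lemBing}, we obtain an isomorphism $X^{\an}\setminus\{c_1=0\}\simeq\whS\setminus\{c_1=0\}$ of complex spaces over $S$, exhibiting $X^{\an}\setminus\{c_1=0\}$ as a Zariski-open subset of the normal Stein space $\whS$.

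Let $T^{\circ}$ denote the restriction of $T$ to $X^{\an}\setminus\{c_1=0\}\simeq\whS\setminus\{c_1=0\}$. Pulling back the complex structure along the local homeomorphism $T^{\circ}\to\whS\setminus\{c_1=0\}$ makes $T^{\circ}$ into a normal complex space and turns $T^{\circ}\to\whS\setminus\{c_1=0\}$ into a finite unramified analytic covering of degree $d$. Since $\{c_1=0\}$ is nowhere dense in the normal complex space $\whS$, the Grauert--Remmert extension theorem for finite analytic coverings (see e.g.\ \cite[7]{GRCoherent}) yields a unique extension to a finite analytic covering $\hat T\to\whS$ of degree $d$ with $\hat T$ normal, restricting to $T^{\circ}$ over $\whS\setminus\{c_1=0\}$; as finite holomorphic pre-images of Stein spaces remain Stein, $\hat T$ is itself a normal Stein space.

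Applying Lemma~\ref{lemgenfinietaled} to the degree-$d$ analytic covering $\hat T\to\whS$ provides a non\-zero\-di\-vi\-sor $c_2\in\cO(\whS)$ such that $\cO(\hat T)[\frac{1}{c_2}]$ is a finite \'etale $\cO(\whS)[\frac{1}{c_2}]$-algebra of degree $d$. By Lemma~\ref{multdescend} (applied component-wise on the irreducible components of $S$ over which $q$ is surjective, and trivially elsewhere, where $\cO(\whS)$ vanishes), we may multiply $c_2$ by a further non\-zero\-di\-vi\-sor to arrange $c_2\in\cO(S)$. Setting $a:=c_1c_2\in\cO(S)$ and $Y:=\Spec(\cO(\hat T)[\frac{1}{a}])$, the algebra $\cO(\hat T)[\frac{1}{a}]$ is finite \'etale of degree $d$ over $\cO(\whS)[\frac{1}{a}]\simeq\cO(X_{\cO(S)[\frac{1}{a}]})$, so $f:Y\to X_{\cO(S)[\frac{1}{a}]}$ is finite \'etale of degree $d$. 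Finally, Proposition~\ref{eqfinitepres} applied over the Stein open $S\setminus\{a=0\}$, together with the identity $\cO(U)[\frac{1}{a}]=\cO(U\setminus\{a=0\})$ valid for Stein $U$, identifies $Y^{\an}$ with $\hat T\setminus\{a=0\}=T^{\circ}\setminus\{a=0\}=T\setminus\{a=0\}$ as topological coverings of $X^{\an}\setminus\{a=0\}$. The main non-formal ingredient is the Grauert--Remmert extension step, which relies crucially on the normality of $\whS$ secured in the second paragraph; the rest is bookkeeping with the algebraization machinery developed earlier in this section.
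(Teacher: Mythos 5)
Your proposal is correct and follows essentially the same route as the paper: algebraize the base generically via Lemma~\ref{etalegeneric}, extend the topological covering across the bad locus by the Grauert--Remmert theorem over a normal space, algebraize the resulting analytic covering generically via Lemma~\ref{lemgenfinietaled}, and conclude with the analytification compatibilities. The one soft spot is the descent of $c_2$ to $\cO(S)$ when $q:\whS\to S$ is not surjective: Lemma~\ref{multdescend} cannot literally be applied ``component-wise'' because the irreducible components of $S$ may intersect, but the needed fact (a non\-zero\-di\-vi\-sor $a\in\cO(S)$ with $q^*a\in c_2\cdot\cO(\whS)$) follows from the coherent-ideal argument of Lemma~\ref{lemconductor}, and the paper's own proof, which simply replaces $S$ by $\whS$, elides the same point.
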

 
\begin{proof}
We may assume that $X$ is affine (cover $X$ by finitely many affine open subsets, apply the theorem to each of them,  and glue the resulting finite \'etale coverings thanks to Corollary \ref{cormorphcov}).

Let $c\in\cO(S)$ and $q:\whS\to S$ be as in Lemma \ref{etalegeneric}. After replacing $S$ with~$\whS$ (which is legitimate by Lemma \ref{lemBing}), the scheme $X$ with $X_{\cO(S)[\frac{1}{c}]}=\Spec(\cO(\whS)[\frac{1}{c}])$ 
and $T$ with $T\setminus\{c=0\}$, we may assume that $X=\Spec(\cO(S)[\frac{1}{c}])$ for some non\-zero\-di\-vi\-sor $c\in\cO(S)$. After further multiplying $c$ by a non\-zero\-di\-vi\-sor in $\cO(S)$,  we may assume that $X^{\an}=S\setminus\{c=0\}$ is normal.
From now on, we assume that we are in this situation (\ie that $X=\Spec(\cO(S)[\frac{1}{c}])$ and $X^{\an}$ is normal).

The topological covering $p:T\to X^{\an}$ extends to an analytic covering~${\op:\oT\to S}$ of degree $d$ with $\oT$ normal (apply the Grauert--Remmert theorem \cite[XII, Th\'eor\`eme 5.4]{SGA1} on the normalization of $S$).
By Lemma \ref{lemgenfinietaled},  there is a non\-zero\-di\-vi\-sor~$a\in\cO(S)$ such that $\cO(\oT)[\frac{1}{a}]$ is a degree $d$ finite \'etale $\cO(S)[\frac{1}{a}]$\nobreakdash-algebra.

One can then set $Y:=\Spec(\cO(\oT)[\frac{1}{ac}])$ to conclude (indeed, $Y^{\an}=\oT\setminus\{ac=0\}$ by Lemma \ref{lemBing}, so $Y^{\an}=T\setminus\{a=0\}$).
\end{proof}

\begin{rems}
\label{remH1}
(i)
One cannot always take $a=1$ in Theorem \ref{thmconstr}, even for $S$ normal and connected, as we now show.
For $i\geq 1$,  let $p_i:T_i\to S_i$ be the degree~$2$ analytic covering with ${S_i:=\{(w,x,y,z)\in\C^4\mid xy-z^2=w^i-x-y=0\}}$ and ${T_i:=\{(u,v,w)\in\C^3\mid w^i=u^2+v^2\}}$ given by $p_i(u,v,w)=(w,u^2,v^2,uv)$.
The map $p_i$ is unramified over~$S_i\setminus\{s_i\}$, where $s_i\in S_i$ is the origin.
One computes that the Fitting ideal $I_i:=\Fitt_3((p_{i*}\cO_{T_i})_{s_i})\subset \cO_{S_i,s_i}$ (see \cite[Definition 20.4]{Eisenbud}) is~$\langle x,y,z \rangle$.
It follows that~$w\in \cO(S_i)$ is nilpotent of order exactly $i$ in~$\cO_{S_i,s_i}/I_i$.

Gluing open neighborhoods of $s_i$ in $S_i$ appropriately yields a degree $2$ analytic covering $p:T\to S$ with $S$ and $T$ Stein, normal and connected, which is unramified over~$S\setminus\{s_1,s_2,\dots\}$.  
(One way to ensure both connectedness and Steinness of $S$, hence of $T$, is to rather glue the real-analytic spaces defined by the same equations as the $S_i$, and to let $S$ be a Stein complexification of the resulting real-analytic space; see \cite[III, Theorem 3.6]{GMT}.)
The formula we gave for~$p_i$ implies that the fibers of $p_*\cO_T$ have dimension $\leq 3$. It therefore follows from \cite[Theorem 1]{Kripke} that~$\cO(T)$ is a finite $\cO(S)$-module.  Consider the Fitting ideal ${I:=\Fitt_3(\cO(T))\subset \cO(S)}$.
By \cite[Corollary 20.5]{Eisenbud}, one has $I_i=I\cdot\cO_{S,s_i}$.  Consequently, our choices imply that there exists $b\in\cO(S)$ whose image in~$\cO_{S,s_i}/I_i$ is nilpotent of order exactly $i$,  and hence whose image in $\cO(S)/I$ is not nilpotent.

Set $X:=\Spec(\cO(S)[\frac{1}{b}])$.  We claim that there does not exist a finite \'etale covering $f:Y\to X$ such that $f^{\an}$ is isomorphic to $p|_{T\setminus\{b=0\}}$.  Assume otherwise. 
By Lemma \ref{Steinnormal},  the rings $\cO(S)$ and $\cO(T)$ are integrally closed in $\cM(S)$ and $\cM(T)$ respectively, so the integral closure of $\cO(S)$ in $\cM(T)$ is $\cO(T)$, 
and hence the integral closure of $\cO(S)[\frac{1}{b}]$ in $\cM(T)$ is $\cO(T)[\frac{1}{b}]$.
By \cite[Lemma \href{https://stacks.math.columbia.edu/tag/03GE}{03GE}]{SP}, the integral closure of $\cO(S)[\frac{1}{b}]$ in $\Frac(\cO(Y))$ is $\cO(Y)$.
As $\Frac(\cO(Y))=\cM(T)$ (use Lemma~\ref{lemgenfinietale} to choose a non\-zero\-di\-vi\-sor $a\in\cO(S)$ such that $\cO(T)[\frac{1}{a}]$ is a finite \'etale~$\cO(S)[\frac{1}{a}]$\nobreakdash-algebra and apply Corollary \ref{cormorphcov}), we deduce that~$\cO(Y)=\cO(T)[\frac{1}{b}]$. 

As $f$ is \'etale, the $\cO(S)[\frac{1}{b}]$-module $\cO(Y)$ is flat of finite presentation, hence locally free (see \cite[Lemma \href{https://stacks.math.columbia.edu/tag/00NX}{00NX}]{SP}).
However, the finite $\cO(S)[\frac{1}{b}]$-module $\cO(T)[\frac{1}{b}]$ is not locally free (because~$b$ is not nilpotent in $\cO(S)/I$).  This is a contradiction.

(ii)
In constrast with (i), it is always possible to take $a=1$ in Theorem \ref{thmconstr} when $X=\Spec(\cO(S))$ and $S$ is finite-dimensional. To see it, let $p:T\to S$ be a topological covering of degree $d$. Endow $T$ with a structure of complex space, so that $q$ is a local biholomorphism.  Then $p_*\cO_T$ is locally free of rank $d$, hence finitely presented by Remark \ref{remseqcat} (ii). The finite \'etale $\cO(S)$-scheme $Y$ associated with~${p:T\to S}$ by the equivalence of Proposition \ref{eqfinitepres} then satisfies $Y^{\an}=T$.

(iii)
The hypothesis that $S$ is finite-dimensional in (ii) cannot be removed. To see it, let $\cL_i$ be a $2$-torsion holomorphic line bundle on a Stein space $S_i$ such that~$\cL_i$ cannot be generated by $\leq i$ global sections (\eg take $S_i$ to be a Grauert tube of the real-analytic variety $\P^{2i}(\R)$, choose $\cL_i$ so that its first Chern class $c_1(\cL_i)$ is the generator of $H^2(\P^{2i}(\R),\Z)=\Z/2$, and note that $c_1(\cL_i)^i\neq 0$). 
Let $S$ be the disjoint union of the $(S_i)_{i\geq 1}$ and let $\cL$ be the holomorphic line bundle on $S$ induced by the~$(\cL_i)_{i\geq 1}$.  Set $T:=\Specan(\cO_S\oplus\cL)$, where the algebra structure on~$\cO_S\oplus\cL$ is induced by an isomorphism $\cL^{\otimes 2}\isoto\cO_S$ (see \cite[Theorem~1.15~b)]{Fischer} for the analytic spectrum construction). The structural morphism $p:T\to S$ is a topological double cover. As $p_*\cO_T=\cO_S\oplus\cL$ is not generated by finitely many global sections, it is not finitely presented. It follows from Proposition \ref{eqfinitepres} that $T$ is not of the form $Y^{\an}$ for any finite \'etale $\cO(S)$-scheme $Y$.
\end{rems}

\section{Degree \texorpdfstring{$2$}{2} cohomology classes on Stein surfaces}
\label{secH2}

In this section, we study degree $2$ cohomology classes on Zariski-open subsets of Stein surfaces. We develop techniques to kill them on appropriate alterations.   Our main goals are Proposition \ref{killram} (as well as its variants Propositions~\ref{killram2} and~\ref{killramab}) and Proposition \ref{GLefschetz11}.
To be able to apply our results in the proof of Theorem \ref{thpi}, we need to work $G$-equivariantly throughout (\S\S\ref{parGeq}--\ref{parGThom} are devoted to generalities concerning $G$-equivariant cohomology). 
Easier non-$G$-equivariant analogues of our results (which we sometimes point out explicitly, see \eg Corollaries \ref{corkillram} and \ref{corGLefschetz11}) are obtained by applying them formally to $G$-equivariant complex spaces of the form $S\sqcup S^{\sigma}$ (often making use of (\ref{cohoRC})) or by disregarding $G$-actions in the proofs.

\subsection{Generalities on \texorpdfstring{$G$}{G}-equivariant cohomology}
\label{parGeq}

Let $E$ be a topological space endowed with a continuous action of $G$. Let $\F$
 be a $G$-equivariant sheaf on $E$. We let~$H^k_G(E,\F)$ denote the $G$-equivariant cohomology groups of $\F$ (the derived functors of $\F\mapsto H^0(E,\F)^G$). 
We denote by $H^k_G(E,\F)_0\subset H^k_G(E,\F)$ the subset of those classes $\alpha\in H^k_G(E,\F)$ such that $\alpha|_x=0$ for all $x\in E^G$.
The Hochschild--Serre spectral sequence (the second spectral sequence of \cite[Th\'eor\`eme 5.2.1]{Tohoku}) reads
\begin{equation}
\label{HSss}
H^p(G, H^q(E,\F))\implies H^{p+q}_G(E,\F).
\end{equation}

If $E'\subset E$ is a closed $G$-invariant subset, we also consider the $G$\nobreakdash-equi\-vari\-ant cohomology groups with support $H^k_{E',G}(E,\F)$ (the derived functors of $\F\mapsto H^0_{E'}(E,\F)^G$) and the $G$-equivariant relative cohomology groups $H^k_G(E,E',\F):=H^k_G(E,j_!j^*\F)$ (where $j:E\setminus E'\hookrightarrow E$ is the inclusion and $j_!$ denotes the extension by zero).

Let $\pi:E\to E/G$ be the quotient map.  Let $j:E\setminus E^G\hookrightarrow E$ be the inclusion. Consider the sheaf $\G:=(\pi_*\F)^G$ on $E/G$ induced by $\F$.  Since the stalks of $j_!j^*\F$ along $E^G$ vanish, the first spectral sequence of \cite[Th\'eor\`eme 5.2.1]{Tohoku} applied to~$j_!j^*\F$ degenerates and yields isomorphisms
\begin{equation}
\label{restrfp}
H^k_G(E,E^G,\F)\isoto H^k(E/G,E^G,\G).
\end{equation}

We still denote by $A$ the constant $G$-equivariant sheaf associated with a $G$\nobreakdash-mod\-ule~$A$.
 Let $\Z(j)$ be the $G$-module which is isomorphic to $\Z$ as an abelian group, and on which $\sigma\in G$ acts by multiplication by $(-1)^j$.  
We set~$\F(j):=\F\otimes_{\Z}\Z(j)$. 

If~$S$ is a complex space, we highlight the following particular case of (\ref{restrfp}):
\begin{equation}
\label{cohoRC}
H^k_G(S\sqcup S^{\sigma},A)\isoto H^k(S,A).
\end{equation}

\subsection{\texorpdfstring{$G$}{G}-equivariant Thom isomorphisms}
\label{parGThom}

The construction of (possibly $G$-equi\-vari\-ant) Thom isomorphisms is explained in \cite[\S\S 1.1.4-1.1.5]{BW1} in a slightly different context. Here is how this works in our setting.  We consider simultaneously the non-$G$-equivariant and the $G$-equivariant cases.

Let $S$ and $S'$ be ($G$-equivariant) complex manifolds of dimensions $n$ and $n'$. Let $f:S'\to S$ be a ($G$-equivariant) embedding. As~$\oor_S=\Z(n)$ and~${\oor_{S'}=\Z(n')}$ ($G$\nobreakdash-equivariantly because complex conjugation acts antiholomorphically),  one has ${\oor_{S'/S}:=\Homrond(f^*\oor_S,\oor_{S'})=\Z(-c)}$.
Let~$\F$ be a ($G$-equivariant) locally constant sheaf on~$S$.   
Applying \cite[Re\-mark~3.3.5, Pro\-po\-si\-tion~3.1.11]{KS} yields a morphism
\begin{equation}
\label{duality}
f^*\F(-c)[-2c]=f^*\F\otimes_{\Z}\oor_{S'/S}[-2c]=f^*\F\otimes_{\Z}f^!\Z \to f^!\F
\end{equation}
which is canonical (and hence $G$-equivariant). We claim that (\ref{duality}) is an isomorphism. 
Working locally, we may assume that~$\F$ is the constant sheaf associated with the abelian group $A$. Writing $A$ as a filtered direct limit of finitely generated groups, we reduce to the case where $A$ is finitely generated, so we may assume that~$A=\Z$ or $A=\Z/m$.  We further reduce to the case~$A=\Z$ by the five lemma. In this case, one can apply \cite[Remark 3.3.5]{KS}.

Taking the ($G$\nobreakdash-equi\-vari\-ant) cohomology groups of degree $k$ of (\ref{duality}) and applying \cite[Proposition~3.1.12]{KS} yields the ($G$-equivariant) Thom isomorphisms
\begin{alignat}{4}
\label{Thom}
H^{k-2c}(S',f^*\F(-c))&\isoto H_{S'}^k(S,\F),\\
\label{eqThom}
H_G^{k-2c}(S',f^*\F(-c))&\isoto H_{S',G}^k(S,\F).
\end{alignat}
The image $u\in H^{2c}_{S'}(S,\Z(c))$ of $1\in H^{0}(S',\Z)$ by the isomorphism (\ref{Thom}) applied with~$\F=\Z(c)$ is the Thom class of the complex vector bundle $N_{S'/S}$. 

Suppose now that we are also given a ($G$-equivariant) holomorphic map ${p:T\to S}$ of ($G$\nobreakdash-equi\-vari\-ant) complex manifolds, and that the subset $T':=p^{-1}(S')$ of $T$ is a complex submanifold of codimension $c$.  Let $f':T'\to T$ and $q:T'\to S'$ be the induced ($G$\nobreakdash-equivariant) maps. Let $v\in H^{2c}_{T'}(T,\Z(c))$ be the Thom class of $N_{T'/T}$. Let~$\lambda\in H^0(T',\Z)$ be such that $p^*u=\lambda\cdot v$, \ie such that the diagram
\begin{equation}
\label{pullbackThom}
\begin{aligned}
\xymatrix@C=1.5em@R=3ex{
q^*f^*\F(-c)[-2c]\ar^{\hspace{2em}\sim}[r]\ar_{\lambda}[d]&q^* f^!\F\ar^{}[d] \\
(f')^*p^*\F(-c)[-2c]\ar^{\hspace{1.4em}\sim}[r]&(f')^!(p^*\F),
}
\end{aligned}
\end{equation}
whose horizontal arrows are (\ref{duality}) and whose right vertical arrow is \cite[Proposition 3.1.9 (iii)]{KS},
commutes for $\F=\Z(c)$ (hence for any $\F$ as one sees by arguing as above).  Taking ($G$-equivariant) cohomology in (\ref{pullbackThom}) yields commutative diagrams
\begin{gather}
\begin{aligned}
\label{pullbacksupportnonG}
\xymatrix@C=1.5em@R=3ex{
H^{k-2c}(S',f^*\F(-c))\ar^{\hspace{1.9em}\sim}[r]\ar_{\lambda\cdot q^*}[d]&H_{S'}^k(S,\F)\ar^{p^*}[d] \\
H^{k-2c}(T',f^*\F(-c))\ar^{\hspace{1.4em}\sim}[r]&H_{T'}^k(T,p^*\F),
}
\end{aligned}
\\
\begin{aligned}
\label{pullbacksupport}
\xymatrix@C=1.5em@R=3ex{
H_G^{k-2c}(S',f^*\F(-c))\ar^{\hspace{1.9em}\sim}[r]\ar_{\lambda\cdot q^*}[d]&H_{S',G}^k(S,\F)\ar^{p^*}[d] \\
H_G^{k-2c}(T',f^*\F(-c))\ar^{\hspace{1.4em}\sim}[r]&H_{T',G}^k(T,p^*\F).
}
\end{aligned}
\end{gather}
If $p$ is transverse to~$f$, then $N_{T'/T}=q^*N_{S'/S}$ and hence $\lambda=1$.
If~$c=1$,  a local Thom class computation 
shows that the integer $\lambda(x)$ is such that a local equation of $S'$ at~$p(x)$ pulls back to the $\lambda(x)$-th power of a local equation of~$T'$ at~$x$.

\subsection{Extending cohomology classes}

\begin{lem}
\label{ext0}
Let $S$ be a $G$-equivariant complex manifold. Let $S'\subset S$ be a nowhere dense $G$-invariant analytic subset.  Let $A$ be a $G$-module and fix $k\geq 1$. If the image of $\alpha\in H^k_G(S,A)$ in $H^k_G(S\setminus S',A)$ belongs to $H^k_G(S\setminus S',A)_0$, then $\alpha\in H^k_G(S,A)_0$.
\end{lem}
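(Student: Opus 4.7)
The goal is to show that the set of $G$-fixed points where $\alpha$ has nontrivial germ is controlled by what happens on $S \setminus S'$. The plan breaks into three steps.

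\emph{Step 1: density of $S^G \setminus S'$ in $S^G$.} I claim that $S^G \cap S'$ is nowhere dense in $S^G$. Suppose otherwise; then some $s \in S^G$ has a neighborhood in $S^G$ contained in $S'$. Using a $G$-equivariant holomorphic chart $\Omega \cong U \subset \C^n$ centered at $s$ (as in Lemma \ref{lemrealpoints}(i)), the holomorphic functions cutting out $S' \cap \Omega$ would vanish on an open subset of $\Omega^G = U \cap \R^n$; expanding in power series at a point of this subset, the Taylor coefficients must vanish, so these functions vanish on a whole open subset of $\Omega$. This contradicts the fact that $S'$ is nowhere dense in $S$.

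\emph{Step 2: reduction to connected components of $S^G$.} By Lemma \ref{lemrealpoints}(i), $S^G$ is a $\ci$ manifold, hence locally path-connected. Combined with Step 1, every connected component $C$ of $S^G$ meets $S^G \setminus S'$, so any $x \in S^G$ is connected by a path $\gamma \colon [0,1] \to S^G$ to some point $y \in S^G \setminus S'$.

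\emph{Step 3: $G$-equivariant homotopy invariance of restriction.} Fix such $x$ and $y$ with path $\gamma$ in $S^G$. Endowing $[0,1]$ with the trivial $G$-action, the map $\gamma \colon [0,1] \to S$ is $G$-equivariant and defines a $G$-equivariant homotopy between the inclusions $i_x, i_y \colon \{*\} \hookrightarrow S$ of points. By $G$-equivariant homotopy invariance of cohomology with values in the constant $G$-module $A$, the two restriction maps $i_x^*, i_y^* \colon H^k_G(S,A) \to H^k_G(\{*\}, A) = H^k(G,A)$ coincide. Since $y \in S^G \setminus S'$, the hypothesis gives $\alpha|_y = i_y^*\alpha = 0$, whence $\alpha|_x = 0$.

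\emph{Main obstacle.} There is no deep obstacle here; the content is Step 1, which is a standard real-analytic/complex-analytic density argument essentially identical in spirit to Lemma \ref{lemrealpoints}(ii). Steps 2 and 3 are formal, using local path-connectedness of the fixed manifold $S^G$ and $G$-equivariant homotopy invariance of equivariant cohomology of a constant $G$-module.
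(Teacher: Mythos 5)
Your proof is correct and follows essentially the same route as the paper: connect an arbitrary $x\in S^G$ by a path in $S^G$ to a point of $(S\setminus S')^G$ and conclude by ($G$-equivariant) homotopy invariance. Your Step 1 simply makes explicit the density argument that the paper delegates to Lemma \ref{lemrealpoints} (its part (ii) is proved by the same power-series expansion you use), so the two proofs differ only in level of detail.
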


\begin{proof}
Fix $x\in S^G$.  There exists a path in $S^G$ connecting $x$ to a point $y\in (S\setminus S')^G$ (use Lemma \ref{lemrealpoints}). 
By homotopy invariance of cohomology, that $\alpha|_y=0$ implies that $\alpha|_x=0$.
\end{proof}

\begin{lem}
\label{extpetit}
Let $S$ be a $G$-equivariant complex manifold.  Let~$S'\subset S$ be a $G$\nobreakdash-in\-vari\-ant analytic subset of codimension $c\geq 1$. Let $A$ be a $G$-module.  If $k\leq 2c-2$, the restriction maps $H^k_G(S,A)\to H^k_G(S\setminus S',A)$ and $H^k_G(S,A)_0\to H^k_G(S\setminus S',A)_0$ are isomorphisms.
\end{lem}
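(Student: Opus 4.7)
The plan is to deduce both assertions from the vanishing
\[
H^k_{S',G}(S,A) = 0 \text{ for } k \leq 2c-1,
\]
via the $G$-equivariant long exact sequence of the pair $(S, S\setminus S')$:
\[
\cdots \to H^k_{S',G}(S,A) \to H^k_G(S,A) \to H^k_G(S\setminus S',A) \to H^{k+1}_{S',G}(S,A) \to \cdots
\]
Vanishing in degrees $k$ and $k+1$ for $k\leq 2c-2$ immediately yields the first iso.

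To establish this vanishing, I would stratify $S'$ by $G$-invariant closed analytic subsets $S' = T_0 \supset T_1 \supset T_2 \supset \cdots$, where $T_{i+1}$ is the (reduced) singular locus of $T_i$; $G$-invariance is automatic since the singular locus is canonical, and each locally closed difference $T_i\setminus T_{i+1}$ is a $G$-equivariant complex submanifold of the open subset $S\setminus T_{i+1}$ of codimension $c_i \geq c+i$. I would then prove by downward induction on $i$ that $H^k_{T_i,G}(S,A)=0$ for $k\leq 2c-1$, using at each step the excision sequence
\[
\cdots \to H^k_{T_{i+1},G}(S,A) \to H^k_{T_i,G}(S,A) \to H^k_{T_i\setminus T_{i+1},G}(S\setminus T_{i+1},A) \to \cdots
\]
together with the $G$-equivariant Thom isomorphism (\ref{eqThom}):
\[
H^k_{T_i\setminus T_{i+1},G}(S\setminus T_{i+1},A) \;\cong\; H^{k-2c_i}_G(T_i\setminus T_{i+1},A(-c_i)),
\]
which vanishes for $k<2c_i$, \emph{a fortiori} for $k\leq 2c-1<2c\leq 2c_i$. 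The base case $T_N=\varnothing$ holds whenever $S$ is finite-dimensional; more generally only the finitely many strata with $c_i \leq (k+1)/2$ enter the argument for a fixed $k\leq 2c-2$, so no difficulty arises.

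For the $_0$ statement, injectivity is inherited from the first iso. For surjectivity, lift $\beta \in H^k_G(S\setminus S',A)_0$ to the unique $\alpha \in H^k_G(S,A)$ restricting to $\beta$; for $k\geq 1$, Lemma \ref{ext0} directly gives $\alpha \in H^k_G(S,A)_0$. The degenerate case $k=0$ (occurring only when $c\geq 1$) is handled via Lemma \ref{lemrealpoints} (ii): since $S$ is a complex manifold (hence has no singular points) and $S'$ is nowhere dense, $S^G \subset S'$ would force $S^G=\varnothing$, so either $S^G=\varnothing$ (and both $_0$ subgroups coincide with the full groups) or $(S\setminus S')^G \neq \varnothing$ meets every connected component of $S$ whose $G$-fixed locus is nonempty, and the componentwise computation of $H^0_G$ for the constant sheaf $A$ makes both $_0$ subgroups match. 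The main obstacle I anticipate is a clean presentation of the singular-locus stratification and the verification that the $G$-equivariant Thom formalism of \S\ref{parGThom} applies to each open-manifold pair $(S\setminus T_{i+1}, T_i\setminus T_{i+1})$; once these are in place, the proof is essentially a diagram chase.
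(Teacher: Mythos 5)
Your proposal is correct and follows essentially the same route as the paper: stratify $S'$ by iterated singular loci, use the $G$-equivariant Thom isomorphism (\ref{eqThom}) to see that the cohomology with supports vanishes for degree reasons when $k\leq 2c-2$, and deduce the statement about the $(-)_0$ subgroups from Lemma \ref{ext0}. The paper phrases the stratification step as a reduction to the case where $S'$ is a submanifold rather than as an explicit induction on supports, but the content is the same.
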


\begin{proof}
Stratifying $S'$ by its singular locus, the singular locus of its singular locus, etc., we reduce to the case where $S'$ is a $G$-invariant submanifold of codimension $c$ of $S$.
The $G$\nobreakdash-equivariant Thom isomorphism (\ref{eqThom}) then yields an exact sequence
\begin{equation}
\label{sessupport}
H^{k-2c}_G(S',A(-c))\to H^k_G(S,A)\to H^k_G(S\setminus S',A)\to H_G^{k-2c+1}(S',A(-c)).
\end{equation}
If $k\leq 2c-2$, the two extreme terms of (\ref{sessupport}) vanish for degree reasons,  proving the first assertion. The second assertion now follows from Lemma \ref{ext0}.
\end{proof}

\begin{lem}
\label{extdiv}
Let $S$ be a $G$-equivariant complex manifold.
Let~${S'\subset S}$ be a $G$\nobreakdash-in\-variant complex submanifold of codimension~$1$. 
If ${H^1_G(S',A(-1))_0=0}$ for some $G$-module $A$, then the restriction map $H^{2}_G(S,A)_0\to H^{2}_G(S\setminus S',A)_0$ is~surjective.
\end{lem}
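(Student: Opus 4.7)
The plan is to use the Gysin long exact sequence arising from the codimension-$1$ embedding $S'\hookrightarrow S$ together with the $G$-equivariant Thom isomorphism (\ref{eqThom}) in the form $H^3_{S',G}(S,A)\cong H^1_G(S',A(-1))$. This gives
$$H^2_G(S,A)\to H^2_G(S\setminus S',A)\xrightarrow{\partial} H^1_G(S',A(-1))\to H^3_G(S,A).$$
Given $\alpha\in H^2_G(S\setminus S',A)_0$, I will show $\partial\alpha\in H^1_G(S',A(-1))_0$; by hypothesis this forces $\partial\alpha=0$, so $\alpha$ lifts to some $\tilde\alpha\in H^2_G(S,A)$. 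Lemma \ref{ext0} then gives $\tilde\alpha\in H^2_G(S,A)_0$, yielding surjectivity.

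The real work is to verify that for every $x\in(S')^G$ the restriction $(\partial\alpha)|_x\in H^1(G,A(-1))$ vanishes. I would reduce this to a purely local computation: using the local model from (the proof of) Lemma \ref{lemrealpoints} and the fact that $T_xS'\subset T_xS$ is a $G$-invariant complex hyperplane, one can find $G$-equivariant coordinates $w_1,\dots,w_n$ on a $G$-invariant polydisk $U$ around $x$ identifying $(U,U\cap S')$ with $(\C^n,\{w_1=0\})$ ($G$ acting by complex conjugation). To eliminate the lower-order terms in a local equation of $S'$, I write $S'$ near $x$ as a graph $\{z_1=g(z_2,\dots,z_n)\}$; $G$-invariance of $S'$ forces $g$ to have real Taylor coefficients, so that the change of variables $w_1:=z_1-g$, $w_i:=z_i$ is itself $G$-equivariant.

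In this model, $U\setminus S'$ retracts $G$-equivariantly onto the circle $S^1\subset\C^*$ via $(w_1,\dots,w_n)\mapsto w_1/|w_1|$, and the two connected components of $(U\setminus S')^G$ are sent to the two fixed points $\pm1$. A Mayer--Vietoris computation on $S^1$ using two small $G$-invariant arcs around $\pm1$ (whose intersection is a free $G$-orbit up to homotopy) identifies restriction to $\{\pm1\}$ with an isomorphism $H^k_G(S^1,A)\isoto H^k(G,A)^2$ for all $k\geq1$. In particular $H^2_G(U\setminus S',A)_0=H^2_G(S^1,A)_0=0$, so $\alpha|_{U\setminus S'}=0$ and its local residue vanishes. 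Since $U\cap S'$ is $G$-equivariantly contractible to $x$, naturality of the Thom isomorphism identifies this local residue with $(\partial\alpha)|_x$, which therefore vanishes.

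The main obstacle is that the vanishing of $(\partial\alpha)|_x$ is not automatic from Lemma \ref{ext0}: one really needs to localize the residue around $x$ and then exhibit a convenient $G$-equivariant model in which the relevant $_0$ subgroup is zero. The two places where care is needed are the $G$-equivariant local normal form for $(S,S')$ at a fixed point and the Mayer--Vietoris calculation on $S^1$; once these are in hand, the remainder is bookkeeping via the Gysin sequence and Lemma \ref{ext0}.
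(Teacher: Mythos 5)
Your argument is correct and is essentially the paper's proof: both pass through the Gysin sequence coming from the equivariant Thom isomorphism, reduce the vanishing of the boundary class $\partial\alpha\in H^1_G(S',A(-1))$ to its restrictions at points of $(S')^G$, verify this by a local model in which the complement retracts $G$-equivariantly onto a circle with $(\bS^1)^G=\bS^0$ and restriction to the fixed points is injective in degree $2$, and conclude with Lemma \ref{ext0}. The only differences are cosmetic — you work in a full $G$-equivariant polydisk neighborhood with an equivariant normal form and compute on $\bS^1$ by Mayer--Vietoris, whereas the paper restricts to a transverse one-dimensional $G$-invariant slice (justified by (\ref{pullbacksupport}) with $\lambda=1$) and uses $H^k_G(\bS^1,\bS^0,A)=0$ for $k\geq 2$ via (\ref{restrfp}); note also that your claimed isomorphism $H^k_G(\bS^1,A)\simeq H^k(G,A)^2$ can fail at $k=1$, but only the degree-$2$ statement is used, so this is harmless.
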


\begin{proof}
Fix $\alpha\in H^{2}_G(S\setminus S',A)_0$.  We claim that the image 
$\beta\in H^{1}_G(S',A(-1))$ of $\alpha$ by the right-hand side arrow of (\ref{sessupport}) (applied with $k=2$ and $c=1$) vanishes. The claim shows that $\alpha$ lifts to a class in $H^{2}_G(S,A)$ (hence in $H^{2}_G(S,A)_0$ by Lemma~\ref{ext0}).

We now prove the claim. As ${H^1_G(S',A(-1))_0=0}$, it suffices to show that~$\beta|_x=0$ for all $x\in (S')^G$.
Replacing $S$ with a small $G$-invariant complex ball~${T\subset S}$ that is transverse to $S'$ at $x$, and $S'$ with $x$ (to see that this is legitimate, use~(\ref{pullbacksupport}) noting that $\lambda=1$), we may assume that $S$ is the unit ball in~$\C$ and that $S'$ is the origin.  In this case,  $S\setminus S'$ retracts $G$-equivariantly  to a sphere~$\bS^1$ with~${(\bS^1)^G=\bS^0}$.
Since~$H^k_G(\bS^1,(\bS^1)^G,A)=0$ for all $k\geq 2$ (use (\ref{restrfp})), the restriction morphism ${H^2_G(\bS^1,A)\to H^2_G(\bS^0,A)}$ is an isomorphism, so ${H^2_G(S\setminus S',A)_0=H^2_G(\bS^1,A)_0=0}$. 
It follows that $\alpha$ vanishes and hence that so does $\beta$.
\end{proof}

\begin{rem}
\label{remH1van}
The assertion that $H^1_G(S',A(-1))_0=0$ in Lemma \ref{extdiv} holds if 
\begin{enumerate}[(i)]
\item either $S'$ is connected, $(S')^G\neq \varnothing$ and $H^1(S',A)=0$ (to see it, use (\ref{HSss}));
\item or $S'=T\sqcup T^{\sigma}$ for some complex manifold $T$ with $H^1(T,A)=0$ (use (\ref{cohoRC})).
\end{enumerate}
\end{rem}

We record the following non-$G$-equivariant analogue of Lemma \ref{extdiv}, obtained by applying it to $S\sqcup S^{\sigma}$ (or using directly the non-$G$-equivariant Thom isomorphism).

\begin{cor}
\label{corextdiv}
Let $S$ be a complex manifold.
Let~${S'\subset S}$ be a complex submanifold of codimension~$1$.  Let $A$ be an abelian group.
If ${H^1(S',A)=0}$, then the restriction map $H^{2}(S,A)\to H^{2}(S\setminus S',A)$ is~surjective.
\end{cor}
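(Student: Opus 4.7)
The plan is to deduce this directly from the non-$G$-equivariant Thom isomorphism, in a strictly easier version of the argument used for Lemma~\ref{extdiv}. Since $S'$ is a codimension-$1$ complex submanifold of $S$, I would apply (\ref{Thom}) with $c=1$ and $\F=A$, noting that the twist $\Z(-c)$ appearing there only affects $G$\nobreakdash-module structures and reduces to $\Z$ as a plain abelian group. This yields isomorphisms $H^{k-2}(S',A)\isoto H^k_{S'}(S,A)$ for every $k\geq 0$.

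Plugging these into the long exact sequence of cohomology with support in $S'$ produces the five-term exact sequence
$$H^0(S',A)\longrightarrow H^2(S,A)\longrightarrow H^2(S\setminus S',A)\longrightarrow H^1(S',A)\longrightarrow H^3(S,A),$$
so the hypothesis $H^1(S',A)=0$ kills the fourth term and forces the middle restriction map to be surjective.

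Alternatively, as the excerpt itself suggests, one may simply apply Lemma~\ref{extdiv} formally to the $G$-equivariant manifold $\wS:=S\sqcup S^\sigma$ (with $G$ exchanging the two factors) and its $G$-invariant codimension-$1$ submanifold $\wS':=S'\sqcup (S')^\sigma$. Since $\wS^G=\varnothing$, the subspaces $(-)_0$ coincide with the full $G$-equivariant cohomology groups, and Remark~\ref{remH1van}(ii) guarantees that the hypothesis of Lemma~\ref{extdiv} is satisfied. Under the identification (\ref{cohoRC}), its conclusion translates exactly into surjectivity of $H^2(S,A)\to H^2(S\setminus S',A)$.

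No real obstacle is expected: the corollary is a direct translation of Lemma~\ref{extdiv} to the non-equivariant setting, where the proof is in fact simpler because the $(-)_0$ conditions become vacuous and the twist $A(-1)$ reduces to $A$.
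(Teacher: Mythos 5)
Your proposal is correct and matches the paper's own justification, which is precisely the two alternatives you give: apply Lemma \ref{extdiv} to $S\sqcup S^{\sigma}$ (where $(-)_0$ is vacuous and (\ref{cohoRC}) plus Remark \ref{remH1van}(ii) do the translation), or use the non-$G$-equivariant Thom isomorphism (\ref{Thom}) with $c=1$ in the long exact sequence of cohomology with support, so that $H^3_{S'}(S,A)\cong H^1(S',A)=0$ forces surjectivity. Nothing is missing.
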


\subsection{A few \texorpdfstring{$G$}{G}-equivariant complex geometry lemmas}

Let $D$ be a divisor in a complex manifold~$S$.  Let $(D_i)_{i\in I}$ be the irreducible components of $D$. 
We say that $D$ is \textit{strict normal crossings} (or \textit{snc} for short) if, for all finite $J\subset I$,  the subset $D_J=\cap_{i\in J}D_i$ of $S$ is a (possibly empty) complex submanifold of codimension~$|J|$.
If $S$ is $G$\nobreakdash-equivariant and~$D$ is $G$-invariant, we say that $D$ is $G$\textit{-snc} if moreover, for all $i\in I$, either~$\sigma(D_i)=D_i$ or $\sigma(D_i)\cap D_i=\varnothing$.
The next proposition is standard. 

\begin{prop}
\label{ressing}
Let $S$ be a $G$-equivariant complex space. Let~$(S'_j)_{1\leq j\leq m}$ be finitely many nowhere dense $G$-invariant closed analytic subspaces of $S$. There exists a $G$-equivariant resolution of singularities $\nu:\wS\to S$ such that the support of~$\nu^{-1}(S_j')$ is a $G$-snc divisor in $\wS$ for all $1\leq j\leq m$.
\end{prop}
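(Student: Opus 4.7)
The plan has two stages: first, obtain a $G$-equivariant resolution making the preimages of the $S_j'$ into snc divisors; then perform further $G$-equivariant blow-ups to upgrade snc to $G$-snc.

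For the first stage, I would invoke functorial Hironaka-type resolution of singularities in the complex analytic category together with embedded resolution/principalization (in the spirit of Bierstone--Milman or W\l{}odarczyk), producing $\nu_0\colon\wS_0\to S$ as a composition of blow-ups along smooth centers such that $\nu_0^{-1}(S_j')$ has snc support for every $j$. Because the construction is functorial with respect to local analytic isomorphisms, it applies in particular to the antiholomorphic involution (which is a local analytic isomorphism between $S$ and $S^{\sigma}$); hence all the centers are $G$-invariant and both $\wS_0$ and $\nu_0$ inherit a natural $G$-structure. Equivalently, one can apply the functorial resolution to the (non-equivariant) complex space $S\sqcup S^{\sigma}$ and descend the result along the swap involution.

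At this point the $G$-invariant snc divisor $D:=\bigcup_j\mathrm{supp}(\nu_0^{-1}(S_j'))$ on $\wS_0$ has its irreducible components permuted by $G$. The second stage eliminates, one orbit at a time, each pair $\{D_i,\sigma(D_i)\}$ with $\sigma(D_i)\neq D_i$ and $D_i\cap\sigma(D_i)\neq\varnothing$: the intersection $Z_i:=D_i\cap\sigma(D_i)$ is a $G$-invariant smooth submanifold of codimension $2$ by the snc assumption, so the $G$-equivariant blow-up along $Z_i$ preserves sncness (the center being smooth and in snc position with the remaining components) while making the strict transforms of $D_i$ and $\sigma(D_i)$ disjoint in the new ambient space. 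Since there are finitely many components, finitely many such blow-ups yield the desired $G$-snc resolution $\nu\colon\wS\to S$.

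The principal obstacle is the first stage: one needs a reference for functorial embedded resolution of singularities in the \emph{complex analytic} category (rather than the purely algebraic one) that is strong enough to handle several analytic subspaces simultaneously and equivariant under local analytic isomorphisms, so that the antiholomorphic $G$-action descends automatically. Granted such a reference, the second stage is a routine finite induction, as the snc property is stable under blow-up along smooth snc-transverse centers and each such blow-up strictly decreases the number of $\sigma$-non-fixed two-element orbits with nontrivial intersection.
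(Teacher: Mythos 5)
Your proposal is correct and the overall two-stage architecture (equivariant log resolution, then upgrading snc to $G$-snc) matches the paper's, but the second stage is handled by a genuinely different device. For Stage 1, you and the paper both rely on functoriality of resolution under local (bi)holomorphisms to obtain $G$-equivariance; the reference you are missing is Koll\'ar's book (\cite[Theorems~3.35 and 3.45, \S~3.44]{Kollarsing}), and the paper also makes the preliminary reduction of blowing up the $S_j'$ to Cartier divisors and replacing them by their sum so that only a single principalization is needed, whereas you ask for simultaneous log resolution of all the $S_j'$ directly (which is also fine, just slightly heavier to cite). For Stage 2, the paper blows up \emph{all} intersection strata $D_J$ with $|J|\ge 2$, from deepest to shallowest, so that every pair of strict transforms becomes disjoint; it then observes that the total transform is a union of smooth $G$-invariant divisors (the now-disjoint strict transforms and the exceptional divisors over $G$-invariant centers), which immediately gives the $G$-snc condition. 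You instead blow up only the loci $Z_i = D_i\cap\sigma(D_i)$ for the offending orbits. This is more economical, and it does work: $Z_i$ is $G$-invariant, smooth, in snc position with the remaining components, a single blow-up separates the strict transforms of $D_i$ and $\sigma(D_i)$, the new exceptional components over disjoint connected components of $Z_i$ are themselves disjoint (so no new bad orbits appear), and pre-existing disjointness is preserved. The trade-off is that the paper's systematic blow-up avoids the need to track which orbits are problematic and why no new ones are created, at the cost of more blow-ups; your version buys brevity of the modification at the cost of the extra verification, which you carry out correctly.
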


\begin{proof}
Blowing up the $S'_j$, we may assume that they are Cartier divisors.  Replacing them with their sum, we may suppose that there is only one of them (denoted by~$S'$).

Complex spaces admit resolutions of singularities (see \cite[The\-o\-rem~3.45]{Kollarsing}). As the resolutions constructed in \loccit are functorial for local biholomorphisms, and since a $G$-equivariant complex space can be thought of as a complex space $S$ together with a biholomorphism $\alpha:S^{\sigma}\to S$ such that~${\alpha\circ\alpha^{\sigma}=\Id_S}$, we see that $G$-equivariant complex spaces admit $G$-equivariant resolutions of singularities.

The line of reasoning of \cite[\S 3.44]{Kollarsing} shows that the principalization theorem \cite[Theorem 3.35]{Kollarsing} extends to the setting of complex spaces, hence,  arguing as above, also to the setting of $G$-equivariant complex spaces.  
Combining resolution of singularities and principalization proves the proposition with snc instead of $G$-snc.

Let $\nu_1:\wS_1\to S$ be the $G$-equivariant resolution obtained in this way.  Let $D$ be the $G$-invariant snc divisor $\nu_1^{-1}(S')$ and let $(D_i)_{i\in I}$ be its irreducible components. Set $D_J:=\cap_{i\in J}D_I$ for $J\subset I$ finite. To turn $D$ into a $G$-snc divisor after further blow-ups, first blow-up the union of the $(D_J)_{J\subset I, |J|=n}$ (where~$n$ is the dimension of~$\wS_1$), then the union of the strict transforms of the $(D_J)_{J\subset I, |J|=n-1}$, etc. 
Let~${\mu:\wS\to\wS_1}$ be the resulting modification, let $\wD\subset\wS$ be the inverse image of~$D$, and set $\nu:=\nu_1\circ\mu$.
As the blown-up loci are smooth (because the previous blow-ups separated their irreducible components),  the $G$-invariant snc divisor $\wD$ can be written as a union of smooth $G$-invariant divisors, and hence is a $G$-snc divisor.
\end{proof}

A $G$-equivariant resolution of singularities with $G$-snc exceptional locus is called a $G$\textit{-resolution of singularities}. They always exist by Proposition \ref{ressing}.

Let $f:X\to S$ be a proper holomorphic map of complex spaces. We refer to \cite[p.\,141]{BS} for what it means for a holomorphic line bundle on $X$ to be \textit{ample with respect to} $S$.
Proposition \ref{propample} below is standard in algebraic geometry.
We could not find a reference for it in complex-analytic geometry and provide a short proof following Conrad's arguments in rigid geometry \cite{Conrad} (it is already indicated in \loccit that the proof presented there extends to the complex setting).

\begin{prop}
\label{propample}
Let $f:X\to S$ be a proper holomorphic map between complex spaces. Let $\cL$ be a holomorphic line bundle on $X$.
\begin{enumerate}[(i)]
\item Let $\cF$ be a coherent sheaf on $X$.  If $\cL|_{X_s}$ is ample for some $s\in S$, then $\RR^kf_*(\cF\otimes\cL^{\otimes l})_s=0$ for all $k>0$ and all $l\gg0$.
\item If $\cL|_{X_s}$ is ample for all $s\in S$, then $\cL$ is ample with respect to $S$.
\end{enumerate}
\end{prop}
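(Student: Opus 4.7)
My plan is to establish (i) via Grauert's theorem on formal functions combined with the ampleness hypothesis on the fiber, and then to deduce (ii) from (i) using proper base change, Nakayama's lemma, and the properness of $f$ to propagate generation from $X_s$ to an $f$-saturated neighborhood.

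For part (i), I would introduce the $n$-th infinitesimal neighborhood $X_s^{(n)}$ of $X_s$ in $X$, cut out by $\km_s^{n+1}\cO_X$. The version of Grauert's theorem on formal functions valid for proper morphisms of complex spaces (see \cite{BS}) provides isomorphisms
\[
(\RR^kf_*(\cF\otimes\cL^{\otimes l}))_s^{\wedge}\isoto\varprojlim_{n}H^k\bigl(X_s^{(n)},(\cF\otimes\cL^{\otimes l})|_{X_s^{(n)}}\bigr),
\]
where $\wedge$ denotes $\km_s$-adic completion. On each $X_s^{(n)}$, the filtration by powers of $\km_s\cO_X$ yields short exact sequences whose successive quotients are of the form $(\km_s^j/\km_s^{j+1})\otimes_{\C}\cF|_{X_s}\otimes\cL^{\otimes l}|_{X_s}$, \ie finite direct sums of copies of $\cF|_{X_s}\otimes\cL^{\otimes l}|_{X_s}$. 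Since $\cL|_{X_s}$ is ample on the proper complex space $X_s$, an integer $l_0$ exists such that $H^k(X_s,\cF|_{X_s}\otimes\cL^{\otimes l}|_{X_s})=0$ for all $l\geq l_0$ and all $0<k\leq\dim(X_s)$. Induction on $n$ using the long exact cohomology sequences will then yield the vanishing of $H^k(X_s^{(n)},(\cF\otimes\cL^{\otimes l})|_{X_s^{(n)}})$ for all $n$, all $l\geq l_0$ and all $k>0$. The completed stalk thus vanishes, and as $(\RR^kf_*(\cF\otimes\cL^{\otimes l}))_s$ is finitely generated over the local ring $\cO_{S,s}$ by Grauert's direct image theorem, Nakayama's lemma forces the stalk itself to be zero.

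For part (ii), I would fix $s\in S$ and a coherent sheaf $\cF$ on $X$, and aim to produce, for all $l$ sufficiently large, finitely many sections of $\cF\otimes\cL^{\otimes l}$ over some $f$-saturated open neighborhood of $X_s$ that generate the sheaf there. Applying (i) with $k=1$ shows that $(\RR^1 f_*(\cF\otimes\cL^{\otimes l}))_s=0$ for $l\gg 0$; coupled with base change for proper morphisms of complex spaces (see \cite{BS}), this forces the natural map
\[
f_*(\cF\otimes\cL^{\otimes l})_s\otimes_{\cO_{S,s}}\C\twoheadrightarrow H^0\bigl(X_s,\cF|_{X_s}\otimes\cL^{\otimes l}|_{X_s}\bigr)
\]
to be surjective. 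By ampleness of $\cL|_{X_s}$, the right-hand side globally generates $\cF|_{X_s}\otimes\cL^{\otimes l}|_{X_s}$ for $l\gg 0$; lifting finitely many generators to sections $\ts_1,\dots,\ts_N$ of $\cF\otimes\cL^{\otimes l}$ over $f^{-1}(U_0)$ for a small open neighborhood $U_0\ni s$, Nakayama applied at stalks along $X_s$ will give generation at every point of $X_s$. Coherence then promotes this to generation over an open neighborhood $V\subset X$ of $X_s$, and properness of $f$ combined with compactness of $X_s$ guarantees $V\supset f^{-1}(U)$ for some open $s\in U\subset U_0$, as desired.

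The delicate point will lie in (i): I need the successive quotients of the $\km_s$-adic filtration to truly be finite sums of copies of $\cF|_{X_s}\otimes\cL^{\otimes l}|_{X_s}$, so that a single bound $l_0$ works uniformly in $n$, and I must carefully invoke the complex-analytic (and possibly non-noetherian) forms of the theorem on formal functions, of Grauert's direct image theorem, and of the proper base change isomorphisms, all of which should be available from \cite{BS}. Provided these analytic tools go through as in the algebraic case, the structure of the argument will parallel Conrad's treatment of the rigid-analytic analogue in \cite{Conrad}.
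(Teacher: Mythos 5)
There is a genuine gap in part (i), and it sits exactly at the step you yourself flag as delicate. Your induction requires the graded pieces of the $\km_s$-adic filtration on $(\cF\otimes\cL^{\otimes l})|_{X_s^{(n)}}$ to be finite direct sums of copies of $\cF|_{X_s}\otimes\cL^{\otimes l}|_{X_s}$, so that the single Serre bound $l_0$ computed on $X_s$ works uniformly in $n$. But this identification of the graded pieces is false in general: without flatness of $\cF$ (and of $X$) over $S$ along $X_s$ — which is not assumed in the proposition — one only has a natural surjection from $(\km_s^j/\km_s^{j+1})\otimes_{\C}\bigl(\cF|_{X_s}\otimes\cL^{\otimes l}|_{X_s}\bigr)$ onto $\km_s^j(\cF\otimes\cL^{\otimes l})/\km_s^{j+1}(\cF\otimes\cL^{\otimes l})$, whose kernel is a coherent sheaf depending on $j$. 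Vanishing of higher cohomology does not descend along such surjections, so ampleness of $\cL|_{X_s}$ only yields a bound $l_0(j)$ for each graded piece separately, and the uniform bound needed to kill $\varprojlim_n H^k\bigl(X_s^{(n)},\cdot\bigr)$, hence the completed stalk, is not established. This is precisely the difficulty the paper's proof is designed to avoid: there one restricts to the infinitesimal neighborhoods $X_n$, notes that each is a projective variety with ample $\cL_n$ by GAGA, uses formal GAGA to algebraize the system into a projective scheme $\kX$ over $\Spec(\widehat{\cO_{S,s}})$ carrying an ample $\kL$ and a coherent $\kF$, and then invokes algebraic Serre vanishing over the noetherian complete local ring, which gives the vanishing uniformly without any flatness hypothesis; the two theorems on formal functions (Grauert's and Grothendieck's) then transfer this back to the stalk of $\RR^kf_*(\cF\otimes\cL^{\otimes l})$. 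Any honest repair of your argument (e.g.\ treating $\bigoplus_j\km_s^j\cF/\km_s^{j+1}\cF$ as a coherent graded module and proving a uniform vanishing for it) essentially reproduces this algebraization step.

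A secondary point concerns (ii). The paper deduces it from (i) by citing the ampleness criterion \cite[IV, Theorem 4.1]{BS}. Your sketch instead builds relative global generation of $\cF\otimes\cL^{\otimes l}$ over a saturated neighborhood of $X_s$, but it never connects this to the actual definition of ``ample with respect to $S$'' from \cite[p.\,141]{BS}, and the base-change surjectivity onto $H^0\bigl(X_s,\cF|_{X_s}\otimes\cL^{\otimes l}|_{X_s}\bigr)$ that you invoke is itself a statement that, in the analytic category and without flatness, requires the same formal-function comparison you have not secured. Quoting the criterion of \cite{BS} (as the paper does, or its Theorem 2.1 as in Lemma \ref{lemglobgen}) is both simpler and actually sufficient once (i) is in place.
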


\begin{proof}
Let $\km_s\subset\cO_S$ be the ideal sheaf of $s$ in $S$.  For $n\geq 1$, let $S_n\subset S$ be the complex subspace defined by $\km_s^n\subset \cO_S$. Set $X_n:=X\times_{S_n}S$. As $\cL|_{X_s}$ is ample,  so is $\cL|_{X_n^{\red}}$ for all $n\geq 1$.  We deduce from the cohomological criterion of ampleness that~$\cL_n:=\cL|_{X_n}$ is ample for all $n\geq 1$.
By GAGA, the compact complex space~$X_n$ is a projective algebraic variety over $\C$, endowed with the ample algebraic line bundle~$\cL_n$ and with the algebraic coherent sheaf $\cF_n:=\cF|_{X_n}$. The schemes $(X_n)_{n\geq 1}$ therefore form a formal scheme over $\Spf(\widehat{\cO_{S,s}})$, which is the formal completion of a projective scheme $\kX\to\Spec(\widehat{\cO_{S,s}})$ endowed with an ample line bundle $\mathfrak{L}=(\cL_n)_{n\geq 1}$ and a coherent sheaf $\kF:=(\cF_n)_{n\geq 1}$, by formal GAGA.  
One then computes
\begin{equation}
\label{GAGAs}
\RR^kf_*(\cF\otimes\cL^{\otimes l})_s\otimes_{\cO_{S,s}}\widehat{\cO_{S,s}}=\varprojlim_n H^k(X_n,\cF_n\otimes\cL_n^{\otimes l})=H^k(\kX,\kF\otimes\kL^{\otimes l}),
\end{equation}
by Grauert's and Grothendieck's theorems on formal functions. 
As $\kL$ is ample, the right-hand side of (\ref{GAGAs}) vanishes for $k>0$ and $l\gg 0$, and assertion (i) is proven.

Assertion (ii) follows from (i) and from \cite[IV, Theorem 4.1]{BS} (in the statement of which one should replace very ample by ample).
\end{proof}

\begin{lem}
\label{lemglobgen}
Let $\nu:\wS\to S$ be a $G$\nobreakdash-resolution of singularities of a 
$G$\nobreakdash-equi\-vari\-ant Stein surface $S$.  Let $(\cF_i)_{i\in I}$ be a finite collection of $G$-equivariant coherent sheaves on~$\wS$. Then there exists a $G$-equivariant holomorphic line bundle~$\cL$ on $\wS$ such that~$\cF_i\otimes\cL^{\otimes l}$ is globally generated for all $l\geq 1$ and all $i\in I$.
\end{lem}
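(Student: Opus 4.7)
The plan is to construct $\cL$ as $\cO_{\wS}(-D)$ for a suitable $G$\nobreakdash-invariant effective divisor $D$ supported on the exceptional locus of $\nu$, and then to invoke the relative Serre vanishing theorem for proper holomorphic maps to Stein bases.

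\textbf{Reduction.} It suffices to exhibit a $G$-equivariant line bundle $\cL$ and some integer $l_0 \geq 1$ such that $\cF_i \otimes \cL^{\otimes l}$ is generated by global sections for all $i \in I$ and all $l \geq l_0$, because then replacing $\cL$ by $\cL^{\otimes l_0}$ yields the asserted statement for all $l \geq 1$.

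\textbf{Construction of $\cL$.} Because $\nu$ is a $G$-resolution of the Stein surface $S$, its exceptional locus is a $G$-invariant effective divisor whose support decomposes as a locally finite disjoint union of compact connected curves $(C_n)_{n \in N}$, each contracted to a single point of $S$. Writing $C_n = \bigcup_j C_{nj}$ for the irreducible decomposition, Grauert's contractibility criterion asserts that the intersection matrix $(C_{nj} \cdot C_{nk})_{j,k}$ is negative definite; its negative is an invertible matrix with non-positive off-diagonal entries and strictly positive inverse, so for each $n$ one may choose positive rational coefficients $(q_{nj})_j$ making $-\sum_j q_{nj} C_{nj}$ intersect every $C_{nk}$ positively. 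The set of such valid tuples is convex and $G$-stable, so one may further arrange the $(q_{nj})$ to be $G$-invariant by averaging. Clearing denominators yields a $G$-invariant integral effective divisor $D = \sum_{n,j} d_{nj} C_{nj}$ on $\wS$ for which $\cL := \cO_{\wS}(-D)$ restricts to an ample line bundle on every $C_n$ by the Nakai--Moishezon criterion. Since the remaining fibers of $\nu$ are single reduced points, $\cL|_{\wS_s}$ is ample for every $s \in S$.

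\textbf{Application of relative Serre vanishing.} By Proposition \ref{propample}(ii), $\cL$ is ample with respect to $S$ in the sense of \cite[p.\,141]{BS}. The Stein-relative version of Serre's theorem (\cite[IV, Theorem 4.1]{BS}, whose input is the fiberwise vanishing of Proposition~\ref{propample}(i) combined with Grauert's coherence theorem and the Steinness of $S$) then provides, for each $\cF_i$, an integer $l_i \geq 1$ such that $\cF_i \otimes \cL^{\otimes l}$ is generated by global sections on $\wS$ for all $l \geq l_i$. Taking $l_0 := \max_{i \in I} l_i$ and invoking the reduction step completes the proof. The main obstacle is precisely the uniformity embedded in this last step: one needs a single integer $l_0$ working simultaneously for all of the (potentially infinitely many) singular fibers of $\nu$, and this is exactly what ampleness with respect to a Stein base delivers, given the fiberwise ampleness secured in the construction of $D$.
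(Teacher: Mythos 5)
There is a genuine gap at the final step. Relative ampleness of $\cL$ with respect to the noncompact Stein base $S$ does \emph{not} deliver a single integer $l_i$ such that $\cF_i\otimes\cL^{\otimes l}$ is generated by global sections on all of $\wS$ for every $l\geq l_i$. The relative Serre/Grauert statements behind \cite[IV, Theorems 2.1 and 4.1]{BS} are local over the base: for each coherent sheaf they produce, near each point $s\in S$ (or on each compact subset of $S$), a twist $m_s$ beyond which the evaluation map $\nu^*\nu_*(\cF_i\otimes\cL^{\otimes l})\to\cF_i\otimes\cL^{\otimes l}$ is surjective in a neighborhood of $\wS_s$. Since a Stein surface may have an infinite discrete set of singular points whose exceptional fibers become arbitrarily complicated, these local bounds $m_s$ are unbounded in general, and no uniform $l_0$ exists for your fixed divisor $D$ with its once-and-for-all coefficients. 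Concretely, if the exceptional fiber over $s_n$ is an irreducible curve of genus $g_n\to\infty$ with self-intersection $-1$ and $D$ has coefficient $1$ there, then $\cO_{\wS}(-lD)$ restricts to a degree-$l$ line bundle on a genus-$g_n$ curve, which fails to be globally generated for $l$ of size comparable to $g_n$; so $\cO_{\wS}\otimes\cL^{\otimes l}$ cannot be globally generated for any $l$ independent of $n$. Your closing remark that "this is exactly what ampleness with respect to a Stein base delivers" is precisely the point at which the argument breaks: uniformity over a noncompact base is not part of what relative ampleness gives.

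The repair is the trick used in the paper's proof, and it fits naturally into your construction: first reduce to the single sheaf $\cF=\bigoplus_i\cF_i$, then do not fix the coefficients of $D$ in advance. For each exceptional fiber $\wS_s$ choose, as you did, a divisor $D_s$ supported on it with $\cO_{\wS}(-D_s)$ ample on $\wS_s$; apply Proposition \ref{propample} and \cite[IV, Theorem 2.1]{BS} to get a fiberwise constant $m_s$ (arranged so that $m_{\sigma(s)}=m_s$) for which $\nu^*\nu_*(\cF\otimes\cO(-lD_s))\to\cF\otimes\cO(-lD_s)$ is surjective near $\wS_s$ for $l\geq m_s$; and only then define $\cL:=\cO_{\wS}(-\sum_s m_sD_s)$. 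Because the fibers are disjoint and the sum is locally finite, this absorbs the unbounded constants into the line bundle itself, making the evaluation map surjective everywhere already for all $l\geq 1$; global generation of $\nu_*(\cF\otimes\cL^{\otimes l})$ then follows from Cartan's Theorem A on the Stein space $S$, and hence $\cF\otimes\cL^{\otimes l}$ is globally generated. (A smaller point: your description of the exceptional locus as contracted to points presupposes $S$ normal; one should first replace $S$ by its normalization, which is harmless for the statement.)
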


\begin{proof}
 It suffices to deal with the single $G$-equivariant coherent sheaf $\cF:=\bigoplus_{i\in I}\cF_i$.

After replacing it with its normalization, we may assume that $S$ is normal. 
 Since~$S$ is a normal surface, the subset $\Sigma\subset S$ above which $\nu$ is not an isomorphism is discrete.
Fix $s\in\Sigma$.
As the intersection matrix of the components of~$\wS_s:=\nu^{-1}(s)$ is negative definite (see \eg \cite[Proposition~2.1.12]{Nemethi}),  there is a (not necessarily effective)
divisor $D_s$ on~$\wS$ supported on $\wS_s$ such that~${\cL_s:=\cO_{\wS}(-D_s)}$ has positive degree on all the irreducible components of $\wS_s$, and hence such that~$\cL_s|_{\wS_s}$ is ample.  After replacing~$D_s$  with~${D_s+D_{\sigma(s)}}$, we may assume that $D_{\sigma(s)}=D_s$ for all $s\in\Sigma$.

For $s\in\Sigma$,  Proposition \ref{propample} shows that $\cL_s$ is ample with respect to~$S$ above some neighborhood of $s$.  Consequently, by \cite[IV, Theorem 2.1]{BS}, there exists $m_s\geq 0$ such that the evaluation morphism
$\nu^*\nu_*(\cF\otimes\cL_s^{\otimes l})\to \cF\otimes\cL_s^{\otimes l}$ is surjective in a neighborhood of $\wS_s$ for all ${l\geq m_s}$. One can of course ensure that $m_s=m_{\sigma(s)}$.

Set $\cL:=\cO_{\wS}(-\sum_{s\in \Sigma}m_sD_s)$. Then $\nu^*\nu_*(\cF\otimes\cL^{\otimes l})\to \cF\otimes\cL^{\otimes l}$ is surjective for~${l\geq 1}$.  As $S$ is Stein, $\nu_*(\cF\otimes\cL^{\otimes l})$ is globally generated, hence so is~${\cF\otimes\cL^{\otimes l}}$.
\end{proof}

\begin{lem}
\label{BertiniGsnc}
Let $D$ be a reduced $G$-snc divisor in a $G$-equivariant complex manifold~$S$. Let $\cL$ be a $G$-equivariant holomorphic line bundle on $S$. Let $V\subset H^0(S,\cL)$ be a finite-dimensional $G$-invariant subspace generating $\cL$. Then there exists a countable intersection of dense open subsets $W\subset V^G$ such that $D\cup\{\sigma=0\}$ is a reduced $G$-snc divisor for all $\sigma\in W$.
\end{lem}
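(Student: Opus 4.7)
The plan is to encode the conditions on $s \in V^G$ making $D \cup \{s = 0\}$ reduced $G$-snc as a countable list of generic Bertini conditions, verify each via Bertini's theorem in characteristic zero, and then transfer from $V$ to $V^G$ via the totally real structure of $V^G$ and the Baire category theorem.

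Write $D = \sum_{i \in I} D_i$ with $I$ countable (since $S$ is second-countable), and set $D_J := \bigcap_{i \in J} D_i$ for finite $J \subset I$; by hypothesis each $D_J$ is a $G$-invariant complex submanifold of $S$ of codimension $|J|$. For any $s \in V^G$, the divisor $\{s = 0\}$ is automatically $G$-invariant. The key observation is that $D \cup \{s = 0\}$ is a reduced $G$-snc divisor if and only if, for every finite $J \subset I$ (including $J = \varnothing$, in which case $D_\varnothing = S$), the restriction $s|_{D_J}$ has smooth zero divisor of codimension $1$ in $D_J$. Indeed, for $J = \varnothing$ this forces $\{s = 0\}$ to be smooth as a divisor in $S$, so its irreducible components are pairwise disjoint (a smooth variety is locally irreducible); for $J \neq \varnothing$ this is transversality of $\{s = 0\}$ with $D_J$; together these make $D \cup \{s = 0\}$ snc; and the $G$-snc condition is automatic since $\sigma$ either fixes a component of $\{s = 0\}$ or swaps it with a disjoint one.

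For each finite $J \subset I$, I would show that the bad locus $B_J \subset V$ of sections failing this transversality on $D_J$ is a countable union of proper complex-analytic subsets. Exhaust $D_J$ by compact subsets $K_n^J$ and consider the closed complex-analytic subset $\Sigma_n^J \subset V \times K_n^J$ of pairs $(s, x)$ for which $s|_{D_J}$ fails to vanish transversally at $x$ (i.e.\ $s(x) = 0$ and $ds_x|_{T_x D_J} = 0$); by Remmert's proper mapping theorem, its image $B_n^J \subset V$ is a closed complex-analytic subset. That $B_n^J$ is a proper subset of $V$ is Bertini's theorem in characteristic zero, applied to the smooth submanifold $D_J$ and the base-point-free linear system $V|_{D_J}$ (which remains base-point-free since $V$ generates $\cL$): via the incidence-variety argument, the projection from the smooth incidence $\{(s, x) \in V \times D_J : s(x) = 0\}$ to $V$ has generically smooth fiber by Sard's theorem, so generic $s$ avoids $B_n^J$.

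Let $B := \bigcup_{J, n} B_n^J \subset V$, a countable union of proper complex-analytic subsets. The key structural ingredient I would then invoke is that $V^G$ is a maximal totally real subspace of $V$, \ie $V = V^G \otimes_{\R} \C$ (this is precisely because $\sigma$ acts $\C$-antilinearly on $V$): any holomorphic function on $V$ that vanishes on a nonempty open subset of $V^G$ must vanish identically, since by the Cauchy--Riemann equations its Taylor series at a point of $V^G$ is determined by its real-analytic restriction to $V^G$. Hence each proper complex-analytic subset of $V$ meets $V^G$ in a nowhere dense real-analytic subset, so $B \cap V^G$ is a countable union of nowhere dense subsets of the finite-dimensional real vector space $V^G$, and by the Baire category theorem its complement $W := V^G \setminus (B \cap V^G)$ is a countable intersection of dense open subsets of $V^G$, as required.

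The main technical point I anticipate is the combination of Bertini with the non-compactness of $S$ and of each $D_J$: one must exhaust by compacts in order to apply Remmert's proper mapping theorem and upgrade the Sard-theoretic bad loci to genuinely complex-analytic subsets of $V$ (rather than merely measure-zero or nowhere dense subsets), so that the totally real restriction trick can then produce nowhere dense real-analytic subsets of $V^G$. The $G$-equivariant part of the statement, by contrast, reduces painlessly to the totally real structure of $V^G \subset V$.
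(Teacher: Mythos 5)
Your reduction to countably many transversality conditions on the strata $D_J$ (including $J=\varnothing$), and the Baire-category scheme, match the paper's proof, which applies a Bertini--Sard theorem of Manaresi on each $D_J$ and intersects the resulting residual sets. But your bridge from complex genericity to genericity in the real form $V^G$ has a genuine gap. Remmert's proper mapping theorem does not apply where you invoke it: the projection $V\times\Omega\to V$ over a relatively compact open set $\Omega\supset K_n^J$ is not proper (its fibres are $\Omega$), and the image of $\Sigma^J\cap(V\times K_n^J)$, while closed, is in general \emph{not} a complex-analytic subset of $V$. A concrete example: take $\cL=\cO_{\C}$, $V=\mathrm{span}(1,z,z^2)$, $D_J=\C$, $K$ the closed unit disc; the set of $(a,b,c)$ such that $a+bz+cz^2$ has a double root in $K$ is a closed semianalytic set whose boundary consists of polynomials with a double root on the unit circle, and it is not an analytic subset of $\C^3$.

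This matters because analyticity is exactly what your totally-real restriction trick needs, and nothing weaker will do: what you are left with is that each $B_n^J$ is closed and of measure zero (indeed subanalytic) in $V$, but such sets can contain $V^G$ entirely --- $V^G$ itself is a closed, measure-zero, subanalytic subset of $V$. In other words, properties generic in $V$ need not be generic on the real form (e.g.\ ``$s\notin V^G$'' is generic in $V$ and fails identically on $V^G$), so no smallness statement about the bad locus in $V$ short of analyticity can be transferred. The fix is to run the Sard/transversality argument directly over $V^G$: form the real incidence set in $V^G\times D_J$ and do the dimension count there, taking into account that at points $x\in D_J\cap S^G$ the vanishing of a $G$-invariant section imposes only one real condition (compensated by the fact that such $x$ vary in only half the real dimension). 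This is precisely what the paper does by citing Manaresi's Theorem~II.5 and adapting its proof $G$-equivariantly (choosing the auxiliary data $G$-invariant and the constant real), rather than deducing the real statement formally from the complex Bertini theorem.
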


\begin{proof}
Let $(D_i)_{i\in I}$ be the irreducible components of $D$. For each finite subset $J\subset I$, set $D_J:=\cap_{i\in J}D_i$. As $D$ is snc, the subset $D_J\subset S$ is a complex submanifold of codimension $|J|$.  By \cite[Theo\-rem II.5]{Manaresi} applied on $D_J$, there exists a countable intersection of dense open subsets $W_J\subset V^G$ such that $D_J\cap\{\sigma=0\}$ is nonsingular of codimension $1$ in $D_J$ for all $\sigma\in W_J$ (in \cite[Step II of the proof of Theorem~II.5]{Manaresi}, choose the $F_i$ and $g$ to be $G$-invariant, and note that $c$ can be taken to be real).  It remains to set $W:=\cap_{J\subset I\textrm{ finite}}W_J$. 
\end{proof}

\subsection{Snc divisors on resolutions of singularities of Stein surfaces}

\begin{lem}
\label{lemsnc}
Let $\nu:\wS\to S$ be a $G$\nobreakdash-resolution of singularities of a 
$G$\nobreakdash-equi\-vari\-ant Stein surface $S$. Let $D\subset \wS$ be a reduced $G$-snc divisor. Fix~$m\geq 1$.  There exist a globally generated $G$-equivariant holomorphic line bundle~$\cL$ on~$\wS$ and a section $\sigma\in H^0(\wS,\cL^{\otimes m})^G$ such that $\{\sigma=0\}$ is a reduced $G$-snc divisor containing~$D$.
\end{lem}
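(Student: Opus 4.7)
The plan is to construct $\sigma$ as the product of a $G$-invariant section $\tau$ of $\cL^{\otimes m}(-D)$ with the canonical $G$-invariant section $s_D\in H^0(\wS,\cO_{\wS}(D))^G$ coming from the $G$-equivariant inclusion $\cO_{\wS}\hookrightarrow\cO_{\wS}(D)$. Since the divisor of $\sigma:=\tau\otimes s_D$ is $D+\{\tau=0\}$, this reduces the problem to finding a $\tau$ whose zero divisor is reduced $G$-snc and shares no component with $D$.

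First I would apply Lemma~\ref{lemglobgen} to the $G$-equivariant coherent sheaves $\cO_{\wS}$ and $\cO_{\wS}(-D)$ on $\wS$, producing a $G$-equivariant holomorphic line bundle $\cL$ such that both $\cL^{\otimes l}$ and $\cL^{\otimes l}(-D)$ are globally generated for all $l\geq 1$. Taking $l=1$ shows that $\cL$ itself is globally generated, as the statement requires, while taking $l=m$ makes $\cL^{\otimes m}(-D)$ globally generated.

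Next, I need a finite-dimensional $G$-invariant subspace $V\subset H^0(\wS,\cL^{\otimes m}(-D))$ generating $\cL^{\otimes m}(-D)$ and with $V^G$ a real form of $V$. Since $\wS$ is a finite-dimensional Stein space, \cite[Theorem~1]{Kripke} provides finitely many generating sections $s_1,\dots,s_N$. The semilinear $G$-action on global sections makes $s_j+\sigma\cdot s_j$ and $i(s_j-\sigma\cdot s_j)$ $G$-invariant for every~$j$, and the identity $s_j=\tfrac{1}{2}(s_j+\sigma\cdot s_j)+\tfrac{1}{2i}\cdot i(s_j-\sigma\cdot s_j)$ shows that these combinations still generate $\cL^{\otimes m}(-D)$. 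I would take $V$ to be their $\C$-span.

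Finally, I would apply Lemma~\ref{BertiniGsnc} to $\wS$, the $G$-snc divisor $D$, the line bundle $\cL^{\otimes m}(-D)$, and the subspace $V$ constructed above, obtaining $\tau\in V^G$ such that $D\cup\{\tau=0\}$ is reduced $G$-snc. Setting $\sigma:=\tau\otimes s_D\in H^0(\wS,\cL^{\otimes m})^G$ then yields a section whose zero divisor equals $D+\{\tau=0\}$, which is reduced $G$-snc and contains~$D$. The only step requiring any thought beyond a direct combination of Lemmas~\ref{lemglobgen} and~\ref{BertiniGsnc} is the production of the finite-dimensional $G$-invariant generating subspace $V$; this is the main (mild) obstacle, resolved by Kripke's theorem combined with the standard semilinear averaging trick.
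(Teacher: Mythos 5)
Your proof follows essentially the same strategy as the paper: apply Lemma~\ref{lemglobgen} with $\cF_1=\cO_{\wS}(-D)$ and $\cF_2=\cO_{\wS}$, produce a finite-dimensional $G$-invariant subspace $V\subset H^0(\wS,\cL^{\otimes m}(-D))$ generating the sheaf, apply Lemma~\ref{BertiniGsnc} together with Baire to obtain $\tau\in V^G$, and set $\sigma:=\tau\cdot\tau_D$. The only genuine difference is in how you obtain the finite-dimensional generating subspace $V$: the paper applies Baire's theorem in the Fr\'echet space $H^0(\wS,\cL^{\otimes m}(-D))$ to find three sections $\tau_1,\tau_2,\tau_3$ with no common zero (exploiting $\dim\wS=2$), then takes the smallest $G$-invariant subspace containing them; you instead invoke Kripke's finite generation theorem and then make the generators $G$-invariant by the standard semilinear averaging. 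Both routes are acceptable and of comparable difficulty.

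You do however make one factual error that needs correcting: you write ``Since $\wS$ is a finite-dimensional Stein space.'' The space $\wS$ is a $G$-resolution of singularities of the Stein surface $S$, and it is generally \emph{not} Stein -- it contains compact curves in its exceptional locus, which is incompatible with Steinness. Fortunately this does not sink the argument, because Kripke's theorem only requires $\wS$ to be reduced, $\sigma$-compact, and finite-dimensional, together with the hypothesis (already supplied by Lemma~\ref{lemglobgen}) that $\cL^{\otimes m}(-D)$ is generated by its global sections; Steinness is used in the paper's Remark~\ref{remseqcat}~(ii) precisely to obtain that global generation in the first place, which here is given to you. So the cure is simply to drop the word ``Stein'' and justify Kripke by the global generation from Lemma~\ref{lemglobgen} together with the finite-dimensionality of $\wS$.
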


\begin{proof}
Construct $\cL$ by applying Lemma \ref{lemglobgen} with $\cF_1=\cO_{\wS}(-D)$ and $\cF_2=\cO_{\wS}$. As~$\cL^{\otimes m}(-D)$ is globally generated, an application of Baire's theorem in the Fr\'echet space~$H^0(\wS,\cL^{\otimes m}(-D))$ shows the existence of sections $\tau_r\in H^0(\wS,\cL^{\otimes m}(-D))$ that are not identically zero on any irreducible component of ${\{\tau_1=\dots=\tau_{r-1}=0\}}$ (for~$1\leq r\leq 3$).  The smallest $G$-invariant subspace $V\subset H^0(\wS,\cL^{\otimes m}(-D))$ containing $(\tau_r)_{1\leq r\leq 3}$ is finite-dimensional and generates~$\cL^{\otimes m}(-D)$.

By Lemma \ref{BertiniGsnc} and Baire's theorem, one can find $\tau\in V^G$ such that $D\cup\{\tau=0\}$ is a reduced $G$-snc divisor. To conclude, set $\sigma:=\tau\cdot\tau_{D}$, where $\tau_{D}\in H^0(\wS,\cO_{\wS}(D))$ is the equation of $D$.
\end{proof}

The next lemma will only be used in the proof of Proposition \ref{killram2}.

\begin{lem}
\label{reallocusram}
Let $\nu:\wS\to S$ be a $G$-resolution of singularities of a $G$\nobreakdash-equi\-variant Stein surface $S$.  Let $D\subset\wS$ be a reduced $G$-snc divisor.  There exist a $G$\nobreakdash-equi\-variant line bundle $\cN$ on $\wS$ and a section $\tau\in H^0(\wS,\cN^{\otimes 2})^G$ such that $D':=D\cup \{\tau=0\}$ is a reduced $G$-snc divisor, and each connected component~$\Omega$ of~$\wS^G$ contains exactly one connected component of $(D')^G$, which is noncompact if so is $\Omega$.
\end{lem}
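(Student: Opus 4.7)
\emph{Plan.} The strategy is to pick a target real-analytic curve $\Gamma$ in the real $2$-manifold $\wS^G$ whose union with $D^G$ has the required connectedness and (non)compactness properties in each component $\Omega$ of $\wS^G$, to realize $\Gamma$ as the real locus of the zero set of a $G$-invariant section of some line bundle of the form $\cN^{\otimes 2}$, and then to perturb that section slightly to recover the $G$-snc condition.

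By Lemma \ref{lemrealpoints}(i), each connected component $\Omega$ of $\wS^G$ is a connected real-analytic $2$-manifold, and because $D$ is $G$-snc, $D^G\cap\Omega$ is a closed $1$-complex consisting of real-analytic arcs meeting transversally at isolated points. I would choose in each $\Omega$ a properly embedded real-analytic $1$-submanifold $\Gamma_\Omega\subset\Omega$, meeting $D^G$ transversally, such that $(D^G\cup\Gamma_\Omega)\cap\Omega$ is connected and $\Gamma_\Omega$ is noncompact iff $\Omega$ is: concretely, pick one point in each component of $D^G\cap\Omega$, join them pairwise by smooth arcs in $\Omega$, add a properly embedded half-line going to an end of $\Omega$ if $\Omega$ is noncompact, and then approximate by real-analytic arcs using the real-analytic structure that $\Omega$ inherits from $\wS$. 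Setting $\Gamma:=\bigsqcup_\Omega\Gamma_\Omega$, the fact that $\wS^G$ is totally real in $\wS$ implies that the real-analytic equations locally defining $\Gamma$ in $\wS^G$ complexify to $G$-invariant holomorphic equations on a $G$-invariant open neighborhood of $\wS^G$ in $\wS$, cutting out a $G$-invariant complex-analytic curve $\widetilde\Gamma$ there with $\widetilde\Gamma\cap\wS^G=\Gamma$.

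To globalize this local data I would use that $\wS$ is Stein, together with Cartan-style vanishing of $H^1$ of coherent sheaves and Runge-type approximation in the $G$-equivariant setting of Section~\ref{secanal}, to produce a $G$-equivariant holomorphic line bundle $\cN$ on $\wS$ and a section $\tau_0\in H^0(\wS,\cN^{\otimes 2})^G$ whose zero scheme agrees with $\widetilde\Gamma$ along $\Gamma$. Using $\cN^{\otimes 2}$ rather than $\cN$ ensures that the real line bundle $((\cN|_{\wS^G})^G)^{\otimes 2}$ is orientable, so that $\tau_0|_{\wS^G}$ may be viewed as a globally signed real-analytic function on $\wS^G$ whose zero set is $\Gamma$ with $0$ a regular value. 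Then, after replacing $\cN$ by $\cN\otimes\cL^{\otimes k}$ for a globally generated $G$-equivariant $\cL$ supplied by Lemma \ref{lemglobgen} and $k$ large, I would pick a finite-dimensional $G$-invariant subspace $V\subset H^0(\wS,\cN^{\otimes 2})^G$ containing a suitable multiple of $\tau_0$ and generating $\cN^{\otimes 2}$. By Lemma \ref{BertiniGsnc} and a Baire argument, a generic $\tau\in V^G$ close to that multiple makes $D\cup\{\tau=0\}$ a reduced $G$-snc divisor, while the real zero locus $\{\tau=0\}^G$ remains a small $\ci$-perturbation of $\Gamma$ (as $0$ is a regular value for $\tau_0|_{\wS^G}$), preserving the required connectedness and (non)compactness in each $\Omega$.

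The main obstacle is the globalization step: producing a \emph{global} $G$-equivariant line bundle on $\wS$ of the form $\cN^{\otimes 2}$ together with a global $G$-invariant section whose zero locus agrees along $\wS^G$ with the prescribed local complexification of $\Gamma$. This is where Stein theory for $G$-equivariant spaces has to be used most carefully, exploiting the extra flexibility afforded by tensoring with a sufficiently positive globally generated line bundle.
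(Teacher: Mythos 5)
There is a genuine gap, and it sits exactly where you place it: the globalization step, which you name as ``the main obstacle'' but do not resolve, is the whole content of the lemma. Moreover, the tools you propose for it are not available: $\wS$ is \emph{not} Stein, since the $G$-resolution $\nu$ introduces compact exceptional curves, so Cartan-type vanishing and Runge-type approximation cannot be invoked on $\wS$ to produce a global section of some $\cN^{\otimes 2}$ whose zero locus agrees along the noncompact set $\wS^G$ with a prescribed local complexification of $\Gamma$. (Concretely, holomorphic functions on $\wS$ are constant on exceptional fibres, and global generation of line bundles on $\wS$ already requires the special twists of Lemma \ref{lemglobgen}.) Even granting a Stein ambient space, Runge approximation on compacta would not suffice: to guarantee that the real zero set stays a small perturbation of $\Gamma$ \emph{globally}, preserving connectedness and noncompactness in every component $\Omega$, one needs Carleman-type approximation with $\mathcal{C}^1$ control along a closed noncompact totally real set, which is a much stronger statement.

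The paper's proof is organized precisely to avoid doing analysis on the non-Stein $\wS$: in a first step it uses Lemma \ref{lemglobgen} and the Bertini-type Lemma \ref{BertiniGsnc} to enlarge $D$ so that $D^G$ meets every real component of every exceptional curve transversally (this is also what handles the components of $\Omega$ whose $D^G$-part sits inside the exceptional region, an issue your construction of $\Gamma_\Omega$ does not address); in a second step it builds the connecting curve $M$ \emph{downstairs} in $S^G\setminus\Sigma^G$, doubled so that $[M]=0$ in $H^1(S^G,\Z/2)$ and hence $M=\{\phi=0\}$ for a global $\ci$ function, embeds the Stein space $S$ in $\C^N$, and applies a Carleman approximation theorem with derivative control to replace $\phi$ by a $G$-invariant holomorphic $\psi$; the section is then simply $\tau=\nu^*\psi-t$ of the trivial bundle $\cN=\cO_{\wS}$, with $t$ generic by Baire. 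Your orientability remark about $\cN^{\otimes 2}$ is reasonable in spirit, but it presupposes the pair $(\cN,\tau_0)$ whose existence is exactly what must be proved; the paper sidesteps this by the doubling trick. Also note that $D^G\cap\Omega$ may have infinitely many components, so ``joining them pairwise by arcs'' needs the kind of care (proper, embedded, eventually leaving compacta) that the paper's explicit construction provides. As written, your argument would not go through without replacing the globalization step by something like the paper's two-step reduction and Carleman input.
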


\begin{proof}
After replacing $S$ with its normalization, we may assume that $S$ is normal.  Let $\Sigma\subset S$ be the discrete subset over which $\nu$ is not an isomorphism.  For~${s\in\Sigma^G}$,  set $\wS_s:=\nu^{-1}(s)$.  Choose a finite subset $\Theta_s\subset(\wS_s)^G$ containing one point (in generic position) on each connected component of $E^G$ for each irreducible component~$E$ of~$\wS_s$.
Define~$\Theta:=\cup_{s\in\Sigma}\Theta_s$.
Our proof has two steps.

\begin{Step}
\label{step1real}
We reduce to the case where for all $s\in\Sigma^G$ and all $x\in\Theta_s$,  there is an ir\-re\-ducible component $\Delta$ of $D$ such that $\Delta^G$ intersects $(\wS_s)^G$ transversally at $x$.
\end{Step}

Let $\cI\subset\cO_{\wS}$ be the ideal sheaf of $\Theta$. By Lemma \ref{lemglobgen},  there exists a $G$-equivariant holomorphic line bundle $\cL$ on $\wS$ such that $\cI\otimes\cL^{\otimes 2}$ is globally generated.
As~$\wS$ has finite dimension and the fibers of $\cI\otimes \cL^{\otimes 2}$ have bounded dimension,  the $G$\nobreakdash-equivariant coherent sheaf $\cI\otimes\cL^{\otimes 2}$ is in fact generated by a finite-dimensional vector subspace~$V\subset H^0(\wS,\cI\otimes\cL^{\otimes 2})$, which we may choose to be $G$-invariant.

Fix $s\in\Sigma^G$ and $x\in\Theta_s$ lying on some irreducible component $E$ of $\wS_s$.
As~$V$ generates $\cI\otimes\cL^{\otimes 2}$ at $x$,  there exists a dense open subset $W_x\subset  V^G$ such that~$\{\sigma=0\}$ is smooth at $x$ and intersects $E$ transversally at $x$ for all $\sigma\in W_x$. 
By Lemma \ref{BertiniGsnc}, there exists a countable intersection of dense open subsets $W\subset V^G$ such  that the divisor $(D\cup\{\sigma=0\})\setminus \Theta$ is $G$-snc in $\wS\setminus\Theta$ for all~$\sigma\in W$.
By Baire's theorem, we may choose $\sigma\in W\cap\bigcap_{x\in \Theta}W_x$.  It now suffices to replace $D$ with $D\cup\{\sigma=0\}$.
\begin{Step}
We prove the lemma under the additional hypothesis stated in Step~\ref{step1real}.
\end{Step}

The hypothesis that the normal crossings curve $D^G$ in the topological surface~$\wS^G$ intersects all the components of $\wS_s^G$ transversally at some point (for all $s\in\Sigma^G$) implies that one can find a closed subset $M\subset S^G$ with $M\cap\Sigma^G=\varnothing$ such that 
\begin{enumerate}[(i)]
\item 
\label{iM}
$M$ is a $\ci$ submanifold of dimension $1$ of $S^G\setminus\Sigma^G$;
\item 
\label{iiM}
the cohomology class $[M]\in H^1(S^G,\Z/2)$ is trivial; equivalently, one can write $M=\{\phi=0\}$ for some $\ci$ map $\phi:S^G\to\R$ of which $0$ is a regular value;
\item 
\label{iiiM}
the curves $M$ and $D^G$ intersect transversally along nonsingular points;
\item
\label{ivM}
 For each connected component $\Omega$ of $\wS^G$,  the set $(D^G\cup M)\cap\Omega$ is connected, and noncompact if so is $\Omega$.
\end{enumerate}
More precisely, it is easy to construct $M$ such that (\ref{iiiM}) and (\ref{ivM}) are satisfied, but such that $M$ is a disjoint union of embedded segments and closed half-lines. To ensure that~(\ref{iM}) also holds, double these segments and half-lines to turn them into (very stretched out) circles and lines respectively.
This procedure (smoothing doubled segments and half-lines) guarantees that $M$ is included in a disjoint union of contractible open subsets of $S^G\setminus\Sigma^G$. It follows that $[M]=0$, hence that~(\ref{iiM}) holds. 
Making use of a tubular neighborhood of $M$ in $S^G\setminus \Sigma^G$, it is possible to modify the~$\ci$ equation $\phi:S^G\to\R$ of~$M$ (see (\ref{iiM})) so that the following holds:
\begin{enumerate}[(i)]
  \setcounter{enumi}{4}
\item 
\label{vM}
for all $s\in\Sigma^G$, the function $\phi$ is constant in some neighborhood of $s$;
\item 
\label{viM}
there is $\varepsilon>0$ such that, for $|t|<\varepsilon$, the subset $M_{t}:=\phi^{-1}(t)$ satisfies (\ref{iM})-(\ref{ivM}).
\end{enumerate}

Now choose a $G$-equivariant proper injective holomorphic map $i:S\to \C^N$ that is immersive at the nonsingular points of $S$, hence away from $\Sigma$ (see \cite[V, Theorem~3.7]{GMT}).  Extend $\phi$ to a $\ci$ map $\tphi:\R^N\to \R$ (do it locally and globalize using partitions of unity). By a higher-dimensional version of Carleman's approximation theorem with a control on derivatives (\eg \cite[Theorem 1.1]{MWO} whose bounded $E$-hulls hypothesis is satisfied by \cite[Theorem 2]{SC}), 
there exists a holomorphic map $\tpsi:\C^N\to\C$ such that $\tpsi|_{\R^N}$ is close to $\tphi$ in the strong $\mathcal{C}^1$ topology.  Replacing~$\tpsi$ with $z\mapsto (\tpsi(z)+\overline{\tpsi(\bar{z})})/2$, we may assume that $\tpsi$ is $G$-equivariant. Set~${\psi:=\tpsi\circ i:S\to \C}$.

 If $\tpsi|_{\R^N}$ approximates $\tphi$ well enough, then~$\psi|_{S^G}$ is close to~$\phi$ in the strong $\mathcal{C}^1$ topology (for which see \eg \cite[Chap.\,2,  \S 1]{Hirsch}). It follows that~(\ref{viM}) holds with~$\phi$ replaced with~$\psi|_{S^G}$. To verify this, fix a small tubular neighborhood $U$ of $M$ in~$S^G\setminus \Sigma^G$ with retraction $\pi:U\to M$, and note that if~$\psi|_{S^G}$ is sufficiently close to~$\phi$ in the strong $\mathcal{C}^1$ topology and $t\in\R$ is small enough, then~$M'_t:=(\psi|_{S^G})^{-1}(t)$ is a $\ci$ submanifold of $U$ such that $\pi|_{M'_t}:M'_t\to M$ is a diffeomorphism whose inverse is close  to the inclusion~$M\subset S^G\setminus\Sigma^G$ in the strong $\mathcal{C}^1$ topology.

Choose $\cN:=\cO_{\wS}$ and $\tau:=\nu^*\psi-t$ for $t\in\R$.
By Lemma \ref{BertiniGsnc},  the divisor $D\cup\{\sigma=0\}$ is reduced and $G$-snc if $t$ is chosen generic in Baire's sense. The other assertions hold by (\ref{viM}) (with $\phi$ replaced with~$\psi|_{S^G}$) if $|t|$ is small enough.
\end{proof}

\subsection{Killing ramification in alterations}

\begin{prop}
\label{killram}
Let $S$ be a reduced $G$-equivariant Stein space of dimension~$\leq~2$.  Let~${S'}$ be a nowhere dense $G$-invariant closed analytic subset of $S$. 
Let~$A$ be an $m$\nobreakdash-torsion $G$\nobreakdash-module for some $m\geq 1$. Choose ${\alpha\in H^2_G(S\setminus S',A)_0}$. There exists a $G$\nobreakdash-equi\-vari\-ant alteration $p:T\to S$ of degree $m$ with $T$ nonsingular and a class~${\beta\in H^2_G(T,A)_0}$ such that ${p^*\alpha=\beta|_{T\setminus p^{-1}(S')}}$ in $H^2_G(T\setminus p^{-1}(S'),A)$.
\end{prop}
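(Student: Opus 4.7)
\emph{Plan.} The strategy is to build $p:T\to S$ as the composition of a $G$-resolution of singularities and an $m$-fold cyclic cover chosen so that every residue of $\alpha$ gets multiplied by the ramification index $m$, which kills it in the $m$-torsion group $A$.

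\emph{Reduction and construction of the cover.} By Proposition~\ref{ressing}, there is a $G$-equivariant resolution of singularities $\nu:\wS\to S$ such that $D:=\nu^{-1}(S')$ is a reduced $G$-snc divisor. Since $\nu$ is a $G$-modification (degree~$1$), it suffices to prove the proposition for the triple $(\wS,D,\nu^*\alpha)$; and $\nu^*\alpha\in H^2_G(\wS\setminus D,A)_0$ because $G$-fixed points of $\wS\setminus D$ project to $G$-fixed points of $S\setminus S'$. Henceforth I assume that $S$ is a $G$-equivariant complex manifold of dimension $\leq 2$ and $S'=D$ is a reduced $G$-snc divisor. Lemma~\ref{lemsnc} then yields a globally generated $G$-equivariant line bundle $\cL$ on $S$ and a section $\sigma\in H^0(S,\cL^{\otimes m})^G$ such that $\wD:=\{\sigma=0\}$ is a reduced $G$-snc divisor containing $D$. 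I form the $G$-equivariant cyclic cover
\[
p_0:T_0:=\Specan\Bigl(\bigoplus_{i=0}^{m-1}\cL^{-i}\Bigr)\longrightarrow S
\]
with algebra structure determined by $\sigma$: it is finite of degree $m$, \'etale outside $\wD$, and has ramification index exactly $m$ along the generic point of every irreducible component of $\wD$. A further $G$-resolution of $T_0$ (Proposition~\ref{ressing}) produces $r:T\to T_0$ with $p^{-1}(\wD)$ a $G$-snc divisor, and $p:=p_0\circ r:T\to S$ is the desired $G$-equivariant alteration of degree $m$ with $T$ nonsingular.

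\emph{Extension of $p^*\alpha$.} I extend $p^*\alpha\in H^2_G(T\setminus p^{-1}(D),A)$ to a class $\beta\in H^2_G(T,A)$ by checking that its residue along each irreducible component $E$ of $p^{-1}(D)$ vanishes. If $E$ is the strict transform of a component of $\wD$ that lies in $D$, then $p$ has ramification index $\lambda=m$ along $E$ generically, and diagram~(\ref{pullbacksupport}) with $c=1$ identifies the residue of $p^*\alpha$ with $m\cdot q^*$ of the residue of $\alpha$, which vanishes in the $m$-torsion group $A$. If instead $E\subset p^{-1}(P)$ is exceptional with $P\in D\cap\wD^{\sing}$, a $G$-invariant coordinate polydisk $U\ni P$ makes $\wD\cap U$ a union of coordinate axes and the cyclic cover explicit; the $H^2(G,H^0)$-graded piece of the Hochschild--Serre filtration of $\alpha|_{U\setminus D}$ vanishes by the $_0$ hypothesis, the $H^0(G,H^2)$-piece is multiplied by the determinant $\pm m$ of the local cover on the $2$-torus (hence dies in $A$), and the $H^1(G,H^1)$-piece is handled analogously, giving vanishing of $p^*\alpha$ in a punctured neighborhood of $p^{-1}(P)$. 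Combining these two vanishings via the $G$-equivariant Thom long exact sequences (equivalently, iterating Lemma~\ref{extdiv} along $p^{-1}(D)$ component by component) produces $\beta\in H^2_G(T,A)$ with $\beta|_{T\setminus p^{-1}(S')}=p^*\alpha$. Lemma~\ref{ext0} then gives $\beta\in H^2_G(T,A)_0$: at $x\in T^G\setminus p^{-1}(S')$, one has $p(x)\in S^G\setminus S'$, so $\beta|_x=(p^*\alpha)|_x=\alpha|_{p(x)}=0$.

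\emph{Main obstacle.} The delicate step is treating the exceptional components above $P\in D\cap\wD^{\sing}$. Since $p$ contracts these components to a point, the clean ramification argument of diagram~(\ref{pullbacksupport}) does not apply directly; one is forced into a local Hochschild--Serre analysis on the complement of an snc curve in a $G$-equivariant polydisk, simultaneously exploiting the $_0$ hypothesis and the multiplicity-$m$ structure of the cyclic cover.
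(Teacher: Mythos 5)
Your reduction, your construction of the degree~$m$ cover (Proposition~\ref{ressing}, Lemma~\ref{lemsnc}, the cyclic cover $\{z^m=\sigma\}$, then a second resolution), and your treatment of the strict transforms via diagram~(\ref{pullbacksupport}) with $\lambda=m$ coincide with the paper's proof. The genuine gap is exactly the step you flag as the ``main obstacle'': the claimed vanishing of $p^*\alpha$ in a punctured neighborhood of $p^{-1}(P)$ is not established. (a) The $H^1(G,H^1)$-piece is \emph{not} ``handled analogously'': on the local $2$-torus the pullback on $H^1$ to the index-$m$ cover is the restriction map $\Hom(\Z^2,A)\to\Hom(\Lambda,A)$, $\Lambda=\{(a,b):a+b\equiv 0 \bmod m\}$, which is nonzero in general (already for $m=2$, $A=\Z/2$ the class dual to one branch restricts nontrivially to $\Lambda$), and the hypothesis $\alpha\in H^2_G(\cdot,A)_0$ gives no evident control of this piece. (b) Even if every graded piece of the filtered map $p^*$ vanished, that only shows $p^*\gamma$ sits one step deeper in the Hochschild--Serre filtration of the cover, not that it is zero: a class cannot be killed ``piece by piece'' through a filtered map, since after the leading term dies the remaining terms of $p^*\gamma$ are not pullbacks of the lower graded pieces of $\gamma$. (c) Your local model assumes both branches at $P$ lie in $D$; crossings of a component of $D$ with a component of $\wD\setminus D$, which also create $A_{m-1}$ points of the cover, are not treated, nor is the $E^{2,0}$-claim justified for a general $m$-torsion $G$-module $A$. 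A lesser slip: the parenthetical ``equivalently, iterating Lemma~\ref{extdiv} along $p^{-1}(D)$ component by component'' is wrong for the strict transforms, since these are open curves with nonvanishing $H^1$ and the hypothesis $H^1_G(\cdot,A(-1))_0=0$ of Lemma~\ref{extdiv} fails there; the ramification argument you give is the correct mechanism for those components.

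What the paper does at the crossing points, and what your proof is missing, is purely geometric: after extending across the smooth part of the ramification divisor by the multiplication-by-$m$ argument (your residue step), it observes that $\wT$ has $A_{m-1}$ singularities $\{z^m=xy\}$ over the crossings, takes the \emph{minimal} resolution, so that the exceptional sets are chains of $m-1$ rational curves, and then extends the class across these chains curve by curve using Lemmas~\ref{extpetit} and~\ref{extdiv}; the hypothesis of Lemma~\ref{extdiv} holds there because rational curves satisfy $H^1=0$ (Remark~\ref{remH1van}), with the $G$-equivariant alternatives of that remark covering invariant and conjugate-swapped components. No local computation of the class near the exceptional fibers is needed. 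Replacing your Hochschild--Serre paragraph by this extension-across-rational-curves argument (and keeping track of the mixed crossings) repairs the proof.
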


\begin{proof}
Let $\nu:\wS\to S$ be a $G$-resolution of singularities such that the support~$\wS'$ of~$\nu^{-1}(S')$ is a $G$-snc divisor in $\wS$ (see Proposition \ref{ressing}).
By Lemma \ref{lemsnc},  
one can find a $G$\nobreakdash-equi\-vari\-ant holomorphic line bundle $\cL$ on $\wS$  and ${\sigma\in H^0(\wS, \cL^{\otimes m})^G}$ such that $\{\sigma=0\}$ is a reduced $G$\nobreakdash-snc divisor containing $\wS'$.  Consider the degree~$m$ analytic covering ${\tp:\wT\to\wS}$ with equation $\{z^m=\sigma\}$. Let $\mu:T\to\wT$ be the minimal resolution of singularities of $\wT$. Let $p:T\to S$ be the induced $G$-equivariant map.

Let $\Sigma\subset \wS$ be the singular locus of $\wS'$.  Applying (\ref{pullbacksupport}) to $\tp:\wT\setminus\tp^{-1}(\Sigma)\to\wS\setminus\Sigma$,  and noting that $\lambda=m$, one sees that the pullback morphism
$$H^3_{\tilde{S}'\setminus\Sigma,G}(\wS\setminus\Sigma,A)\to H^3_{\tp^{-1}(\tilde{S}')\setminus\tp^{-1}(\Sigma),G}(\wT\setminus\tp^{-1}(\Sigma),A)$$
vanishes identically. It follows that $\tp^*\nu^*\alpha\in H^2_G(\wT\setminus \tp^{-1}(\wS),A)_0$ lifts to a class $\tilde{\alpha}\in H^2_G(\wT\setminus\tp^{-1}(\Sigma),A)$.  By Lemma \ref{ext0}, one has $\tilde{\alpha}\in  H^2_G(\wT\setminus\tp^{-1}(\Sigma),A)_0$. 

As $\tp^{-1}(\Sigma)$ does not meet the irreducible components of $\wT$ of dimension $\leq 1$, we may henceforth assume that $\wT$ is a surface. The singularities of $\wT$ at points of~$\tp^{-1}(\Sigma)$ are then $A_{m-1}$ singularities (locally isomorphic to $\{z^m=xy\}$), so the connected components of the exceptional locus of $\mu:T\to\wT$ are chains of 
$m-1$ 
rational curves. Several applications of Lemmas \ref{extpetit} and \ref{extdiv} (taking Remark~\ref{remH1van} into account) imply that $\tilde{\alpha}$ extends to a class $\beta\in H^2_G(T,A)_0$,  as desired.
\end{proof}

Applied to a $G$-equivariant Stein space of the form $S\sqcup S^{\sigma}$, Proposition \ref{killram}
yields the next non-$G$-equivariant corollary (use (\ref{cohoRC})).

\begin{cor}
\label{corkillram}
Let $S$ be a reduced Stein space of dimension $\leq2$.  Let~${S'}\subset S$ be a nowhere dense closed analytic subset.  Fix $m\geq 1$, an $m$\nobreakdash-torsion abelian group $A$, and ${\alpha\in H^2(S\setminus S',A)}$. There exist an alteration $p:T\to S$ of degree $m$ with $T$ nonsingular and a class~${\beta\in H^2(T,A)}$ with ${p^*\alpha=\beta|_{T\setminus p^{-1}(S')}}$ in $H^2(T\setminus p^{-1}(S'),A)$.
\end{cor}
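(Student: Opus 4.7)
The plan is to reduce the non-equivariant statement to the $G$-equivariant Proposition \ref{killram} via the doubling trick $S \sqcup S^\sigma$, exactly as the parenthetical remark before the corollary suggests.

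First I would form the $G$-equivariant reduced Stein space $\widetilde{S} := S \sqcup S^\sigma$ (of dimension $\leq 2$) with $G$ acting by swapping the two factors, and the $G$-invariant nowhere dense closed analytic subset $\widetilde{S}' := S' \sqcup (S')^\sigma$. The crucial observation is that $\widetilde{S}^G = \varnothing$, since a fixed point of the swap would need to lie in both components simultaneously. Consequently the condition $H^k_G(\widetilde{S} \setminus \widetilde{S}', A)_0 = H^k_G(\widetilde{S} \setminus \widetilde{S}', A)$ is automatic for every $k$, and similarly after any $G$-equivariant base change whose source has empty fixed locus.

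Next I would use the isomorphism (\ref{cohoRC}), which reads $H^k_G(S \sqcup S^\sigma, A) \isoto H^k(S, A)$, and its counterpart for $\widetilde{S} \setminus \widetilde{S}'$, to transport the given class $\alpha \in H^2(S \setminus S', A)$ to a class $\widetilde{\alpha} \in H^2_G(\widetilde{S} \setminus \widetilde{S}', A)_0$. Apply Proposition~\ref{killram} to $\widetilde{\alpha}$: one obtains a $G$-equivariant alteration $\widetilde{p} : \widetilde{T} \to \widetilde{S}$ of degree $m$ with $\widetilde{T}$ nonsingular and a class $\widetilde{\beta} \in H^2_G(\widetilde{T}, A)_0$ satisfying $\widetilde{p}^* \widetilde{\alpha} = \widetilde{\beta}|_{\widetilde{T} \setminus \widetilde{p}^{-1}(\widetilde{S}')}$.

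Finally, I would set $T := \widetilde{p}^{-1}(S)$ and $p := \widetilde{p}|_T$. Because $\widetilde{p}$ is $G$-equivariant and $\widetilde{S}$ has two components swapped by $G$, we have $\widetilde{T} = T \sqcup \sigma(T)$, so $p : T \to S$ is an alteration of degree $m$ with $T$ nonsingular. Applying (\ref{cohoRC}) to $\widetilde{T}$ converts $\widetilde{\beta}$ into a class $\beta \in H^2(T, A)$, and the compatibility $\widetilde{p}^* \widetilde{\alpha} = \widetilde{\beta}|_{\widetilde{T} \setminus \widetilde{p}^{-1}(\widetilde{S}')}$ restricts to the desired identity $p^* \alpha = \beta|_{T \setminus p^{-1}(S')}$ on the $S$-component. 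The only thing to verify carefully is the naturality of (\ref{cohoRC}) with respect to restriction to an open subset and to the map $\widetilde{p}$, which is essentially a formality since the isomorphism simply records the projection onto one of the two components; I do not anticipate any real obstacle here.
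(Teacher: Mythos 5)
Your proof is correct and is exactly the argument the paper intends (the paper compresses it into the single remark preceding the corollary: apply Proposition~\ref{killram} to $S\sqcup S^{\sigma}$ and use~(\ref{cohoRC})). You correctly spell out the key points — $(\widetilde S\setminus\widetilde S')^G=\varnothing$ makes the $(\,\cdot\,)_0$ condition vacuous, $\widetilde T$ splits $G$-equivariantly over the two components of $\widetilde S$, and~(\ref{cohoRC}) is natural for open restrictions and $G$-equivariant maps.
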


The following variant of Proposition \ref{killram} will be used in the proof of Theorem~\ref{thpi}. 
Its proof is inspired by \cite[\S 4.2]{pi}.
We let $\partial$ denote the boundary maps associated with the short exact sequence of $G$-modules $0\to\Z(1)\xrightarrow{2}\Z(1)\to\Z/2\to 0$.

\begin{prop}
\label{killram2}
Keep the notation of Proposition \ref{killram}. If $m=2$ and~$A=\Z/2$, one can choose $p:T\to S$ and $\beta$ so that $\partial(\beta)\in H^3_G(T,\Z(1))$ vanishes.
\end{prop}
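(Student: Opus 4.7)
The plan is to rerun the proof of Proposition \ref{killram} in the case $m=2$, $A=\Z/2$, but with the branch divisor of the cyclic double cover enlarged via Lemma \ref{reallocusram} in order to control the topology of the real locus of $T$. Fix a $G$-resolution $\nu:\wS\to S$ such that the support $\wS'$ of $\nu^{-1}(S')$ is a $G$-snc divisor (Proposition \ref{ressing}). Apply Lemma \ref{lemsnc} with $D=\wS'$ and $m=2$ to obtain a globally generated $G$-equivariant line bundle $\cL$ and a section $\sigma_0\in H^0(\wS,\cL^{\otimes 2})^G$ such that $\{\sigma_0=0\}$ is a reduced $G$-snc divisor containing $\wS'$. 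Then apply Lemma \ref{reallocusram} with $D=\{\sigma_0=0\}$ to produce a $G$-equivariant line bundle $\cN$ and a section $\tau\in H^0(\wS,\cN^{\otimes 2})^G$ such that $D':=\{\sigma_0=0\}\cup\{\tau=0\}$ is reduced $G$-snc, and each connected component $\Omega$ of $\wS^G$ contains exactly one connected component of $(D')^G$, which is noncompact whenever $\Omega$ is noncompact. Setting $\sigma:=\sigma_0\tau\in H^0(\wS,(\cL\otimes\cN)^{\otimes 2})^G$, one has $\{\sigma=0\}=D'\supseteq\wS'$. Form the cyclic double cover $\tp:\wT=\{z^2=\sigma\}\to\wS$, resolve its $A_1$ singularities via $\mu:T\to\wT$, and let $p:=\nu\circ\tp\circ\mu$. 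The argument of Proposition \ref{killram} yields $\beta\in H^2_G(T,\Z/2)_0$ with $p^*\alpha=\beta|_{T\setminus p^{-1}(S')}$.

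The remaining task is to verify $\partial\beta=0\in H^3_G(T,\Z(1))$. Since the Bockstein commutes with restriction to any $x\in T^G$ and $\beta|_x=0$, we have $\partial\beta\in H^3_G(T,\Z(1))_0$, so it suffices to show $H^3_G(T,\Z(1))_0=0$. I would approach this via the $G$-equivariant long exact sequence of the pair $(T,T^G)$ together with the identification (\ref{restrfp}). The structural control on $(D')^G$ afforded by Lemma \ref{reallocusram} forces each connected component of $T^G$ to be a (possibly branched) noncompact real surface whose homotopy type is captured by a $1$-dimensional complex: indeed $T^G\to\wS^G$ is a double cover branched along the preimage of $(D')^G$, and in each noncompact component of $\wS^G$ the branch locus is a single noncompact connected curve. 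Combined with the fact that Stein surfaces have the homotopy type of $2$-dimensional CW complexes, a Hochschild--Serre computation should yield the required vanishing.

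The main obstacle will be the cohomological vanishing $H^3_G(T,\Z(1))_0=0$. An alternative strategy I would pursue in parallel is to exhibit $\beta$ as the mod-$2$ reduction of $c_1(\cM)$ for some $G$-equivariant holomorphic line bundle $\cM$ on $T$, which would make $\partial\beta=0$ automatic. Such a representation is available in principle because the $G$-equivariant exponential sequence, combined with $H^k_G(T,\cO_T)=0$ for $k\geq 1$ (the Stein vanishing $H^k(T,\cO_T)=0$ for $k\geq 1$ together with surjectivity of the norm $f\mapsto f+\sigma(f)$ on $\cO(T)$ give this via Hochschild--Serre), yields a surjection $\Pic^G(T)\twoheadrightarrow H^2_G(T,\Z(1))$; the task then reduces to tracking a compatible integral line-bundle lift through the inductive construction of $\beta$, i.e.\ through each application of Lemmas \ref{extpetit} and \ref{extdiv}.
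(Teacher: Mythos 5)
Your construction step is exactly the paper's: enlarge the branch divisor via Lemma \ref{reallocusram} before taking the double cover, so that each connected component $\Omega$ of $\wS^G$ contains exactly one component of the real branch locus, noncompact when $\Omega$ is. But your verification of $\partial(\beta)=0$ rests on the claim that it suffices to prove $H^3_G(T,\Z(1))_0=0$, and that claim is false in general. By the restriction isomorphism $H^3_G(T,\Z(1))\isoto H^3_G(T^G,\Z(1))$ and the canonical decomposition $H^3_G(T^G,\Z(1))=H^0(T^G,\Z/2)\oplus H^2(T^G,\Z/2)$, restriction to a real point only sees the $H^0$ summand; hence $H^3_G(T,\Z(1))_0\simeq H^2(T^G,\Z/2)$, which is nonzero as soon as $T^G$ has a compact connected component. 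This happens whenever $S^G$ has a compact component (e.g.\ $S$ the affine quadric $x^2+y^2+z^2=1$ with its standard real structure, $S^G\simeq \bS^2$): Lemma \ref{reallocusram} only makes the branch curve noncompact over \emph{noncompact} components of $\wS^G$, so over a compact component the preimage in $T^G$ is a compact connected surface and contributes $\Z/2$ to $H^2(T^G,\Z/2)$. Your sketch only treats the noncompact components, so the reduction collapses precisely in the case that carries the obstruction. The paper avoids this entirely: the real purpose of Lemma \ref{reallocusram} there is to make $(q|_{T^G})^{-1}(\Omega)$ \emph{connected} for every component $\Omega$, which gives injectivity of the transfer $q_*:H^2(T^G,\Z/2)\to H^2(\wS^G,\Z/2)$; one then shows $q_*\partial(\beta)=0$ on $\wS$ (projection formula gives $(q_*\beta)|_{\wS\setminus\wS'}=2\nu^*\alpha=0$, and a Thom-isomorphism argument lifts $q_*\beta$ to $H^2_G(\wS,\Z(1))$), and combines this with $\partial(\beta)\in H^3_G(T,\Z(1))_0$ and the injectivity of $q_*$ on the $H^2(T^G,\Z/2)$ summand to kill both summands of $\partial(\beta)|_{T^G}$. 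That pushforward step is the missing idea in your argument.

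Your fallback strategy is essentially circular: $\partial(\beta)=0$ is \emph{equivalent} to $\beta$ lifting to $H^2_G(T,\Z(1))$, i.e.\ (given the surjection $\Pic_G(T)\twoheadrightarrow H^2_G(T,\Z(1))$ already used in Proposition \ref{GLefschetz11}) to $\beta$ being the mod $2$ reduction of $c_1$ of a $G$-equivariant line bundle. "Tracking an integral lift through Lemmas \ref{extpetit} and \ref{extdiv}" is not carried out and is not innocuous: the class $\tp^*\nu^*\alpha$ one starts from has no integral lift in general (the original $\alpha$ is only a $\Z/2$-class on $S\setminus S'$), so the existence of an integral lift of the extension $\beta$ is exactly the content of the proposition and must be produced by a genuine argument such as the pushforward-to-$\wS$ computation above, not assumed to persist through the construction.
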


\begin{proof}
Keep the notation of the proof of Proposition \ref{killram} and set ${q:=\tp\circ\nu:T\to\wS}$.

We claim that one can arrange that $q_*:H^2(T^G,\Z/2)\to H^2(\wS^G,\Z/2)$ is injective. To do so, we may assume that $\wS$ has dimension $2$.  Just after defining~$\cL$ and~$\sigma$ in the proof of Proposition~\ref{killram},  choose $\cN$ and $\tau$ as in Lemma \ref{reallocusram} applied with~${D:=\{\sigma=0\}}$, and replace $\cL$ with~$\cL\otimes\cN$ and $\sigma$ with $\sigma\cdot\tau$. This ensures that each connected component $\Omega$ of~$\wS^G$ contains exactly one connected component of~$D^G$, which is noncompact if so is $\Omega$. 
This implies that $(q|_{T^G})^{-1}(\Omega)$ is connected, and noncompact if so is $\Omega$. It follows that 
$q_*:H^2(T^G,\Z/2)\to H^2(\wS^G,\Z/2)$ is injective, as wanted.

We may now prove that $\partial(\beta)=0$.
Consider the commutative diagram
\begin{equation}
\label{pushBrauer}
\begin{aligned}
\xymatrix@C=1.5em@R=3ex{
H^2_G(T,\Z(1))\ar^{}[r]\ar_{q_*}[d]&H^2_G(T,\Z/2)\ar^{q_*}[d]\ar^{\partial}[r]&H^3_G(T,\Z(1))\ar^{q_*}[d]
\\
H^2_G(\wS,\Z(1))\ar^{}[r]&H^2_G(\wS,\Z/2)\ar^{\partial}[r]&H^3_G(\wS,\Z(1)),
}
\end{aligned}
\end{equation}
whose rows are exact sequences of $G$-equivariant cohomology associated with the short exact sequence $0\to\Z(1)\xrightarrow{2}\Z(1)\to\Z/2\to 0$, and whose vertical arrows are pushforward maps.
One computes that $(q_*\beta)|_{\tilde{S}\setminus \tilde{S}'}=q_*q^*\nu^*\alpha=2\nu^*\alpha=0$ (where we used the projection formula). 
Using (\ref{eqThom}), one sees that the reduction modulo $2$ morphism $H^2_{\tilde{S}'\setminus \Sigma,G}(\wS\setminus \Sigma,\Z(1))\to H^2_{\tilde{S}'\setminus \Sigma,G}(\wS\setminus\Sigma,\Z/2)$ is onto. Consequently,  the class~$(q_*\beta)|_{\tilde{S}\setminus\Sigma}$, which lifts to 
$H^2_{\tilde{S}'\setminus \Sigma,G}(\wS\setminus \Sigma,\Z/2)$ because $(q_*\beta)|_{\tilde{S}\setminus \tilde{S}'}=0$, further lifts to $H^2_{\tilde{S}'\setminus \Sigma,G}(\wS\setminus \Sigma,\Z(1))$. 
We deduce that $(q_*\beta)|_{\tilde{S}\setminus\Sigma}$ lifts to $H^2_G(\wS\setminus\Sigma,\Z(1))$. It follows from Lemma \ref{ext0} that $q_*\beta$ lifts to $H^2_G(\wS,\Z(1))$, and hence from the exactness and commutativity of (\ref{pushBrauer}) that $q_*\partial(\beta)=0$ in $H^3_G(\wS,\Z(1))$.

We now consider the diagram
\begin{equation}
\label{decompoRpoints}
\begin{aligned}
\xymatrix@C=1.5em@R=3ex{
H^3_G(T,\Z(1))\ar^{\sim\hspace{.5em}}[r]\ar_{q_*}[d]&H^3_G(T^G,\Z(1))\ar@{=}^{}[r]&H^0(T^G,\Z/2)\oplus H^2(T^G,\Z/2)\ar^{q_*}[d]
\\
H^3_G(\wS,\Z(1))\ar^{\sim\hspace{.5em}}[r]&H^3_G(\wS^G,\Z(1))\ar@{=}^{}[r]&H^0(\wS^G,\Z/2)\oplus H^2(\wS^G,\Z/2),
}
\end{aligned}
\end{equation}
whose vertical arrows are pushforward maps,  whose horizontal arrows are restriction maps (which are isomorphisms by \cite[Proposition 2.8]{Artinvanishing}),  where the equalities are the canonical decompositions of \cite[(1.30)]{BW1}, and which is commutative as a consequence of \cite[(1.33) and Proposition 1.22]{BW1}.
(We applied to the $G$\nobreakdash-equi\-vari\-ant cohomology of $G$-equivariant complex spaces a few facts that are proven in \cite{BW1} for the $G$-equivariant cohomology of algebraic varieties over real closed fields, \eg over $\R$.  These facts remain true in our setting,  with the same proofs.)

Since $\beta\in H^2_G(T,\Z/2)_0$,  one has~$\partial(\beta)\in H^3_G(T,\Z(1))_0$.
The image of~$\partial(\beta)|_{T^G}$ in~$H^0(T^G,\Z/2)$, which is given by restriction to real points, therefore vanishes.  As~${q_*\partial(\beta)=0}$ and ${q_*:H^2(T^G,\Z/2)\to H^2(\wS^G,\Z/2)}$ is injective,  the commutativity of (\ref{decompoRpoints}) implies that~$\partial(\beta)|_{T^G}=0$. It follows that~$\partial(\beta)=0$.
\end{proof}

The next variant of Corollary \ref{corkillram} will be used in the proof of Theorem \ref{thab}. Its proof is inspired by \cite[Proof of Theorem 2.2]{CTOP}.

\begin{prop}
\label{killramab}
Let $\pi:\whS\to S$ be a an analytic covering of connected normal Stein surfaces such that $\cM(S)\subset\cM(\whS)$ is Galois of group $\Gamma$. Let~${{\whS'}\subset \whS}$ be a $\Gamma$\nobreakdash-in\-va\-riant nowhere dense closed analytic subset.  Fix $m\geq 1$
and ${\alpha\in H^2(\whS\setminus \whS',\Z/m)}$. Set $N:=m|\Gamma|$.
There exist $f\in\cM(S)^*$,  an alteration $p:T\to \whS$ with $T$ nonsingular and $\cM(T)=\cM(\whS)[f^{\frac{1}{N}}]$, and a class~${\beta\in H^2(T,\Z/m)}$ with ${p^*\alpha=\beta|_{T\setminus p^{-1}(\hat{S}')}}$.
\end{prop}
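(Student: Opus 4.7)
The strategy is to refine the construction in the proof of Proposition~\ref{killram} so that the cover it produces arises from extracting an $N$-th root of a function in $\cM(S)^{*}$, via a norm.

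First, using the $\Gamma$-equivariant variants of Proposition~\ref{ressing} and Lemma~\ref{lemsnc} (the same proofs apply with $\Gamma$ replacing $G$ throughout; in particular, a $\Gamma$-equivariant resolution exists by the functoriality of \cite[Theorem~3.45]{Kollarsing} under local biholomorphisms), I would fix a $\Gamma$-equivariant resolution of singularities $\nu:\wS\to\whS$ such that $\wS'':=\mathrm{supp}(\nu^{-1}(\whS'))$ is $\Gamma$-snc, together with a $\Gamma$-equivariant holomorphic line bundle $\cL$ on $\wS$ and a $\Gamma$-invariant section $\sigma\in H^{0}(\wS,\cL^{\otimes m})^{\Gamma}$ whose zero locus is a reduced $\Gamma$-snc divisor containing $\wS''$.

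The crucial new step is the norm construction. I would choose a meromorphic trivialization $\tau$ of $\cL$ whose divisor contains no component of $\wS''$; this is possible because $\wS''$ has finitely many components and one can correct any given meromorphic trivialization along each of them by multiplication by an appropriate element of $\cM(\wS)^{*}=\cM(\whS)^{*}$. By $\Gamma$-invariance of $\wS''$, the divisor of $\gamma\tau$ also avoids $\wS''$ for every $\gamma\in\Gamma$. Setting $g:=\sigma/\tau^{m}\in\cM(\whS)^{*}$, define
\[f:=N_{\cM(\whS)/\cM(S)}(g)=\frac{\sigma^{|\Gamma|}}{N(\tau)^{m}}\in\cM(S)^{*},\]
where $N(\tau):=\prod_{\gamma\in\Gamma}\gamma\tau$. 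For each component $E$ of $\wS''$, one has $\mathrm{mult}_{E}(\sigma)=1$ and $\mathrm{mult}_{E}(N(\tau))=\sum_{\gamma\in\Gamma}\mathrm{mult}_{\gamma^{-1}E}(\tau)=0$ by the choice of $\tau$, so $\mathrm{mult}_{E}(f)=|\Gamma|$.

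Let $T_{0}$ be the normalization of $\whS$ in the finite \'etale $\cM(\whS)$-algebra $\cM(\whS)[f^{1/N}]$ (viewed as an analytic covering via~(\ref{eqext})), and let $p:T\to\whS$ be the composition of a resolution of singularities $T\to T_{0}$ with the structural morphism $T_{0}\to\whS$. Then $p$ is an alteration with $T$ nonsingular and $\cM(T)=\cM(\whS)[f^{1/N}]$. To see that $p^{*}\alpha$ extends to a class $\beta\in H^{2}(T,\Z/m)$, I would follow Proposition~\ref{killram}'s proof: the ramification index of $T\to\wS$ along each component $E$ of $\wS''$ equals $N/\gcd(N,|\Gamma|)=m$, so the non-$G$-equivariant Thom pullback formula~(\ref{pullbacksupportnonG}) with $\lambda=m\equiv 0\pmod{m}$ shows that the pullback map on $H^{3}$ with $\Z/m$-coefficients supported along $\wS''$ vanishes, whence $p^{*}\alpha$ extends across $p^{-1}(\whS')$ away from the singular locus of the full branch divisor; further extension across this singular locus and the remaining exceptional divisors of the resolution $T\to T_{0}$ is performed by repeated applications of Lemmas~\ref{extpetit} and~\ref{extdiv}, exactly as at the end of Proposition~\ref{killram}'s proof. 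The main technical obstacle is the bookkeeping of the extra branch components of $T\to\wS$, arising both from the components of $\{\sigma=0\}$ not contained in $\wS''$ and from the divisors $\mathrm{div}(\gamma\tau)$; one must check that these contribute only $A$-type singularities on $T_{0}$, whose minimal resolutions are chains of $\P^{1}$'s to which Lemma~\ref{extdiv} applies via Remark~\ref{remH1van}.
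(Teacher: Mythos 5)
Your approach shares the paper's overall skeleton (resolve $\whS$, choose a section $\sigma$ whose zero divisor contains the preimage of $\whS'$, build $f\in\cM(S)^*$ by taking a $\Gamma$-norm, extract an $N$-th root, resolve, and extend $p^*\alpha$ via the Thom pull-back formula and Lemma~\ref{extdiv}). The norm construction you propose is essentially correct and equivalent to the paper's (the paper avoids needing a $\Gamma$-equivariant version of Lemma~\ref{lemsnc} by taking a non-invariant $\sigma\in H^0(\wS,\cL^{\otimes N})$ and norming it to $\sigma_1=\prod_\gamma\gamma^*\sigma$; your route of demanding $\sigma\in H^0(\wS,\cL^{\otimes m})^\Gamma$ from the start would require developing a $\Gamma$-equivariant Bertini statement that the paper does not state, though this is a surmountable issue).

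The genuine gap is the one you flag as ``the main technical obstacle'' and then propose to dispatch with a bare ``one must check.'' That check fails. After you build $f$, the branch divisor of the cyclic cover $z^N=f$ on $\wS$ is the support of $\mathrm{div}(f)$, which is $\{\sigma=0\}\cup\mathrm{supp}(\mathrm{div}(N(\tau)))$. You only arranged for $\{\sigma=0\}$ to be snc; the meromorphic trivialization $\tau$ (and a fortiori its $\Gamma$-translates) is arbitrary away from $\wS''$, so $\mathrm{div}(N(\tau))$ can meet $\{\sigma=0\}$ and itself in arbitrarily bad patterns. Consequently, the normalization of the cover need not have $A$-type singularities; it can acquire arbitrary normal surface singularities whose minimal resolutions have exceptional curves of positive genus, to which Lemma~\ref{extdiv} (via Remark~\ref{remH1van}) does not apply. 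This is precisely why the paper inserts the extra modification $\nu_1:\wS_1\to\wS$ so that $\{\nu_1^*\sigma_1=0\}$ becomes snc \emph{before} taking the cyclic cover — and then separately extends $\nu_1^*\nu^*\alpha$ across the new exceptional trees of $\nu_1$ using Corollary~\ref{corextdiv}. Without that step, your final ``several applications of Lemmas~\ref{extpetit} and~\ref{extdiv}'' does not go through.

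A secondary issue, more easily repaired but currently unaddressed: you compute ramification of ``$T\to\wS$'', but your $T$ is constructed as a resolution of $T_0$, where $T_0$ is the normalization of $\whS$ in $\cM(\whS)[f^{1/N}]$; there is no given map $T\to\wS$. To perform the local Thom computation along $\wS''$ and to apply~(\ref{pullbacksupportnonG}) you must first replace $T_0$ by (the normalization of) its base change over $\wS$ — or, following the paper, over the further blowup $\wS_1$ — and only then resolve; this is implicit in the paper's definition of $\wT_1$ and $T$, and is exactly the point where your construction must be modified to incorporate the missing snc step.
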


\begin{proof}
Let $\nu:\wS\to\whS$ be a $\Gamma$-equivariant resolution of singularities such that ${\wS':=\nu^{-1}(\whS')}$ is an snc divisor. 
By a non-$G$-equivariant version of Lemma~\ref{lemsnc} (obtained by discarding $G$-actions in the proof, or by applying it formally to~${S\sqcup S^{\sigma}}$), there is a globally generated holomorphic line bundle $\cL$ on $\wS$ and $\sigma\in H^0(\wS,\cL^{\otimes N})$ such that~${\{\sigma=0\}}$ is a reduced snc divisor containing $\wS'$.
Write $\cL_1:=\bigotimes_{\gamma\in\Gamma}\gamma^*\cL$ and $\sigma_1:=\prod_{\gamma\in\Gamma}\gamma^*\sigma\in H^0(\wS,\cL_1^{\otimes N})$. Let $\nu_1:\wS_1\to\wS$ be a $\Gamma$-equivariant modification  with $\wS_1$ nonsingular such that $\{\nu_1^*\sigma_1=0\}$ is an snc divisor. 
Let $\tp_1:\wT_1\to\wS_1$ be the normalization of the degree~$N$ analytic covering with equation $\{z^N=\nu_1^*\sigma_1\}$. Let~${\mu:T\to\wT_1}$ be the minimal resolution of singularities. Let $p:T\to \whS$ be the induced $\Gamma$\nobreakdash-equivariant~map.

We first verify that $\cM(T)$ has the required form. Let $\tau\in H^0(\wS,\cL)$ be any nonzero section. Define $\tau_1:=\prod_{\gamma\in\Gamma}\gamma^*\tau\in H^0(\wS,\cL_1)$. Consider the rational function $g:=\frac{\sigma}{\tau^N}\in\cM(\wS)=\cM(\whS)$. Then $f:=\frac{\sigma_1}{\tau_1^N}=\prod_{\gamma\in\Gamma}\gamma^*g\in\cM(\whS)^{\Gamma}=\cM(S)$. By construction, one has $\cM(\whS)[f^{\frac{1}{N}}]\subset\cM(T)$. This inclusion is an equality as both $\cM(T)$ and $\cM(\whS)[f^{\frac{1}{N}}]$ have degree $N$ over $\cM(\whS)$ (apply (\ref{eqext}) to the Stein factorization of $p$).

We finally construct a class $\beta$ with the desired property. Let $\wS'_1\subset \wS_1$ be the strict transform of $\wS'$. Since the connected components of the exceptional locus of~$\nu_1$ are trees of rational curves meeting $\wS_1'$ at a single point, applying Corollary~\ref{corextdiv} repeatedly shows that the class $\nu_1^*\nu^*\alpha\in H^2(\wS_1\setminus\nu_1^{-1}(\wS'),\Z/m)$ extends to a class~$\alpha_1\in H^2(\wS_1\setminus \wS'_1,\Z/m)$.  Let $\Sigma\subset \wS_1$ be the singular locus of $\wS'_1$.

As the multiplicities of the components of $\wS_1'$ in $\{\nu_1^*\sigma_1=0\}$ divide $|\Gamma|$, the indices of ramification of $\tp_1$ along divisors of $\wT_1$ dominating these components are multiples of~$m$. It follows from (\ref{pullbacksupportnonG}) that the pull-back morphism
$$H^3_{\tilde{S}_1'\setminus\Sigma}(\wS_1\setminus\Sigma,\Z/m)\to H^3_{\tp_1^{-1}(\tilde{S}_1')\setminus\tp_1^{-1}(\Sigma)}(\wT_1\setminus\tp_1^{-1}(\Sigma),\Z/m)$$
vanishes identically. We deduce that $\tp_1^*\alpha_1\in H^2(\wT_1\setminus\tp_1^{-1}(\wS_1'),\Z/m)$ lifts to a class $\talpha_1\in H^2(\wT_1\setminus\tp_1^{-1}(\Sigma),\Z/m)$.

Local computations show that $\wT_1$ has $A_k$ singularities (for varying $k$) at points of $\tp_1^{-1}(\Sigma)$, 
and hence that the connected components of the exceptional locus of $\mu:T\to\wT_1$ are chains of rational curves. Several applications of Corollary \ref{corextdiv} therefore imply that $\talpha_1$ extends to a class $\beta\in H^2(T,\Z/m)$,  as wanted.
\end{proof}

\subsection{Killing global cohomology classes on Zariski-open sets}

\begin{prop}
\label{GLefschetz11}
Let $S$ be a $G$-equivariant reduced Stein space of dimension $\leq 2$. Let~$p:T\to S$ be a $G$-equivariant alteration.  For all $\beta\in H^2_G(T,\Z(1))$, there exists a $G$-invariant nowhere dense closed analytic subset $T'\subset T$ such that $\beta|_{T\setminus T'}=0$.
\end{prop}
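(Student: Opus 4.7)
The plan is to reduce to the case of smooth $T$ via a $G$-resolution, to identify $\beta$ as the Chern class of a $G$-equivariant holomorphic line bundle $\cL$ on the whole of $T$ (the key point), and then to kill $\cL$ by a $G$-invariant meromorphic section whose zero and pole divisors will form $T'$.

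First let $\nu:\wT\to T$ be a $G$-resolution of singularities (Proposition~\ref{ressing}). If I find a $G$-invariant nowhere dense closed analytic subset $\wT'\subset\wT$ with $(\nu^*\beta)|_{\wT\setminus\wT'}=0$, then $T':=\nu(\wT'\cup\mathrm{Exc}(\nu))$ is closed analytic in $T$ by Remmert's proper mapping theorem, $G$-invariant, and nowhere dense (its irreducible components have dimension strictly less than $\dim T$); moreover $\nu$ restricts to a biholomorphism over $T\setminus T'\subset T_{\mathrm{reg}}$, so pulling back shows $\beta|_{T\setminus T'}=0$. Hence I may assume that $T$ is a $G$-equivariant complex manifold of dimension $\leq 2$.

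The main step is the vanishing $H^2_G(T,\cO_T)=0$. Applying the Leray spectral sequence to the proper $G$-equivariant morphism $p:T\to S$, the sheaf $R^2p_*\cO_T$ vanishes because the fibers of $p$ have complex dimension at most $1$, while $p_*\cO_T$ and $R^1p_*\cO_T$ are $G$-equivariant coherent sheaves on $S$ (the latter supported on the zero-dimensional locus where $p$ fails to be a local biholomorphism). For any $G$-equivariant coherent sheaf $\cF$ on the $G$-equivariant Stein space $S$, the Hochschild--Serre spectral sequence (\ref{HSss}), together with Cartan's theorem~B (giving $H^q(S,\cF)=0$ for $q\geq 1$) and the vanishing of $H^r(G,\cF(S))$ for $r\geq 1$ (because $\cF(S)$ is a $\C$-vector space on which $G$ acts $\C$-antilinearly, so averaging by $\tfrac{1}{2}(1+\sigma)$ kills its higher group cohomology), yields $H^p_G(S,\cF)=0$ for all $p\geq 1$. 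All Leray terms with $p+q=2$ thus vanish, giving $H^2_G(T,\cO_T)=0$; similarly $H^1_G(T,\cO_T)=0$. The $G$-equivariant exponential sequence $0\to\Z(1)\to\cO_T\xrightarrow{\exp}\cO_T^*\to 0$ then induces an isomorphism $\Pic^G(T)\isoto H^2_G(T,\Z(1))$, and so $\beta=c_1(\cL)$ for some $G$-equivariant holomorphic line bundle $\cL$ on $T$.

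To conclude, I adapt the proof of Lemma~\ref{lemglobgen} (passing through the Stein factorization of $p$ if needed, so as to view the exceptional part of $p$ as a modification of a Stein space) to construct a $G$-equivariant holomorphic line bundle $\cM$ on $T$ of positive degree on every one-dimensional component of every fiber of $p$. Then $\cM$ is ample with respect to $S$ in the sense of Proposition~\ref{propample}, so by that proposition and \cite[IV, Theorem~2.1]{BS} both $\cL\otimes\cM^{\otimes l}$ and $\cM^{\otimes l}$ are globally generated on $T$ for $l$ sufficiently large. Using $G$-equivariant Cartan~A and a Baire-type argument as in Lemma~\ref{BertiniGsnc}, I pick $G$-invariant sections $\sigma\in H^0(T,\cL\otimes\cM^{\otimes l})^G$ and $\tau\in H^0(T,\cM^{\otimes l})^G$ each not identically zero on any irreducible component of $T$. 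The quotient $\sigma/\tau$ is then a $G$-invariant meromorphic section of $\cL$, and $\cL$ is trivialized on the complement of $T':=\{\sigma=0\}\cup\{\tau=0\}$, a $G$-invariant nowhere dense closed analytic subset of $T$; hence $\beta|_{T\setminus T'}=c_1(\cL)|_{T\setminus T'}=0$.

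The main obstacle of the argument is the cohomological vanishing $H^2_G(T,\cO_T)=0$: this is what allows one to lift the \emph{a priori} transcendental class $\beta$ to an honest $G$-equivariant line bundle on $T$. This is the only place where both the dimension hypothesis on $S$ (bounding fiber dimensions of $p$) and the Stein, $G$-equivariant structure of $S$ (through Hochschild--Serre) are essential; once this lifting is in place, the construction of a $G$-invariant meromorphic section of $\cL$ is a routine adaptation of techniques already developed in the paper.
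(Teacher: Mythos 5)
Your proposal is correct and follows essentially the same route as the paper: reduce to nonsingular $T$, prove $H^2_G(T,\cO_T)=0$ via the Leray spectral sequence for the proper map to the Stein base (fiber dimension $\leq 1$, Grauert coherence, Cartan~B, and invertibility of $2$ killing the higher $G$-cohomology), lift $\beta$ through the exponential sequence to a $G$-equivariant line bundle, and trivialize that bundle off a $G$-invariant divisor by the global-generation argument of Lemma~\ref{lemglobgen}. The only blemish is your side claim that $H^1_G(T,\cO_T)=0$ (hence that $\Pic_G(T)\to H^2_G(T,\Z(1))$ is an isomorphism): since $H^1(T,\cO_T)\cong H^0(S,\RR^1p_*\cO_T)$ need not vanish this is false in general, but your argument only uses the surjectivity coming from $H^2_G(T,\cO_T)=0$, so the proof stands.
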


\begin{proof}
After replacing $S$ with $\oT$, where $T\to\oT\to S$ is the Stein factorization of~$p$, we may assume that $p$ is a modification. Replacing $T$ with a $G$-equivariant resolution of singularities (see Proposition \ref{ressing}), we may assume that $T$ is nonsingular.

The sheaf $\RR^1p_*\cO_T$ is supported on a discrete set and the sheaves $\RR^sp_*\cO_T$ vanish for $s\geq 2$ (as a consequence of Grauert's comparison theorem \cite[Haupsatz~II~a]{Grauert}). 
Since the higher cohomology groups of~$p_*\cO_T$ vanish as $S$ is Stein, we deduce from the Leray spectral sequence of~$p$ that $H^2(T,\cO_T)=0$. 
It follows that the group $H^2_G(T,\cO_T)$ vanishes (see \cite[\S A.3]{Tight}).

The long exact sequence of $G$-equivariant cohomology associated with the exponential short exact sequence 
$0\to \Z(1)\xrightarrow{2\pi\sqrt{-1}}\cO_T\xrightarrow{\exp}\cO_T^*\to 0$ therefore shows that the cohomology  class $\beta$ is the image by the $G$-equivariant cycle class map $\cl:\Pic_G(T)=H^1_G(T,\cO_T^*)\to H^2_G(T,\Z(1))$ (see \cite[\S A.4]{Tight}), of some $G$\nobreakdash-equivariant holomorphic line bundle $\cN\in\Pic_G(T)$ on $T$.

By Lemma \ref{lemglobgen}, there exists a $G$-equivariant holomorphic line bundle $\cL$ on $T$ such that both $\cL$ and $\cN\otimes \cL$ are globally generated. It follows that there exist divisors~$D_1\subset T$ and $D_2\subset T$ with~$\cO(D_1)=\cL$ and $\cO(D_2)=\cN\otimes \cL$. One therefore has $\cN=\cO(D_2-D_1)$. It now suffices to take $T':=D_1\cup D_2$.
\end{proof}

\begin{cor}
\label{corGLefschetz11}
Let $S$ be a reduced Stein space of dimension $\leq 2$. 
Let $p:T\to S$ be an alteration.  Let $A$ be a finitely generated abelian group.
For all~$\beta\in H^2(T,A)$,  there exists a nowhere dense closed analytic subset $T'\subset T$ such that $\beta|_{T\setminus T'}=0$.
\end{cor}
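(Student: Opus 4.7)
The plan is to deduce this non-equivariant corollary from Proposition \ref{GLefschetz11} in two steps: first, handle the case $A=\Z$ using the standard trick of applying the $G$-equivariant result to $S\sqcup S^{\sigma}$; then bootstrap to arbitrary finitely generated $A$ by a long exact sequence argument that rests on the vanishing of $H^3(T,\Z)$.

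For $A=\Z$, applying Proposition \ref{GLefschetz11} to the $G$-equivariant Stein space $S\sqcup S^{\sigma}$ (with $G$ swapping the two factors) and to the $G$-equivariant alteration $T\sqcup T^{\sigma}\to S\sqcup S^{\sigma}$ produces a $G$-invariant nowhere dense closed analytic subset of $T\sqcup T^{\sigma}$, which is necessarily of the form $T'\sqcup (T')^{\sigma}$ for some nowhere dense closed analytic $T'\subset T$; the identification $H^2_G(T\sqcup T^{\sigma},\Z(1))\cong H^2(T,\Z)$ coming from (\ref{cohoRC}) then yields the statement of Corollary \ref{corGLefschetz11} in this case. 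For general finitely generated $A$, I decompose $A=\Z^r\oplus\bigoplus_i \Z/m_i$ and observe that, by linearity, it suffices to handle each cyclic summand separately. For a class $\beta\in H^2(T,\Z/m)$, the long exact sequence attached to the short exact sequence $0\to\Z\xrightarrow{m}\Z\to\Z/m\to 0$ shows that $\beta$ lifts to some $\tilde\beta\in H^2(T,\Z)$ provided $H^3(T,\Z)=0$, and the already-established $A=\Z$ case applied to $\tilde\beta$ yields the desired $T'$.

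The main obstacle is proving that $H^3(T,\Z)=0$. Following the reductions at the start of the proof of Proposition \ref{GLefschetz11}, I may assume that $p:T\to S$ is a modification with $T$ nonsingular and $S$ normal Stein of dimension $\leq 2$; if $\pi:\tilde T\to T$ is a resolution of singularities and a suitable $\tilde T'\subset\tilde T$ has been constructed for the pullback of $\beta$, then $T':=\mathrm{Sing}(T)\cup\pi(\tilde T')$ works downstairs, since $\pi$ is an isomorphism away from this set. The Leray spectral sequence $E_2^{p,q}=H^p(S,R^qp_*\Z)\Rightarrow H^{p+q}(T,\Z)$ then vanishes in total degree $3$: the sheaves $R^qp_*\Z$ for $1\leq q\leq 2$ are skyscrapers on the discrete set above which $p$ is not an isomorphism (so they have no higher cohomology), $R^qp_*\Z=0$ for $q\geq 3$ as fibers have real dimension $\leq 2$, $p_*\Z=\Z$ because the fibers of $p$ are connected (by normality of $S$), and finally $H^3(S,\Z)=0$ since Stein surfaces have the homotopy type of $2$-dimensional CW complexes. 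This computation is the crux of the argument and is what crucially uses the dimension hypothesis on $S$.
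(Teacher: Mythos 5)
Your proof is correct and is essentially the paper's: reduce to $A=\Z$ or $A=\Z/m$, use $H^3(T,\Z)=0$ to see that $H^2(T,\Z)\to H^2(T,\Z/m)$ is onto, and settle the case $A=\Z$ by applying Proposition~\ref{GLefschetz11} to $S\sqcup S^{\sigma}$ and $T\sqcup T^{\sigma}$ via (\ref{cohoRC}). The only divergence is how $H^3(T,\Z)=0$ is justified: the paper obtains it in one stroke for the given alteration from the Leray spectral sequence of $p$ and Artin vanishing on the Stein base \cite[Proposition~2.2]{Artinvanishing}, whereas your more hands-on route also works but is stated slightly loosely --- the reductions at the start of the proof of Proposition~\ref{GLefschetz11} do not make $S$ normal (for that one must further factor the resolved $T$ through the normalization of the Stein factorization, which is again Stein), connectedness of the fibers is really furnished by the Stein factorization rather than by normality alone, and $R^1p_*\Z$, $R^2p_*\Z$ are skyscrapers because they are supported on the discrete locus of positive-dimensional fibers (the non-isomorphism locus itself need not be discrete when $S$ is not normal).
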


\begin{proof}
We may assume that $A=\Z$ or $A=\Z/m$ for some $m\geq 1$. The Leray spectral sequence for $p$ and the Artin vanishing theorem (see \cite[Proposition~2.2]{Artinvanishing}) imply that $H^3(T,\Z)=0$.  It follows that the reduction modulo $m$ morphism $H^2(T,\Z)\to H^2(T,\Z/m)$ is surjective. The case $A=\Z/m$ therefore reduces to the case $A=\Z$.  In this case, it suffices to apply Proposition \ref{GLefschetz11} to the $G$\nobreakdash-equivariant space $S\sqcup S^{\sigma}$ (making use of~(\ref{cohoRC})). 
\end{proof}

\section{A generic comparison theorem over Stein surfaces}
\label{seccomp}

In this section, we prove the comparison theorem that is the main technical result of this article (Theorem \ref{compC}), as well as its $G$-equivariant companion Theorem \ref{compG}.

\subsection{Generic properties of constructible sheaves}
\label{parbasechange}

Recall that the notion of absolutely flat ring was introduced in \S\ref{parqf}.

\begin{lem}
\label{lemloclibre}
Let $A$ be a ring whose total ring of fractions $F$ is absolutely flat. Let~$\LL$ be a constructible \'etale sheaf on $\Spec(A)$.  Then there exists a non\-zero\-di\-vi\-sor $a\in A$ such that $\LL|_{\Spec(A[\frac{1}{a}])}$ is finite locally constant.
\end{lem}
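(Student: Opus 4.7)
The plan is to first show that $\LL|_{\Spec(F)}$ is already finite locally constant, and then to descend this property to $\Spec(A[\tfrac{1}{a}])$ for a suitable non-zerodivisor $a\in A$ by a standard limit argument.

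The crucial input on $F$ is that, being absolutely flat, every finitely generated ideal of $F$ is principal and generated by an idempotent (see for instance \cite[Proposition~4.41~(3)]{Chromatic}). Consequently, every quasi-compact open subset of $\Spec(F)$ is clopen, and hence every constructible locally closed subscheme of $\Spec(F)$ is clopen. Picking a finite stratification $\Spec(A) = \bigsqcup_i X_i$ by constructible locally closed subschemes on which $\LL$ is finite locally constant, and base-changing to $\Spec(F)$, one obtains a decomposition of $\Spec(F)$ into clopen pieces on each of which $\LL|_{\Spec(F)}$ is finite locally constant. Since finite local constancy glues along clopen decompositions, $\LL|_{\Spec(F)}$ is itself finite locally constant.

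For the spreading-out step, one writes $F = \varinjlim_a A[\tfrac{1}{a}]$ with $a$ ranging over the directed set of non-zerodivisors of $A$, so that $\Spec(F) = \varprojlim_a \Spec(A[\tfrac{1}{a}])$ with affine transition maps. The standard limit theorems for finite \'etale covers and for morphisms between them (see \cite[Lemma \href{https://stacks.math.columbia.edu/tag/09YQ}{09YQ}]{SP}) yield a non-zerodivisor $a_0 \in A$ and a finite locally constant sheaf $\LL'$ on $\Spec(A[\tfrac{1}{a_0}])$ with $\LL'|_{\Spec(F)} \simeq \LL|_{\Spec(F)}$, and further ensure that this isomorphism itself descends, after inverting one more non-zerodivisor, to an isomorphism between $\LL'$ and $\LL|_{\Spec(A[\frac{1}{a}])}$ over $\Spec(A[\tfrac{1}{a}])$ for some non-zerodivisor $a\in A$ divisible by $a_0$. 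Then $\LL|_{\Spec(A[\frac{1}{a}])}$ is finite locally constant, as required.

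The main obstacle I anticipate is in this last assertion, namely that the non-zerodivisor produced by the limit argument can be chosen in $A$ itself. A non-zerodivisor $b\in A[\tfrac{1}{a_0}]$ need not lift to a non-zerodivisor of $A$; however, for any representative $b' \in A$ (with $b = b'/a_0^n$ in $A[\tfrac{1}{a_0}]$), the product $a_0 b' \in A$ is a non-zerodivisor of $A$ by the standard cancellation argument using that $a_0$ is itself a non-zerodivisor, so one takes $a := a_0 b'$.
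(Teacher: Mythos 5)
Your proof is correct and follows essentially the same route as the paper's: establish that constructible subsets of $\Spec(F)$ are clopen (whence $\LL|_{\Spec(F)}$ is finite locally constant), then spread out to some $\Spec(A[\frac{1}{a}])$ by a limit argument over the directed system of non-zerodivisors of $A$. The paper cites \cite[Lemmas \href{https://stacks.math.columbia.edu/tag/09YU}{09YU} and \href{https://stacks.math.columbia.edu/tag/0GL2}{0GL2}]{SP} for the descent, which directly produce a stage $A[\frac{1}{a}]$ indexed by a non-zerodivisor $a\in A$; your final paragraph's worry about lifting a non-zerodivisor from $A[\frac{1}{a_0}]$ back to $A$ is therefore unnecessary when the limit theorem is applied to the directed system as you set it up, though the cancellation argument you give is correct.
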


\begin{proof}
All quasi-compact open subsets $U\subset \Spec(F)$ are closed by  \cite[Lemma~\href{https://stacks.math.columbia.edu/tag/092F}{092F} (2)$\Rightarrow$(3) and Lemma \href{https://stacks.math.columbia.edu/tag/04MG}{04MG} (7)$\Rightarrow$(4)]{SP}. Consequently, all constructible subsets of~$\Spec(F)$ are open. It follows that $\LL|_{\Spec(F)}$ is finite locally constant.
Viewing $F$ as the colimit of the rings $A[\frac{1}{a}]$, where $a$ runs over all non\-zero\-di\-vi\-sors of $A$, and applying \cite[Lemmas \href{https://stacks.math.columbia.edu/tag/09YU}{09YU} and \href{https://stacks.math.columbia.edu/tag/0GL2}{0GL2}]{SP} concludes.
\end{proof}

\begin{lem}
\label{lemconstr}
Let $A$ be a ring whose total ring of fractions $F$ is absolutely flat.
Let $f:X\to\Spec(A)$ be a morphism of finite presentation and let $\LL$ be a constructible \'etale sheaf on $X$. Then there exists a non\-zero\-di\-vi\-sor $a\in A$ such that the sheaves~$\RR^sf_*\LL|_{\Spec(A[\frac{1}{a}])}$ are constructible for all $s\geq 0$.
\end{lem}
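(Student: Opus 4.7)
The plan is a three-step argument, descending to a noetherian model, applying a generic constructibility result there, and then ascending using the absolute flatness of $F$.

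First, I would apply standard spreading-out results for finite presentation morphisms and constructible \'etale sheaves (\cite[Tags \href{https://stacks.math.columbia.edu/tag/01ZM}{01ZM}, \href{https://stacks.math.columbia.edu/tag/05N4}{05N4}, \href{https://stacks.math.columbia.edu/tag/09YU}{09YU}]{SP}) to descend the data $(f,X,\LL)$ to a triple $(f_0: X_0\to\Spec(A_0),\LL_0)$ over a finitely generated $\Z$-subalgebra $A_0\subset A$, where $f_0$ is of finite presentation and $\LL_0$ is constructible; the ring $A_0$ is then noetherian and excellent. For any finitely generated $\Z$-subalgebra $A_\lambda\subset A$ containing $A_0$, denote by $(f_\lambda,X_\lambda,\LL_\lambda)$ the corresponding base change. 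By construction $A=\colim_\lambda A_\lambda$.

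Second, I would apply to each $f_\lambda:X_\lambda\to\Spec(A_\lambda)$ Deligne's theorem of generic constructibility with base change for constructible torsion sheaves (SGA~$4\tfrac{1}{2}$, \emph{Th\'eor\`emes de finitude}, Thm.~1.9), producing a dense open $U_\lambda\subset \Spec(A_\lambda)$ such that, for all $s\geq 0$, the sheaves $R^sf_{\lambda,*}\LL_\lambda|_{U_\lambda}$ are constructible, vanish for $s$ beyond some integer, and their formation commutes with every base change along a morphism to $U_\lambda$. It then suffices to exhibit some $\lambda$ and a nonzerodivisor $a\in A$ such that $\Spec(A[\tfrac{1}{a}])\to\Spec(A_\lambda)$ factors through $U_\lambda$: generic base change combined with the preservation of constructibility under pullback would identify $R^sf_*\LL|_{\Spec(A[\frac{1}{a}])}$ with the pullback of the constructible sheaf $R^sf_{\lambda,*}\LL_\lambda|_{U_\lambda}$, giving the conclusion.

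Third, I would construct such $\lambda$ and $a$. Writing $V_\lambda=V(I_\lambda)=\Spec(A_\lambda)\setminus U_\lambda$ for a finitely generated ideal $I_\lambda\subset A_\lambda$, the density of $U_\lambda$ in the noetherian $\Spec(A_\lambda)$ implies that $I_\lambda$ contains a nonzerodivisor of $A_\lambda$. The main obstacle is that a nonzerodivisor of $A_\lambda$ need not remain one in the larger ring $A$: this is precisely where the hypothesis that $F$ is absolutely flat enters. Observe that $b\in A$ is a nonzerodivisor iff its image in $F$ is a unit, equivalently iff $bF=F$; and in the absolutely flat ring $F$ every finitely generated ideal is generated by an idempotent (see \cite[Proposition~4.41]{Chromatic}, quoted in \S\ref{parqf}), so $I_\lambda F=(e_\lambda)F$ for some idempotent $e_\lambda\in F$. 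The task then reduces to verifying that, after sufficient enlargement of $\lambda$ (replacing $I_\lambda$ by $I_\lambda A_{\lambda'}$), one has $e_{\lambda'}=1$ in $F$, for then $1\in I_{\lambda'}F$ and clearing denominators yields the desired nonzerodivisor of $A$ inside $I_{\lambda'}A$. I expect this stabilization, which encodes the behavior of the bad locus in the limit $\Spec(F)=\varprojlim\Spec(A[\tfrac{1}{a}])$ over nonzerodivisors $a$, to be the main technical step.
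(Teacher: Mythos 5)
Your Steps 1 and 2 are fine, but the proposal stands or falls on the stabilization claim at the end of Step 3, which you do not prove, and which I do not believe is true in general: there is no reason why, after enlarging $\lambda$, the image of $\Spec(F)\to\Spec(A_{\lambda'})$ should avoid the locus where Deligne's conclusions fail (equivalently, that $I_{\lambda'}F=F$). Note that $A_{\lambda'}\subset A\subset F$, so the image of $\Spec(F)$ in $\Spec(A_{\lambda'})$ is Zariski dense, and a dense set of points can perfectly well meet the nowhere dense closed bad locus; worse, if the base-change map fails at some fixed prime $\kp$ of $A$ lying over $\Spec(F)$, this failure compares the stalk computed on the noetherian model at level $\lambda'$ with the limit stalk, and it need not disappear at any finite level; even if it did disappear pointwise, you would still need a single $\lambda'$ working simultaneously for the infinitely many points of $\Spec(F)$, a uniformity for which no argument is given. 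In short, demanding that $\Spec(A[\frac{1}{a}])\to\Spec(A_{\lambda'})$ factor through one Deligne-good dense open is a genuinely stronger requirement than the lemma itself, and this is exactly where the difficulty of the statement is concentrated.

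The paper's proof avoids this issue entirely: instead of a single dense open, it uses Deligne's theorem together with noetherian induction to produce a finite partition of $\Spec(A_0)$ into locally closed subsets $B_{0,j}$ over each of which $\RR^s(f_{0,j})_*\LL$ is constructible and commutes with arbitrary base change. The preimages $B_j\subset\Spec(A)$ are constructible, and the key point is that constructible subsets of $\Spec(F)$ are open (since $F$ is absolutely flat, the constructible and Zariski topologies on $\Spec(F)$ agree); a limit argument then shows that after inverting a single nonzerodivisor $a$ the $B_j$ become quasi-compact open subsets covering $\Spec(A[\frac{1}{a}])$, and on each such open piece $\RR^sf_*\LL$ is the pullback of a constructible sheaf, hence constructible. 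So the absolute flatness of $F$ is used not to push the image of $\Spec(F)$ into a good open (your route), but to turn the strata of a partition into opens — no control of any ``bad locus'' relative to the image of $\Spec(F)$ is ever needed. To repair your argument you would essentially have to import this stratification idea, at which point it becomes the paper's proof; as written, the final step is a genuine gap.
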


\begin{proof}
As $F$ is reduced (see \cite[Proposition~4.41~(1)]{Chromatic}),  so is $A$. Write~$A$ as the directed colimit of its (reduced and noetherian) finitely generated subrings.  By \cite[Lemmas \href{https://stacks.math.columbia.edu/tag/01ZM}{01ZM} and \href{https://stacks.math.columbia.edu/tag/09YU}{09YU}]{SP}, there exist a reduced noetherian ring $A_0$, a morphism $u:B:=\Spec(A)\to B_0:=\Spec(A_0)$,  a morphism of finite presentation~${f_0:X_0\to B_0}$, and a constructible \'etale sheaf $\LL_0$ on $X_0$, such that $f$, $X$ and~$\LL$ are induced from~$f_0$,~$X_0$ and $\LL_0$ by base change by $u$. 

By Deligne's generic base change theorem \cite[Th\'eor\`eme 1.9]{Finitude} and noetherian induction, there exists a partition $B_0=\sqcup_{1\leq j\leq n} B_{0,j}$ of $B_0$ into finitely many locally closed subsets such that,  if one lets $f_{0,j}:X_{0,j}\to B_{0,j}$ denote the base change of $f_0$ by the inclusion morphism~${B_{0,j}\to B_0}$,  then the sheaf $\RR^s(f_{0,j})_*\LL$ is constructible and its formation commutes with base change for all $s\geq 0$. 

For $0\leq j\leq n$, define $B_j:=u^{-1}(B_{0,j})$. They are constructible subsets of $B$.  Since the constructible and the Zariski topology of $\Spec(F)$ coincide (as follows from \cite[Lemma \href{https://stacks.math.columbia.edu/tag/092F}{092F} (2)$\Rightarrow$(3) and Lemma \href{https://stacks.math.columbia.edu/tag/04MG}{04MG} (7)$\Rightarrow$(4)]{SP}), 
the inverse images of the $B_j$ in $\Spec(F)$ are quasi-compact open subsets.
View $F$ as the colimit of the rings $A[\frac{1}{a}]$, where $a$ runs over all non\-zero\-di\-vi\-sors of~$A$.  By \cite[Lemma \href{https://stacks.math.columbia.edu/tag/01Z4}{01Z4}]{SP}, we may assume, after possibly inverting some non\-zero\-di\-vi\-sor $a\in A$, that the $B_j$ are quasi-compact open subsets of $B$.  The properties of the partition $B_0=\sqcup_{1\leq j\leq n} B_{0,j}$ now imply that the $(\RR^sf_*\LL)|_{B_j}$ are constructible, which concludes the proof.
\end{proof}

\begin{rems}
(i) Total rings of fractions of reduced rings are not always absolutely flat (a counterexample is constructed in \cite[Proposition 10]{Quentel}).
The hypotheses in Lemmas \ref{lemloclibre} and \ref{lemconstr} are therefore more stringent than asking for $A$ to be reduced.

(ii) The direct image of a constructible \'etale sheaf by a morphism of finite presentation is not constructible in general.  Indeed, one can find a ring $A$ and an element~$a\in A$ such that the closure of $\Spec(A[\frac{1}{a}])$ in $\Spec(A)$ is not constructible (see \cite[Exemple et Proposition 1]{Gamboa}). It follows that the direct image of a constant \'etale sheaf by the open immersion $\Spec(A[\frac{1}{a}])\to \Spec(A)$ is not constructible. 

(iii) Lemma \ref{lemconstr} implies that the higher direct images of a constructible \'etale sheaf by a morphism of finite presentation $f:X\to\Spec(A)$ are constructible if $A$ is absolutely flat (note that all non\-zero\-di\-vi\-sors in absolutely flat rings are invertible as a consequence of \cite[Proposition 4.41 (4)]{Chromatic}).
\end{rems}

\subsection{Killing singular cohomology classes generically in \'etale covers}

The next lemmas are key to the proof of the generic comparison theorem in \S\ref{parcompth}.

\begin{lem}
\label{lembilanvan1}
Let $S$ be a reduced Stein space.  Let $X$ be an affine \'etale~$\cO(S)$\nobreakdash-scheme. Fix $m\geq 1$.  Any class~${\alpha\in H^1(X^{\an},\Z/m)}$ vanishes in $H^1((X')^{\an},\Z/m)$ for some non\-zero\-di\-vi\-sor ${a\in\cO(S)}$ and some surjective \'etale morphism of finite presentation~${X'\to X_{\cO(S)[\frac{1}{a}]}}$.
\end{lem}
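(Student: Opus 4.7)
The plan is to interpret $\alpha \in H^1(X^{\an}, \Z/m)$ as a $\Z/m$-torsor $p : T \to X^{\an}$ in the classical topology. Such a torsor is exactly a topological covering of degree $m$ equipped with a free fiberwise $\Z/m$-action; this interpretation is available because $X^{\an}$ is locally contractible. I would then apply Theorem~\ref{thmconstr} to $p$ to obtain a nonzerodivisor $a \in \cO(S)$ and a finite \'etale covering $f : Y \to X_{\cO(S)[\frac{1}{a}]}$ of degree $m$ such that $f^{\an}$ is isomorphic, as a topological covering, to $p|_{T \setminus \{a=0\}}$. I will set $X' := Y$.

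Next I would verify that $X' \to X_{\cO(S)[\frac{1}{a}]}$ has the required properties. \'Etaleness and finite presentation are immediate since finite \'etale morphisms are of finite presentation. For surjectivity, the image of $f$ is clopen in $X_{\cO(S)[\frac{1}{a}]}$ (as finite morphisms are closed and \'etale morphisms are open). Because $\{a=0\}$ in $X^{\an}$ (and in $T$) is the preimage of $\{a=0\} \subset S$, the restricted covering $T \setminus \{a=0\} \to X^{\an} \setminus \{a=0\}$ remains surjective, so the image of $f^{\an}$ equals $(X_{\cO(S)[\frac{1}{a}]})^{\an}$. Applying Theorem~\ref{thmclopen}~(i) to the \'etale $\cO(S)$-scheme $X_{\cO(S)[\frac{1}{a}]}$ (noting that $\cO(S) \to \cO(S)[\frac{1}{a}]$ is \'etale since it is a localization) then upgrades this to surjectivity of $f$ itself.

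It remains to show that $\alpha$ pulls back to $0$ in $H^1((X')^{\an}, \Z/m)$. Under the identification $(X')^{\an} \simeq T \setminus \{a=0\}$, the pullback of $\alpha|_{(X_{\cO(S)[\frac{1}{a}]})^{\an}}$ corresponds to the $\Z/m$-torsor obtained by base-changing $p|_{T \setminus \{a=0\}}$ along itself, which is trivial because it admits the diagonal section. The main technical input is Theorem~\ref{thmconstr}; once that is granted, the rest is essentially formal, the only nontrivial verification being the passage from topological to algebraic surjectivity via Theorem~\ref{thmclopen}~(i).
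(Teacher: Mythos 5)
Your proof is correct and takes a slightly different route from the paper's. The paper also begins by invoking Theorem~\ref{thmconstr} to produce the finite \'etale covering $Y\to X_{\cO(S)[\frac{1}{a}]}$ of degree $m$ whose analytification realizes the covering associated with $\alpha$, but then sets $X':=\Isom_X(Y,\Z/m)$ (a finite \'etale $X$-scheme of degree $m!$ on which $Y$ tautologically trivializes). You instead take $X':=Y$ itself (degree $m$), and observe that the isomorphism of topological coverings $Y^{\an}\simeq T\setminus\{a=0\}$ over $X^{\an}\setminus\{a=0\}$ supplied by Theorem~\ref{thmconstr} furnishes a section of the pulled-back $\Z/m$-torsor (essentially the diagonal), which already forces $(f^{\an})^*\alpha=0$. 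This is a genuine (if mild) economy: one gets away with a smaller covering, and there is no need to introduce the $\Isom$-scheme at all. The underlying principle is the same — a torsor pulled back along itself acquires a section — and both arguments rely on Theorem~\ref{thmconstr} as the sole nontrivial input.

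One small remark: the detour through Theorem~\ref{thmclopen}~(i) to establish surjectivity of $f$ is unnecessary. Theorem~\ref{thmconstr} produces a finite \'etale covering of constant degree $m\geq 1$, meaning $f_*\cO_{X'}$ is finite locally free of rank $m$; this already makes $f$ faithfully flat and hence surjective, with no appeal to the comparison of clopen sets. Your argument is not wrong, just redundant.
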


\begin{proof}
The class $\alpha$ corresponds to a topological covering~${p:T\to X^{\an}}$ of degree~$m$.  By Theorem \ref{thmconstr}, after possibly inverting some non\-zero\-di\-vi\-sor~$a\in\cO(S)$, this topological covering is the analytification of a finite \'etale covering~$Y\to X$ of degree~$m$.  Set $X':=\Isom_X(Y,\Z/m)$.  The structural morphism $f:X'\to X$ is finite \'etale and surjective.  Since the finite \'etale covering~$Y\to X$ becomes trivial after base change by $f$,  the topological covering $p$ becomes trivial after base change by $f^{\an}$, and hence $(f^{\an})^*\alpha=0$.
\end{proof}

\begin{lem}
\label{lembilanvan2}
Let $S$ be a reduced Stein space of dimension $\leq 2$.  Let $X$ be an affine \'etale~$\cO(S)$-scheme. Fix $s>0$ and $m\geq 1$.  Then any class~${\alpha\in H^s(X^{\an},\Z/m)}$ vanishes in $H^s((X')^{\an},\Z/m)$ for some non\-zero\-di\-vi\-sor ${a\in\cO(S)}$ and some surjective \'etale morphism of finite presentation~${X'\to X_{\cO(S)[\frac{1}{a}]}}$.
\end{lem}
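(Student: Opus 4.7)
The case $s=1$ is Lemma \ref{lembilanvan1}, so I assume $s\geq 2$. My plan is to first apply Lemma \ref{etalegeneric}: after inverting a nonzerodivisor and replacing $S$ by the resulting normal analytic covering $\whS$ (legitimate via Lemma \ref{lemBing}), one may assume that $S$ is normal and $X=\Spec(\cO(S)[\frac{1}{c}])$ for some nonzerodivisor $c\in\cO(S)$, so that $X^{\an}=S\setminus\{c=0\}$ is a Stein open subset of $S$, and hence itself a reduced Stein space of dimension $\leq 2$. For $s\geq 3$ the conclusion is then immediate: Stein spaces of dimension $\leq 2$ have the homotopy type of a $2$-dimensional CW complex, so $H^s(X^{\an},\Z/m)=0$ and $\alpha=0$, allowing one to take $X'=X$.

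For $s=2$, the crucial idea is to apply Corollary \ref{corkillram} to $S$ itself (rather than to $X^{\an}$), with $S':=\{c=0\}$ and $\alpha\in H^2(X^{\an},\Z/m)=H^2(S\setminus S',\Z/m)$. This produces an alteration $p:T\to S$ of degree $m$ with $T$ nonsingular, together with a class $\beta\in H^2(T,\Z/m)$ such that $p^*\alpha=\beta|_{T\setminus p^{-1}(S')}$. Next, Corollary \ref{corGLefschetz11} yields a nowhere dense closed analytic subset $T'\subset T$ with $\beta|_{T\setminus T'}=0$. Letting $B\subset S$ be the branch locus of $p$ (a nowhere dense closed analytic subset of $S$, since $p$ is proper), I set $V:=S\setminus(\{c=0\}\cup B\cup p(T'))$. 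The restriction $p:p^{-1}(V)\to V$ is then a finite \'etale covering of degree $m$, and because $p^{-1}(V)\subset T\setminus T'$ (as $V\cap p(T')=\varnothing$) and $V\cap S'=\varnothing$, one has $p^*\alpha=\beta|_{p^{-1}(V)}=0$.

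The final algebraization follows the template of the proof of Theorem \ref{thmconstr}. By the Grauert--Remmert theorem applied to the normal Stein space $S$ across the nowhere dense closed analytic subset $S\setminus V$, the finite \'etale cover $p^{-1}(V)\to V$ extends to an analytic covering $\bar p:\bar T\to S$ of degree $m$ with $\bar T$ normal (and hence Stein, being a finite analytic cover of a Stein space). Lemma \ref{lemgenfinietaled} then provides a nonzerodivisor $a_0\in\cO(S)$ such that $\cO(\bar T)[\frac{1}{a_0}]$ is a finite \'etale $\cO(S)[\frac{1}{a_0}]$-algebra of degree $m$. Using Cartan's Theorem B on the ideal sheaf of $\{c=0\}\cup B\cup p(T')$ in the Stein space $S$, I find a nonzerodivisor $b\in\cO(S)$ vanishing on $\{c=0\}\cup B\cup p(T')$. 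Setting $a:=a_0\cdot b\cdot c$ and $X':=\Spec(\cO(\bar T)[\frac{1}{a}])$, the morphism $X'\to X_{\cO(S)[\frac{1}{a}]}=\Spec(\cO(S)[\frac{1}{a}])$ is finite \'etale surjective of degree $m$ and of finite presentation; since $\{a=0\}\supset\{c=0\}\cup B\cup p(T')$, one has $(X')^{\an}=\bar T\setminus\bar p^{-1}(\{a=0\})\subset\bar T|_V=p^{-1}(V)\subset T\setminus T'$, whence $\alpha|_{(X')^{\an}}=p^*\alpha|_{(X')^{\an}}=\beta|_{(X')^{\an}}=0$. The main subtlety will be to arrange that every analytic subset to be excluded---namely $\{c=0\}$, $B$, and $p(T')$---lives in $S$ and is thus controllable by a single nonzerodivisor in $\cO(S)$; this is precisely the reason for applying Corollary \ref{corkillram} to $S$ rather than to $X^{\an}$.
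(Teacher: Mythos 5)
Your proof is correct and follows essentially the same route as the paper: the $s=1$ and $s\geq 3$ cases are handled identically (Lemma \ref{lembilanvan1}, resp.\ Hamm's theorem), and for $s=2$ you use the same chain of reductions and lemmas (Lemma \ref{etalegeneric} with Lemma \ref{lemBing}, then Corollary \ref{corkillram}, Corollary \ref{corGLefschetz11}, and finally Lemma \ref{lemgenfinietaled}). The only, harmless, difference is the algebraization step: the paper passes to the Stein factorization $T\to\oT\to S$ of the alteration and pushes the vanishing locus down to $S$, whereas you restrict $p$ to a finite covering over the \'etale locus $V$ and re-extend it by Grauert--Remmert to a finite analytic covering of the (normal) $S$ -- both produce a finite analytic covering of $S$ to which Lemma \ref{lemgenfinietaled} applies.
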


\begin{proof}

The case $s=1$ follows from Lemma \ref{lembilanvan1}.

Assume now that $s=2$.  
Let $c\in\cO(S)$ and $q:\whS\to S$ be as in Lemma~\ref{etalegeneric}.  Replacing $S$ with~$\whS$ (which is legitimate thanks to Lemma \ref{lemBing}) and the scheme~$X$ with ${X_{\cO(S)[\frac{1}{c}]}=\Spec(\cO(\whS)[\frac{1}{c}])}$, we may assume that $X=\Spec(\cO(S)[\frac{1}{c}])$ for some non\-zero\-di\-vi\-sor $c\in\cO(S)$. 
By Corollary \ref{corkillram}, there exists an al\-ter\-ation~${p:T\to S}$ of degree $m$ such that $p^*\alpha\in H^2(p^{-1}(X^{\an}),\Z/m)$ extends to a class~${\beta\in H^2(T,\Z/m)}$.  
Consider the Stein factorization~$T\to\oT\xrightarrow{\op} S$ of $p$.  
By Corollary \ref{corGLefschetz11}, the class~$\beta$ vanishes in restriction to the complement of some nowhere dense closed analytic subset of~$T$. We deduce the existence of a non\-zero\-di\-vi\-sor~$a\in\cO(S)$ such that the class~$\op^*\alpha\in H^2(\oT\setminus \{c=0\},\Z/m)$ vanishes in $H^2(\oT\setminus \{ac=0\},\Z/m)$.  Lemma \ref{lemgenfinietaled} ensures, after possibly changing~$a$, that~$\cO(\oT)[\frac{1}{ac}]$ is a finite \'etale $\cO(S)[\frac{1}{ac}]$-algebra of degree $m$. Setting $X':=\Spec(\cO(\oT)[\frac{1}{ac}])$ concludes.

Assume finally that $s\geq 3$.  Since $X$ is affine and \'etale over $\cO(S)$,  its analytification $X^{\an}$ is Stein of dimension $\leq 2$. We deduce from \cite[Satz]{Hamm} that~$X^{\an}$ has the homotopy type of a CW complex of dimension $\leq 2$, and hence that~$H^s(X^{\an},\Z/m)=0$. One can therefore take $X'=X$.
\end{proof}

\subsection{The comparison theorem} 
\label{parcompth}

\begin{thm}
\label{compC}
Let $S$ be a reduced Stein space of dimension $\leq 2$.  
Let $X$ be an $\cO(S)$-scheme of finite presentation.
Let $\LL$ be a constructible \'etale sheaf on $X$.  If one lets~$a\in\cO(S)$ run over all non\-zero\-di\-vi\-sors, the comparison morphisms
\begin{equation}
\label{genisoC}
\underset{a}{\colim }\,H^k_{\et}(X_{\cO(S)[\frac{1}{a}]},\LL)\to \underset{a}{\colim}\, H^k((X_{\cO(S)[\frac{1}{a}]})^{\an},\LL^{\an})
\end{equation}
are isomorphisms for all $k\geq 0$.
\end{thm}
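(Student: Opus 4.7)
The plan is to carry out the strategy announced in \S\ref{parproof}: reduce to the constant sheaf case $\LL = \Z/m$ by dévissage, then analyse the change-of-topology morphism $\varepsilon : X^{\an} \to X_{\et}$ via its Leray spectral sequence, controlling each $\RR^s \varepsilon_* \Z/m$ generically using the results of Sections \ref{secH0}, \ref{secH1}, and \ref{secH2}.

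For the dévissage, I would first note that the total ring of fractions $\cM(S) = \prod_i \cM(S_i)$ is absolutely flat, so Lemmas \ref{lemloclibre} and \ref{lemconstr} apply. After inverting some non-zero-divisor $a \in \cO(S)$ I may assume $\LL$ is finite locally constant on $X_{\cO(S)[\frac{1}{a}]}$. Choosing a finite \'etale surjection $g : Y \to X_{\cO(S)[\frac{1}{a}]}$ trivialising $\LL$ and using that $g^{\an}$ is also finite (hence that analytification commutes with $g_*$), I reduce via pushforward along $g$ and a standard dévissage (expressing $\LL$ as a subsheaf of $g_* \Z/m^{\oplus N}$) to the case where $\LL = \Z/m$ is constant on some affine \'etale $\cO(S)[\frac{1}{a}]$-scheme $X$.

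In this case, the comparison morphism is the abutment of a morphism between the Leray spectral sequences
$$E_2^{p,q}(U) = H^p_{\et}(U, \RR^q \varepsilon_* \Z/m) \Longrightarrow H^{p+q}(U^{\an}, \Z/m)$$
for affine \'etale $U \to X$. Corollary \ref{compH0}(ii), available after reducing to the normal case via Lemma \ref{lemgenfinietaled}, yields $\Z/m \isoto \varepsilon_* \Z/m$. The sheaves $\RR^q \varepsilon_* \Z/m$ vanish identically for $q \geq 3$ since $U^{\an}$ has the homotopy type of a CW complex of dimension $\leq 2$ by \cite[Satz]{Hamm}. For $q = 1, 2$, the pivotal Lemmas \ref{lembilanvan1} and \ref{lembilanvan2} show that every section of $\RR^q \varepsilon_* \Z/m$ over an affine \'etale $U$ is killed after inverting some non-zero-divisor $a$ and pulling back along a surjective \'etale morphism of finite presentation $U' \to U_{\cO(S)[\frac{1}{a}]}$; sheaf-theoretically this makes the restrictions of these higher direct images to the generic étale refinements vanish.

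The main obstacle is Lemma \ref{lembilanvan2} for $q = 2$, which exploits $\dim S \leq 2$ in an essential way: it combines the ramification-killing Corollary \ref{corkillram} (a degree-$m$ alteration built from resolution of singularities and a Kummer cover) with the generic triviality of line bundles on such alterations (Corollary \ref{corGLefschetz11}, resting on the exponential sequence and on the vanishing $H^2(T, \cO_T) = 0$ for $T$ a resolution of a Stein surface, which in turn uses Grauert's theorem on higher direct images). With these inputs in hand, the isomorphism (\ref{genisoC}) follows by a standard spectral sequence chase: since $\Z/m \isoto \varepsilon_* \Z/m$ and the sheaves $\RR^q \varepsilon_* \Z/m$ for $q \geq 1$ vanish after inversion and \'etale refinement, taking the colimit of the Leray spectral sequences collapses them onto their edge maps $H^k_{\et}(U, \Z/m) \to H^k(U^{\an}, \Z/m)$, which thus become isomorphisms for every $k$.
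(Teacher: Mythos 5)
There is a genuine gap at the very start of your d\'evissage: you claim that after inverting a single non\-zero\-di\-vi\-sor $a\in\cO(S)$ the constructible sheaf $\LL$ becomes finite locally constant on $X_{\cO(S)[\frac{1}{a}]}$. Lemma \ref{lemloclibre} only gives this for sheaves on $\Spec(\cO(S))$ itself (its proof rests on the fact that constructible subsets of $\Spec(\cM(S))$ are open, because $\cM(S)$ is absolutely flat), and the analogous statement is simply false for a general $X$ of finite presentation over $\cO(S)$: take $X=\A^1_{\cO(S)}$ and $\LL$ a skyscraper sheaf supported on the zero section. No localization at elements of $\cO(S)$ removes such ``vertical'' strata, so your reduction to a constant sheaf on an affine \'etale $\cO(S)[\frac{1}{a}]$-scheme never gets off the ground for $X$ of positive relative dimension; as written, your argument only covers the case where $X$ is quasi-finite over $\cO(S)$.

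What is missing is the paper's first reduction, to $X=\Spec(\cO(S))$: one runs the Leray spectral sequence of the structural morphism $f:X\to\Spec(\cO(S))$ on both sides, uses Lemma \ref{lemconstr} (Deligne's generic base change) to know that the sheaves $\RR^sf_*\LL$ are constructible after inverting some non\-zero\-di\-vi\-sor, and --- this is the essential input your proposal never invokes --- the relative comparison theorem \cite[Theorem 3.7]{Stein}, which asserts that $(\RR^sf_*\LL)^{\an}\to\RR^sf^{\an}_*(\LL^{\an})$ is an isomorphism. That theorem is what takes care of the fibral directions of $X$ over $\cO(S)$ (it is where Artin's classical comparison over the points of $S$ enters), and it converts the problem into one about constructible sheaves on $\Spec(\cO(S))$, where Lemma \ref{lemloclibre} and your subsequent d\'evissage by finite \'etale pushforwards do apply. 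From that point on your outline matches the paper's proof: resolution of the locally constant sheaf by pushforwards from finite \'etale covers (with Lemma \ref{etalegeneric} and Lemma \ref{lemBing} used to replace $S$ by an analytic covering so that the pushforwards are harmless), then for $\LL=\Z/m$ the normalization of $S$ via Lemma \ref{lemgenfinietaled}, Corollary \ref{compH0} for $\varepsilon_*\Z/m$, Hamm's theorem for $\RR^s\varepsilon_*\Z/m$ with $s\geq 3$, and Lemmas \ref{lembilanvan1} and \ref{lembilanvan2} (via hypercoverings) to kill the contributions of $\RR^1$ and $\RR^2$ in the colimit.
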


\begin{proof}
We proceed in several steps.

\setcounter{Step}{0}
\begin{Step}
\label{step1}
We reduce to the case where $X=\Spec(\cO(S))$.
\end{Step}

Let $f:X\to\Spec(\cO(S))$ be the structural morphism.   By Lemma \ref{lemconstr} (which applies since $\cM(S)$ is absolutely flat, see \S\ref{parqf}), there is a non\-zero\-di\-vi\-sor $b\in\cO(S)$ such that the sheaves $\RR^sf_*\LL|_{\Spec(\cO(S)[\frac{1}{b}])}$ are constructible. 
Both sides of (\ref{genisoC}) are computed by (colimits of) Leray spectral sequences, whose~$E_2^{r,s}$ terms read 
\begin{equation}
\label{E2pq}
\underset{a}{\colim }\,H^r_{\et}(\Spec(\cO(S)[\tfrac{1}{a}]),\RR^sf_*\LL)\textrm{ and }\underset{a}{\colim }\,H^r(S\setminus\{a=0\},\RR^sf^{\an}_*(\LL^{\an})).
\end{equation}
The natural morphisms $(\RR^sf_*\LL)^{\an}\to \RR^sf^{\an}_*(\LL^{\an})$ are isomorphisms by \cite[Theorem 3.7]{Stein}. The comparison morphisms between the $E_2^{r,s}$ terms (\ref{E2pq}) are therefore isomorphisms by the $X=\Spec(\cO(S))$ case of Theorem \ref{compC} applied to the constructible sheaves on $\Spec(\cO(S))$ obtained by extending the $\RR^sf_*\LL|_{\Spec(\cO(S)[\frac{1}{b}])}$ by zero.  It follows that the comparison morphisms (\ref{genisoC}) are isomorphisms.

\begin{Step}
\label{step2}
We further reduce to the case where $\LL=\Z/m$ for some $m\geq 1$.
\end{Step}

By Lemma \ref{lemloclibre}, there is a non\-zero\-di\-vi\-sor $b\in\cO(S)$ such that $\LL|_{\Spec(\cO(S)[\frac{1}{b}])}$ is finite locally constant.  
Let $(A_i)_{i\in I}$ be finitely many abelian groups such that each geometric fiber of $\LL|_{\Spec(\cO(S)[\frac{1}{b}])}$ are isomorphic to exactly one of the $A_i$. 
Let~$f:X\to \Spec(\cO(S)[\frac{1}{b}])$ be the surjective finite \'etale morphism representing the finite locally constant \'etale sheaf $\bigsqcup_{i\in I}\Isom_{\Spec(\cO(S)[\frac{1}{b}])}(\LL|_{\Spec(\cO(S)[\frac{1}{b}])}, A_i)$ (see~\cite[Lemma \href{https://stacks.math.columbia.edu/tag/03RV}{03RV}]{SP}).
Our choice of $f$ implies that the constructible sheaf ${\M:=f^{*}(\LL|_{\Spec(\cO(S)[\frac{1}{b}])})}$ is Zariski-locally constant. In addition, the natural morphism $\LL|_{\Spec(\cO(S)[\frac{1}{b}])}\to f_*\M$ is injective, and its cokernel is finite locally constant (see \cite[Lemmas \href{https://stacks.math.columbia.edu/tag/095B}{095B} and \href{https://stacks.math.columbia.edu/tag/03RX}{03RX} (2)]{SP}). 

Iterating this procedure, we construct a resolution of $\LL|_{\Spec(\cO(S)[\frac{1}{b}])}$ by sheaves of the form $f_*\M$ with $f:X\to \Spec(\cO(S)[\frac{1}{b}])$ finite \'etale, and $\M$ constructible and Zariski-locally constant. Making use of the spectral sequence associated with such a resolution, we reduce to the case where $\LL$ is the extension by zero of a sheaf of the form $f_*\M$. Partitioning $X$ into finitely many open subsets over which $\M$ is constant, we further reduce to the case where $\M$ is constant.

By Lemma \ref{etalegeneric}, we may assume, possibly after changing $b$, that there exists an analytic covering $q:\whS\to S$ such that $X=\Spec(\cO(\whS)[\frac{1}{b}])$. As the higher direct images of morphisms induced by $f$ and $f^{\an}$ vanish by \cite[Proposition \href{https://stacks.math.columbia.edu/tag/03QP}{03QP}]{SP} and \cite[III, Theorem 6.2]{Iversen} respectively (and making use of Lemma \ref{lemBing}), we may replace $S$ with $\whS$, and $\LL$ with a constant sheaf on $\Spec(\cO(\whS))$ that has the same stalks as $\M$.  One can then further assume that $\LL=\Z/m$ for some $m\geq 1$.

\begin{Step}
\label{step3}
We deal with case where $X=\Spec(\cO(S))$ and $\LL=\Z/m$ for some $m\geq 1$.
\end{Step}

After maybe normalizing $S$ (which is legitimate thanks to Lemma \ref{lemgenfinietaled} applied to the normalization morphism), we may assume that $S$ is normal.

The colimit over all non\-zero\-di\-vi\-sors $a\in\cO(S)$ of the Leray spectral sequences of the site morphisms $\varepsilon:(S\setminus \{a=0\})_{\cl}\to(\Spec(\cO(S)[\frac{1}{a}]))_{\et}$ (see \S\ref{paranal}) reads
\begin{equation*}
E_2^{r,s}=\underset{a}{\colim }\,H^r_{\et}(\Spec(\cO(S)[\tfrac{1}{a}]),\RR^s\varepsilon_*\Z/m) \implies \underset{a}{\colim }\,H^{r+s}(S\setminus \{a=0\},\Z/m).
\end{equation*}
It follows from Corollary \ref{compH0} that the natural sheaf morphism $\Z/m\to\varepsilon_*\Z/m$ is an isomorphism. To deduce that (\ref{genisoC}) is an isomorphism from the above spectral sequence, it therefore suffices to show that 
$\underset{a}{\colim }\,H^r_{\et}(\Spec(\cO(S)[\tfrac{1}{a}]),\RR^s\varepsilon_*\Z/m)=0$ for all $r\geq 0$ and $s>0$.  Computing \'etale cohomology using hypercoverings (see~\cite[Lemma \href{https://stacks.math.columbia.edu/tag/09VZ}{09VZ}]{SP}), it suffices to show that $\underset{a}{\colim }\,H^0_{\et}(X_{\cO(S)[\frac{1}{a}]},\RR^s\varepsilon_*\Z/m)=0$ for any affine \'etale $\cO(S)$-scheme $X$ and any $s>0$.  
By definition of the higher direct images~$\RR^s\varepsilon_*\Z/m$, this amounts 
exactly to Lemma~\ref{lembilanvan2}.
\end{proof}

\begin{rems}
\label{remcomp}
(i) 
Theorem \ref{compC} fails in general if $S$ is not reduced, already for $k=0$ and $\LL=\Z/2$. To see it, let $S$ and $X$ be as in Remark \ref{remH0} (i).  Note that all the non\-zero\-di\-vi\-sors of $\cO(S)$ are invertible.   Then the left-hand side of (\ref{genisoC}) is nonzero because $X\neq\varnothing$, but the right-hand side of (\ref{genisoC}) vanishes because $X^{\an}=\varnothing$.

(ii) Theorem \ref{compC} also fails in general if $X$ is only assumed to be of finite type over $\cO(S)$, already for $k=0$ and $\LL=\Z/2$, as the example of Remark \ref{remnotfp} shows.

(iii)
The hypothesis that $\LL$ is constructible in Theorem \ref{compC} cannot be removed either. To see it,  let $S$ be a reduced countable Stein space,  define~${X:=\Spec(\cO(S))}$, set $k=0$,  and let $\LL$ be the skyscraper \'etale sheaf on $X$ with stalk~$\Z/2$ at some maximal ideal of $\cO(S)$ associated with a nonprincipal ultrafilter of $S$. Then~$\LL^{\an}=0$, so the right-hand side of (\ref{genisoC}) vanishes but the left-hand side does not. 

(iv)
Theorem \ref{compC} would fail in general if one did not take the colimit over all non\-zero\-di\-vi\-sors $a\in\cO(S)$, already for $k=0$ and $\LL=\Z/2$.  To see it, let $S$ and $X$ be as in Remark \ref{remH0} (ii).  Then $X$ is connected, so $H^k_{\et}(X,\Z/2)=\Z/2$, but $X^{\an}$ has two connected components, so $H^k(X^{\an},\Z/2)=(\Z/2)^2$.  Another counterexample, with $k=1$ and $\LL=\Z/2$ (and $S$ normal and connected), appears in Remark \ref{remH1}~(i).

(v)
Despite the examples of (i) and (iv), it is conceivable that the comparison morphism $H^k_{\et}(\Spec(\cO(S)),\Z/m)\to H^k(S,\Z/m)$ is an isomorphism for all~${k\geq 0}$, all $m\geq 1$, and all finite-dimensional Stein spaces $S$. This is so for~${k=0}$ (by Remark~\ref{remH0}~(iii)) and $k=1$ (by Remark \ref{remH1} (ii)).

(vi)
We do not know if Theorem \ref{compC} still holds when $S$ has dimension $\geq 3$.
\end{rems}

\subsection{The \texorpdfstring{$G$}{G}-equivariant comparison theorem}

Here is a $G$-equivariant enhancement of Theorem \ref{compC}.

\begin{thm}
\label{compG}
Let $S$ be a reduced $G$-equivariant Stein space of dimension $\leq 2$.  
Let~$X$ be an $\cO(S)^G$-scheme of finite presentation.
Let $\LL$ be a constructible \'etale sheaf on~$X$.  If one lets~$a\in\cO(S)^G$ run over all non\-zero\-di\-vi\-sors, the comparison morphisms
\begin{equation}
\label{genisoG}
\underset{a}{\colim }\,H^k_{\et}(X_{\cO(S)^G[\frac{1}{a}]},\LL)\to \underset{a}{\colim}\, H_G^k((X_{\cO(S)^G[\frac{1}{a}]})^{\an},\LL^{\an})
\end{equation}
are isomorphisms for all $k\geq 0$.
\end{thm}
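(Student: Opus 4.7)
The plan is to deduce Theorem \ref{compG} from Theorem \ref{compC} by Galois descent along the degree-$2$ étale $G$-torsor $\Spec(\cO(S))\to\Spec(\cO(S)^G)$. This cover exists because a standard averaging argument (splitting $f\in\cO(S)$ as $\tfrac{f+\sigma(f)}{2}+\sqrt{-1}\cdot\tfrac{f-\sigma(f)}{2\sqrt{-1}}$) identifies $\cO(S)$ with $\cO(S)^G\otimes_{\R}\C$ as rings with $G$-action; in particular $\cO(S)$ is finite free of rank $2$ over $\cO(S)^G$, so any $\cO(S)^G$-scheme $X$ of finite presentation has $X_{\cO(S)}$ of finite presentation, making Theorem~\ref{compC} applicable to $X_{\cO(S)}$.

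For each nonzerodivisor $a\in\cO(S)^G$ (which is automatically a nonzerodivisor in $\cO(S)$), base change yields an étale $G$-torsor $X_{\cO(S)[\frac{1}{a}]}\to X_{\cO(S)^G[\frac{1}{a}]}$, giving a Hochschild--Serre spectral sequence in étale cohomology
$$H^p\bigl(G,H^q_{\et}(X_{\cO(S)[\frac{1}{a}]},\LL)\bigr)\Rightarrow H^{p+q}_{\et}(X_{\cO(S)^G[\frac{1}{a}]},\LL).$$
Writing $Y_a:=(X_{\cO(S)^G[\frac{1}{a}]})^{\an}$, the corresponding topological Hochschild--Serre spectral sequence is an instance of (\ref{HSss}). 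The universal property of $G$-equivariant analytification shows that $Y_a$, with $G$-action forgotten, is canonically the (non-equivariant) analytification of the $\cO(S)$-scheme $X_{\cO(S)[\frac{1}{a}]}$, and the $G$-action on $H^q(Y_a,\LL^{\an})$ is the one induced by functoriality from the $G$-action on $X_{\cO(S)[\frac{1}{a}]}$ and on $\LL$. By naturality of (\ref{Gbcmorphisms}), the comparison maps then assemble into a morphism from the étale Hochschild--Serre spectral sequence to the topological one.

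Next, I apply Theorem \ref{compC} to the $\cO(S)$-scheme $X_{\cO(S)}$ with the pulled-back sheaf. Every nonzerodivisor $c\in\cO(S)$ divides the $G$-invariant element $c\sigma(c)\in\cO(S)^G$, and $\cO(S)[\frac{1}{c\sigma(c)}]$ inverts both $c$ and $\sigma(c)$; hence $G$-invariant nonzerodivisors form a cofinal family of nonzerodivisors of $\cO(S)$. Consequently Theorem~\ref{compC} yields, for every $q\geq 0$, a $G$-equivariant isomorphism
$$\underset{a\in\cO(S)^G}{\colim}\,H^q_{\et}(X_{\cO(S)[\frac{1}{a}]},\LL)\isoto \underset{a\in\cO(S)^G}{\colim}\,H^q((X_{\cO(S)[\frac{1}{a}]})^{\an},\LL^{\an}).$$

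Finally, applying $H^p(G,-)$, which commutes with filtered colimits since $G$ is finite, gives an isomorphism between the $E_2$ pages of the two colimit spectral sequences. Their abutments, by exactness of filtered colimits, are precisely the two sides of (\ref{genisoG}), so the induced morphism of abutments is an isomorphism. The main obstacle here is bookkeeping: carefully setting up the identification of $Y_a$ (with $G$-action forgotten) with the non-equivariant analytification of $X_{\cO(S)[\frac{1}{a}]}$, and checking that the étale and topological Hochschild--Serre spectral sequences, the comparison morphism, and all $G$-actions are compatible. Once this is in hand, Theorem \ref{compG} reduces formally to Theorem \ref{compC}.
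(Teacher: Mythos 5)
Your proposal is correct and follows essentially the same route as the paper: both deduce Theorem \ref{compG} from Theorem \ref{compC} by mapping the \'etale Hochschild--Serre spectral sequence for the Galois cover $X_{\cO(S)[\frac{1}{a}]}\to X_{\cO(S)^G[\frac{1}{a}]}$ to the topological $G$-equivariant one, and observing that after passing to the colimit over nonzerodivisors Theorem \ref{compC} gives an isomorphism on the $E_2$-pages, hence on the abutments. Your additional bookkeeping (the identification $\cO(S)\simeq\cO(S)^G\otimes_{\R}\C$ giving the \'etale $G$-torsor, and the cofinality of $G$-invariant nonzerodivisors among nonzerodivisors of $\cO(S)$) is precisely what the paper leaves implicit, citing Scheiderer for the two spectral sequences.
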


\begin{proof}
The $G$-equivariant site morphisms $\varepsilon:((X_{\cO(S)[\frac{1}{a}]})^{\an})_{\cl}\to (X_{\cO(S)[\frac{1}{a}]})_{\et}$ induce morphisms between the Hochschild--Serre spectral sequences 
\begin{alignat}{4}
&E_2^{r,s}=H^r(G,H^s_{\et}(X_{\cO(S)[\frac{1}{a}]},\LL))&\implies& H^{r+s}_{\et}(X_{\cO(S)^G[\frac{1}{a}]},\LL)\textrm{\hspace{.3em} and }\nonumber\\
&E_2^{r,s}=H^r(G,H^s((X_{\cO(S)[\frac{1}{a}]})^{\an},\LL^{\an}))&\implies& H^{r+s}_G((X_{\cO(S)^G[\frac{1}{a}]})^{\an},\LL^{\an})\nonumber
\end{alignat}
(for which see \cite[Remark~10.9]{Scheiderer}).
After taking the colimit over all non\-zero\-di\-vi\-sors $a\in\cO(S)$,  these morphisms are isomorphisms on page $2$ by Theorem \ref{compC},  and hence isomorphisms on the abutment.
\end{proof}

\section{Cohomological dimension and real spectra}
\label{seccohodim}

The \textit{\'etale cohomological dimension} $\cd(X)$ of a quasi-compact and quasi-separated scheme $X$ is the largest integer $n$ such that there exists a torsion \'etale sheaf~$\LL$ on $X$ with $H^n_{\et}(X,\LL)\neq 0$ (or $+\infty$ if there is no upper bound on these integers). Combining \cite[Lemmas \href{https://stacks.math.columbia.edu/tag/03SA}{03SA}~(2) and \href{https://stacks.math.columbia.edu/tag/03Q5}{03Q5}]{SP} shows that one can restrict to constructible \'etale sheaves on $X$.
We let $X_{\rr}$ be the \textit{real spectrum} of a scheme $X$ (constructed by gluing real spectra of coordinate rings of affine charts of $X$, see \cite[(0.4)]{Scheiderer}).

\subsection{\'Etale cohomological dimension}

\begin{thm}
\label{etcd}
Let $S$ be a reduced $G$-equivariant Stein space of dimension $\leq 2$.
Let~$X$ be an affine $\cO(S)^G$-scheme of finite presentation. The following assertions are equivalent.
\begin{enumerate}[(i)]
\item $\cd(X_{\cM(S)^G})<\infty$;
\item $\cd(X_{\cM(S)^G})\leq \dim(X^{\an})$;
\item the real spectrum $(X_{\cM(S)^G})_{\rr}$ is empty;
\item there exists a non\-zero\-di\-vi\-sor $a\in\cO(S)^G$ such that $(X^{\an}\setminus\{a=0\})^G=\varnothing$.
\end{enumerate}
\end{thm}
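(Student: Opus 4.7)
The strategy is to establish the cycle $(i)\Rightarrow(iii)\Rightarrow(iv)\Rightarrow(ii)\Rightarrow(i)$; since $(ii)\Rightarrow(i)$ is tautological, the substantive content lies in the three other implications.

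For $(iv)\Rightarrow(ii)$ I would invoke Theorem \ref{compG}, which identifies, for any constructible \'etale sheaf $\LL$ on $X_{\cM(S)^G}$,
\[
H^k_{\et}(X_{\cM(S)^G},\LL)\isoto \underset{a}{\colim}\;H^k_G\big(X^{\an}\setminus\{a=0\},\LL^{\an}\big),
\]
and then shrink the colimit to non\-zero\-di\-vi\-sors $a$ divisible by a fixed $a_0$ as provided by (iv). For such $a$, the action of $G$ on $Y_a:=X^{\an}\setminus\{a=0\}$ is free, so its $G$-equivariant cohomology agrees with the singular cohomology of the quotient $Y_a/G$. The complement of the zero locus of a holomorphic function in a Stein space is again Stein, so $Y_a$ is Stein of complex dimension $\leq d:=\dim(X^{\an})$ (the ambient $X^{\an}$ is itself Stein, since $X$ is affine over $\cO(S)^G$); by Hamm \cite{Hamm} it has the homotopy type of a CW complex of real dimension $\leq d$, and this property descends to the free quotient $Y_a/G$. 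Consequently $H^k(Y_a/G,-)=0$ for $k>d$, yielding (ii).

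For $(i)\Rightarrow(iii)$ I would argue contrapositively: a point $(\kp,\prec)\in (X_{\cM(S)^G})_{\rr}$ exhibits an orderable residue field, and Scheiderer's comparison between \'etale and real \'etale cohomology with $\Z/2$-coefficients \cite{Scheiderer} then forces $\cd_2(X_{\cM(S)^G})=\infty$, so (i) fails. The remaining step $(iii)\Rightarrow(iv)$ I would prove by contrapositive via a quasi-compactness argument on the real spectrum. Assume (iv) fails. For every non\-zero\-di\-vi\-sor $a\in\cO(S)^G$, Lemma \ref{lemRpoints} produces a point $\xi_a\in X(\R)\subset X_{\rr}$ lying in the constructible open subset $U_a:=\{a\neq 0\}\subset X_{\rr}$. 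Since a finite product of non\-zero\-di\-vi\-sors is a non\-zero\-di\-vi\-sor, the family $\{U_a\}_a$ satisfies the finite intersection property. The real spectrum $X_{\rr}$ is a spectral space, hence quasi-compact in its constructible topology, in which each $U_a$ becomes clopen; therefore
\[
\bigcap_{a}U_a=(X_{\cM(S)^G})_{\rr}\;\neq\;\varnothing,
\]
contradicting (iii). The displayed equality reflects that $\cM(S)^G$ is the localization of $\cO(S)^G$ at its non\-zero\-di\-vi\-sors.

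The main obstacle is the implication $(iv)\Rightarrow(ii)$: one has to route the vanishing through Theorem \ref{compG}, use freeness of the $G$-action to pass from equivariant cohomology to cohomology of the quotient, and then combine Hamm's CW dimension bound with the fact that it survives under a free $G$-quotient. The other inputs—Scheiderer's real/\'etale comparison for $(i)\Rightarrow(iii)$, and spectral-space quasi-compactness for $(iii)\Rightarrow(iv)$—enter essentially as black boxes.
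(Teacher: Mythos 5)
Your steps (i)$\Rightarrow$(iii), (iii)$\Rightarrow$(iv) and (ii)$\Rightarrow$(i) are essentially fine; in fact your (iii)$\Rightarrow$(iv), via quasi-compactness of $X_{\rr}$ in the constructible topology, is a genuinely different and more elementary route than the paper, which instead proves $\neg$(iv)$\Rightarrow\neg$(i) directly from Theorem \ref{compG} (nonvanishing of pulled-back $\Z/2$-classes at real points) and gets (iii)$\Rightarrow$(i) by complexifying $X$ and invoking \cite[Corollary 7.21]{Scheiderer}. (For (ii)$\Rightarrow$(i) you should at least record that $\dim(X^{\an})<\infty$, which holds because $X^{\an}$ is a closed subspace of $S\times\C^N$.) The genuine gap is in (iv)$\Rightarrow$(ii), at the sentence asserting that the CW-dimension bound of Hamm ``descends to the free quotient $Y_a/G$''. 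The homotopy type of a quotient by a free action is not determined by the homotopy type of the total space: the contractible infinite-dimensional sphere with its free antipodal involution has quotient with nonzero $\Z/2$-cohomology in every degree. So freeness of the $G$-action plus Hamm's theorem alone cannot yield $H^k(Y_a/G,-)=0$ for $k>d$; any correct argument must use more than the homotopy type of $Y_a$ --- at the very least its finite covering dimension, fed into a Smith/Gysin-sequence argument at the prime $2$ together with a transfer argument at odd primes --- and this is exactly the point where the paper imports the equivariant Artin vanishing theorem for Stein spaces with antiholomorphic involution, \cite[Theorem 2.6]{Artinvanishing}, as a nontrivial external input rather than deducing it formally.

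A second, smaller omission in the same step: the cohomological dimension is tested against all torsion (equivalently, constructible) \'etale sheaves, so after applying Theorem \ref{compG} you must kill $H^k_G(Y_a,\LL^{\an})$ for arbitrary constructible $\LL$, not just constant or locally constant coefficients. Even granting a homotopy-dimension bound on $Y_a/G$, that only controls cohomology with locally constant coefficients; for general constructible sheaves one needs an extra d\'evissage along a stratification by closed analytic (hence Stein) subsets, or again the cited equivariant vanishing theorem, which is formulated at the level of sheaf cohomology. In short: your global architecture and the (iii)$\Rightarrow$(iv) compactness argument are sound, but the crucial vanishing in (iv)$\Rightarrow$(ii) is not established by the reasons you give and needs either \cite[Theorem 2.6]{Artinvanishing} or a worked-out Smith-theory-plus-dimension argument in its place.
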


\begin{proof}
As $X$ is a closed subscheme of $\A^N_{\cO(S)^G}$ for some $N$, the complex space~$X^{\an}$ is a closed subspace of $S\times \C^N$,  and hence is finite-dimensional. This shows~(ii)$\Rightarrow$(i).

If $\LL$ is a constructible sheaf on $X_{\cO(S)^G[\frac{1}{b}]}$ for a non\-zero\-di\-vi\-sor $b\in\cO(S)^G$, then
\begin{equation}
\label{cohogenerique}
H^k_{\et}(X_{\cM(S)^G},\LL)\hspace{-.14em}=\hspace{-.14em}\underset{a}{\colim }\hspace{.1em}H^k_{\et}(X_{\cO(S)^G[\frac{1}{ab}]},\LL)\hspace{-.14em}=\hspace{-.14em}\underset{a}{\colim }\hspace{.1em}H^k_G((X_{\cO(S)^G[\frac{1}{ab}]})^{\an},\LL^{\an})
\end{equation}
for all $k\geq 0$, where $a\in\cO(S)^G$ runs over all non\-zero\-di\-vi\-sors (the first equality follows from \cite[Lemma~\href{https://stacks.math.columbia.edu/tag/03Q6}{03Q6}]{SP} and the second from Theorem \ref{compG}).

Assume now that (iv) does not hold.  Fix $k\geq 0$. Consider (\ref{cohogenerique}) with $b=1$ and~$\LL=\Z/2$.  For any non\-zero\-di\-vi\-sor $a\in\cO(S)^G$,  the image of the nonzero class of $H^k_G(\pt,\Z/2)=\Z/2$ in 
$H^k_G((X_{\cO(S)^G[\frac{1}{a}]})^{\an},\Z/2)=H^k_G(X^{\an}\setminus\{a=0\},\Z/2)$ is nonzero because its restriction to any point in $(X^{\an}\setminus\{a=0\})^G$ is nonzero. It follows that it induces a nonzero class in the right-hand side of (\ref{cohogenerique}). We deduce that~$H^k_{\et}(X_{\cM(S)^G},\Z/2)$ is nonzero. As $k$ was arbitrary, assertion (i) does not hold.

We now show that (iv)$\Rightarrow$(ii). To this effect,  assume that (iv) holds and fix a constructible \'etale sheaf~$\LL$ on $X_{\cM(S)^G}$. 
Viewing $\cM(S)^G$ as the colimit of the rings $\cO(S)^G[\frac{1}{b}]$ over all non\-zero\-di\-vi\-sors $b\in\cO(S)^G$ and using \cite[Lemma \href{https://stacks.math.columbia.edu/tag/09YU}{09YU}]{SP} shows the existence of a non\-zero\-di\-vi\-sor $b\in\cO(S)^G$ and of a constructible \'etale sheaf $\LL'$ on~$X_{\cO(S)^G[\frac{1}{b}]}$ inducing $\LL$ by base change.
It follows from (\ref{cohogenerique}) applied to $\LL'$ that
\begin{equation}
\label{Steinprincipal}
H^k_{\et}(X_{\cM(S)^G},\LL)=\underset{a}{\colim }\, H^k_G(X^{\an}\setminus\{ab=0\},(\LL')^{\an}),
\end{equation}
where $a\in\cO(S)^G$ runs over all non\-zero\-di\-vi\-sors. 
We noted above that $X^{\an}$ is a closed subspace of $S\times \C^N$ for some $N\geq 0$. It is therefore Stein. Consequently, so is $X^{\an}\setminus\{ab=0\}$ (the graph of $(ab)^{\an}$ realizes it as a closed subspace of
$X^{\an}\times\C^*$).
In view of our hypothesis (iv),  we deduce from \cite[Theorem 2.6]{Artinvanishing} and (\ref{Steinprincipal}) 
that $H^k_{\et}(X_{\cM(S)^G},\LL)=0$ for all $k>\dim(X^{\an})$.  This shows that (ii) holds.

The implication (i)$\Rightarrow$(iii) follows from \cite[Remark 7.5]{Scheiderer}.  Conversely, suppose that assertion (iii) holds.   By the already proven implication (iv)$\Rightarrow$(i) applied to the $\cO(S)^G$\nobreakdash-scheme $X':=X\times_{\Spec(\R)}\Spec(\C)$, one has $\cd(X'_{\cM(S)^G})<\infty$.   
Applying \cite[Corollary 7.21]{Scheiderer} now shows that (i) holds and completes the proof.
\end{proof}

\subsection{Cohomological dimension of fields of meromorphic functions}

The following theorem is a consequence of Theorem \ref{etcd}.  

\begin{thm}
\label{thcohodimR}
Let $S$ be a normal $G$-equivariant Stein surface with $S/G$ connected.  
The following assertions are equivalent:
\begin{enumerate}[(i)]
\item the field $\cM(S)^G$ has finite cohomological dimension;
\item the field $\cM(S)^G$ has cohomological dimension $2$;
\item the field $\cM(S)^G$ admits no field orderings;
\item $S^G$ is a discrete subset of $S$.
\end{enumerate}
\end{thm}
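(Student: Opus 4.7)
The plan is to apply Theorem~\ref{etcd} to the $\cO(S)^G$-scheme $X=\Spec(\cO(S)^G)$, for which $X^{\an}=S$ and $X_{\cM(S)^G}=\Spec(\cM(S)^G)$. Under this choice, condition~(i) of Theorem~\ref{etcd} translates directly to condition~(i) here, and condition~(iii) of Theorem~\ref{etcd}---emptiness of the real spectrum of $\Spec(\cM(S)^G)$---to condition~(iii) here. This immediately yields (i)$\Leftrightarrow$(iii), together with the upper bound $\cd(\cM(S)^G)\leq 2$ whenever these conditions hold.

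The next step is to identify condition~(iv) of Theorem~\ref{etcd}, the existence of a nonzerodivisor $a\in\cO(S)^G$ with $(S\setminus\{a=0\})^G=\varnothing$, with condition~(iv) of the present statement. In both cases where $S$ is connected and where $S=S_0\sqcup S_0^\sigma$, the ring $\cO(S)^G$ is a domain, so nonzerodivisor means nonzero. For the forward direction, $\{a=0\}$ is then a nowhere dense complex subspace of $S$, so Lemma~\ref{lemrealpoints}(ii) forces $S^G\subset\{a=0\}$ to consist entirely of singular points of the normal surface~$S$, which form a discrete set. For the converse, if $S^G=\varnothing$ take $a=1$; if $S^G$ is nonempty and discrete, I would apply Cartan's theorem~A to the $G$-equivariant coherent ideal sheaf $\cI_{S^G}\subset\cO_S$ to produce a nonzero global section, and then extract its $G$-invariant part (or, if this vanishes, $\sqrt{-1}$ times its $G$-anti-invariant part, noting that $\sqrt{-1}$ is itself $G$-anti-invariant) to obtain a nonzero element of $H^0(S,\cI_{S^G})^G\subset\cO(S)^G$.

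To upgrade $\cd(\cM(S)^G)\leq 2$ to equality and obtain~(ii), I would appeal to Theorem~\ref{thcohodimintro}. When $S$ is connected, $\cM(S)\supset\cM(S)^G$ is a finite field extension of degree~$2$ (since $\sqrt{-1}\notin\cM(S)^G$), so the standard inequality $\cd(L)\leq\cd(K)$ for finite separable extensions $K\subset L$, combined with $\cd(\cM(S))=2$, forces $\cd(\cM(S)^G)\geq 2$. In the swap case $S=S_0\sqcup S_0^\sigma$, one has $\cM(S)^G\simeq\cM(S_0)$ as fields, and Theorem~\ref{thcohodimintro} applied to the connected normal Stein surface $S_0$ yields $\cd(\cM(S)^G)=2$ directly; note that in that case $S^G=\varnothing$ and all four conditions hold automatically. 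Assembling the chain (ii)$\Rightarrow$(i)$\Leftrightarrow$(iii)$\Leftrightarrow$(iv)$\Rightarrow$(ii) completes the argument. The main substantive input beyond Theorem~\ref{etcd} is Theorem~\ref{thcohodimintro}, and the only mild subtlety is the $G$-equivariant application of Cartan's theorem, which is routine.
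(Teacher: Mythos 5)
Your reduction to Theorem~\ref{etcd} with $X=\Spec(\cO(S)^G)$, and your identification of condition~(iv) of that theorem with discreteness of $S^G$ (nowhere-density of $\{a=0\}$ plus Lemma~\ref{lemrealpoints}(ii) and discreteness of the singular locus of a normal surface in one direction; Steinness of $S$ to produce a $G$-invariant function vanishing on a discrete $S^G$ in the other), is exactly the paper's argument, and that part is fine.

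The gap is in how you obtain the lower bound $\cd(\cM(S)^G)\geq 2$, which is needed to pass from ``$\cd\leq\dim(X^{\an})=2$'' to assertion~(ii). You invoke Theorem~\ref{thcohodimintro} (that $\cd(\cM(S_0))=2$ for a connected normal Stein surface $S_0$), but in this paper that statement is Theorem~\ref{thcohodim}, whose proof is precisely ``apply Theorem~\ref{thcohodimR} to $S_0\sqcup S_0^{\sigma}$''; it is not available independently (the prior literature only covers Stein compacta), so your argument is circular both in the connected case and in the swap case $S=S_0\sqcup S_0^{\sigma}$. Note also that Theorem~\ref{etcd} cannot supply this bound: it only gives upper bounds and the equivalences with emptiness of the real spectrum. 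What is missing is a direct proof that $\cd(\cM(T))\geq 2$ for a connected component $T$ of $S$ (which suffices, since $\cM(T)$ is $\cM(S)^G$ or a degree~$2$ extension of it, and one concludes with \cite[I.3.3, Proposition 14]{CG}). The paper does this by fixing $t\in T$, observing that $\cO(T)_t\to\cO_{T,t}$ is a faithfully flat local morphism of noetherian rings with the maximal ideal of the source generating that of the target, so that $\dim(\cO(T)_t)=\dim(\cO_{T,t})=2$, and then applying \cite[X, Corollaire~2.5]{SGA43}. Without such an independent argument (or some other exhibition of a nonzero degree-$2$ cohomology class over $\cM(S)^G$), your chain (iv)$\Rightarrow$(ii) is not established.
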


\begin{proof}
To deduce the result from Theorem \ref{etcd} applied with $X=\Spec(\cO(S)^G)$,  one only needs to make the following two remarks.

The first remark is that $\cd(\cM(S)^G)\geq 2$.  To see it, let $T$ be a connected component of~$S$. Since $\cM(T)$ is equal to $\cM(S)^G$ or to $\cM(S)^G[\sqrt{-1}]$, it suffices to show that~$\cd(\cM(T))\geq 2$ (use \cite[I.3.3, Proposition 14]{CG}).  
Fix $t\in T$. The local morphism $\cO(T)_t\to\cO_{T,t}$ is a faithfully flat morphism of noetherian rings (see \cite[Lemma 1.11]{Stein}). As the maximal ideal of $\cO(T)_t$ generates the maximal ideal of $\cO_{T,t}$ because $T$ is Stein,  one has $\dim(\cO(T)_t)=\dim(\cO_{T,t})=2$ by \cite[Lemma~\href{https://stacks.math.columbia.edu/tag/00ON}{00ON}]{SP}.
That $\cd(\cM(T))\geq 2$ now follows from \cite[X, Corollaire~2.5]{SGA43}.

The second remark is that $S^G$ is discrete if and only if there is a non\-zero\-di\-vi\-sor~$a\in\cO(S)^G$ vanishing on $S^G$. The direct implication follows at once from the fact that $S$ is Stein. The converse is a consequence of the discreteness of the singular locus of $S$ (as $S$ is normal of dimension $2$) and of Lemma \ref{lemrealpoints} (ii).
\end{proof}

Let us record the non-$G$-equivariant version of Theorem \ref{thcohodimR}.

\begin{thm}
\label{thcohodim}
Let $S$ be a connected normal Stein surface. Then the field $\cM(S)$ has cohomological dimension $2$.
\end{thm}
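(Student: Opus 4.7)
The plan is to deduce Theorem \ref{thcohodim} formally from its $G$-equivariant counterpart Theorem \ref{thcohodimR}, applied to the $G$-equivariant Stein surface $\wS := S \sqcup S^{\sigma}$ (the action exchanging the two factors).

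First I would verify the hypotheses of Theorem \ref{thcohodimR} for $\wS$. Since $\wS/G$ is canonically identified with $S$ as a ringed space, it is connected by hypothesis, and normality is preserved. Moreover $\wS^G = \varnothing$ because $\sigma$ swaps the two components with no fixed points, so condition (iv) of Theorem \ref{thcohodimR} is trivially satisfied (the empty set is discrete). Hence Theorem \ref{thcohodimR} gives that $\cM(\wS)^G$ has cohomological dimension $2$.

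Next, I would identify $\cM(\wS)^G$ with $\cM(S)$. Since $\cM(\wS) = \cM(S) \times \cM(S^{\sigma})$ with $G$ exchanging the factors via $\sigma^*$, the map $f \mapsto (f,\sigma^* f)$ defines an isomorphism of fields $\cM(S) \isoto \cM(\wS)^G$. Combining this identification with the previous step yields $\cd(\cM(S)) = 2$.

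The main obstacle is of course already absorbed in the proof of Theorem \ref{thcohodimR}, which itself rests on Theorem \ref{etcd} and hence on the generic comparison theorem (Theorem \ref{compG}). In fact, one could equally well give a direct proof by applying the non-$G$-equivariant Theorem \ref{compC} to $X = \Spec(\cO(S))$ with $\LL = \Z/m$, together with Artin's vanishing theorem applied to each Stein open $S \setminus \{a=0\}$, to obtain the upper bound $\cd(\cM(S)) \leq 2$. The matching lower bound $\cd(\cM(S)) \geq 2$ is the easy input already used in the proof of Theorem \ref{thcohodimR}: pick $s \in S$, note that $\cO(S)_{\km_s} \to \cO_{S,s}$ is a faithfully flat morphism of noetherian local rings and that the maximal ideal of the source generates that of the target since $S$ is Stein, whence $\dim(\cO(S)_{\km_s}) = \dim(\cO_{S,s}) = 2$ by \cite[Lemma \href{https://stacks.math.columbia.edu/tag/00ON}{00ON}]{SP}, and conclude via \cite[X, Corollaire 2.5]{SGA43}. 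Either way, the theorem reduces to machinery already established in the body of the paper, and no further substantial difficulty arises.
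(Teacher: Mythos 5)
Your argument is exactly the paper's own proof: apply Theorem \ref{thcohodimR} to the $G$-equivariant Stein surface $S \sqcup S^{\sigma}$, noting that $(S \sqcup S^{\sigma})^G = \varnothing$ is discrete and $\cM(S\sqcup S^\sigma)^G \cong \cM(S)$. The extra detail you supply (verifying the hypotheses, the alternative direct route via Theorem \ref{compC}) is correct and consistent with the surrounding material.
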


\begin{proof}
Apply Theorem \ref{thcohodimR} to the $G$-equivariant Stein surface $S\sqcup S^{\sigma}$.
\end{proof}

\subsection{Real spectra}

Let~$S$ be a $G$-equivariant Stein space.  For any $\cO(S)^G$\nobreakdash-scheme of finite presentation $X$,
the bijection $(X^{\an})^G\isoto X(\R)$ (see Lemma \ref{lemRpoints}) allows us to view~$(X^{\an})^G$ as a subset of $X_{\rr}$.

\begin{thm}
\label{thorder}
Let $S$ be a reduced $G$-equivariant Stein space of dimension $\leq 2$.
Let~$X$ be an $\cO(S)^G$-scheme of finite presentation.  Then the closure of $(X^{\an})^G$ in~$X_{\rr}$ contains $(X_{\cM(S)^G})_{\rr}$.
\end{thm}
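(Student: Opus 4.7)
The plan is to carry out the classical Artin--Lang strategy, using Theorem \ref{etcd} as the key input that converts non-emptiness of the real spectrum of the generic fiber into non-emptiness of the real-analytic locus.

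First, I would reduce to the case where $X$ is affine: the real spectrum $X_{\rr}$ is covered by the open subsets $U_{\rr}$ for $U \subset X$ affine open, both $(X^{\an})^G$ and $(X_{\cM(S)^G})_{\rr}$ respect this cover, and closures are computed locally in a spectral space. Fix $\xi = (\kp,\prec) \in (X_{\cM(S)^G})_{\rr}$. By definition of the spectral topology, it suffices to show that every basic open neighborhood of $\xi$ in $X_{\rr}$ meets $(X^{\an})^G$. Such a neighborhood has the form
$$V := \{\eta \in X_{\rr} \mid a_1,\dots,a_m \succ_\eta 0\}$$
for some $a_1,\dots,a_m \in \cO(X)$ with $a_i \succ_\xi 0$ for all $i$. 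Note that each $a_i$ has nonzero image in $\Frac(\cO(X_{\cM(S)^G})/\kp)$, since $0$ is not positive in any field ordering.

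The key auxiliary construction is
$$B := \cO(X)[t_1,\dots,t_m,s] \big/ (t_1^2 - a_1,\ \dots,\ t_m^2 - a_m,\ s\cdot a_1\cdots a_m - 1),\qquad Y := \Spec(B),$$
which is of finite presentation over $\cO(S)^G$ and parametrizes points of $X$ where each $a_i$ is a nonzero square. I would then verify that $(Y_{\cM(S)^G})_{\rr} \neq \varnothing$ as follows: let $F$ be the real closure of $\Frac(\cO(X_{\cM(S)^G})/\kp)$ with respect to $\prec$; since each $a_i \succ 0$ in $F$, it admits a square root $b_i \in F$, and $a_1\cdots a_m$ is invertible in $F$. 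Sending $t_i \mapsto b_i$ and $s \mapsto (a_1\cdots a_m)^{-1}$ produces an $F$-point of $Y_{\cM(S)^G}$ over $\xi$, and pulling back $\prec$ yields a point of $(Y_{\cM(S)^G})_{\rr}$.

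Now I would apply Theorem \ref{etcd} to $Y$. Condition (iii) fails for $Y$, hence so does condition (iv); taking the nonzerodivisor $a = 1 \in \cO(S)^G$ then shows that $(Y^{\an})^G \neq \varnothing$. Picking any $y \in (Y^{\an})^G$ with image $x \in (X^{\an})^G$, we get $a_i(x) = t_i(y)^2$ and $s(y)\cdot a_1(x)\cdots a_m(x) = 1$, so each $a_i(x)$ is a nonzero square in $\R$, i.e., strictly positive. Hence $x \in V \cap (X^{\an})^G$, completing the argument.

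The substantive content of the theorem is entirely packaged into Theorem \ref{etcd}, specifically the equivalence (iii)$\Leftrightarrow$(iv) translating absence of orderings of $\cM(S)^G$-algebras into absence of (generic) real analytic points. Once that equivalence is in hand, the Artin--Lang trick of extracting square roots is routine; the only delicate verification is that the auxiliary scheme $Y$ is finitely presented over $\cO(S)^G$ (which it is, by construction) so that Theorem \ref{etcd} applies to it.
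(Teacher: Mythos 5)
Your proof is correct and follows essentially the same route as the paper: after reducing to the affine case, one adjoins square roots of the $a_i$ (with the $a_i$ made invertible) to form a finitely presented auxiliary scheme $Y$, lifts the given ordering to $(Y_{\cM(S)^G})_{\rr}$ via the real closure, and invokes Theorem \ref{etcd} (iii)$\Leftrightarrow$(iv) together with Lemma \ref{lemRpoints} to produce a real analytic point of $Y$ mapping into the prescribed basic open set. The only differences are cosmetic (direct argument versus contradiction, inverting $a_1\cdots a_m$ versus the square roots themselves).
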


\begin{proof}
Since the assertion in local on $X$, we may assume that $X=\Spec(A)$ is affine.  Suppose that there is an open subset $U\subset X_{\rr}=\Sper(A)$ meeting $(X_{\cM(S)^G})_{\rr}$ but not intersecting $(X^{\an})^G$. We may assume that there exist $a_1,\dots, a_m\in A$ such that~$U=\{(\kp,\prec)\in\Sper(A)\mid a_1,\dots,a_m\succ 0\}$.

Define $B:=A[x_1,\dots,x_m,\frac{1}{x_1},\dots,\frac{1}{x_m}]/\langle x_i^2-a_i\rangle$ and set $Y:=\Spec(B)$.  On the one hand, any point of $(\kp,\prec)\in U$ lifts to $Y_{\rr}$ (because the inequality $a_i\succ 0$ implies that the $a_i$ are squares in the real closure of $(\Frac(A/\kp),\prec))$. It follows that~$(Y_{\cM(S)^G})_{\rr}\neq\varnothing$.
On the other hand, if $y\in(Y^{\an})^G\isoto Y(\R)$ (see Lemma~\ref{lemRpoints}), one of the $a_i$ would have to be negative on $y$,  which is impossible since $a_i$ is a square in $B$. So $(Y^{\an})^G=\varnothing$.
This contradicts Theorem \ref{etcd} (iii)$\Leftrightarrow$(iv).
\end{proof}

We highlight the following particular case of Theorem \ref{thorder}.

\begin{thm}
\label{thorder2}
Let $S$ be a normal $G$-equivariant Stein surface with $S/G$ connected. 
Then the closure of $S^G$ in $\Sper(\cO(S)^G)$ contains $\Sper(\cM(S)^G)$.
\end{thm}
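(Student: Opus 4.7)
The plan is to deduce Theorem \ref{thorder2} from the more general Theorem \ref{thorder} by specializing to the $\cO(S)^G$-scheme $X := \Spec(\cO(S)^G)$. First I would verify the hypotheses of Theorem \ref{thorder}: since $S$ is a normal $G$-equivariant Stein surface, it is in particular reduced and of dimension $\leq 2$; and the scheme $X$ is tautologically an $\cO(S)^G$-scheme of finite presentation, as it is the base scheme itself.

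The core of the argument is then to make three identifications. The equalities $X_{\rr} = \Sper(\cO(S)^G)$ and $(X_{\cM(S)^G})_{\rr} = \Sper(\cM(S)^G)$ are immediate from the definitions. The identification $(X^{\an})^G = S^G$ is the one that requires comment: the universal property of analytification (see \S\ref{paranal}) applied in the $G$-equivariant setting shows that $X^{\an} = S$ as $G$-equivariant complex spaces over $S$, since for every $G$-equivariant complex space $S'$ over $S$, both $\Hom_S^G(S', X^{\an})$ and $\Hom_S^G(S', S)$ reduce to a single element (the structure morphism). Alternatively, Lemma \ref{lemRpoints} yields a bijection $(X^{\an})^G \isoto X(\R)$, and $X(\R)$ identifies with $S^G$ via evaluation of $G$-invariant holomorphic functions at fixed points, matching the convention stated just before Theorem \ref{thmorderings}.

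With these identifications in place, the conclusion of Theorem \ref{thorder} that the closure of $(X^{\an})^G$ in $X_{\rr}$ contains $(X_{\cM(S)^G})_{\rr}$ translates directly into the desired statement that the closure of $S^G$ in $\Sper(\cO(S)^G)$ contains $\Sper(\cM(S)^G)$. All the substantive work has been done in Theorem \ref{thorder} (via the generic comparison Theorem \ref{compG} applied to appropriate auxiliary schemes $\Spec(A[x_i, x_i^{-1}]/\langle x_i^2 - a_i\rangle)$ that encode open semi-algebraic conditions), so this specialization is essentially bookkeeping and presents no real obstacle.
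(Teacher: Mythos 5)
Your proposal is correct and coincides with the paper's own treatment: Theorem \ref{thorder2} is stated there precisely as the particular case of Theorem \ref{thorder} with $X=\Spec(\cO(S)^G)$, using the identifications $X^{\an}=S$ (so $(X^{\an})^G=S^G$, compatibly with the convention fixed before Theorem \ref{thmorderings}), $X_{\rr}=\Sper(\cO(S)^G)$ and $(X_{\cM(S)^G})_{\rr}=\Sper(\cM(S)^G)$. Your verification of the hypotheses (normal $\Rightarrow$ reduced, dimension $\leq 2$, finite presentation of the base scheme) is exactly the bookkeeping the paper leaves implicit.
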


\begin{rems}
(i) Theorem \ref{thorder} fails in general if $S$ is not reduced, even when ${X=\Spec(\cO(S)^G)}$. We now show that $S_0:=\bigsqcup_{n\in\N}\Spec(\R[x]/(x^{n+1}))^{\an}$ yields such an example. One has~$\cM(S_0)^G=\cO(S_0)^G=\prod_{n\in\N}\R[x]/(x^{n+1})$.
Let~$\cU$ be a nonprincipal ultrafilter on $\N$.  
We let $\prod_{\cU}$ denote the ultraproduct  construction (with respect to~$\cU$).
The localization~$\R[x]_{(x)}$ of $\R[x]$ at $(x)$ is a discrete valuation ring. It therefore follows from \L o\'{s}' theorem (see \eg \cite[p.~191]{BDLvdD}) that $A:=\prod_{\cU}\R[x]_{(x)}$ is a valuation ring with value group $\Gamma:=\prod_{\cU}\Z$ whose residue field $A/(x)=\prod_{\cU}\R$ is real closed.  Set $y_n:=x^{n+1}$ and $y:=(y_n)_{n\in\N}$, so $A/(y)=\prod_{\cU}\R[x]/(x^{n+1})$. 
As is explained in \cite[p.~192]{BDLvdD},  the ordered group $\Gamma$ contains $\Z$ as a convex subgroup.
Let~$\kp\subset A$ be the prime ideal consisting of those elements of $A$ whose valuation is not in $\Z$.
Since $y\in \kp$, one can view $\kp$ as a prime ideal of $A/(y)$, hence of $\cO(S_0)^G$.  In addition,  since $A/\kp$ is a valuation ring, the field $\Frac(A/\kp)$ admits a field ordering (see \eg \cite[Lemma 3.7]{Lamreal}). Let $\xi$ be the corresponding point of~$\Sper(\cO(S_0)^G)$. 
Now the element $x\in\cO(S_0)^G$ vanishes on $S_0^G$, but is nonzero on~$\xi$ because $x\notin\kp$.  It follows that $S_0^G$ cannot be dense in $\Sper(\cO(S_0)^G)$.

(ii) In the setting of Theorem \ref{thorder}, it is natural to wonder whether the stronger statement that $(X^{\an})^G$ is dense in $X_{\rr}$ holds. This is not the case in general if~$S$ is reduced curve or a normal surface, and $X=\Spec(\cO(S)^G)$.  To see it,  note that the $G$-equivariant complex space $S_0$ constructed in (i) can be $G$\nobreakdash-equivariantly embedded in a $G$-equivariant Stein space $S$ that can be chosen to be either a reduced curve or a normal surface, in such a way that induced map $S_0^G\to S^G$ is bijective.

(iii) In contrast with (ii), we do not know whether $S^G$ is dense in $\Sper(\cO(S)^G)$ if~$S$ is a nonsingular $G$-equivariant Stein surface.

(iv) We refer to \cite{AB} for a detailed study of $\Sper(\cO(\R))$ (where $\R$ is viewed as a real-analytic manifold).  In particular, the example presented in (i) is closely related to \cite[Examples 3.4 and~6.2]{AB}.

(v) Theorem \ref{thorder} also fails if $X$ is only assumed to be of finite type over~$\cO(S)^G$.  To see it, set $S:=\bigsqcup_{n\in\N}\Spec(\R)^{\an}$, so $\cO(S)^G=\cM(S)^G=\prod_{n\in\N}\R$. 
Let $\cU$ be a nonprincipal ultrafilter on $\N$. Let $\km$ be the kernel of the projection ${\prod_{n\in\N}\R\to \prod_{\cU}\R}$.
Set $X:=\Spec(\cO(S)^G/\km)$. Then~$X^{\an}=\varnothing$, but since $\cO(S)^G/\km=\prod_{\cU}\R$ is a real closed field,  one has~$X_{\rr}\neq\varnothing$.
\end{rems}

Theorem \ref{thorder2} shows that $S^G$ controls $\Sper(\cM(S)^G)$ (when $S$ has dimension~$\leq 2$).  The following (much easier) lemma provides us with a converse (for any $S$).

\begin{lem}
\label{lemgenerization}
Let $S$ be a reduced $G$-equivariant Stein space.
Then any nonsingular point of $S^G$ admits a generization in $\Sper(\cO(S)^G)$ that belongs to $\Sper(\cM(S)^G)$.
\end{lem}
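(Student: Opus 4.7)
The plan is to build the desired generization by composing the germ map at $s$ with the completion map, and then pulling back an ordering from a formal Laurent series field.

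First, since $s$ is a nonsingular point of a reduced complex space, it lies on a unique irreducible component $T'\subset S$. Since $\sigma(T')$ is also an irreducible component containing $\sigma(s)=s$, uniqueness forces $\sigma(T')=T'$, so $T'$ is a $G$-invariant irreducible component, and $s$ is still a nonsingular point of $T'$. Accordingly I define $\kp\subset\cO(S)^G$ to be the kernel of the restriction morphism $\cO(S)^G\to\cO(T')^G$, which is a prime since $\cO(T')^G\subset\cO(T')$ is a domain. A standard clearing-denominators argument (writing a $G$-invariant meromorphic function $g/h$ on $T'$ as $g\,\sigma(h)/(h\,\sigma(h))$) identifies $\Frac(\cO(T')^G)$ with $\cM(T')^G$, and this field is one of the factors of the finite product of fields $\cM(S)^G$. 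Hence $\kp$ is the contraction under $\cO(S)^G\to\cM(S)^G$ of a maximal ideal of $\cM(S)^G$, so any field ordering on $\cM(T')^G$ will define a point of $\Sper(\cM(S)^G)\subset\Sper(\cO(S)^G)$ with underlying prime $\kp$.

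Second, I pick $G$-invariant local coordinates $z_1,\dots,z_n$ at $s$ as in the proof of Lemma \ref{lemrealpoints} (i). These identify $\cO_{T',s}^G$ with the ring $\R\{z_1,\dots,z_n\}$ of convergent power series with real coefficients, whose completion is $\R[[z_1,\dots,z_n]]$. The composition
\[
\cO(T')^G\longrightarrow\cO_{T',s}^G=\R\{z_1,\dots,z_n\}\hookrightarrow\R[[z_1,\dots,z_n]]\hookrightarrow\R((z_1,\dots,z_n))
\]
is injective: the first arrow by the identity principle on the irreducible Stein space $T'$, the second and third tautologically. I then equip $\R((z_1,\dots,z_n))$ with any field ordering for which each $z_i$ is a positive infinitesimal (for example, by restricting the natural ordering on the iterated Laurent series field $\R((z_1))\cdots((z_n))$, or equivalently by any embedding into the Hahn series field over $\R$ with value group $\Z^n$ lex-ordered). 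Pulling this ordering back along the injection above yields a field ordering $\prec$ on $\cM(T')^G=\Frac(\cO(T')^G)$, hence the desired point $(\kp,\prec)\in\Sper(\cM(S)^G)$.

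Finally, I need to verify that $(\kp,\prec)$ is a generization of $(\km_s,\prec_s)$ in $\Sper(\cO(S)^G)$. Since $\kp\subset\km_s$ and a subbasis of opens of $\Sper(\cO(S)^G)$ is given by the sets $\{a\succ 0\}$ for $a\in\cO(S)^G$, it suffices to check that $a(s)>0$ implies $a\succ 0$ in the ordering $\prec$. But the image of $a$ in $\R[[z_1,\dots,z_n]]$ is $a(s)+m$ with $m$ in the maximal ideal; as $m$ is infinitesimal in our chosen ordering, $a(s)+m$ and $a(s)$ have the same sign. The main obstacle, such as it is, is purely bookkeeping: making sure that the prime $\kp$ is legitimately recovered from $\Sper(\cM(S)^G)$, which reduces to the identification $\Frac(\cO(T')^G)=\cM(T')^G$ and the product-of-fields structure of $\cM(S)^G$; once this identification is in hand, the construction is routine.
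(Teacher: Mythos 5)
Your proof is correct, and it takes a genuinely different (and arguably more elementary) route than the paper. The paper reduces to $S$ irreducible in the same way, but then works with the localization $\cO(S)_s$ of the \emph{global} ring $\cO(S)$ at $\km_s$: it invokes \cite[Lemma 1.13]{Stein} to see that this is a noetherian regular local ring, deduces via faithfully flat descent that $(\cO(S)_s)^G$ is a regular local ring with residue field $\R$ and fraction field $\cM(S)^G$, and then cites \cite[Lemma 2.3]{Henselian} as a black box to produce the ordering. You instead pass directly to the \emph{analytic} local ring $\cO_{T',s}^G\cong\R\{z_1,\dots,z_n\}$ via $G$-invariant coordinates (exactly as in the proof of Lemma~\ref{lemrealpoints}(i)), map it into $\R[[z]]$ and then into an iterated Laurent series field with an explicit ordering making the $z_i$ positive infinitesimals, and pull the ordering back; the only nontrivial input is the identity principle for holomorphic functions on the irreducible Stein space $T'$. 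Both proofs are unwinding the same geometric idea (an ordering built from a local $\R[[z]]$-model at~$s$), but your version bypasses the noetherianity/regularity of $\cO(S)_s$ and the cited lemma, which is a real simplification. Two small remarks: first, the phrase ``finite product of fields $\cM(S)^G$'' is not quite right---if $S$ has infinitely many irreducible components the product is infinite---but since you only use the single factor $\cM(T')^G$ and the surjectivity of $\cM(S)^G\to\cM(T')^G$, this does not affect the argument; second, to be fully precise, the point you construct in $\Sper(\cO(S)^G)$ has ordering the restriction of $\prec$ to $\Frac(\cO(S)^G/\kp)\subset\cM(T')^G$, and one should note that it equals the image of the point $(\Ker(\cM(S)^G\to\cM(T')^G),\prec)$ of $\Sper(\cM(S)^G)$, which is exactly what ``belongs to $\Sper(\cM(S)^G)$'' means here---you essentially say this, but it is worth spelling out.
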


\begin{proof}
Let $s\in S^G$ be a nonsingular point.  Replacing $S$ with its irreducible component through $s$, we may assume that $S$ is irreducible.  The local ring $\cO(S)_s$ is regular (apply \cite[Lemma 1.13]{Stein} with $K=\{s\}$).
As $\cO(S)_s=(\cO(S)_s)^G[\sqrt{-1}]$,  we deduce from
\cite[Lemma \href{https://stacks.math.columbia.edu/tag/07NG}{07NG}]{SP} that so is $(\cO(S)_s)^G$. The lemma now follows from 
\cite[Lemma 2.3]{Henselian} as $\cM(S)^G=\Frac((\cO(S)_s)^G)$.
\end{proof}

\section{Applications to the period-index problem}
\label{secpi}

\subsection{Computing the Brauer group}

If $F$ is a field,   the subgroup~$\Br(F)_0$ of~$\Br(F)$ was defined in (\ref{Brnul}) to be $\Ker\big[\Br(F)\to \prod_{\xi\in\Sper(F)}\Br(F_{\xi})\big]$.  We also recall that if~$E$ is a topological space on which $G$ acts and $A$ is a $G$-module,  then the subgroup
$H^k_G(E,A)_0$ of $H^k_G(E,A)$ is $\Ker\big[H^k_G(E,A)\to \prod_{x\in E^G}H^k_G(x,A)\big]$ (see~\S\ref{parGeq}).

\begin{lem}
\label{BrauerG}
Let $S$ be a normal $G$-equivariant Stein surface with $S/G$ connected.  Fix $m\geq 1$. Letting  $a\in\cO(S)^G$ runs over all non\-zero\-di\-vi\-sors, one has
\begin{align}
\label{BrauercalculG}
&\Br(\cM(S)^G)[m]=\underset{a}{\colim}\,H^2_G(S\setminus\{a=0\},\Z/m(1))\\
\label{Brauercalcul0}
\textrm{and }\hspace{.5em}&\Br(\cM(S)^G)_0[m]=\underset{a}{\colim}\,H^2_G(S\setminus\{a=0\},\Z/m(1))_0.
\end{align}
\end{lem}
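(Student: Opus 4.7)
My plan is to derive both identities from Kummer theory combined with the $G$-equivariant comparison theorem, and then match the two ``$_0$'' subgroups using the relationship between orderings of $\cM(S)^G$ and $G$-fixed points of $S$ established earlier.

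For (\ref{BrauercalculG}), I would chain the following standard identifications. Kummer theory and Hilbert's Theorem~90 give $\Br(\cM(S)^G)[m]=H^2_{\et}(\Spec(\cM(S)^G),\mu_m)$. By \cite[Lemma~\href{https://stacks.math.columbia.edu/tag/03Q6}{03Q6}]{SP}, this \'etale cohomology group equals $\underset{a}{\colim}\,H^2_{\et}(\Spec(\cO(S)^G[\frac{1}{a}]),\mu_m)$, the colimit running over non\-zero\-di\-vi\-sors $a\in\cO(S)^G$. Theorem~\ref{compG} applied to $X=\Spec(\cO(S)^G)$ and $\LL=\mu_m$ identifies this in turn with $\underset{a}{\colim}\,H^2_G(S\setminus\{a=0\},\mu_m^{\an})$, and $\mu_m^{\an}$ is the $G$-equivariant constant sheaf $\Z/m(1)$ because complex conjugation acts on $m$-th roots of unity by inversion.

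For (\ref{Brauercalcul0}), I would match the ``$_0$'' subgroups via the following reductions. When $m$ is odd, both $\Br(F_\xi)[m]$ and $H^2(G,\Z/m(1))=H^2_G(x,\Z/m(1))$ vanish, so both ``$_0$'' subgroups coincide with the ambient groups and the identity follows from (\ref{BrauercalculG}). Assuming $m$ is even, both restriction targets are canonically isomorphic to $\Z/2$. For the inclusion ``$\supset$'', at a nonsingular $x\in(S\setminus\{a=0\})^G$ I would invoke Lemma~\ref{lemgenerization} to produce an ordering $\xi_x\in\Sper(\cM(S)^G)$ specializing to $x$ in $\Sper(\cO(S)^G)$, and then identify $\tilde\alpha|_x$ with $\alpha|_{F_{\xi_x}}$ via the functoriality of Kummer theory and of the comparison map from Theorem~\ref{compG}. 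Singular $G$-fixed points would be handled by pulling back through a $G$-equivariant resolution $\nu:\wS\to S$ (noting $\cM(\wS)^G=\cM(S)^G$, so $\Sper(\cM(\wS)^G)=\Sper(\cM(S)^G)$) and applying the nonsingular case to a preimage in $\wS^G$. The reverse inclusion ``$\subset$'' would use Theorem~\ref{thorder2}: every $\xi\in\Sper(\cM(S)^G)$ lies in the closure of $S^G$ in $\Sper(\cO(S)^G)$, so by approximating $\xi$ by $G$-fixed points one shows that nonvanishing of $\alpha|_{F_\xi}$ would force nonvanishing of $\tilde\alpha|_x$ for some such $x$.

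The hard part will be to make the compatibility and approximation arguments rigorous. Concretely, the key claim is that at a nonsingular $G$-fixed point $x$ with associated ordering $\xi_x$, the $G$-equivariant restriction $H^2_G(S\setminus\{a=0\},\Z/m(1))\to H^2(G,\Z/m(1))$ and the Brauer restriction $\Br(\cM(S)^G)[m]\to\Br(F_{\xi_x})[m]$ agree under the canonical identifications of both targets with $\Z/2$. This requires carefully tracing the Kummer isomorphism through the comparison map of Theorem~\ref{compG}, via the local inclusion $\cO(S)^G_{\km_x}\subset F_{\xi_x}$ whose residue field is $\R$.
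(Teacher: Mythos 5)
Your treatment of (\ref{BrauercalculG}) is exactly the paper's (Kummer sequence, \cite[Lemma~\href{https://stacks.math.columbia.edu/tag/03Q6}{03Q6}]{SP}, Theorem~\ref{compG}), and for (\ref{Brauercalcul0}) you have correctly identified the two geometric inputs the paper also uses, namely Theorem~\ref{thorder2} and Lemma~\ref{lemgenerization}. But the step you yourself call ``the hard part'' is a genuine gap, and it is precisely where the paper invokes a tool you never supply: Scheiderer's sheaf $\RR^2\rho\,(\Z/m(1))$ on $\Sper(\cO(S)^G[\frac{1}{a}])$, whose stalk at $\xi$ is $H^2_{\et}(\Spec(\kappa_\xi),\Z/m(1))$, together with the map $\phi$ sending an \'etale class $\alpha$ to the family of restrictions $\xi\mapsto\alpha|_{\kappa_\xi}$, and the fact that this sheaf is \emph{locally constant} over the nonsingular locus (\cite[Propositions~3.12 and~17.4~b)]{Scheiderer}). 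Both of your reductions need exactly this and cannot be obtained from what you cite. For the pointwise identification $\tilde\alpha|_x=\alpha|_{F_{\xi_x}}$, ``functoriality of Kummer theory and of the comparison map'' does not apply: there is no field homomorphism between the residue field $\R$ at $x$ and the real closure $F_{\xi_x}$ over which anything could be functorial; what is needed is a specialization statement, that the values of the family $\xi\mapsto\alpha|_{\kappa_\xi}$ at the specialization pair $x\in\overline{\{\xi_x\}}$ coincide, and that is the local constancy of $\RR^2\rho$. Similarly, your ``approximation'' step requires knowing that $\{\xi\in\Sper(\cO(S)^G[\frac{1}{a}])\mid \alpha|_{\kappa_\xi}=0\}$ is open (equivalently closed); the density statement of Theorem~\ref{thorder2} transfers nothing by itself. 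Once the locally constant sheaf is in place, both directions are immediate, and this is how the paper concludes.

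A secondary point: your detour through a $G$-equivariant resolution to handle singular points of $S^G$ is both unnecessary and potentially broken. The fiber of a resolution over a real singular point may contain no real points at all (for the vertex of $\{x^2+y^2+z^2=0\}$ the exceptional conic has empty real locus), so the $_0$ condition at such a point cannot be checked upstairs. The paper instead notes that the singular locus of the normal surface $S$ is a discrete $G$-invariant set, so one may multiply $a$ by a $G$-invariant nonzerodivisor vanishing on it; in the colimit of the $_0$-subgroups it then suffices to test the condition at nonsingular real points, where Lemma~\ref{lemgenerization} applies.
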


\begin{proof}
One computes that
\begin{equation}
\label{calculBr}
\begin{alignedat}{3}
\Br(\cM(S)^G)[m]&=H^2_{\et}(\Spec(\cM(S)^G),\Z/m(1))\\
&=\underset{a}{\colim}\,H^2_{\et}(\Spec(\cO(S)^G[\tfrac{1}{a}]),\Z/m(1))\\
&=\underset{a}{\colim}\,H^2_G(S\setminus\{a=0\},\Z/m(1)),
\end{alignedat}
\end{equation}
where the first equality follows from the Kummer exact sequence, the second from \cite[Lemma~\href{https://stacks.math.columbia.edu/tag/03Q6}{03Q6}]{SP}, and the third from Theorem \ref{compG}. This shows (\ref{BrauercalculG}).

Fix a non\-zero\-di\-vi\-sor $a\in\cO(S)^G$.  Scheiderer has defined a sheaf $\RR^2\rho\,(\Z/m(1))$ on $\Sper(\cO(S)^G[\tfrac{1}{a}])$ whose stalk at a point $\xi\in\Sper(\cO(S)^G[\tfrac{1}{a}])$ with associated real closed residue field~$\kappa_{\xi}$ is $H^2_{\et}(\Spec(\kappa_{\xi}),\Z/m(1))$,  such that there exists a morphism 
\begin{equation}
\label{SpecSper}
\phi:H^2_{\et}(\Spec(\cO(S)^G[\tfrac{1}{a}]),\Z/m(1))\to H^0(\Sper(\cO(S)^G[\tfrac{1}{a}]),\RR^2\rho\,(\Z/m(1)))
\end{equation}
with $\phi(\alpha)_{\xi}=\alpha|_{\kappa_{\xi}}$ in $H^2_{\et}(\Spec(\kappa_{\xi}),\Z/m(1))$ for $\alpha\in H^2_{\et}(\Spec(\cO(S)^G[\tfrac{1}{a}]),\Z/m(1))$ (see~\cite[Definition~2.5,  Proposition 3.12 b) and~c)]{Scheiderer}). 

Assume now that $a$ has been chosen so~$S\setminus \{a=0\}$ is nonsingular and fix a class~$\alpha\in H^2_{\et}(\Spec(\cO(S)^G[\tfrac{1}{a}]),\Z/m(1))$.  Since the sheaf $\RR^2\rho\,(\Z/m(1))$ is locally constant (see \cite[Proposition 17.4 b)]{Scheiderer}),  we deduce from Theorem \ref{thorder2} and Lemma~\ref{lemgenerization} that it is equivalent to require that $\phi(\alpha)$ vanishes on $S^G\setminus\{a=0\}$, or on~$\Sper(\cM(S)^G)$. Restricting to such elements $a\in\cO(S)^G$ and such classes $\alpha$ in the chain of equalities (\ref{calculBr}) yields the isomorphism~(\ref{Brauercalcul0}).
\end{proof}

\begin{cor}
\label{BrauerC}
Let $S$ be a connected normal Stein surface.  Fix $m\geq 1$. Then 
\begin{equation}
\label{Brauercalcul}
\Br(\cM(S))[m]=\underset{a}{\colim}\,H^2(S\setminus\{a=0\},\Z/m),
\end{equation}
where $a\in\cO(S)$ runs over all non\-zero\-di\-vi\-sors.
\end{cor}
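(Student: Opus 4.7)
The plan is to deduce Corollary \ref{BrauerC} by applying Lemma \ref{BrauerG} to the $G$\nobreakdash-equivariant Stein surface $S\sqcup S^{\sigma}$ (with $G$ exchanging the two factors), in the same spirit as how Theorem \ref{thcohodim} is obtained from Theorem \ref{thcohodimR}. Indeed $S\sqcup S^{\sigma}$ is a normal $G$-equivariant Stein surface whose quotient $(S\sqcup S^{\sigma})/G\cong S$ is connected, so Lemma \ref{BrauerG} applies to it.

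First I would identify the relevant rings and cohomology groups. Projection onto the first factor induces an isomorphism $\cO(S\sqcup S^{\sigma})^G\isoto \cO(S)$, $(f,g)\mapsto f$, which restricts to a bijection between non\-zero\-di\-vi\-sors on both sides (the inverse sends $a\in\cO(S)$ to $(a,\sigma^*a)\in\cO(S\sqcup S^{\sigma})^G$). Passing to total rings of fractions therefore yields $\cM(S\sqcup S^{\sigma})^G\isoto \cM(S)$, and hence $\Br(\cM(S\sqcup S^{\sigma})^G)[m]=\Br(\cM(S))[m]$. Moreover, for such an $a$, the open subset $(S\sqcup S^{\sigma})\setminus\{(a,\sigma^*a)=0\}$ is $G$\nobreakdash-equivariantly isomorphic to $U\sqcup U^{\sigma}$ where $U:=S\setminus\{a=0\}$.

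Next I would use the isomorphism (\ref{cohoRC}) to remove the $G$-action. Applied to $U$ and to the $G$-module $\Z/m(1)$, it gives
\begin{equation*}
H^2_G(U\sqcup U^{\sigma},\Z/m(1))\isoto H^2(U,\Z/m),
\end{equation*}
since the underlying abelian group of $\Z/m(1)$ is just $\Z/m$ and the Tate twist plays no role once one restricts to a single component.

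Putting these ingredients together, the identity (\ref{BrauercalculG}) of Lemma \ref{BrauerG} applied to $S\sqcup S^{\sigma}$ reads
\begin{equation*}
\Br(\cM(S))[m]=\underset{a}{\colim}\,H^2_G\bigl((S\setminus\{a=0\})\sqcup(S\setminus\{a=0\})^{\sigma},\Z/m(1)\bigr)=\underset{a}{\colim}\,H^2(S\setminus\{a=0\},\Z/m),
\end{equation*}
where $a$ runs over all non\-zero\-di\-vi\-sors of $\cO(S)$. This is exactly the desired equality~(\ref{Brauercalcul}). There is no genuine obstacle here beyond checking these compatibilities; the substance of the statement is already contained in Lemma \ref{BrauerG} (and, further upstream, in the $G$-equivariant generic comparison theorem, Theorem~\ref{compG}).
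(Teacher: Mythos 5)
Your proposal is correct and is exactly the paper's approach: the paper's proof is the one-line instruction to apply Lemma \ref{BrauerG} to $S\sqcup S^{\sigma}$, and you have simply spelled out the bookkeeping (identification of $\cO(S\sqcup S^{\sigma})^G$ with $\cO(S)$, the corresponding nonzerodivisors, and the use of (\ref{cohoRC}) to drop the $G$-action) that the paper leaves implicit.
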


\begin{proof}
Apply Lemma \ref{BrauerG} to the $G$-equivariant Stein surface $S\sqcup S^{\sigma}$.
\end{proof}

\subsection{The period-index problem on Stein surfaces}

\begin{thm}
\label{thpiC}
Let $S$ be a connected normal Stein surface.  For all $\eta\in \Br(\cM(S))$, one has $\ind(\eta)=\per(\eta)$.
\end{thm}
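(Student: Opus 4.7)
The plan is as follows. Since the period always divides the index, it suffices to exhibit a finite \'etale $\cM(S)$-algebra $A$ of degree $m:=\per(\eta)$ such that $\eta$ dies in $\Br(A)$; this will force $\ind(\eta)\mid m$ and hence $\ind(\eta)=\per(\eta)=m$. Via the equivalence of categories (\ref{eqext}), such an $A$ is the same as a degree $m$ normal analytic covering $\wT\to S$, with $A=\cM(\wT)$.

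To construct $\wT$, I would first use Corollary \ref{BrauerC} to represent $\eta$ as a cohomology class $\alpha\in H^2(S\setminus\{a=0\},\Z/m)$ for some nonzerodivisor $a\in\cO(S)$. Corollary \ref{corkillram} (killing ramification) then produces an alteration $p:T\to S$ of degree $m$ with $T$ nonsingular and a class $\beta\in H^2(T,\Z/m)$ satisfying $p^*\alpha=\beta|_{T\setminus p^{-1}(\{a=0\})}$, while Corollary \ref{corGLefschetz11} provides a nowhere dense closed analytic $T'\subset T$ with $\beta|_{T\setminus T'}=0$. Hence $p^*\alpha$ vanishes on $T\setminus(T'\cup p^{-1}(\{a=0\}))$. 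Factoring $p=\op\circ\mu$ through the Stein factorization $\op:\oT\to S$, and letting $\wT$ be the normalization of $\oT$, one obtains a degree $m$ analytic covering $\wT\to S$ with $\wT$ normal, and $\cM(\wT)=\cM(\oT)=\cM(T)$ (by the remarks before (\ref{eqext})).

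It remains to verify $\eta_{\cM(\wT)}=0$. Writing $T=\bigsqcup_i T_i$ into connected components (in bijection, via $\mu$ and the normalization, with the components $\wT_j$ of $\wT$), each $T_i$ is a connected normal Stein surface and dominates $S$ by dimension count, so $p^*a$ is a nonzerodivisor in $\cO(T_i)$. Using Cartan's Theorem~A, I would choose a nonzerodivisor $c_i\in\cO(T_i)$ vanishing on $T'\cap T_i$; then $p^*\alpha$ vanishes in $H^2(T_i\setminus\{c_i\cdot p^*a=0\},\Z/m)$, so Corollary \ref{BrauerC} applied to $T_i$ shows that $\eta$ dies in $\Br(\cM(T_i))$. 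Since $\cM(\wT)=\prod_j\cM(\wT_j)$ with each $\cM(\wT_j)$ identified with some $\cM(T_i)$, the class $\eta$ dies in every field factor of $\cM(\wT)$, so $\ind(\eta)\mid[\cM(\wT_j):\cM(S)]$ for every $j$; as $\ind(\eta)$ then divides their sum, which is $m$, we are done.

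The real work is already done in Corollaries \ref{corkillram} and \ref{corGLefschetz11}: both rely essentially on $\dim S\leq 2$ and on the Section \ref{secH2} machinery (resolution of singularities with snc boundary on Stein surfaces, Bertini-type genericity via Lemma \ref{BertiniGsnc}, and the exponential sequence combined with Lemma \ref{lemglobgen}). Given these, the remaining work is largely bookkeeping, and the only subtle point to verify carefully is that the cohomological pullback along $p$ matches the algebraic pullback $\Br(\cM(S))\to\Br(\cM(T_i))$; this follows from the functoriality of the comparison isomorphism in Corollary \ref{BrauerC} (which in turn rests on Theorem \ref{compC}) combined with the fact that each $T_i$ dominates $S$, so that $\cM(S)$ does embed in $\cM(T_i)$.
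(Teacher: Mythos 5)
Your overall route is the same as the paper's: represent $\eta$ by a class $\alpha\in H^2(S\setminus\{a=0\},\Z/m)$ via Corollary \ref{BrauerC}, kill its ramification on a degree $m$ alteration $p:T\to S$ (Corollary \ref{corkillram}), kill the extended class $\beta$ on a Zariski-open subset (Corollary \ref{corGLefschetz11}), and conclude through the Stein factorization and the equivalence (\ref{eqext}). There is, however, a genuine flaw in your verification step: a connected component $T_i$ of $T$ is \emph{not} a Stein surface in general. The alteration produced in Proposition \ref{killram} is a resolution of singularities of a ramified cover of a resolution of $S$, so $T$ typically contains compact exceptional curves (for instance the chains of rational curves resolving the $A_{m-1}$ singularities of $\wT$, and the exceptional curves over the singular points of $S$); a complex space containing a positive-dimensional compact analytic subset is never Stein. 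Consequently Cartan's Theorem A does not produce your $c_i$ on $T_i$, and, more seriously, Corollary \ref{BrauerC} cannot be applied to $T_i$ as written: it rests on the comparison Theorem \ref{compC}, which requires the underlying space to be Stein, and the identification $\Br(\cM(T_i))[m]=\underset{c}{\colim}\,H^2(T_i\setminus\{c=0\},\Z/m)$ is not available for such $T_i$ (for example, classes carried by exceptional curves survive in every open set of the form $T_i\setminus\{c=0\}$ with $c$ pulled back from the Stein factorization, since such curves are contracted to points).

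The repair is short and uses exactly the objects you already introduced: transfer the vanishing statement to $\wT$ before invoking Corollary \ref{BrauerC}. The modification $T\to\wT$ is proper, so the images in $\wT$ of $T'$ and of the exceptional locus of $T\to\wT$ form a nowhere dense closed analytic subset $Z\subset\wT$; since $\wT$ is finite over $S$ it is Stein, so there is a nonzerodivisor $c\in\cO(\wT)$ vanishing on $Z$, and then $\wT\setminus\{c\cdot a=0\}$ is biholomorphic to a Zariski-open subset of $T$ on which $p^*\alpha$ vanishes (alternatively, one can check that $H^2(\wT\setminus\{ab=0\},\Z/m)\to H^2(T\setminus\{ab=0\},\Z/m)$ is injective because the fibers of $T\to\wT$ are points or trees of rational curves). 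Applying Corollary \ref{BrauerC} to the connected components of $\wT$, which \emph{are} connected normal Stein surfaces, and using the functoriality of the comparison maps as you indicate, gives $\eta|_{\cM(\wT_j)}=0$ for all $j$, and your index bookkeeping over the factors of the degree $m$ \'etale algebra $\cM(\wT)$ then finishes the proof. With this patch your argument coincides with the paper's proof (which is itself terse at precisely this point); your extra care about the possible disconnectedness of $T$ and about matching topological and algebraic pullbacks is correct and welcome.
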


\begin{proof}
Set $m:=\per(\eta)$, so $\eta\in \Br(\cM(S))[m]$. Let $a\in\cO(S)$ be a non\-zero\-di\-vi\-sor and let $\alpha\in H^2(S\setminus\{a=0\},\Z/m)$ be a class inducing $\eta$ in (\ref{Brauercalcul}).
By Corollary~\ref{corkillram}, there exist an alteration $p:T\to S$ of degree $m$ with $T$ nonsingular, and a cohomology class~$\beta\in H^2(T,\Z/m)$ such that $p^*\alpha=\beta|_{T\setminus\{a=0\}}$ in $H^2(T\setminus\{a=0\},\Z/m)$.

By Corollary \ref{corGLefschetz11}, the class $\beta$ vanishes in restriction to the complement of some nowhere dense closed analytic subset of $T$. It follows that there exists a non\-zero\-di\-vi\-sor $b\in\cO(T)$ such that $p^*\alpha|_{T\setminus\{ab=0\}}$ vanishes in $H^2(T\setminus\{ab=0\},\Z/m)$.  By Corollary \ref{BrauerC}, the class $\eta$ vanishes in $\Br(\cM(T))$.  Since $\cM(T)$ is an \'etale~$\cM(S)$\nobreakdash-algebra of degree $m$ (apply~(\ref{eqext}) to the Stein factorization $T\to\oT\xrightarrow{\op}S$ of $p$), we deduce that $\ind(\eta)\leq m$. As $\per(\eta)$ always divides $\ind(\eta)$, the theorem is proved.
\end{proof}

\begin{rem}
\label{remcyclic}
The field extension~$\cM(T)$ of $\cM(S)$ constructed in the proof of Theorem \ref{thpiC} is cyclic of degree $m$ (see especially the equation of $\wT$ in the proof of Proposition \ref{killram}). It follows that all central division algebras over~$\cM(S)$ are cyclic. We refer to \cite[Theorem 2.1]{CTOP} for the analogous result in the local case.
\end{rem}

\begin{thm}
\label{thpi}
Let $S$ be a normal $G$-equivariant Stein surface with $S/G$ connected.  Then, for all $\eta\in \Br(\cM(S)^G)_0$, one has $\ind(\eta)=\per(\eta)$.
\end{thm}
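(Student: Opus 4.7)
The plan is to imitate the strategy of Theorem \ref{thpiC} $G$-equivariantly, but to bypass the problem of lifting $\beta$ from $\Z/m(1)$ to $\Z(1)$ coefficients (which would otherwise force recourse to Proposition \ref{killram2} and thus restrict to $m=2$) by reducing, after a single $G$-equivariant alteration, to a purely non-equivariant vanishing statement on that alteration.

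First I would set $m := \per(\eta)$ and invoke the identification (\ref{Brauercalcul0}) from Lemma \ref{BrauerG} to represent $\eta$ by a class $\alpha \in H^2_G(S \setminus \{a=0\}, \Z/m(1))_0$ for some non-zerodivisor $a \in \cO(S)^G$. The hypothesis $\eta \in \Br(\cM(S)^G)_0$ is exactly what produces a representative in the $(\cdot)_0$ subgroup, which is the input form required by Proposition \ref{killram}.

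Next I would apply Proposition \ref{killram} with $A = \Z/m(1)$ and $S' = \{a=0\}$: this supplies a $G$-equivariant alteration $p : T \to S$ of degree $m$ with $T$ nonsingular, together with a class $\beta \in H^2_G(T, \Z/m(1))_0$ satisfying $p^*\alpha = \beta|_{T \setminus p^{-1}(\{a=0\})}$. Forgetting the $G$-module structure on coefficients sends $\beta$ to a class $\tilde\beta \in H^2(T, \Z/m)$. Since $T$ is a nonsingular (hence reduced) Stein surface, Corollary \ref{corGLefschetz11} supplies a nowhere dense closed analytic subset $T' \subset T$ with $\tilde\beta|_{T \setminus T'} = 0$.

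To conclude, I would factor $p$ through its analytic normalization in $\cM(T)$, obtaining a $G$-equivariant analytic covering $\bar p : \bar T \to S$ of degree $m$ with $\bar T$ normal and $\cM(\bar T) = \cM(T)$. The equivalence (\ref{eqextG}) identifies $\cM(T)$ with a degree $m$ finite \'etale $\cM(S)^G$-algebra. Applying Corollary \ref{BrauerC} to each connected component of $\bar T$ and using the commutative diagram of Kummer sequences relating equivariant and non-equivariant Brauer groups, the vanishing of $\tilde\beta$ on $T \setminus (T' \cup p^{-1}\{a=0\})$ translates into the vanishing of the pullback of $\eta$ in $\Br(\cM(T))$. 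Hence $\ind(\eta) \mid m = \per(\eta)$, and the reverse divisibility is automatic.

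The main obstacle is the second step, and its entire weight lies in Proposition \ref{killram} and in the computation of $\Br(\cM(S)^G)_0$ in Lemma \ref{BrauerG}; once those are in hand, the remainder assembles without surprise. The only delicate point in a writeup would be checking that the forgetful map from equivariant to non-equivariant $H^2$ on $T$ is compatible, via the respective Kummer sequences, with the base change $\Br(\cM(S)^G) \to \Br(\cM(T))$, so that generic vanishing of $\tilde\beta$ really does split $\eta$ over $\cM(T)$.
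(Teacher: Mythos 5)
Your argument breaks down at the final step, where you pass from generic vanishing of the non-equivariant class $\tilde\beta$ to the conclusion $\ind(\eta)\mid m$. You assert that (\ref{eqextG}) identifies $\cM(T)$ with a degree $m$ finite \'etale $\cM(S)^G$-algebra, but this is not what the equivalence says: the degree-$m$ $G$-equivariant analytic covering $\oT\to S$ (Stein factorization of $p$) corresponds under (\ref{eqextG}) to the \emph{invariant} algebra $\cM(T)^G$, which is the one of degree $m$ over $\cM(S)^G$; the full ring $\cM(T)$ has degree $m$ over $\cM(S)$ and hence degree $2m$ over $\cM(S)^G$. What your chain of reasoning (Proposition \ref{killram}, forgetting the $G$-action, Corollary \ref{corGLefschetz11}, Corollary \ref{BrauerC}) actually proves is that $\eta$ is split by $\cM(T)$, i.e.\ $\ind(\eta)\mid 2m$, which together with $\per(\eta)\mid\ind(\eta)$ only leaves $\ind(\eta)\in\{m,2m\}$. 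To obtain $\ind(\eta)\mid m$ you must split $\eta$ over the degree-$m$ algebra $\cM(T)^G$, and by Lemma \ref{BrauerG} applied to $T$ this requires the class $\beta$ to vanish generically in the $G$-equivariant groups $H^2_G(T\setminus\{b=0\},\Z/m(1))$, not merely after forgetting the $G$-structure.

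Non-equivariant generic vanishing does not give this: when $T^G\neq\varnothing$, equivariant $H^2$ contains contributions coming from the real points (restrictions to $x\in T^G$ of classes from $H^*(G,\Z/m(1))$) which survive restriction to every Zariski-open subset meeting $T^G$; equivalently, $H^3_G(T,\Z(1))$ need not vanish, so there is no reason $\beta$ should lift to $\Z(1)$-coefficients, and Proposition \ref{GLefschetz11} cannot be invoked equivariantly. This is exactly the obstruction the paper has to confront: it proves the refinement Proposition \ref{killram2} (only available for $m=2$, $A=\Z/2$), which arranges $\partial(\beta)=0$ so that $\beta$ lifts to $H^2_G(T,\Z(1))$ and Proposition \ref{GLefschetz11} kills it on a $G$-invariant Zariski-open set; then Lemma \ref{BrauerG} splits $\eta$ over the degree-$2$ algebra $\cM(T)^G$. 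The general period is handled by induction: the odd part reduces to Theorem \ref{thpiC} via the degree-$2$ algebra $\cM(S)$, and for even $m$ one first kills $\frac{m}{2}\eta$ in a degree-$2$ extension $\cM(T)^G$ and applies the induction hypothesis to $p^*\eta\in\Br(\cM(T)^G)_0$. Your proposed bypass of Proposition \ref{killram2} discards the equivariant (real-point) information at precisely the point where it is needed, so the proof does not close as written.
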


\begin{proof}
We argue by induction on $m:=\per(\eta)$. 
If $m$ is odd, the theorem follows from Theorem \ref{thpiC} since $\cM(S)$ is an \'etale $\cM(S)^G$\nobreakdash-al\-ge\-bra of degree $2$. 

Assume now that $m=2$, so $\eta\in \Br(\cM(S)^G)_0[2]$.
Let~$a\in\cO(S)^G$ be a non\-zero\-di\-vi\-sor and let $\alpha\in H^2_G(S\setminus\{a=0\},\Z/2)_0$ be a class inducing $\eta$ in~(\ref{Brauercalcul0}).
By Proposition~\ref{killram2}, there exist a $G$-equivariant alteration~${p:T\to S}$ of degree~$2$ with~$T$ nonsingular, and a class $\beta\in H^2_G(T,\Z/2)$ such that $\partial(\beta)=0$ in $H^3_G(T,\Z(1))$ and~${p^*\alpha=\beta|_{T\setminus\{a=0\}}}$ in~$H^2_G(T\setminus\{a=0\},\Z/2)$.
As $\partial(\beta)=0$, the class $\beta$ lifts to~$H^2_G(T,\Z(1))$ and hence
vanishes in restriction to the complement of some $G$\nobreakdash-in\-vari\-ant nowhere dense closed analytic subset of~$T$ (see Proposition \ref{GLefschetz11}).  It follows that there exists a non\-zero\-di\-vi\-sor~${b\in\cO(T)^G}$ such that $p^*\alpha|_{T\setminus\{ab=0\}}$ vanishes in ${H^2_G(T\setminus\{ab=0\},\Z/2)}$. By Lemma \ref{BrauerG}, the class~$\eta$ vanishes in $\Br(\cM(T)^G)$. As~$\cM(T)^G$ is an \'etale~$\cM(S)^G$\nobreakdash-al\-ge\-bra of degree~$2$ (apply~(\ref{eqextG}) to the Stein factorization $T\to\oT\xrightarrow{\op}S$ of~$p$), we deduce that $\ind(\eta)\mid 2$, hence that $\ind(\eta)=2$. 

Assume finally that $m$ is even.  By the above, the class $\frac{m}{2}\cdot\eta\in \Br(\cM(S)^G)_0[2]$ is killed in a degree $2$ extension of $\cM(S)^G$ (which has the form $\cM(T)^G$ for some $G$\nobreakdash-equivariant analytic covering $p:T\to S$ (see (\ref{eqextG})). Then $p^*\eta\in\Br(\cM(T)^G)_0$ has period dividing $\frac{m}{2}$,  hence index dividing $\frac{m}{2}$ by induction.
It follows that $\ind(\eta)\mid m$, hence that $\ind(\eta)=m$.
\end{proof}

\begin{thm}
\label{corpi}
Let $S$ be a normal $G$-equivariant Stein surface with $S/G$ connected and $S^G\subset S$ discrete.  Then, for all $\eta\in \Br(\cM(S)^G)$, one has $\ind(\eta)=\per(\eta)$.
\end{thm}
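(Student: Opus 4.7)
The plan is to deduce Theorem \ref{corpi} as a direct consequence of Theorem \ref{thpi} once we observe that the hypothesis $S^G \subset S$ discrete forces $\Br(\cM(S)^G)_0 = \Br(\cM(S)^G)$.

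More precisely, I would argue as follows. By Theorem \ref{thcohodimR}, the discreteness of $S^G$ (condition (iv)) is equivalent to the field $\cM(S)^G$ admitting no field orderings (condition (iii)), that is, $\Sper(\cM(S)^G) = \varnothing$. Inspecting the definition
\[
\Br(\cM(S)^G)_0 = \Ker\Big[\Br(\cM(S)^G) \to \prod_{\xi \in \Sper(\cM(S)^G)} \Br((\cM(S)^G)_\xi)\Big],
\]
we see that the target is an empty product, hence trivially equal to zero, so every class in $\Br(\cM(S)^G)$ lies in $\Br(\cM(S)^G)_0$.

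With this observation in hand, Theorem \ref{thpi} applies to every $\eta \in \Br(\cM(S)^G)$ and yields $\ind(\eta) = \per(\eta)$, which completes the proof. There is no real obstacle here; the content is entirely in the already-established Theorems \ref{thcohodimR} and \ref{thpi}.
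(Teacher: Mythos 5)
Your proposal is correct and is essentially the paper's own argument: Theorem \ref{thcohodimR} (iv)$\Rightarrow$(iii) gives $\Sper(\cM(S)^G)=\varnothing$, hence $\Br(\cM(S)^G)_0=\Br(\cM(S)^G)$, and Theorem \ref{thpi} concludes.
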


\begin{proof}
Theorem \ref{thcohodimR} shows that the field $\cM(S)^G$ admits no field ordering. It follows that~$\eta\in\Br(\cM(S)^G)_0$, and Theorem \ref{thpi} applies.
\end{proof}


\subsection{Application to the \texorpdfstring{$u$}{u}-invariant}
\label{paru}

The $u$-\textit{invariant} $u(F)$ of a field $F$ is the maximal rank of an anisotropic quadratic form over $F$ that is hyperbolic over all real closures of $F$,  or $+\infty$ if these ranks admit no upper bound (see \cite[Definition~1.1]{EL}). In the more classical particular case when $F$ admits no field orderings,  the $u$-invariant $u(F)$ is the maximal rank of an anisotropic quadratic form over $F$.

\begin{thm}
Let $S$ be a normal $G$-equivariant Stein surface with $S/G$ connected. Then $u(\cM(S)^G)=4$.
\end{thm}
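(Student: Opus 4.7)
The plan is to establish both inequalities $u(\cM(S)^G) \leq 4$ and $u(\cM(S)^G) \geq 4$, with the first being the substantive part and a direct consequence of Theorems \ref{thcohodim} and \ref{thpi}.

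For the upper bound, the strategy is to apply a general criterion from the theory of quadratic forms over formally real fields: \emph{if $F$ satisfies $\cd(F(\sqrt{-1})) \leq 2$ and every element of $\Br(F)_0[2]$ has index $\leq 2$, then $u(F) \leq 4$}. These hypotheses are exactly what Theorems \ref{thcohodim} and \ref{thpi} provide for $F = \cM(S)^G$: Theorem \ref{thcohodim} gives $\cd(\cM(S)) = 2$, and Theorem \ref{thpi} (specialised to $2$-torsion) shows that every class in $\Br(\cM(S)^G)_0[2]$ is Brauer-equivalent to a quaternion algebra. Unpacking the criterion: given an anisotropic form $q$ over $F$ hyperbolic at every real closure, Pfister's local-global theorem places $q$ in the torsion subgroup $W_t(F)$ of the Witt ring; Voevodsky's Milnor conjecture identifies $I^n(F)/I^{n+1}(F)$ with $H^n(F,\Z/2)$, and Scheiderer's real étale cohomology analysis of torsion classes for fields of virtual cohomological dimension $\leq 2$ gives $I^3(F) \cap W_t(F) = 0$. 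Therefore $q \in I^2(F)$, its Clifford invariant $c(q)$ lies in $\Br(F)_0[2]$, and by the period-index hypothesis $c(q)$ is a quaternion class $(a,b)$. Consequently $q - \langle\langle a, b\rangle\rangle$ belongs to $I^3(F) \cap W_t(F) = 0$, so $q$ is Witt-equivalent to the rank-$4$ Pfister form $\langle\langle a, b\rangle\rangle$, forcing $\dim q \leq 4$.

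For the lower bound, it suffices to exhibit a non-zero class in $\Br(\cM(S)^G)_0[2]$, since its norm form $\langle 1, -a, -b, ab\rangle$ is then anisotropic of rank $4$ and hyperbolic at every real closure. By Lemma \ref{BrauerG}, this amounts to showing that $\colim_a H^2_G(S \setminus \{a=0\}, \Z/2(1))_0$ is non-zero, where $a$ ranges over non-zerodivisors of $\cO(S)^G$. This can be established by a direct $G$-equivariant topological construction on a $G$-resolution of singularities of $S$: one produces a $G$-invariant divisor whose associated Gysin class, after suitable choice of $a$, is non-zero in $H^2_G$ but restricts trivially at every $G$-fixed point.

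The main obstacle is the upper bound, specifically the clean formulation and invocation of the quadratic form criterion that combines the virtual cohomological dimension hypothesis with the period-index hypothesis. Each ingredient --- Milnor's conjecture, Merkurjev's theorem on the $2$-torsion of the Brauer group, the Arason--Pfister Hauptsatz, and Scheiderer's injectivity results for higher Galois cohomology into products over real closures --- is well established, but the combination into the statement $u(F) \leq 4$ is the key nontrivial step; the lower bound is then straightforward given the cohomological machinery of Section \ref{secpi}.
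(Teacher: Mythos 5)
Your proposal is correct and follows essentially the same strategy as the paper's proof. The paper establishes the upper bound $u(\cM(S)^G)\leq 4$ by invoking \cite[Theorems 3.4 and 4.4]{CTOP} (according as $\cM(S)^G$ does or does not admit an ordering), replacing the period-index input of \cite{CTOP} by Theorem~\ref{thpi} and supplying the cohomological-dimension hypothesis from Theorem~\ref{thcohodimR}; you have unpacked precisely the quadratic-form machinery (Pfister's local-global principle, Merkurjev, Arason--Pfister, the Milnor conjecture, and Scheiderer's $I^3_t=0$ for $\mathrm{vcd}\leq 2$) that those CTOP arguments encapsulate, so this is the same route in greater detail rather than a different one. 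For the lower bound the paper cites the argument of \cite[\S 6.4]{pi}, which is the same idea you sketch: produce a nonzero class in $\Br(\cM(S)^G)_0[2]$ and take its quaternion norm form. One small imprecision: you cite Theorem~\ref{thcohodim} for $\cd(\cM(S))=2$, but when $S=T\sqcup T^{\sigma}$ the ring $\cM(S)$ is not a field; what is needed (and what the paper cites via Theorem~\ref{thcohodimR}) is $\cd(\cM(S)^G[\sqrt{-1}])\leq 2$, which in the disconnected case follows from Theorem~\ref{thcohodim} applied to the component $T$. This does not affect the validity of the argument.
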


\begin{proof}
We only briefly explain why this follows from our main theorems and from arguments that appear in the literature.

If $\cM(S)^G$ admits no field orderings (\resp admits a field ordering), then $\cM(S)^G$ (\resp $\cM(S)^G[\sqrt{-1}]$) has cohomological dimension $2$ by Theorem \ref{thcohodimR},  so the inequality $u(\cM(S)^G)\leq 4$ follows from the arguments of \cite[Theorem 3.4]{CTOP} (\resp of \cite[Theorem 4.4]{CTOP}) replacing the use of \cite[Theorem 2.1]{CTOP} (\resp of \cite[Theorem 4.1]{CTOP}) with an application of Theorem \ref{thpi}.

As for the easier inequality $u(\cM(S)^G)\geq 4$, it can be proven by adapting the arguments of the last paragraph of \cite[\S 6.4, Proof of Theorem 0.12]{pi}.
\end{proof}

\section{Applications to Serre's conjecture II}
\label{secSerre}

\begin{thm}
\label{thab}
Let $S$ be a connected normal Stein surface.  Then the maximal abelian extension $\cM(S)^{\ab}$ of $\cM(S)$ has cohomological dimension $1$.
\end{thm}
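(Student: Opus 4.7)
The plan is to deduce $\cd(\cM(S)^{\ab}) \leq 1$ from Brauer vanishing via \cite[II.4.2, Proposition~11]{CG} and the Kummer sequence (applicable as $\mu_\infty \subset \C \subset \cM(S)^{\ab}$); the inequality $\cd \geq 1$ is immediate since $\cM(S)^{\ab}$ is not separably closed. Concretely, for every finite separable extension $L/\cM(S)^{\ab}$, it suffices to show $\Br(L) = 0$. Writing $L = M \cdot \cM(S)^{\ab}$ for some finite $M/\cM(S)$ and passing to the Galois closure of $M/\cM(S)$, I may assume (after the standard restriction-corestriction reduction \`a la \cite[Theorem~2.2]{CTOP}, performed prime-by-prime on the $\ell$-primary parts of Brauer classes) that $M/\cM(S)$ is Galois; a fixed class $\eta \in \Br(L)$ of period $m$ then descends to a class $\eta_0 \in \Br(L')$ for $L' := M \cdot F_0$, where $F_0 \subset \cM(S)^{\ab}$ is a suitable finite abelian extension of $\cM(S)$.

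By \eqref{eqext}, $L'$ is the function field $\cM(\hat{S})$ of an analytic covering $\hat{S} \to S$ of connected normal Stein surfaces, and $L'/\cM(S)$ is Galois with some group $\Gamma$. I would represent $\eta_0$ via Corollary \ref{BrauerC} by a class $\alpha \in H^2(\hat{S} \setminus \hat{S}', \Z/m)$ on the complement of a $\Gamma$-invariant divisor $\hat{S}' \subset \hat{S}$, and apply Proposition \ref{killramab} to produce $f \in \cM(S)^*$, an alteration $p \colon T \to \hat{S}$ with $T$ nonsingular and $\cM(T) = L'[f^{1/N}]$ for $N := m|\Gamma|$, and a class $\beta \in H^2(T, \Z/m)$ restricting to $p^*\alpha$ on $T \setminus p^{-1}(\hat{S}')$. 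Corollary \ref{corGLefschetz11} then kills $\beta$ away from some divisor of $T$; combined with Corollary \ref{BrauerC}, applied after Stein factorization to a Stein surface with function field $\cM(T)$, this gives $\eta_0 = 0$ in $\Br(\cM(T))$.

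The decisive final observation is that $\cM(T) = M \cdot F_0[f^{1/N}]$ sits inside $L$: since $\mu_N \subset \C \subset \cM(S)$ and $f \in \cM(S)^*$, the extension $\cM(S)[f^{1/N}]/\cM(S)$ is Kummer, hence abelian, so $F_0[f^{1/N}] = F_0 \cdot \cM(S)[f^{1/N}]$ is a compositum of finite abelian extensions of $\cM(S)$ and itself abelian over $\cM(S)$; therefore $F_0[f^{1/N}] \subset \cM(S)^{\ab}$ and $\cM(T) \subset M \cdot \cM(S)^{\ab} = L$. The vanishing of $\eta_0$ in $\Br(\cM(T))$ then forces $\eta = 0$ in $\Br(L)$. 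The main conceptual point --- and the feature inherited from \cite[Theorem~2.2]{CTOP} --- is that Proposition \ref{killramab} is designed so that the Kummer-theoretic ramification-killing element $f$ lives in $\cM(S)^*$ rather than only $\cM(\hat{S})^*$, which is precisely what keeps $\cM(T)$ inside the abelian closure of $\cM(S)$; the remaining difficulty is the reduction from arbitrary finite separable $L/\cM(S)^{\ab}$ to the Galois setting, which follows the strategy of \cite{CTOP} in the henselian local case.
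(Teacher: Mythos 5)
Your argument is essentially the paper's proof: the same reduction to a finite Galois extension of $\cM(S)$ via the last paragraph of the proof of \cite[Theorem~2.2]{CTOP}, the same representation of the Brauer class by singular cohomology on a Zariski-open subset of the associated analytic covering (Corollary~\ref{BrauerC}), the same ramification-killing via Proposition~\ref{killramab} with the crucial feature that $f\in\cM(S)^*$ keeps the splitting field $L[f^{1/N}]$ inside the abelian closure, and the same final vanishing via Corollary~\ref{corGLefschetz11}. One small slip worth correcting: the criterion relating $\cd\leq 1$ to the vanishing of Brauer groups of all finite extensions is \cite[II.3.1, Proposition~5]{CG}, not \cite[II.4.2, Proposition~11]{CG} --- the latter is the Tsen-type transcendence-degree bound and would not apply to the non-finitely-generated field $\cM(S)^{\ab}$.
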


\begin{proof}
Set $K:=\cM(S)$. We work in a fixed algebraic closure $\oK$ of $K$.
By \cite[II.3.1, Proposition 5]{CG}, it suffices to prove that $\Br(F)=0$ for any subfield~$F$ of $\oK$ that is a finite extension of $K^{\ab}$.  By the last paragraph of the proof of \cite[Theorem 2.2]{CTOP} (or by the more elementary argument explained in \cite[Remark~2.2.1]{CTOP}),  we may assume that $F$ is Galois over $K$.

Fix $\eta\in\Br(F)$. Then there exists a subfield $L\subset F$ which is a finite Galois extension of $K$ and a class $\eta_L\in \Br(L)$ such that $\eta_L$ induces~$\eta$ by restriction.  Let~${\pi:\whS\to S}$ be the analytic covering of connected normal Stein surfaces corresponding to the field extension $K\subset L$ in (\ref{eqext}).  The Galois group $\Gamma:=\Gal(L/K)$ acts on $\whS$ by functoriality of (\ref{eqext}).
Define $m:=\per(\eta_L)$ and $N:=m|\Gamma|$.

Let $a\in\cO(\whS)$ be a non\-zero\-di\-vi\-sor and let $\alpha\in H^2(\whS\setminus\{a=0\},\Z/m)$ be a class inducing $\eta_L$ in (\ref{Brauercalcul}).  After replacing $a$ with $\prod_{\gamma\in\Gamma}\gamma^*a$, we may assume that the divisor ${\whS':=\{a=0\}}$ is $\Gamma$-invariant.
By Proposition \ref{killramab}, there exist~${f\in \cM(S)^*}$, an alteration $p:T\to\whS$ with~${\cM(T)=\cM(\whS)[f^{\frac{1}{N}}]}$,  and a class~${\beta\in H^2(T,\Z/m)}$ such that ${p^*\alpha=\beta|_{T\setminus \{a=0\}}}$ in~$H^2(T\setminus \{a=0\}),\Z/m)$.

By Corollary \ref{corGLefschetz11}, the class $\beta$ vanishes in restriction to the complement of some nowhere dense closed analytic subset of $T$. It follows that there exists a non\-zero\-di\-vi\-sor $b\in\cO(T)$ such that $p^*\alpha|_{T\setminus\{ab=0\}}$ vanishes in $H^2(T\setminus\{ab=0\},\Z/m)$.  By Corollary \ref{BrauerC}, the image of $\eta_L$ in $\Br(\cM(T))=\Br(L[f^{\frac{1}{N}}])$ vanishes.  As~$L[f^{\frac{1}{N}}]\subset F$, it follows that $\eta=0$, as wanted.
\end{proof}

\begin{thm}
\label{thSerre1}
Let $S$ be a connected normal Stein surface. Let $H$ be a simply connected semisimple algebraic group over $\cM(S)$. Then $H^1(\cM(S),H)=0$.
\end{thm}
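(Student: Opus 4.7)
The plan is to combine three properties of $F := \cM(S)$ established earlier in the paper --- namely $\cd(F) = 2$ (Theorem \ref{thcohodim}), that period equals index on $\Br(F')$ for every finite extension $F'/F$ (Theorem \ref{thpiC}), and that $\cd(F^{\ab}) \leq 1$ (Theorem \ref{thab}) --- with the existing literature on Serre's conjecture II, treated type by type. For this to make sense, I first need to observe that any finite extension $F'/F$ is of the form $\cM(T)$ for some connected normal Stein surface $T$, obtained as an analytic covering of $S$ via the equivalence \eqref{eqext}; here one uses that a finite holomorphic map preserves the Stein property and the dimension, and that $T$ must be connected because $F'$ is a field. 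Consequently, Theorems \ref{thcohodim}, \ref{thpiC} and \ref{thab} all transfer uniformly to finite extensions of $F$.

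After reducing to the case where $H$ is absolutely almost simple --- by writing $H$ as the Weil restriction from a finite separable extension $F'/F$ of a group $H'$ and using $H^1(F,H) = H^1(F', H')$ --- I would then treat each Dynkin type separately. For classical types (inner and outer forms of $A_n$, and $B_n$, $C_n$, $D_n$ other than triality), Serre's conjecture II reduces by the work of Bayer--Parimala \cite{BP} to the period-index property and the bound $u(F') \leq 4$ on the $u$-invariant of all finite extensions of $F$, both of which are available (see \S\ref{paru} for the $u$-invariant). For the exceptional types $G_2, F_4$, and the trialitarian forms $^3\!D_4, {}^6\!D_4$, the result follows from the arguments of Chernousov and Gille \cite{Chernousov, Gille} using the cohomological dimension hypotheses together with the vanishing of Brauer classes in abelian extensions of bounded degree. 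For the remaining types $E_6, E_7, E_8$, one applies Gille's theorem for $E_6, E_7$ and Chernousov's theorem for $E_8$, both of which require precisely the three properties listed above (the Rost invariant argument for $E_8$ in particular relies on $\cd(F^{\ab}) \leq 1$).

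The main obstacle is that several of the cited proofs of Serre's conjecture II are stated for function fields of algebraic surfaces (or, as in \cite{CTOP}, for fraction fields of two-dimensional Henselian local rings), and one must verify case by case that the arguments use only the three properties of $F$ isolated above. This is precisely the strategy followed in \cite{CTOP} in the local case, and the same inspection works here once the analytic field $F = \cM(S)$ is shown to satisfy the same arithmetic invariants. A secondary technical point is the transfer of the three properties to finite extensions of $F$, which is where the equivalence \eqref{eqext} plays a crucial role.
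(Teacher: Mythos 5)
Your overall strategy is the right one, and several of your preliminary observations are exactly what is needed: by the equivalence \eqref{eqext}, every finite extension of $\cM(S)$ is again the field of meromorphic functions of a connected normal Stein surface, so the three properties (cohomological dimension $2$, period equals index, cohomological dimension $\leq 1$ of the maximal abelian extension) do pass to all finite extensions; and the reduction to absolutely almost simple groups via Weil restriction and Shapiro is harmless. For the classical types your plan even overshoots: the theorem of Bayer--Fluckiger and Parimala \cite{BP} applies to any perfect field of cohomological dimension $\leq 2$, so Theorem \ref{thcohodim} alone suffices there, with no need for the period--index property or a $u$-invariant bound.

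The genuine gap is in the exceptional cases. There is no off-the-shelf ``Gille's theorem for $E_6,E_7$'' or ``Chernousov's theorem for $E_8$'' (nor one for trialitarian $D_4$) valid for an arbitrary field satisfying your three properties: the results of \cite{Chernousov, Gille} concern quasi-split groups (triviality of the kernel of the Rost invariant for quasi-split ${}^{3,6}D_4$, $E_6$, $E_7$, and quasi-split groups without $E_8$ factors), and they do not by themselves give $H^1(\cM(S),H)=0$ for general forms, let alone for $E_8$. The step you defer --- ``verify case by case that the arguments use only the three properties'' --- is precisely the nontrivial content that is missing, and it is exactly what Colliot-Th\'el\`ene, Gille and Parimala carried out axiomatically: their \cite[Th\'eor\`eme 1.2 (v)]{CTGP} is stated for fields of characteristic zero satisfying hypotheses that are verified here by Theorems \ref{thcohodim}, \ref{thpiC} and \ref{thab}, and the paper's proof of Theorem \ref{thSerre1} consists of this single citation. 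With that reference your argument collapses to the paper's; without it, the cases ${}^{3,6}D_4$, $E_6$, $E_7$, $E_8$ remain unproven in your proposal.
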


\begin{proof}
This follows from \cite[Th\'eor\`eme 1.2 (v)]{CTGP}, whose hypotheses are satisfied by Theorems \ref{thcohodim},  \ref{thpiC} and \ref{thab}.
\end{proof}

\begin{thm}
\label{thSerre2}
Let $S$ be a normal $G$-equivariant Stein surface with $S/G$ connected and $S^G$ discrete.  Let $H$ be a simply connected semisimple algebraic group over~$\cM(S)^G$. Then ${H^1(\cM(S)^G,H)=0}$.
\end{thm}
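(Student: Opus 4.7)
The strategy is the same as in the proof of Theorem \ref{thSerre1}: invoke the general reduction of Serre's conjecture II in \cite[Théorème 1.2 (v)]{CTGP}, this time applied to $K := \cM(S)^G$. That reduction expresses the vanishing $H^1(K,H) = 0$ for every simply connected semisimple group $H$ over $K$ in terms of certain cohomological inputs about $K$ and about the quadratic extension $L := K(\sqrt{-1}) = \cM(S)$.

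The inputs pertaining to $K$ itself --- namely cohomological dimension $\le 2$ and the period-index equality on $\Br(K)$ --- are supplied by Theorem \ref{thcohodimR} and Theorem \ref{corpi} respectively; both depend crucially on the standing hypothesis that $S^G$ is discrete, which places us in the (virtually-$\cd$-$2$ but) non-orderable regime. The inputs pertaining to $L = \cM(S)$ --- its cohomological dimension $\le 2$, period equals index on $\Br(L)$, and the cohomological dimension bound $\cd(L^{\ab}) \le 1$ --- are supplied by Theorems \ref{thcohodim}, \ref{thpiC}, and \ref{thab} respectively, applied to each connected component of $S$ (the assumption that $S/G$ is connected ensures that the components are either individually connected or swapped pairwise by $G$, in which case the analytic space disconnects but no new phenomena arise).

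The main point of the argument is not to prove anything genuinely new, but to verify that the $G$-equivariant Stein geometry at hand places $K = \cM(S)^G$ inside the precise hypotheses of the CTGP criterion. The potential obstacle is to match the exact statement of \cite[Théorème 1.2 (v)]{CTGP}: one must check that the cohomological dimension bound on the abelian closure is needed only for the quadratic extension $L = \cM(S)$ and not for $K^{\ab}$ directly (where no analogue of Theorem \ref{thab} is available in this paper), and that the discreteness of $S^G$ --- which guarantees via Theorem \ref{thcohodimR} that $K$ admits no field orderings --- suffices to avoid the Hasse-principle complications alluded to at the end of \S\ref{cohodimR}. With these bookkeeping points settled, the theorem follows.
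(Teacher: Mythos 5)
There is a genuine gap at exactly the point you flag and then set aside as ``bookkeeping''. The criterion \cite[Th\'eor\`eme 1.2 (v)]{CTGP}, as it is used in the proof of Theorem \ref{thSerre1}, is a statement about the base field itself: its hypotheses for $K$ include, besides $\cd(K)\leq 2$ and period equals index over the finite extensions of $K$, a bound of the form $\cd(K^{\ab})\leq 1$ on the maximal abelian extension \emph{of $K$}. For $K=\cM(S)^G$ no analogue of Theorem \ref{thab} is proved in the paper (Theorem \ref{thab} only treats $\cM(S)$ for a connected normal Stein surface), and it cannot be extracted from the complex case: since $K^{\ab}\subset \cM(S)^{\ab}=L^{\ab}$, Serre's inequality $\cd(L^{\ab})\leq\cd(K^{\ab})$ goes in the wrong direction, so $\cd(\cM(S)^{\ab})\leq 1$ gives no control on $\cd((\cM(S)^G)^{\ab})$. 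Your hope that the CTGP criterion only needs the abelian-closure condition ``for the quadratic extension $L$'' is not what that theorem says, so the direct invocation of \cite[Th\'eor\`eme 1.2 (v)]{CTGP} for $\cM(S)^G$ does not go through as written; the missing hypothesis is precisely the one the real case is designed to avoid.

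The paper's proof takes a different route that circumvents this. One first reduces to the case where $H$ is absolutely almost simple, by writing $H=\prod_i \RR_{F_i/\cM(S)^G}H_i$ (\cite[Proposition A.5.14]{CGP}) and applying Shapiro's lemma, noting via the equivalence (\ref{eqextG}) that the finite extensions $F_i$ are again fields of the form $\cM(T)^G$ with $T^G$ discrete. For $H$ absolutely almost simple and simply connected, any class in $H^1(\cM(S)^G,H)$ dies over the degree $2$ \'etale extension $\cM(S)$ by Theorem \ref{thSerre1}, and one concludes by Gille's quadratic-descent theorem \cite[Corollaire 5.5.3]{Gillebook}, which applies because $\cM(S)^G$ has cohomological dimension $2$ (Theorem \ref{thcohodimR}, using that $S^G$ is discrete). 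If you want to salvage your approach, you would either have to prove $\cd((\cM(S)^G)^{\ab})\leq 1$ (not known here), or replace the appeal to \cite{CTGP} by exactly this kind of descent from $\cM(S)$ to $\cM(S)^G$.
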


\begin{proof}
There are finite extensions $(F_i)_{i\in I}$ of~$\cM(S)^G$ and simply connected absolutely almost simple algebraic groups $H_i$ over~$F_i$ such that $H=\prod_{i\in I} \RR_{F_i/\cM(S)^G}H_i$ (see \cite[Proposition A.5.14]{CGP}). Then $H^1(\cM(S)^G,H)=\prod_{i\in I}H^1(F_i,H_i)$ by Sha\-piro's lemma \cite[Corollaire 2.10]{BoSe}.
As the $F_i$ are of the same nature as~$\cM(S)^G$ (see (\ref{eqextG})), we are reduced to the case where $H$ is absolutely almost simple. In this case,  any class in $H^1(\cM(S)^G,H)$ is trivial over the degree $2$ \'etale extension $\cM(S)$ of $\cM(S)^G$ by Theorem \ref{thSerre1}, hence is trivial by \cite[Corollaire~5.5.3]{Gillebook} (which applies because $\cM(S)^G$ has cohomological dimension $2$, see Theorem \ref{thcohodimR}).
\end{proof}

\section{Applications to Hilbert's \texorpdfstring{$17$}{17}th problem}
\label{secH17}

If $X$ is a scheme on which $2$ is invertible,  we let $\{h\}\in H^1_{\et}(X,\Z/2)$ be the image of $h\in\cO(X)^*$ by the boundary map of the exact sequence $0\to\Z/2\to \G_m\xrightarrow{2}\G_m\to 0$ of \'etale sheaves on $X$. 
The next lemma is well-known (see \cite[Lemma 1.2]{CT4}).

\begin{lem}
\label{lemsquares}
Let $F$ be a field of characteristic $\neq 2$. Fix $h\in F^*$. The following are equivalent:
\begin{enumerate}[(i)]
\item the element $h$ is a sum of $3$ squares in $F$;
\item the element $-1$ is a sum of $2$ squares in $F[\sqrt{-h}]$;
\item the quaternion algebra $(-1,-1)$ splits over $F[\sqrt{-h}]$;
\item the class $\{-1\}^2\in H^2_{\et}(\Spec(F[\sqrt{-h}]),\Z/2)$ vanishes.
\end{enumerate}
\end{lem}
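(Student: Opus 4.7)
I split the four conditions into two groups: (ii), (iii), (iv) are statements about the \'etale $F$-algebra $K := F[\sqrt{-h}]$, while (i) is a statement about $F$ itself. The plan is to establish the chain (ii) $\Leftrightarrow$ (iii) $\Leftrightarrow$ (iv) over $K$ by standard quaternion-algebra and Kummer-theoretic facts, and then to prove (i) $\Leftrightarrow$ (iii) by explicitly descending isotropy data from $K$ to $F$. The latter equivalence will be the main point; everything else is classical.

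The equivalence (iii) $\Leftrightarrow$ (iv) follows from the identification of the Brauer class of $(a,b)$ with the cup product $\{a\} \cup \{b\} \in H^2_{\et}(\Spec(K), \Z/2)$, specialised to $a = b = -1$ over $K$. For (ii) $\Leftrightarrow$ (iii), I use that the reduced norm form of $(-1,-1)$ is the diagonal form $\langle 1,1,1,1\rangle$, so $(-1,-1)$ splits over $K$ iff $\langle 1,1,1,1\rangle$ is isotropic over $K$. The implication ``$-1$ is a sum of $2$ squares $\Rightarrow$ isotropy'' is trivial; for the converse, a nontrivial relation $a^2+b^2+c^2+d^2 = 0$ gives $-(c^2+d^2)^{-1}(a^2+b^2) = 1$, and the Brahmagupta--Fibonacci identity $(a^2+b^2)(c^2+d^2) = (ac+bd)^2 + (ad-bc)^2$ exhibits $-1$ as a sum of $2$ squares in $K$ (the degenerate case $c^2+d^2 = 0$ is handled similarly).

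For (i) $\Leftrightarrow$ (iii), the direction (i) $\Rightarrow$ (iii) is immediate: if $h = a^2+b^2+c^2$ in $F$, then $a^2+b^2+c^2+(\sqrt{-h})^2 = 0$ shows $\langle 1,1,1,1\rangle$ is isotropic over $K$. The converse is the main step. Given a nontrivial isotropic vector $(p^{(1)}, \dots, p^{(4)})$ for $\langle 1,1,1,1\rangle$ over $K$, I write $p^{(i)} = u_i + v_i \sqrt{-h}$ with $u_i, v_i \in F$, and separate ``real'' and ``imaginary'' parts of $\sum (p^{(i)})^2 = 0$ to obtain
\[
|\vec u|^2 = h\,|\vec v|^2, \qquad \vec u \cdot \vec v = 0
\]
with $\vec u, \vec v \in F^4$. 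In the generic case $|\vec v|^2 \neq 0$, Euler's four-square identity applied to $\vec u$ and $\vec v$ produces $c_1, c_2, c_3, c_4 \in F$ with $c_1 = \vec u \cdot \vec v = 0$ and $|\vec u|^2 |\vec v|^2 = \sum c_i^2$; dividing through by $|\vec v|^4$ exhibits $h$ as a sum of three squares in $F$. The degenerate case $|\vec v|^2 = 0$ forces $|\vec u|^2 = 0$ nontrivially, so already $(-1,-1)$ splits over $F$; then $-1$ is a sum of two squares in $F$ by the earlier argument, and the trivial rewriting $h = y^2 - z^2$ with $y = (h+1)/2$, $z = (h-1)/2$, followed by absorbing $-1$ as $s^2 + t^2$, turns the difference into a sum of three squares in $F$. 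The main obstacle is just bookkeeping the two cases and the case $-h \in (F^*)^2$ (where $K = F$) carefully.
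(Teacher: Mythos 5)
Your proposal is correct, but it is organized around a different pivot than the paper's proof. The paper disposes of the case where $h$ is a square, then proves (i)$\Leftrightarrow$(ii) directly by explicit two-square identities in $F[\sqrt{-h}]$ (writing $-1=\tfrac{b^2+c^2}{a^2+x^2}$ for one direction, and equating coefficients of $1,x$ and using a three-square identity for the other), and simply cites Gille--Szamuely for (ii)$\Leftrightarrow$(iii)$\Leftrightarrow$(iv). You instead make (iii) the hub: you prove (ii)$\Leftrightarrow$(iii) by hand via the norm form $\langle 1,1,1,1\rangle$ and the Brahmagupta--Fibonacci identity, and you prove (i)$\Leftrightarrow$(iii) by splitting an isotropic vector over $K$ into real and imaginary parts, getting $|\vec u|^2=h|\vec v|^2$ and $\vec u\cdot\vec v=0$, and then invoking the variant of Euler's four-square identity whose first term is $(\vec u\cdot\vec v)^2$. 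In fact your argument strictly generalizes the paper's computation: specializing your identity to $\vec u=(1,a,c,0)$, $\vec v=(0,b,d,0)$ recovers exactly the formula $h=\bigl(\tfrac{b}{b^2+d^2}\bigr)^2+\bigl(\tfrac{d}{b^2+d^2}\bigr)^2+\bigl(\tfrac{ad-bc}{b^2+d^2}\bigr)^2$ appearing in the paper. What your route buys is self-containedness ((ii)$\Leftrightarrow$(iii) is not outsourced) and a symmetric treatment of the descent step; what the paper's route buys is brevity, and it avoids the quadratic-form formalism over $K$ when $K$ is not a field. One small point of care: in the paper's application $F[\sqrt{-h}]$ means the \'etale algebra $F[x]/(x^2+h)$, which is $F\times F$ when $-h$ is a square, so your degenerate cases (where you write $K=F$) should be phrased componentwise or via coefficient comparison in the free module $F\oplus Fx$, as the paper implicitly does; this is only bookkeeping and does not affect the validity of your argument.
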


\begin{proof}
If $h\in F^*$ is a square, then $-1$ is a square in $F[\sqrt{-h}]$ and all assertions are true. Assume otherwise.
Set $x:=\sqrt{-h}$.
Suppose that $h=a^2+b^2+c^2$ in $F$. Then
$$-1=\frac{b^2+c^2}{a^2+x^2}=\Big(\frac{ab+xc}{a^2+x^2}\Big)^2+\Big(\frac{ac-xb}{a^2+x^2}\Big)^2.$$
This proves (i)$\Rightarrow$(ii).
Conversely,  assume that $-1=(a+bx)^2+(c+dx)^2$ with ${a,b,c,d\in F}$.  This implies that $1+a^2+c^2=(b^2+d^2)\cdot f$ and $ab+cd=0$. Unless $b^2+d^2=0$ (in which case $-1=a^2+c^2$), one computes that
$$f=\frac{1+a^2+c^2}{b^2+d^2}=\Big(\frac{b}{b^2+d^2}\Big)^2+\Big(\frac{d}{b^2+d^2}\Big)^2+\Big(\frac{ad-bc}{b^2+d^2}\Big)^2.$$
As for the equivalences (ii)$\Leftrightarrow$(iii) and (iii)$\Leftrightarrow$(iv), see \eg \cite[Proposition 1.3.2]{GS} and \cite[Proposition 4.7.1]{GS}.
\end{proof}

\begin{thm}
\label{3squares}
Let $S$ be a normal $G$-equivariant Stein surface. Fix $h\in\cO(S)^G$. The following assertions are equivalent:
\begin{enumerate}[(i)]
\item there exists a closed discrete subset $\Sigma\subset S^G$ such that $h\geq 0$ on $S^G\setminus\Sigma$;
\item $h$ is a sum of squares in $\cM(S)^G$;
\item $h$ is a sum of $3$ squares in $\cM(S)^G$.
\end{enumerate}
\end{thm}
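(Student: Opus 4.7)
The strategy is to show (iii)$\Rightarrow$(ii)$\Rightarrow$(i)$\Rightarrow$(iii); the first is immediate from the definitions.

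For (ii)$\Rightarrow$(i): if $h = \sum f_i^2$ in $\cM(S)^G$, Artin's theorem gives $h \succeq 0$ at every ordering of $\cM(S)^G$. A nonsingular $s \in S^G$ with $h(s) < 0$ would, via Lemma \ref{lemgenerization}, generize to an ordering $\xi \in \Sper(\cM(S)^G)$ at which $h$ remains negative (the sign is preserved under specialization away from the zero locus), a contradiction. Hence $\{h<0\} \cap S^G \subset S^G \cap \mathrm{Sing}(S)$, a discrete set because $S$ is normal of dimension~$2$; its closure provides the required~$\Sigma$.

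The substantial implication is (i)$\Rightarrow$(iii). By Lemma \ref{lemsquares}, it suffices to prove $(-1,-1) = 0$ in $\Br(F)$ for $F := \cM(S)^G[\sqrt{-h}]$. Assuming $h \neq 0$, hypothesis~(i) guarantees that $F$ is a field (else $-h$ is a square and $h = 0$) with no orderings (any would restrict to an ordering of $\cM(S)^G$ where $h \succ 0$, incompatible with $(\sqrt{-h})^2 \succeq 0$). By~(\ref{eqextG}), $F = \cM(T)^G$ for $T$ a normal $G$-equivariant Stein surface with $T/G$ connected, realized as the normalization of $\{z^2 = -h\} \subset S \times \C$. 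If $T$ is disconnected, $F$ contains $\C$ and $(-1,-1)$ vanishes via $\Br(\R) \to \Br(\C) = 0$; otherwise $\Sper(F) = \varnothing$, so by Theorem~\ref{thorder2} and Lemma~\ref{lemgenerization}, $T^G$ is discrete. After a $G$-resolution of singularities, I may furthermore assume $T$ smooth.

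The crucial geometric observation is that on the open Stein subspace $U := T \setminus T^G$ the group $G$ acts freely. Hence $H^*_G(U, A)$ identifies with local-system cohomology on $U/G$ for any $G$-module $A$, and since $U/G$ inherits from the Stein surface $U$ the homotopy type of a $2$-dimensional CW-complex (the homotopy-dimension bound for Stein surfaces recalled in the introduction), $H^k(U/G, \mathcal{A}) = 0$ for $k > 2$ and any local system~$\mathcal{A}$. In particular $H^3_G(U, \Z(1)) = 0$, so the Bockstein of $(-1,-1)|_U \in H^2_G(U, \Z/2(1))$ vanishes and $(-1,-1)|_U$ lifts to a class $\tilde\beta \in H^2_G(U, \Z(1))$. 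Applying Proposition~\ref{GLefschetz11} to $\tilde\beta$ (viewing $U$ as a $G$-equivariant Stein surface of dimension~$2$) produces a $G$-invariant nowhere-dense closed analytic subset $T' \subset U$ of pure complex dimension~$1$ outside of which $\tilde\beta$---and hence $(-1,-1)|_U$---vanishes. By Remmert--Stein (using the codimension gap $\dim T^G = 0 < 1 = \dim T'$), the set $T'$ extends to an analytic $\overline{T'} \subset T$. A global section of the ideal sheaf of $T^G \cup \overline{T'}$ on the Stein surface~$T$ (Cartan~B), symmetrized by~$\sigma$, yields a non-zero-divisor $a \in \cO(T)^G$ with $\{a=0\} \supset T^G \cup \overline{T'}$. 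Then $(-1,-1)$ vanishes in $H^2_G(T \setminus \{a=0\}, \Z/2(1))$, and hence in $\Br(F)[2] = \Br(F)$ by Lemma~\ref{BrauerG}.

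The main obstacle I foresee is setting up the cohomological vanishing $H^3_G(U, \Z(1)) = 0$, which hinges on identifying $G$-equivariant cohomology of a free $G$-action with local-system cohomology of the quotient and on the $2$-dimensional CW-homotopy bound for Stein surfaces. A secondary technical point is to verify that the analytic subset $T'$ produced on $U$ extends across the discrete $T^G$ back to $T$, and that a suitable $G$-invariant equation cutting out $T^G \cup \overline{T'}$ exists; both are handled by Remmert--Stein and Cartan~B together with the strict codimension gap.
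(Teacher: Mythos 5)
Your overall architecture is the same as the paper's: adjoin $\sqrt{-h}$, reduce via Lemma~\ref{lemsquares} to killing $(-1,-1)$ over $F$, show the real locus of the associated covering $T$ is negligible, lift the mod~$2$ class to an integral class using a vanishing of $H^3_G$, kill it generically with Proposition~\ref{GLefschetz11}, and descend through the comparison theorem and Lemma~\ref{BrauerG}. But the pivotal vanishing $H^3_G(U,\Z(1))=0$ is not actually established. First, $U:=T\setminus T^G$ is not Stein when $T^G\neq\varnothing$: removing a discrete set from a Stein surface destroys Steinness and can create $3$-dimensional topology ($\C^2\setminus\{0\}$ is homotopy equivalent to $\bS^3$), so the homotopy-dimension bound you invoke does not apply to $U$. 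Moreover, on a smooth $T$ the fixed locus is empty or of real dimension $2$ by Lemma~\ref{lemrealpoints}~(i), so after your resolution step the correct statement is that the resolved surface has \emph{empty} real locus; but that resolution is itself not Stein (it contains compact exceptional curves), so Proposition~\ref{GLefschetz11} must be applied with the Stein base $T$ (as the paper does), not to ``$U$ viewed as a $G$-equivariant Stein surface'', and Lemma~\ref{BrauerG} likewise cannot be applied after you have replaced $T$ by a resolution. Second, even granting a Stein $U$ with free $G$-action, the claim that $U/G$ ``inherits'' the homotopy type of a $2$-complex is not a formal consequence of Hamm's bound for $U$: the Hochschild--Serre spectral sequence has a priori nonzero $E_2$-terms in total degree $3$ (e.g.\ $H^3(G,\Z(1))=\Z/2$), and the statement you need is exactly the equivariant Artin vanishing that the paper imports from \cite[Proposition 2.8]{Artinvanishing}. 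You yourself flag this as the main obstacle, but the justification offered rests on the false Steinness of $U$ and on an unproved descent of the CW bound to the quotient, so this is a genuine gap rather than a technical point.

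Two further issues. The case where $-h$ is a square in $\cM(S)^G$ (so $F$ is not a field) is dismissed by ``else $h=0$'', which is wrong when $S^G$ is small: take $S^G=\varnothing$ and $h=-1$; then (i) holds vacuously, $h\neq 0$, $F\simeq \cM(S)^G\times\cM(S)^G$, and your fallback ``$T$ disconnected $\Rightarrow F\supset\C$'' fails since $G$ then preserves each copy of $S$ and $\cM(S)^G$ need not contain $\sqrt{-1}$; in that case the assertion to prove is precisely that $-1$ is a sum of two squares in $\cM(S)^G$, which still requires the machinery. The paper avoids this by working throughout with the \'etale algebra and its covering via (\ref{eqextG}), never needing $F$ to be a field. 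Similarly, your claim that every ordering of $\cM(S)^G$ makes $h$ positive needs care because (i) only controls $h$ off the discrete set $\Sigma$; the paper sidesteps orderings entirely via the pointwise argument $T^G\subset\{h\circ p=0\}\cup p^{-1}(\Sigma)$ and Lemma~\ref{lemrealpoints}~(ii), which directly gives that the resolution has no real points. (Your (ii)$\Rightarrow$(i) via positivity at orderings and Lemma~\ref{lemgenerization} is correct, and a pleasant variant of the paper's density argument; the slip ``$\Br(F)[2]=\Br(F)$'' is harmless.)
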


\begin{proof}
We assume, as we may, that $S/G$ is connected.

The implication (iii)$\Rightarrow$(ii) is obvious.  To prove (ii)$\Rightarrow$(i), we claim that one can take $\Sigma:=\{x\in S^G\mid x\textrm{ is a singular point of }S\}$ (it is discrete since $S$ is normal of dimension $2$). To prove the claim,  write $h=\sum_i h_i^2$  in~$\cM(S)^G$. Let $S'\subset S$ 
be the nowhere dense closed analytic subset along which one of the $h_i$ has poles. Then~$h\geq 0$ on~${S^G\setminus (S')^G}$.  If $x\in S^G\setminus\Sigma$, then $x$ belongs to the closure of $S^G\setminus (S')^G$ (by Lemma~\ref{lemrealpoints} applied to small neighborhoods of $x$), and hence~$h(x)\geq 0$.

We now prove (i)$\Rightarrow$(iii).  We may suppose that $h\neq 0$.  Let $p:T\to S$ be the  $G$-equivariant analytic covering of normal Stein surfaces associated with the \'etale $\cM(S)^G$-algebra $\cM(S)^G[x]/\langle x^2+h\rangle$ (see~(\ref{eqextG})).  
Let $\nu:\wT\to T$ be a $G$-equivariant resolution of singularities (see Proposition \ref{ressing}).
The element $x\in\cM(T)^G$ belongs to~$\widehat{\cO}_T(T)^G$ (in the notation of \cite[6, \S 4.1]{GRCoherent}) and hence to~$\cO(T)^G$ because $T$ is normal. That $h\geq 0$ on $T^G\setminus p^{-1}(\Sigma)$ and $h=-x^2$ implies that $T^G$ is included in the nowhere dense analytic subset $\{h=0\}\cup p^{-1}(\Sigma)$ of $T$, and hence that $\wT^G=\varnothing$ (see Lemma~\ref{lemrealpoints} (ii)).
Theorem~\ref{compG} yields an isomorphism
\begin{equation}
\label{applicompa}
\underset{a}{\colim }\, H^2_{\et}(\Spec(\cO(T)^G[\tfrac{1}{a}]),\Z/2)\isoto\underset{a}{\colim }\,H^2_G(T\setminus \{a=0\},\Z/2),
\end{equation}
where $a\in\cO(T)^G$ runs over all non\-zero\-di\-vi\-sors.  
Consider the image $\alpha$ of the class $\{-1\}^2\in H^2_{\et}(\Spec(\cO(T)^G),\Z/2)$ in $H^2_G(T,\Z/2)$. Define $\beta:=\nu^*\alpha\in H^2_G(\wT,\Z/2)$. 
As $\wT^G=\varnothing$, one has~$H^3_G(\wT,\Z(1))=0$ by \cite[Proposition 2.8]{Artinvanishing}.  It follows that $\beta$ lifts to a class in $H^2_G(\wT,\Z(1))$ and hence, by Proposition \ref{GLefschetz11}, that it vanishes in the complement of a $G$\nobreakdash-invariant nowhere dense closed analytic subset of $\wT$. 
 We deduce that $\alpha$ vanishes in ${H^2_G(T\setminus \{a=0\},\Z/2)}$ for some non\-zero\-di\-vi\-sor $a\in\cO(T)^G$.
 It follows from~(\ref{applicompa}) that $\{-1\}^2$ vanishes in $H^2_{\et}(\Spec(\cO(T)^G[\tfrac{1}{a}]),\Z/2)$ for some (possibly different) non\-zero\-di\-vi\-sor $a\in\cO(T)^G$, hence a fortiori in~$H^2_{\et}(\Spec(\cM(T)^G),\Z/2)$.  To conclude, apply Lemma \ref{lemsquares} (iv)$\Rightarrow$(i).
\end{proof}

The case of the function $h=-1$ is of particular interest.

\begin{cor}
Let $S$ be a normal $G$-equivariant Stein surface. The following assertions are equivalent:
\begin{enumerate}[(i)]
\item the subset $S^G$ of $S$ is discrete;
\item $-1$ is a sum of squares in $\cM(S)^G$;
\item $-1$ is a sum of $2$ squares in $\cM(S)^G$.
\end{enumerate}
\end{cor}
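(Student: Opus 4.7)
The corollary is a direct consequence of Theorem \ref{3squares} (applied with $h=-1$) combined with Lemma \ref{lemsquares} (also with $h=-1$). The plan is to observe that the three conditions on $\cM(S)^G$ simply specialize the three conditions of Theorem \ref{3squares}, using the numerical coincidence that $F[\sqrt{-h}] = F[\sqrt{1}] = F$ when $h=-1$, which upgrades ``sum of $3$ squares'' to ``sum of $2$ squares.''

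First, the implication (iii)$\Rightarrow$(ii) is trivial. For (ii)$\Rightarrow$(i), I would apply the implication (ii)$\Rightarrow$(i) of Theorem \ref{3squares} with $h=-1$: there exists a closed discrete $\Sigma \subset S^G$ with $-1 \geq 0$ on $S^G \setminus \Sigma$. Since the inequality $-1 \geq 0$ never holds in $\R$, this forces $S^G \setminus \Sigma = \varnothing$, so $S^G \subset \Sigma$, and hence $S^G$ is discrete.

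For (i)$\Rightarrow$(iii), I would first note that $S^G$ is automatically closed in $S$ (the fixed locus of the continuous involution $\sigma$ on the Hausdorff space $S$), so $S^G$ is itself a closed discrete subset of $S$. Taking $\Sigma := S^G$ in condition (i) of Theorem \ref{3squares} applied with $h=-1$, the requirement ``$h \geq 0$ on $S^G \setminus \Sigma$'' is vacuous, so the implication (i)$\Rightarrow$(iii) of Theorem \ref{3squares} yields that $-1$ is a sum of $3$ squares in $\cM(S)^G$. Finally, applying Lemma \ref{lemsquares} with $h=-1$ and $F := \cM(S)^G$ — and observing that $F[\sqrt{-h}] = F[\sqrt{1}] = F$ — the equivalence (i)$\Leftrightarrow$(ii) of that lemma says precisely that $-1$ is a sum of $3$ squares in $F$ if and only if $-1$ is a sum of $2$ squares in $F$. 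This gives (iii).

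There is no real obstacle here: the deep content is already packaged into Theorem \ref{3squares} (which itself rests on the generic comparison theorem and the $G$-equivariant degree-$2$ cohomology analysis), while the passage from $3$ to $2$ squares is essentially tautological once one specializes Lemma \ref{lemsquares} to $h=-1$.
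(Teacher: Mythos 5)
Your proof is correct and follows essentially the same route as the paper: apply Theorem \ref{3squares} with $h=-1$, and pass from $3$ squares to $2$ squares via Lemma \ref{lemsquares} specialized to $h=-1$ (so that $F[\sqrt{-h}]=F$), which is exactly one of the two options the paper invokes (the other being Pfister's Satz 4).
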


\begin{proof}
Use Theorem \ref{3squares} and note that if $-1$ is a sum of $3$ squares in a field $F$, then it is a sum of $2$ squares in $F$ (apply \cite[Satz 4]{Pfister} or use Lemma \ref{lemsquares} (i)$\Rightarrow$(ii)).
\end{proof}

Arguing as in the proof of \cite[Theorem 6.5]{Stein}, we obtain the following consequence in real-analytic geometry. We follow the conventions of \cite{GMT} and refer to \cite[II, Definition 1.4]{GMT} for the definitions of \textit{real-analytic spaces} and \textit{real-analytic varieties}.

\begin{thm}
\label{corsquares}
Let $M$ be a normal real-analytic variety of pure dimension $2$ and fix ${h\in\cO(M)}$. The following assertions are equivalent:
\begin{enumerate}[(i)]
\item $h\geq 0$ on $M$;
\item $h$ is a sum of squares in $\cM(M)$;
\item $h$ is a sum of $3$ squares in $\cM(M)$.
\end{enumerate}
\end{thm}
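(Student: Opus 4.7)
The plan is to reduce Theorem \ref{corsquares} to its Stein-analytic counterpart Theorem \ref{3squares} by passing to a $G$-equivariant complexification of $M$, following the strategy of the proof of \cite[Theorem 6.5]{Stein}.

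First I would dispose of the two easy implications. The implication (iii)$\Rightarrow$(ii) is immediate. For (ii)$\Rightarrow$(i), writing $h=\sum_i h_i^2$ in $\cM(M)$ with each $h_i$ holomorphic off a nowhere dense closed analytic subset $M'\subset M$, one has $h\geq 0$ on $M\setminus M'$; continuity of $h\in\cO(M)$ and density of $M\setminus M'$ (in particular at normal singular points, via an analog of Lemma \ref{lemrealpoints}~(ii) in the real-analytic setting) then give $h\geq 0$ on $M$.

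For the main implication (i)$\Rightarrow$(iii), I would apply \cite[III, Theorem 3.6]{GMT} to obtain a $G$-equivariant Stein complexification $S_0$ of $M$; after passing to normalization (which is Stein and preserves the real locus, as $M$ is normal hence $S_0$ reduced) and restricting to a $G$-invariant open neighborhood of $M$, I arrive at a normal $G$-equivariant Stein surface $S$ with $M$ embedded as an open subset of $S^G$ and with $\Sigma:=S^G\setminus M$ a closed discrete subset of $S$; the function $h$ extends to some $\tilde h\in\cO(S)^G$ by construction. The hypothesis~(i) of Theorem \ref{3squares} is then satisfied by~$\tilde h$, yielding a decomposition $\tilde h=\sum_{j=1}^3 f_j^2$ with $f_j\in\cM(S)^G$.

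To descend this identity to $M$, I would produce a ring morphism $\cM(S)^G\to\cM(M)$: any $f\in\cM(S)^G$ writes $f=a/b$ with $a,b\in\cO(S)^G$ and $b$ a non-zero-divisor, and since $M$ is totally real of real dimension equal to the complex dimension of $S$ at smooth points, a nonzero $b\in\cO(S)$ cannot vanish identically on $M$; hence $b|_M\in\cO(M)$ is a non-zero-divisor and $f|_M:=a|_M/b|_M\in\cM(M)$ is well-defined, giving the desired morphism sending $\tilde h\mapsto h$. Applying it to the above decomposition finishes the proof. The principal obstacle is the complexification step: arranging the normal $G$-equivariant Stein surface $S$ so that $S^G\setminus M$ is discrete requires careful control near the singular locus of $M$, where coherence (which follows from normality) and the pure dimensionality hypothesis ensure that no extraneous higher-dimensional real locus appears in a neighborhood of $M$.
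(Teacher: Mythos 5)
Your route is the paper's route: complexify $M$ to a normal $G$-equivariant Stein surface $S$, extend $h$ to a $G$-invariant holomorphic function, apply Theorem \ref{3squares}, and restrict the resulting three-squares identity back to $M$. The easy implications and the restriction morphism $\cM(S)^G\to\cM(M)$ (denominators do not vanish identically on the maximal-dimensional totally real set $M$, by the identity-principle argument of Lemma \ref{lemrealpoints}) are exactly how the paper concludes, following \cite[Theorem 6.5]{Stein}.

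The substantive step is the one you yourself call the principal obstacle, and there your justification does not work as written. You normalize a Stein complexification $S_0$ and assert that this ``preserves the real locus, as $M$ is normal hence $S_0$ reduced'': reducedness of $S_0$ is beside the point. If $S_0$ were non-normal at points of $M$, the normalization $S\to S_0$ need not admit a section over $M$, so $M$ would not embed in $S^G$ and $h$ would have no reason to extend to an element of $\cO(S)^G$; your appeal to coherence and pure dimensionality addresses a different danger (extraneous components of the fixed locus), not this one. What makes the claim true is that $S_0$ is automatically normal along $M$: the local ring of a complexification at $x\in M$ is $\cO_{M,x}\otimes_\R\C$, which is \'etale over the normal ring $\cO_{M,x}$ and hence normal, so the normalization is an isomorphism over a neighborhood of $M$ and in fact no discrete exceptional set $\Sigma$ arises at all. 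The paper sidesteps this by quoting \cite[IV, Proposition 3.8]{GMT} (normality implies coherence, which you also need merely for a complexification to exist via \cite[III, Theorem 3.6]{GMT}) together with \cite[III, Theorem 3.10]{GMT}, which directly yields a normal $G$-equivariant Stein surface $S$ with $S^G\isoto M$ and a basis of $G$-invariant Stein neighborhoods; the extension of $h$ is then provided by \cite[III, Proposition 1.8]{GMT} after shrinking within that basis (so that Steinness is retained, a point your ``restrict to a $G$-invariant open neighborhood'' and ``extends by construction'' leave implicit) and is made $G$-invariant by replacing $\tilde h$ with $z\mapsto(\tilde h(z)+\overline{\tilde h(\sigma(z))})/2$. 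With the normality-along-$M$ step repaired as above (or replaced by the citations the paper uses), your argument goes through.
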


\begin{proof}
By \cite[IV, Proposition 3.8]{GMT}, 
the real-analytic variety $M$ is coherent, hence a real-analytic space.
By \cite[III, Theorems 3.6 and~3.10]{GMT}, there exist a normal $G$-equivariant Stein surface $S$, and an isomorphism $M\isoto S^G$ of real-analytic spaces such that~$M$ admits a basis of $G$-invariant Stein open neighborhoods in $S$.
By \cite[III, Proposition 1.8]{GMT}, one may replace $S$ with one such neighborhood in such a way that~$h$ extends to a holomorphic map $\th:S\to \C$. After replacing $\th$ with the function $z\mapsto(\th(z)+\overline{\th\circ \sigma(z)})/2$, we may assume that it is $G$-equivariant.  One may now apply Theorem \ref{3squares} to the function $\th$ to conclude.
\end{proof}

\bibliographystyle{myamsalpha}
\bibliography{Steinsurface}

\providecommand{\bysame}{\leavevmode\hbox to3em{\hrulefill}\thinspace}
\providecommand{\MR}{\relax\ifhmode\unskip\space\fi MR }
\providecommand{\MRhref}[2]{%
  \href{http://www.ams.org/mathscinet-getitem?mr=#1}{#2}
}
\providecommand{\href}[2]{#2}
\begin{thebibliography}{SGA$4_{\textrm{III}}$}

\bibitem[AB90]{AB}
C.~Andradas and E.~Becker, \emph{A note on the real spectrum of analytic
  functions on an analytic manifold of dimension one}, Real analytic and
  algebraic geometry ({T}rento, 1988), Lecture Notes in Math., vol. 1420,
  Springer, Berlin, 1990, pp.~1--21.

\bibitem[ABFR05]{ABFR1}
F.~Acquistapace, F.~Broglia, J.~F. Fernando and J.~M. Ruiz, \emph{On the
  {P}ythagoras numbers of real analytic surfaces}, Ann. Sci. {\'E}NS
  \textbf{38} (2005), no.~5, 751--772.

\bibitem[ADCR03]{ADR}
C.~Andradas, A.~D\'{\i}az-Cano and J.~M. Ruiz, \emph{The {A}rtin--{L}ang
  property for normal real analytic surfaces}, J. Reine Angew. Math.
  \textbf{556} (2003), 99--111.

\bibitem[Alb32]{Albert}
A.~A. Albert, \emph{A construction of non-cyclic normal division algebras},
  Bull. Amer. Math. Soc. \textbf{38} (1932), no.~6, 449--456.

\bibitem[Art27]{Artin}
E.~Artin, \emph{\"{U}ber die {Z}erlegung definiter {F}unktionen in {Q}uadrate},
  Abh. Math. Sem. Univ. Hamburg \textbf{5} (1927), no.~1, 100--115.

\bibitem[AS27]{AS}
E.~Artin and O.~Schreier, \emph{Algebraische {K}onstruktion reeller
  {K}\"{o}rper}, Abh. Math. Sem. Univ. Hamburg \textbf{5} (1927), no.~1,
  85--99.

\bibitem[BCR98]{BCR}
J.~Bochnak, M.~Coste and M.-F. Roy, \emph{Real algebraic geometry}, Ergeb.
  Math. Grenzgeb. (3), vol.~36, Springer-Verlag, Berlin, 1998.

\bibitem[BDLvdD79]{BDLvdD}
J.~Becker, J.~Denef, L.~Lipshitz and L.~van~den Dries, \emph{Ultraproducts and
  approximations in local rings. {I}}, Invent. math. \textbf{51} (1979), no.~2,
  189--203.

\bibitem[Ben19]{pi}
O.~Benoist, \emph{The period-index problem for real surfaces}, Publ. Math.
  IH{\'E}S \textbf{130} (2019), no.~1, 63--110.

\bibitem[Ben20]{Henselian}
\bysame, \emph{Sums of squares in function fields over {H}enselian local
  fields}, Math. Ann. \textbf{376} (2020), no.~1-2, 683--692.

\bibitem[Ben23]{Stein}
\bysame, \emph{{\'E}tale cohomology of algebraic varieties over {S}tein
  compacta}, preprint 2023,
  \href{http://arxiv.org/abs/2305.06054}{arXiv:2305.06054}.

\bibitem[Ben24a]{Artinvanishing}
\bysame, \emph{On the {A}rtin vanishing theorem for {S}tein spaces}, preprint
  2024, \href{http://arxiv.org/abs/2410.04291}{arXiv:2410.04291}.

\bibitem[Ben24b]{Steinalgebra}
\bysame, \emph{S{}tein spaces and {S}tein algebras}, preprint 2024,
  \href{http://arxiv.org/abs/2410.05521}{arXiv:2410.05521}.

\bibitem[BFP95]{BP}
E.~Bayer-Fluckiger and R.~Parimala, \emph{Galois cohomology of the classical
  groups over fields of cohomological dimension {$\leq 2$}}, Invent. math.
  \textbf{122} (1995), no.~2, 195--229.

\bibitem[BFP98]{BP2}
\bysame, \emph{Classical groups and the {H}asse principle}, Ann. of Math. (2)
  \textbf{147} (1998), no.~3, 651--693.

\bibitem[Bin76]{Bingener}
J.~Bingener, \emph{Schemata \"{u}ber {S}teinschen {A}lgebren}, Schr. Math.
  Inst. Univ. M\"{u}nster (2), vol.~10, 1976.

\bibitem[Bou70]{BourbAlgebre}
N.~Bourbaki, \emph{{\'E}l\'ements de math\'ematique. {A}lg\`ebre. {C}hapitres 1
  \`a{} 3}, Hermann, Paris, 1970.

\bibitem[BS64]{BoSe}
A.~Borel and J.-P. Serre, \emph{Th\'eor\`emes de finitude en cohomologie
  galoisienne}, Comment. Math. Helv. \textbf{39} (1964), 111--164.

\bibitem[BS76]{BS}
C.~B\u{a}nic\u{a} and O.~St\u{a}n\u{a}\c{s}il\u{a}, \emph{Algebraic methods in
  the global theory of complex spaces}, Editura Academiei, Bucharest; John
  Wiley \& Sons, London-New York-Sydney, 1976.

\bibitem[BSY22]{Chromatic}
R.~Burklund, T.~M. Schlank and A.~Yuan, \emph{The chromatic {N}ullstellensatz},
  preprint 2022, \href{http://arxiv.org/abs/2207.09929}{arXiv:2207.09929}, to
  appear in Ann. Math.

\bibitem[BW20]{BW1}
O.~Benoist and O.~Wittenberg, \emph{On the integral {H}odge conjecture for real
  varieties, {I}}, Invent. math. \textbf{222} (2020), no.~1, 1--77.

\bibitem[BW21]{Tight}
\bysame, \emph{The tight approximation property}, J. Reine Angew. Math.
  \textbf{776} (2021), 151--200.

\bibitem[Cas94]{Castilla}
A.~Castilla, \emph{Artin--{L}ang property for analytic manifolds of dimension
  two}, Math. Z. \textbf{217} (1994), no.~1, 5--14.

\bibitem[CGP15]{CGP}
B.~Conrad, O.~Gabber and G.~Prasad, \emph{Pseudo-reductive groups}, second ed.,
  New Mathematical Monographs, vol.~26, Cambridge University Press, Cambridge,
  2015.

\bibitem[Che03]{Chernousov}
V.~Chernousov, \emph{The kernel of the {R}ost invariant, {S}erre's conjecture
  {II} and the {H}asse principle for quasi-split groups
  {${}^{3,6}D_4,E_6,E_7$}}, Math. Ann. \textbf{326} (2003), no.~2, 297--330.

\bibitem[Con06]{Conrad}
B.~Conrad, \emph{Relative ampleness in rigid geometry}, Ann. Inst. Fourier
  \textbf{56} (2006), no.~4, 1049--1126.

\bibitem[CT93]{CT4}
J.-L. Colliot-Th\'el\`ene, \emph{The {N}oether--{L}efschetz theorem and sums of
  {$4$} squares in the rational function field {$\mathbf{R}(x,y)$}}, Compositio
  Math. \textbf{86} (1993), no.~2, 235--243.

\bibitem[CT96]{CTreal}
\bysame, \emph{Groupes lin\'eaires sur les corps de fonctions de courbes
  r\'eelles}, J. Reine Angew. Math. \textbf{474} (1996), 139--167.

\bibitem[CT06]{BourCT}
\bysame, \emph{Alg\`ebres simples centrales sur les corps de fonctions de deux
  variables (d'apr\`es {A}. {J}. de {J}ong)}, Ast\'erisque (2006), no.~307,
  Exp. No. 949, ix, 379--413, S{\'e}minaire Bourbaki. Vol. 2004/2005.

\bibitem[CTGP04]{CTGP}
J.-L. Colliot-Th\'el\`ene, P.~Gille and R.~Parimala, \emph{Arithmetic of linear
  algebraic groups over 2-dimensional geometric fields}, Duke Math. J.
  \textbf{121} (2004), no.~2, 285--341.

\bibitem[CTOP02]{CTOP}
J.-L. Colliot-Th\'el\`ene, M.~Ojanguren and R.~Parimala, \emph{Quadratic forms
  over fraction fields of two-dimensional {H}enselian rings and {B}rauer groups
  of related schemes}, Algebra, arithmetic and geometry, {P}art {I}, {II}
  ({M}umbai, 2000), Tata Inst. Fund. Res. Stud. Math., vol.~16, Tata Inst.
  Fund. Res., Bombay, 2002, pp.~185--217.

\bibitem[CTP90]{CTP}
J.-L. Colliot-Th\'{e}l\`ene and R.~Parimala, \emph{Real components of algebraic
  varieties and \'{e}tale cohomology}, Invent. math. \textbf{101} (1990),
  no.~1, 81--99.

\bibitem[Del77]{Finitude}
P.~Deligne, \emph{Th\'eor\`emes de finitude en cohomologie {$\ell$}-adique},
  Cohomologie \'etale, Lecture Notes in Math., vol. 569, Springer, Berlin,
  1977, pp.~233--261.

\bibitem[dJ04]{deJong}
A.~J. de~Jong, \emph{The period-index problem for the {B}rauer group of an
  algebraic surface}, Duke Math. J. \textbf{123} (2004), no.~1, 71--94.

\bibitem[dJHS11]{dJHS}
A.~J. de~Jong, X.~He and J.~M. Starr, \emph{Families of rationally simply
  connected varieties over surfaces and torsors for semisimple groups}, Publ.
  Math. IH{\'E}S (2011), no.~114, 1--85.

\bibitem[Eis95]{Eisenbud}
D.~Eisenbud, \emph{Commutative algebra}, Graduate Texts in Math., vol. 150,
  Springer-Verlag, New York, 1995.

\bibitem[EL73]{EL}
R.~Elman and T.-Y. Lam, \emph{Quadratic forms and the {$u$}-invariant. {I}},
  Math. Z. \textbf{131} (1973), 283--304.

\bibitem[Fer21]{Fernando}
J.~F. Fernando, \emph{Positive semidefinite analytic functions on real analytic
  surfaces}, J. Geom. Anal. \textbf{31} (2021), no.~12, 12375--12410.

\bibitem[Fis76]{Fischer}
G.~Fischer, \emph{Complex analytic geometry}, Lecture Notes in Math., vol. 538,
  Springer-Verlag, Berlin-New York, 1976.

\bibitem[For67]{Forster}
O.~Forster, \emph{Zur {T}heorie der {S}teinschen {A}lgebren und {M}oduln},
  Math. Z. \textbf{97} (1967), 376--405.

\bibitem[Gam88]{Gamboa}
J.~M. Gamboa, \emph{Un exemple d'ensemble constructible \`a{} adh\'erence non
  constructible}, C. R. Acad. Sci. Paris S\'er. I Math. \textbf{306} (1988),
  no.~14, 617--619.

\bibitem[Gil01]{Gille}
P.~Gille, \emph{Cohomologie galoisienne des groupes quasi-d\'eploy\'es sur des
  corps de dimension cohomologique {$\leq 2$}}, Compositio Math. \textbf{125}
  (2001), no.~3, 283--325.

\bibitem[Gil10]{Gillesurvey}
\bysame, \emph{Serre's conjecture {II}: a survey}, Quadratic forms, linear
  algebraic groups, and cohomology, Dev. Math., vol.~18, Springer, New York,
  2010, pp.~41--56.

\bibitem[Gil19]{Gillebook}
\bysame, \emph{Groupes alg\'ebriques semi-simples en dimension cohomologique
  {$\le$} 2}, Lecture Notes in Math., vol. 2238, Springer, Cham, 2019.

\bibitem[GMT86]{GMT}
F.~Guaraldo, P.~Macr\`i and A.~Tancredi, \emph{Topics on real analytic spaces},
  Advanced Lectures in Math., Friedr. Vieweg \& Sohn, Braunschweig, 1986.

\bibitem[GR79]{GRStein}
H.~Grauert and R.~Remmert, \emph{Theory of {S}tein spaces}, Grundlehren der
  Mathematischen Wissenschaften, vol. 236, Springer-Verlag, Berlin-New York,
  1979.

\bibitem[GR84]{GRCoherent}
\bysame, \emph{Coherent analytic sheaves}, Grundlehren der Mathematischen
  Wissenschaften, vol. 265, Springer-Verlag, Berlin, 1984.

\bibitem[Gra60]{Grauert}
H.~Grauert, \emph{Ein {T}heorem der analytischen {G}arbentheorie und die
  {M}odulr\"aume komplexer {S}trukturen}, Publ. Math. IH\'{E}S (1960), no.~5,
  64.

\bibitem[Gro57]{Tohoku}
A.~Grothendieck, \emph{Sur quelques points d'alg\`ebre homologique}, Tohoku
  Math. J. (2) \textbf{9} (1957), 119--221.

\bibitem[GS17]{GS}
P.~Gille and T.~Szamuely, \emph{Central simple algebras and {Galois}
  cohomology}, second ed., Camb. Stud. Adv. Math., vol. 165, Cambridge
  University Press, 2017.

\bibitem[Gur88]{Guralnick}
R.~M. Guralnick, \emph{Matrices and representations over rings of analytic
  functions and other one-dimensional rings}, Visiting scholars'
  lectures---1987 ({L}ubbock, {TX}), Texas Tech Univ. Math. Ser., vol.~15,
  Texas Tech Univ., Lubbock, TX, 1988, pp.~15--35.

\bibitem[Ham83]{Hamm}
H.~A. Hamm, \emph{Zur {H}omotopietyp {S}teinscher {R}\"{a}ume}, J. Reine Angew.
  Math. \textbf{338} (1983), 121--135.

\bibitem[Hir94]{Hirsch}
M.~W. Hirsch, \emph{Differential topology}, Graduate Texts in Mathematics,
  vol.~33, Springer-Verlag, New York, 1994, Corrected reprint of the 1976
  original.

\bibitem[Iss66]{Isssa}
H.~Iss'sa, \emph{On the meromorphic function field of a {S}tein variety}, Ann.
  of Math. (2) \textbf{83} (1966), 34--46.

\bibitem[Ive86]{Iversen}
B.~Iversen, \emph{Cohomology of sheaves}, Universitext, Springer-Verlag,
  Berlin, 1986.

\bibitem[Jaw82]{Jaworski1}
P.~Jaworski, \emph{Positive definite analytic functions and vector bundles},
  Bull. Acad. Polon. Sci. S\'{e}r. Sci. Math. \textbf{30} (1982), 501--506.

\bibitem[Jaw86]{Jaworski2}
\bysame, \emph{Extensions of orderings on fields of quotients of rings of real
  analytic functions}, Math. Nachr. \textbf{125} (1986), 329--339.

\bibitem[Kol07]{Kollarsing}
J.~Koll\'ar, \emph{Lectures on resolution of singularities}, Annals of Math.
  Studies, vol. 166, Princeton University Press, Princeton, NJ, 2007.

\bibitem[Kri69]{Kripke}
B.~Kripke, \emph{Finitely generated coherent analytic sheaves}, Proc. AMS
  \textbf{21} (1969), 530--534.

\bibitem[KS90]{KS}
M.~Kashiwara and P.~Schapira, \emph{Sheaves on manifolds}, Grundlehren der
  mathematischen Wissenschaften, vol. 292, Springer-Verlag, Berlin, 1990.

\bibitem[Lam83]{Lamreal}
T.~Y. Lam, \emph{Orderings, valuations and quadratic forms}, CBMS Regional
  Conference Series in Mathematics, vol.~52, Conference Board of the
  Mathematical Sciences, Washington, DC; by the American Mathematical Society,
  Providence, RI, 1983.

\bibitem[Lam05]{Lam}
\bysame, \emph{Introduction to quadratic forms over fields}, Graduate Studies
  in Math., vol.~67, Amer. Math. Soc., Providence, RI, 2005.

\bibitem[Lan53]{LAng}
S.~Lang, \emph{The theory of real places}, Ann. of Math. (2) \textbf{57}
  (1953), 378--391.

\bibitem[Lie08]{Lieblich}
M.~Lieblich, \emph{Twisted sheaves and the period-index problem}, Compos. Math.
  \textbf{144} (2008), no.~1, 1--31.

\bibitem[Man82]{Manaresi}
M.~Manaresi, \emph{Sard and {B}ertini type theorems for complex spaces}, Ann.
  Mat. Pura Appl. (4) \textbf{131} (1982), 265--279.

\bibitem[Mer91]{Merkurjev}
A.~S. Merkurjev, \emph{Simple algebras and quadratic forms}, Izv. Akad. Nauk
  SSSR Ser. Mat. \textbf{55} (1991), no.~1, 218--224.

\bibitem[MW{\O}11]{MWO}
P.~E. Manne, E.~F. Wold and N.~{\O}vrelid, \emph{Holomorphic convexity and
  {C}arleman approximation by entire functions on {S}tein manifolds}, Math.
  Ann. \textbf{351} (2011), no.~3, 571--585.

\bibitem[Nar62]{Narasimhannormalization}
R.~Narasimhan, \emph{A note on {S}tein spaces and their normalisations}, Ann.
  Scuola Norm. Sup. Pisa Cl. Sci. (3) \textbf{16} (1962), 327--333.

\bibitem[N{\'e}m22]{Nemethi}
A.~N{\'e}methi, \emph{Normal surface singularities}, Ergeb. Math. Grenzgeb.
  (3), vol.~74, Springer, Cham, 2022.

\bibitem[Pfi67]{Pfister}
A.~Pfister, \emph{Zur {D}arstellung definiter {F}unktionen als {S}umme von
  {Q}uadraten}, Invent. math. \textbf{4} (1967), 229--237.

\bibitem[Pfi71]{PfisterICM}
\bysame, \emph{Sums of squares in real function fields}, Actes {Congr}. {Int}.
  {Math}., {Nice} 1970, {Tome} 1, 297--300, 1971.

\bibitem[Que71]{Quentel}
Y.~Quentel, \emph{Sur la compacit\'e{} du spectre minimal d'un anneau}, Bull.
  SMF \textbf{99} (1971), 265--272.

\bibitem[Rui85]{Ruiz}
J.~M. Ruiz, \emph{On {H}ilbert's 17th problem and real {N}ullstellensatz for
  global analytic functions}, Math. Z. \textbf{190} (1985), no.~3, 447--454.

\bibitem[SC91]{SC}
M.~M. Smirnov and E.~M. Chirka, \emph{Polynomial convexity of some sets in
  {$\mathbf{C}^n$}}, Mat. Zametki \textbf{50} (1991), no.~5, 81--89.

\bibitem[Sch94]{Scheiderer}
C.~Scheiderer, \emph{Real and \'{e}tale cohomology}, Lecture Notes in Math.,
  vol. 1588, Springer-Verlag, Berlin, 1994.

\bibitem[Sch96]{ScheidererHasse}
\bysame, \emph{Hasse principles and approximation theorems for homogeneous
  spaces over fields of virtual cohomological dimension one}, Invent. math.
  \textbf{125} (1996), no.~2, 307--365.

\bibitem[Ser94]{CG}
J.-P. Serre, \emph{Cohomologie galoisienne}, fifth ed., Lecture Notes in Math.,
  vol.~5, Springer-Verlag, Berlin, 1994.

\bibitem[SGA$1$]{SGA1}
A.~Grothendieck, \emph{Rev\^etements \'etales et groupe fondamental ({SGA} 1)},
  Documents Math\'ematiques, vol.~3, Soci\'et\'e Math\'ematique de France,
  Paris, 2003.

\bibitem[SGA$4_{\textrm{I}}$]{SGA41}
\bysame, \emph{Th\'{e}orie des topos et cohomologie \'{e}tale des sch\'{e}mas
  ({SGA} 4 {I})}, Lecture Notes in Math., vol. 269, Springer-Verlag, Berlin-New
  York, 1972.

\bibitem[SGA$4_{\textrm{III}}$]{SGA43}
\bysame, \emph{Th\'eorie des topos et cohomologie \'etale des sch\'emas ({SGA}
  4 {III})}, Lecture Notes in Math., vol. 305, Springer-Verlag, Berlin-New
  York, 1973.

\bibitem[SP]{SP}
A.~J. de~Jong et~al., \emph{\textit{The Stacks Project}},
  \url{https://stacks.math.columbia.edu}.

\bibitem[Ste65]{Steinberg}
R.~Steinberg, \emph{Regular elements of semisimple algebraic groups}, Publ.
  Math. IH{\'E}S (1965), no.~25, 49--80.

\end{thebibliography}

\end{document}